\documentclass[12pt]{article}
\usepackage[]{amsmath,amssymb}
\usepackage{amscd}
\usepackage{latexsym}
\usepackage{cite}
\usepackage{amsthm}

\newtheorem{definition}{Definition}[section]
\newtheorem{theorem}[definition]{Theorem}
\newtheorem{lemma}[definition]{Lemma}
\newtheorem{corollary}[definition]{Corollary}
\newtheorem{remark}[definition]{Remark}
\newtheorem{example}[definition]{Example}
\newtheorem{conjecture}[definition]{Conjecture}
\newtheorem{problem}[definition]{Problem}
\newtheorem{note}[definition]{Note}

\newtheorem{proposition}[definition]{Proposition}

\typeout{Substyle for letter-sized documents. Released 24 July 1992}


\setlength{\topmargin}{-1in}
\setlength{\headheight}{1.5cm}
\setlength{\headsep}{0.3cm}
\setlength{\textheight}{9in}
\setlength{\oddsidemargin}{0cm}
\setlength{\evensidemargin}{0cm}
\setlength{\textwidth}{6.5in}

\begin{document}
\title{\bf 
The $q$-Onsager algebra and its \\alternating central extension}
 \author{
Paul Terwilliger 
}
\date{}

\maketitle
\begin{abstract} The $q$-Onsager algebra $O_q$ has a presentation involving two generators $W_0$, $W_1$ and two relations, called the
$q$-Dolan/Grady relations. The alternating central extension $\mathcal O_q$ has a presentation involving the alternating generators
$\lbrace \mathcal W_{-k}\rbrace_{k=0}^\infty$, $\lbrace \mathcal  W_{k+1}\rbrace_{k=0}^\infty$, 
$ \lbrace \mathcal G_{k+1}\rbrace_{k=0}^\infty$, 
$\lbrace \mathcal {\tilde G}_{k+1}\rbrace_{k=0}^\infty$ and a large number of relations. 
Let $\langle \mathcal W_0, \mathcal W_1 \rangle$ denote the subalgebra of $\mathcal O_q$ generated by $\mathcal W_0$, $\mathcal W_1$.
It is known that there exists an algebra isomorphism $O_q \to \langle \mathcal W_0, \mathcal W_1 \rangle$ that sends $W_0\mapsto \mathcal W_0$
and $W_1 \mapsto \mathcal W_1$.
It is known that the center $\mathcal Z$ of $\mathcal O_q$ is isomorphic to a polynomial algebra in countably many variables.
 It is known that the multiplication map 
$\langle \mathcal W_0, \mathcal W_1 \rangle \otimes \mathcal Z \to \mathcal O_q$, $ w \otimes z \mapsto wz$ is an isomorphism of algebras.
We call this isomorphism the standard tensor product factorization of $\mathcal O_q$. In the study of $\mathcal O_q$ there are two natural points of view: we can start with
the alternating generators, or we can start with the standard tensor product factorization. It is not obvious how these two points of view are related.
The goal of the paper is to
describe this relationship. We give seven main results; the principal one is an attractive factorization of the generating function for some algebraically
independent elements that generate $\mathcal Z$.
\bigskip

\noindent
{\bf Keywords}. $q$-Onsager algebra; $q$-Dolan/Grady relations; alternating central extension.
\hfil\break
\noindent {\bf 2020 Mathematics Subject Classification}. 
Primary: 17B37. Secondary: 05E14, 81R50.

 \end{abstract}
 
 \section{Introduction} This paper is part of a sequence 
\cite{conj},
\cite{pbwqO}, \cite{compQons} concerning the $q$-Onsager algebra and its alternating central extension. We refer to those papers
for background information and historical remarks. Let us recall a few main points.
 The $q$-Onsager algebra $O_q$ is associative and infinite-dimensional. It has a presentation with two generators $W_0$, $W_1$ and two relations, called the $q$-Dolan/Grady relations:
 \begin{align*}
&\lbrack W_0, \lbrack W_0, \lbrack W_0, W_1\rbrack_q \rbrack_{q^{-1}} \rbrack =(q^2 - q^{-2})^2 \lbrack W_1, W_0 \rbrack,
\\
&\lbrack W_1, \lbrack W_1, \lbrack W_1, W_0\rbrack_q \rbrack_{q^{-1}}\rbrack = (q^2-q^{-2})^2 \lbrack W_0, W_1 \rbrack.
\end{align*}

\noindent 
In \cite[Theorem~4.5]{BK},
Baseilhac and Kolb obtain a Poincar\'e-Birkhoff-Witt (or PBW)
basis for $O_q$. They obtain this PBW basis by using an approach of Damiani 
\cite{damiani}
along with
two automorphisms of
$O_q$ that are reminiscent of the
Lusztig automorphisms for
$U_q(\widehat{\mathfrak{sl}}_2)$.
The PBW basis elements are denoted
\begin{align}
\lbrace B_{n \delta+ \alpha_0} \rbrace_{n=0}^\infty,
\qquad \quad 
\lbrace B_{n \delta+ \alpha_1} \rbrace_{n=0}^\infty,
\qquad \quad 
\lbrace B_{n \delta} \rbrace_{n=1}^\infty.
\label{eq:UpbwIntro}
\end{align}
 We will be discussing the generating functions
 \begin{align*}
&B^-(t) = \sum_{n=0}^\infty  B_{n\delta+\alpha_0} t^n, \qquad \quad
B^+(t) = \sum_{n=0}^\infty  B_{n\delta+\alpha_1} t^n,
\\
&B(t) = \sum_{n=0}^\infty B_{n\delta} t^n,       \qquad \qquad B_{0\delta} = q^{-2}-1.
\end{align*}

\noindent
 In \cite{BK05} Baseilhac and Koizumi introduce a current algebra  for $O_q$, in order to solve boundary integrable systems with hidden symmetries.  Following
 \cite{compQons} we denote this current algebra by $\mathcal O_q$.
In \cite[Definition~3.1]{basnc} Baseilhac and Shigechi give a presentation of $\mathcal O_q$ by generators and relations. 
The generators, said to be alternating, are denoted
\begin{align*}
\lbrace \mathcal W_{-k}\rbrace_{k=0}^\infty, \qquad  \lbrace \mathcal  W_{k+1}\rbrace_{k=0}^\infty,\qquad  
 \lbrace \mathcal G_{k+1}\rbrace_{k=0}^\infty,
\qquad
\lbrace \mathcal {\tilde G}_{k+1}\rbrace_{k=0}^\infty.
\end{align*}
 The relations are given in  \eqref{eq:3p1}--\eqref{eq:3p11}
 below.  The alternating generators form a PBW basis for $\mathcal O_q$, see \cite[Theorem~6.1]{pbwqO} or Proposition \ref{lem:pbw} below.
 We will be discussing the generating functions
  \begin{align*}
&\mathcal W^-(t) = \sum_{n=0}^\infty \mathcal W_{-n} t^n,
\qquad \qquad  
\mathcal W^+(t) = \sum_{n=0}^\infty \mathcal W_{n+1} t^n,
\\
&\mathcal G(t) = \sum_{n=0}^\infty \mathcal G_n t^n,
\qquad \quad 
\mathcal {\tilde G}(t) = \sum_{n=0}^\infty \mathcal {\tilde G}_n t^n, \qquad \quad \mathcal G_0 = \mathcal {\tilde G}_0 = - (q-q^{-1})\lbrack 2 \rbrack^2_q.
\end{align*}

 \noindent Next we describe how $O_q$ and $\mathcal O_q$ are related. In this description we will refer to the   polynomial algebra $ \mathbb F \lbrack z_1, z_2, \ldots \rbrack$,
where $\mathbb F$ is the ground field and $\lbrace z_n \rbrace_{n=1}^\infty$ are mutually commuting indeterminates. For notational convenience define $z_0=1$.
 \medskip

 \noindent 
 The algebras $O_q$ and $\mathcal O_q$ are related as follows.
 \begin{enumerate}
 \item[$\bullet$]
 Let $\langle \mathcal W_0, \mathcal W_1\rangle $ denote the subalgebra of $\mathcal O_q$ generated by $\mathcal W_0$, $\mathcal W_1$.
By \cite[Theorem~10.3]{pbwqO} there exists an algebra isomorphism $O_q \to  \langle \mathcal W_0, \mathcal W_1\rangle $ that sends $W_0 \mapsto \mathcal W_0$ and
$W_1 \mapsto \mathcal W_1$.
 \item[$\bullet$]  By \cite[Theorem~10.2]{pbwqO}  the center $\mathcal Z$ of $\mathcal O_q$ is isomorphic to $ \mathbb F \lbrack z_1, z_2, \ldots \rbrack$.
\item[$\bullet$] By \cite[Theorem~10.4]{pbwqO} the multiplication map 
\begin{align}
\begin{split}
\langle \mathcal W_0, \mathcal W_1\rangle 
\otimes
\mathcal Z
 & \to   \mathcal O_q
\\
 w \otimes z  &\mapsto  wz
 \end{split}
 \label{eq:mult}
 \end{align}
 is an isomorphism of algebras.
\end{enumerate}
By the above bullets points or  \cite[Theorem~9.14]{pbwqO},  the algebra
 $\mathcal O_q$ is isomorphic to  $O_q\otimes  \mathbb F \lbrack z_1, z_2, \ldots \rbrack$.
Motivated by this and as explained in \cite{pbwqO}, 
we call $\mathcal O_q$  the alternating central extension of $O_q$.  We call \eqref{eq:mult}  the standard tensor product factorization of $\mathcal O_q$.
 \medskip

  \noindent For the rest of this section, we identify the algebra $O_q$ with  $\langle \mathcal W_0, \mathcal W_1\rangle $ via the isomorphism in the first bullet point above.
 \medskip
 
 \noindent In the study of $\mathcal O_q$ there are two natural points of view: we can start with the 
 alternating generators, or we can start with the standard tensor product factorization. It is not obvious how these two
 points of view are related. The goal of the paper is to describe this relationship.
 \medskip
 
 \noindent In this paper we obtain seven main results, that are summarized as follows.
 \medskip

 \noindent
  Our first and second main results relate the generating functions for $O_q$ and $\mathcal O_q$. We show that
\begin{align*}
\frac{q+q^{-1}}{t+t^{-1}}
\mathcal W^-\biggl( \frac{q+q^{-1}}{t+t^{-1}}\biggr) &=
\frac{q^{-1}t B^+(q^{-1}t)+ B^-(qt)    
}{(q^2-q^{-2})(t-t^{-1})}\,
\mathcal {\tilde G}\biggl( \frac{q+q^{-1}}{t+t^{-1}}\biggr),
\\
\frac{q+q^{-1}}{t+t^{-1}}
\mathcal W^+\biggl( \frac{q+q^{-1}}{t+t^{-1}}\biggr) &=
\frac{ B^+(q^{-1}t)+ qtB^-(qt) }{(q^2-q^{-2})(t-t^{-1})}\,
\mathcal {\tilde G}\biggl( \frac{q+q^{-1}}{t+t^{-1}}\biggr)
\end{align*}
and also 
\begin{align*}
\frac{q+q^{-1}}{t+t^{-1}}
\mathcal W^-\biggl( \frac{q+q^{-1}}{t+t^{-1}}\biggr) &=
\mathcal {\tilde G}\biggl( \frac{q+q^{-1}}{t+t^{-1}}\biggr)
\frac{qt B^+(qt)+ B^-(q^{-1}t)    
}{(q^2-q^{-2})(t-t^{-1})},
\\
\frac{q+q^{-1}}{t+t^{-1}}
\mathcal W^+\biggl( \frac{q+q^{-1}}{t+t^{-1}}\biggr) &=
\mathcal {\tilde G}\biggl( \frac{q+q^{-1}}{t+t^{-1}}\biggr)
\frac{ B^+(qt)+ q^{-1}tB^-(q^{-1}t) }{(q^2-q^{-2})(t-t^{-1})}.
\end{align*}

 \noindent Our third main result concerns some elements $\lbrace \mathcal Z^\vee_n \rbrace_{n=1}^\infty$ from \cite[Theorem~10.2]{pbwqO}
that are algebraically independent and generate $\mathcal Z$.
 By \cite[Definitions~8.4,~8.6]{pbwqO}, the generating function
  \begin{align*}
  \mathcal Z^\vee(t) = \sum_{n=0}^\infty \mathcal Z^\vee_n t^n, \qquad \qquad \mathcal Z^\vee_0=1
 \end{align*}
 satisfies $\mathcal Z^\vee(t) = (q+q^{-1})^{-2} \Psi(t)$, where
 \begin{align*} 
 \Psi(t)&=
t^{-1} ST\mathcal W^-(S) \mathcal W^+(T) +
t ST\mathcal W^+(S) \mathcal W^-(T) -
q^2 ST \mathcal W^-(S) \mathcal W^-(T)
\\
& \quad \qquad -
q^{-2}ST \mathcal W^+(S) \mathcal W^+(T) +
(q^2-q^{-2})^{-2} \mathcal G(S) \mathcal {\tilde G}(T)
\end{align*}
and 
\begin{align}\label{eq:IntroST}
 S = \frac{q+q^{-1}}{q^{-1}t+qt^{-1}}, \qquad \quad  T= \frac{q+q^{-1}}{qt+q^{-1}t^{-1}}.
 \end{align}
 We obtain a factorization
 \begin{align}
 \label{eq:factor}
 \mathcal Z^\vee(t) = \xi \mathcal {\tilde G}(S) B(t) \mathcal {\tilde G}(T), \qquad \qquad \xi = -q(q-q^{-1})(q^2-q^{-2})^{-4}.
 \end{align}

 \noindent In our fourth main result, we introduce the alternating generators for $O_q$.
 These generators are denoted
 \begin{align*}
\lbrace W_{-k}\rbrace_{k=0}^\infty, \qquad  \lbrace W_{k+1}\rbrace_{k=0}^\infty,\qquad  
 \lbrace G_{k+1}\rbrace_{k=0}^\infty,
\qquad
\lbrace {\tilde G}_{k+1}\rbrace_{k=0}^\infty
\end{align*}
 and defined as follows.
 We display a surjective algebra homomorphism $\gamma: \mathcal O_q \to O_q$ that sends 
 \begin{align*}                     
 \mathcal W_0 \mapsto W_0,
 \qquad \quad
  \mathcal W_1 \mapsto W_1,
 \qquad \quad
 \mathcal Z^\vee_n \mapsto 0, \qquad n \geq 1.
 \end{align*}
 The map $\gamma$ sends
 \begin{align*}
 \mathcal W_{-k} \mapsto W_{-k}, \qquad \quad
  \mathcal W_{k+1} \mapsto W_{k+1}, \qquad \quad
   \mathcal G_{k+1} \mapsto G_{k+1}, \qquad \quad
    \mathcal {\tilde G}_{k+1} \mapsto {\tilde G}_{k+1}
 \end{align*}
 for  $k \in \mathbb N$. The existence of $\gamma$ was previously conjectured by Baseilhac and Belliard in \cite[Conjecture~2]{basBel}.
 \medskip
 
 \noindent In our fifth main result,  we prove \cite[Conjecture~6.2]{conj}. The essential ingredients in our proof are the
  factorization  \eqref{eq:factor}
  and the existence of the alternating generators of $O_q$.
\medskip
 
 \noindent In our sixth main result,  we establish an algebra isomorphism $\varphi: 
\mathcal O_q \to O_q \otimes 
\mathbb F \lbrack z_1, z_2,\ldots\rbrack$ that sends
\begin{align*}
\mathcal W_{-n} &\mapsto \sum_{k=0}^n W_{k-n} \otimes z_k,
\quad \qquad \qquad 
\mathcal W_{n+1} \mapsto \sum_{k=0}^n W_{n+1-k} \otimes z_k,
\\
\mathcal G_{n} &\mapsto \sum_{k=0}^n G_{n-k} \otimes z_k,
\quad \qquad \qquad
\mathcal {\tilde G}_{n} \mapsto \sum_{k=0}^n \tilde G_{n-k} \otimes z_k
\end{align*}
for $n \in \mathbb N$. In particular $\varphi$ sends
\begin{align*}
\mathcal W_0 \mapsto W_0 \otimes 1,
\qquad \qquad 
\mathcal W_1 \mapsto W_1 \otimes 1.
\end{align*}

\noindent For $n \in \mathbb N$ let $\mathcal Z_n$ denote the preimage of $1\otimes z_n$ under $\varphi$. We have $\mathcal Z_0=1$. By construction the elements
$\lbrace \mathcal Z_n \rbrace_{n=1}^\infty$
  are  algebraically independent and generate $\mathcal Z$.
  \medskip
  
  \noindent In our seventh main result, we show that the elements $\lbrace \mathcal Z_n \rbrace_{n=1}^\infty$ are related to the elements $\lbrace \mathcal Z^\vee_n \rbrace_{n=1}^\infty$
  in the following way. The generating function 
 $\mathcal Z(t) = \sum_{n=0}^\infty \mathcal Z_n t^n$ satisfies
  \begin{align*}
  \mathcal Z^\vee(t) = \mathcal Z(S)\mathcal Z(T),
  \end{align*}
  where $S$, $T$ are from \eqref{eq:IntroST}.
 \medskip

\noindent Our seven main results are contained in Theorems
 \ref{lem:zzznote}, 
   \ref{lem:zzznotez},
     \ref{thm:Mn},
      \ref{lem:4gen},
     \ref{thm:ct},
   \ref{prop:vpIso},
    \ref{lem:ZST}.
    \medskip
    
    \noindent
    We remark that \cite{alternating}, \cite{altCE} contain analogs of the above results that apply to the alternating central extension for
    the positive part of $U_q(\widehat{\mathfrak{sl}}_2)$; see also \cite{compactUqp}.
    \medskip
    
     \noindent At the end of the paper we give some conjectures and open problems.
     \medskip
   
  \noindent The paper is organized as follows. Section 2 contains some preliminaries. 
  In Section 3 we give the definition and basic properties of $O_q$.
  In Sections 4, 5 we describe the PBW basis for $O_q$ due to Baseilhac and Kolb.
    In Sections 6, 7 we give the definition and basic properties of $\mathcal O_q$.
In Section 8 we recall an algebra isomorphism $\phi : O_q \otimes \mathbb F \lbrack z_1, z_2, \ldots \rbrack\to \mathcal O_q$
and the generating function $\mathcal Z^\vee(t)$.
In Section 9 we compare our generating functions for $O_q$ and $\mathcal O_q$.
In Section 10 we obtain our factorization of $\mathcal Z^\vee(t)$.
In Section 11 we introduce the algebra homomorphism $\gamma: \mathcal O_q \to O_q$
and use it to obtain the alternating generators of $O_q$.
In Section 12 we use the factorization of $\mathcal Z^\vee(t)$ and the alternating generators of $O_q$ to
prove \cite[Conjecture~6.2]{conj}.
In Section 13 we obtain the algebra isomorphism $\varphi: \mathcal O_q \to  O_q \otimes \mathbb F \lbrack z_1, z_2, \ldots \rbrack$
and describe how it is related to the inverse of $\phi$. We also obtain the generating function $\mathcal Z(t)$ and describe
how it is related to $\mathcal Z^\vee(t)$. 
In Sections 14, 15 we tie up some loose ends from earlier sections.
In Section 16 we give some conjectures and open problems.
In Appendix A we describe some features of the polynomial algebra $\mathbb F \lbrack z_1, z_2, \ldots \rbrack$ that are
used in the main body of the paper.

\section{Preliminaries}
\noindent We now begin our formal argument. Throughout the paper, the following notational conventions are in effect.
Recall the natural numbers $\mathbb N= \lbrace 0,1,2,\ldots \rbrace$ and integers $\mathbb Z=\lbrace 0,\pm 1, \pm 2,\ldots \rbrace$. Let $\mathbb F$ denote a field.
Every vector space and tensor product discussed in this paper is over $\mathbb F$.
Every algebra discussed in this paper is associative, over $\mathbb F$, and has a multiplicative identity. A subalgebra has the same multiplicative identity as the parent algebra.
 Let $\mathcal A$ denote an algebra. By an {\it automorphism} of $\mathcal A$
we mean an algebra isomorphism $\mathcal A\rightarrow \mathcal A$. The algebra $\mathcal A^{\rm opp}$ consists of the vector space $\mathcal A$ and the multiplication map $\mathcal A \times \mathcal A \rightarrow \mathcal A$, $(a,b)\to ba$.
By an {\it antiautomorphism} of $\mathcal A$ we mean an algebra isomorphism $\mathcal A \rightarrow \mathcal A^{\rm opp}$.
\medskip

\noindent Throughout the paper, let $s$ and $t$ denote commuting indeterminates.

 \begin{definition}\label{def:pbw}
 \rm 
(See \cite[p.~299]{damiani}.)
Let $ \mathcal A$ denote an algebra. A {\it Poincar\'e-Birkhoff-Witt} (or {\it PBW}) basis for $\mathcal A$
consists of a subset $\Omega \subseteq \mathcal A$ and a linear order $<$ on $\Omega$
such that the following is a basis for the vector space $\mathcal A$:
\begin{align*}
a_1 a_2 \cdots a_n \qquad n \in \mathbb N, \qquad a_1, a_2, \ldots, a_n \in \Omega, \qquad
a_1 \leq a_2 \leq \cdots \leq a_n.
\end{align*}
We interpret the empty product as the multiplicative identity in $\mathcal A$.
\end{definition}

\begin{definition}\label{def:poly} \rm
Let $\lbrace z_n \rbrace_{n=1}^\infty$ denote mutually commuting indeterminates. Let $\mathbb F \lbrack z_1, z_2, \ldots \rbrack$ denote
the algebra consisting of the polynomials in $z_1, z_2, \ldots $ that have all coefficients in $\mathbb F$.
For notational convenience, define $z_0=1$.
\end{definition}

\noindent Some features of  $\mathbb F \lbrack z_1, z_2, \ldots \rbrack$ are explained in Appendix A.
\medskip

  \noindent Throughout the paper, we fix a nonzero $q \in \mathbb F$
that is not a root of unity.
Recall the notation
\begin{align*}
\lbrack n\rbrack_q = \frac{q^n-q^{-n}}{q-q^{-1}}
\qquad \qquad n \in \mathbb N.
\end{align*}

\section{The $q$-Onsager algebra $O_q$}
In this section we recall the $q$-Onsager algebra $ O_q$.
\medskip

\noindent For elements $X, Y$ in any algebra, define their
commutator and $q$-commutator by 
\begin{align*}
\lbrack X, Y \rbrack = XY-YX, \qquad \qquad
\lbrack X, Y \rbrack_q = q XY- q^{-1}YX.
\end{align*}
\noindent Note that 
\begin{align*}
\lbrack X, \lbrack X, \lbrack X, Y\rbrack_q \rbrack_{q^{-1}} \rbrack
= 
X^3Y-\lbrack 3\rbrack_q X^2YX+ 
\lbrack 3\rbrack_q XYX^2 -YX^3.
\end{align*}

\begin{definition} \label{def:U} \rm
(See \cite[Section~2]{bas1}, \cite[Definition~3.9]{qSerre}.)
Define the algebra $O_q$ by generators $W_0$, $W_1$ and relations
\begin{align}
\label{eq:qOns1}
&\lbrack W_0, \lbrack W_0, \lbrack W_0, W_1\rbrack_q \rbrack_{q^{-1}} \rbrack =(q^2 - q^{-2})^2 \lbrack W_1, W_0 \rbrack,
\\
\label{eq:qOns2}
&\lbrack W_1, \lbrack W_1, \lbrack W_1, W_0\rbrack_q \rbrack_{q^{-1}}\rbrack = (q^2-q^{-2})^2 \lbrack W_0, W_1 \rbrack.
\end{align}
We call $O_q$ the {\it $q$-Onsager algebra}.
The relations \eqref{eq:qOns1}, \eqref{eq:qOns2}  are called the {\it $q$-Dolan/Grady relations}.
\end{definition}
\begin{remark}\rm In \cite{BK} Baseilhac and Kolb define the $q$-Onsager algebra in a slightly more general way that involves two scalar parameters $c, q$. Our $O_q$ is their
$q$-Onsager algebra with $c=q^{-1}(q-q^{-1})^2$.
\end{remark}
\noindent We mention some symmetries of $O_q$. 

\begin{lemma}
\label{lem:aut} There exists an automorphism $\sigma$ of $O_q$ that sends $W_0 \leftrightarrow W_1$.
Moreover $\sigma^2 = {\rm id}$, where ${\rm id}$ denotes the identity map.
\end{lemma}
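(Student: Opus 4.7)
The plan is to invoke the universal property of the presentation of $O_q$ by generators and relations. Consider the assignment $W_0 \mapsto W_1$, $W_1 \mapsto W_0$. This extends uniquely to an algebra homomorphism from the free algebra on $W_0, W_1$ to $O_q$, and I need only check that the two defining relations \eqref{eq:qOns1}, \eqref{eq:qOns2} lie in the kernel of the composition with the quotient map $O_q \to O_q$. Equivalently, I must show that swapping $W_0 \leftrightarrow W_1$ carries the left-hand side minus right-hand side of \eqref{eq:qOns1} to the left-hand side minus right-hand side of \eqref{eq:qOns2}, and vice versa.

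This verification is purely symbolic: the operations $\lbrack \cdot, \cdot \rbrack$ and $\lbrack \cdot, \cdot \rbrack_q$, $\lbrack \cdot, \cdot \rbrack_{q^{-1}}$ are defined uniformly in the two arguments, so applying the swap to $\lbrack W_0, \lbrack W_0, \lbrack W_0, W_1 \rbrack_q \rbrack_{q^{-1}} \rbrack - (q^2-q^{-2})^2 \lbrack W_1, W_0 \rbrack$ produces $\lbrack W_1, \lbrack W_1, \lbrack W_1, W_0 \rbrack_q \rbrack_{q^{-1}} \rbrack - (q^2-q^{-2})^2 \lbrack W_0, W_1 \rbrack$, which is zero in $O_q$ by \eqref{eq:qOns2}. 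The reverse direction is identical. Hence the swap descends to an algebra endomorphism $\sigma: O_q \to O_q$ with $\sigma(W_0) = W_1$ and $\sigma(W_1) = W_0$.

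For the involution claim, observe that $\sigma^2$ is an algebra endomorphism of $O_q$ fixing both generators $W_0$ and $W_1$. Since $W_0, W_1$ generate $O_q$ as an algebra, $\sigma^2$ agrees with $\mathrm{id}$ on a generating set and therefore $\sigma^2 = \mathrm{id}$. This in particular shows $\sigma$ is invertible with inverse $\sigma$ itself, so $\sigma$ is an automorphism. There is no real obstacle here; the only point that requires any care is the symbolic check that the two $q$-Dolan/Grady relations are literally interchanged under the swap, which is immediate from the fact that the definition of each $q$-bracket treats its two arguments in the same formal way relative to the parameter.
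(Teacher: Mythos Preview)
Your argument is correct and is exactly the standard justification; the paper itself states this lemma without proof, treating it as immediate from the evident symmetry of the two $q$-Dolan/Grady relations under the swap $W_0 \leftrightarrow W_1$.
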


\begin{lemma}\label{lem:antiaut} {\rm (See \cite[Lemma~2.5]{z2z2z2}.)}
There exists an antiautomorphism $\dagger$ of $O_q$ that fixes each of $W_0$, $W_1$.
 Moreover $\dagger^2={\rm id}$.
\end{lemma}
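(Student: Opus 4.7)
The plan is to invoke the universal property of the presentation in Definition \ref{def:U}. By definition an antiautomorphism $O_q \to O_q$ fixing $W_0$ and $W_1$ is an algebra isomorphism $O_q \to O_q^{\rm opp}$ sending each of $W_0, W_1$ to itself. I would first produce a mere algebra homomorphism of this kind; it suffices to check that the elements $W_0, W_1$, now viewed inside $O_q^{\rm opp}$, still satisfy the $q$-Dolan/Grady relations \eqref{eq:qOns1}, \eqref{eq:qOns2}.

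To carry out the check I would use the explicit monomial expansion
\[
\lbrack W_0, \lbrack W_0, \lbrack W_0, W_1\rbrack_q \rbrack_{q^{-1}} \rbrack
= W_0^3 W_1 - \lbrack 3 \rbrack_q W_0^2 W_1 W_0 + \lbrack 3 \rbrack_q W_0 W_1 W_0^2 - W_1 W_0^3
\]
recorded just above Definition \ref{def:U}. Replacing each product of generators by the opposite product (that is, evaluating the same polynomial expression in $O_q^{\rm opp}$) reverses the sequence of monomials while leaving the coefficient list $1, -\lbrack 3\rbrack_q, \lbrack 3\rbrack_q, -1$ in place; since that coefficient list is itself reversal-antisymmetric, the resulting polynomial is exactly the negative of the original. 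The right-hand side $(q^2-q^{-2})^2\lbrack W_1, W_0\rbrack$ of \eqref{eq:qOns1} is visibly negated by reversal as well, so the relation is preserved as an identity in $O_q^{\rm opp}$. The same computation with $W_0$ and $W_1$ interchanged handles \eqref{eq:qOns2}. This supplies the algebra antihomomorphism $\dagger : O_q \to O_q$ fixing $W_0, W_1$.

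Finally, $\dagger \circ \dagger$ is an algebra homomorphism $O_q \to O_q$ fixing the two generators, so it agrees with the identity on all of $O_q$. Thus $\dagger^2 = {\rm id}$, which in particular forces $\dagger$ to be bijective, hence an antiautomorphism. I do not foresee any real obstacle: the whole content of the lemma is the antisymmetry of the $q$-Dolan/Grady polynomials under monomial reversal, which is immediate from the four-term expansion, and the involutivity is formal once $\dagger$ exists. No appeal to the PBW theorem or to any deeper structural property of $O_q$ is needed.
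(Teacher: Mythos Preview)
Your argument is correct and is the natural one: the paper itself gives no proof here, only a citation to \cite[Lemma~2.5]{z2z2z2}, and the reasoning there is exactly the reversal-antisymmetry check you describe. Nothing more is needed.
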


\begin{lemma} {\rm (See \cite[Lemma~3.5]{compQons}.)}  The maps $\sigma$, $\dagger$ commute.
\end{lemma}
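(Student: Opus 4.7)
The plan is to verify the equality $\sigma \dagger = \dagger \sigma$ directly on the generating set $\{W_0, W_1\}$ and then leverage universal algebra to extend the equality to all of $O_q$.

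First I would observe that since $\sigma$ is an algebra automorphism and $\dagger$ is an antiautomorphism, each of the compositions $\sigma \dagger$ and $\dagger \sigma$ is an antiautomorphism of $O_q$. Indeed, for $x, y \in O_q$ one has
\begin{align*}
(\sigma \dagger)(xy) &= \sigma(\dagger(y) \dagger(x)) = \sigma(\dagger(y)) \sigma(\dagger(x)) = (\sigma\dagger)(y)(\sigma\dagger)(x), \\
(\dagger \sigma)(xy) &= \dagger(\sigma(x) \sigma(y)) = \dagger(\sigma(y)) \dagger(\sigma(x)) = (\dagger\sigma)(y)(\dagger\sigma)(x).
\end{align*}

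Next I would evaluate both compositions at the generators. By Lemma \ref{lem:aut} and Lemma \ref{lem:antiaut},
\begin{align*}
(\sigma \dagger)(W_0) &= \sigma(W_0) = W_1, & (\dagger \sigma)(W_0) &= \dagger(W_1) = W_1, \\
(\sigma \dagger)(W_1) &= \sigma(W_1) = W_0, & (\dagger \sigma)(W_1) &= \dagger(W_0) = W_0.
\end{align*}
Thus $\sigma \dagger$ and $\dagger \sigma$ agree on the generating set $\{W_0, W_1\}$ of $O_q$. Since two antiautomorphisms of $O_q$ that agree on a generating set must coincide (equivalently, their difference vanishes on a set whose $\mathbb F$-span generates the algebra, hence on all of $O_q$ by the antimultiplicative property), we conclude $\sigma \dagger = \dagger \sigma$.

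There is essentially no obstacle here: the argument is a clean application of the universal property of the presentation in Definition \ref{def:U} combined with the observation that both compositions are antiautomorphisms. The only minor point to note is that one must record why matching on generators is enough, namely that an antiautomorphism is determined by its values on any generating set.
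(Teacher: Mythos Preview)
Your proof is correct. The paper itself does not supply a proof for this lemma but only cites \cite[Lemma~3.5]{compQons}; your argument---checking that $\sigma\dagger$ and $\dagger\sigma$ are both antiautomorphisms and agree on the generating set $\{W_0,W_1\}$---is the standard and expected justification.
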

 
\begin{definition}\label{def:tauA} \rm Let $\tau$ denote the composition of $\sigma$ and $\dagger$. Note that $\tau$ is an antiautomorphism of $O_q$ that sends
$W_0 \leftrightarrow W_1$. We have $\tau^2 = {\rm id}$.
\end{definition}

\noindent Later in the paper we will make use of the following map.
\begin{lemma}\label{lem:vth}
 There exists an algebra homomorphism $\vartheta: O_q \to \mathbb F$
that sends
\begin{align*}
W_0 \mapsto 0, \qquad \qquad W_1 \mapsto 0.
\end{align*}
\end{lemma}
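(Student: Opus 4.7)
The plan is to invoke the universal property of the presentation given in Definition \ref{def:U}. Since $O_q$ is defined by the generators $W_0, W_1$ subject to the $q$-Dolan/Grady relations \eqref{eq:qOns1}, \eqref{eq:qOns2}, to produce an algebra homomorphism $\vartheta: O_q \to \mathbb F$ it suffices to exhibit two elements $w_0, w_1 \in \mathbb F$ that satisfy the same two relations when substituted for $W_0, W_1$; the homomorphism is then determined by $W_0 \mapsto w_0$, $W_1 \mapsto w_1$.

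First I would take $w_0 = 0$ and $w_1 = 0$. Next I would observe that with this choice, every monomial appearing on either side of \eqref{eq:qOns1} and \eqref{eq:qOns2} contains a factor of $w_0$ or $w_1$, hence evaluates to $0$ in $\mathbb F$. Thus both relations reduce to the trivially valid identity $0 = 0$. By the universal property of the presentation, the assignment $W_0 \mapsto 0$, $W_1 \mapsto 0$ extends (uniquely) to an algebra homomorphism $\vartheta: O_q \to \mathbb F$ with the required properties.

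There is no substantive obstacle here; the verification of the relations is trivial because $\mathbb F$ is commutative and both generators map to zero. The only point one should be careful about is that the map lands in $\mathbb F$, which requires no nontrivial commutativity check beyond noting that $\mathbb F$ is itself an associative $\mathbb F$-algebra with multiplicative identity, so Definition \ref{def:U} applies directly.
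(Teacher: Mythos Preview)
Your proof is correct and follows essentially the same approach as the paper: both verify that the $q$-Dolan/Grady relations \eqref{eq:qOns1}, \eqref{eq:qOns2} hold trivially when $W_0$ and $W_1$ are set to $0$, and then invoke the universal property of the presentation. You have simply spelled out the universal-property argument in more detail than the paper's one-line proof.
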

\begin{proof} The $q$-Dolan/Grady relations \eqref{eq:qOns1}, \eqref{eq:qOns2} hold if we set $W_0=0$ and $W_1=0$.
\end{proof}

\section{A PBW basis for $O_q$}

\noindent In \cite{BK}, Baseilhac and Kolb obtain a PBW basis for $O_q$ that involves some elements
\begin{align}
\lbrace B_{n \delta+ \alpha_0} \rbrace_{n=0}^\infty,
\qquad \quad 
\lbrace B_{n \delta+ \alpha_1} \rbrace_{n=0}^\infty,
\qquad \quad 
\lbrace B_{n \delta} \rbrace_{n=1}^\infty.
\label{eq:Upbw}
\end{align}
These elements are recursively defined  as follows. Writing $B_\delta  = q^{-2}W_1 W_0 - W_0 W_1$ we have
\begin{align}
&B_{\alpha_0}=W_0,  \qquad \qquad 
B_{\delta+\alpha_0} = W_1 + 
\frac{q \lbrack B_{\delta}, W_0\rbrack}{(q-q^{-1})(q^2-q^{-2})},
\label{eq:line1}
\\
&
B_{n \delta+\alpha_0} = B_{(n-2)\delta+\alpha_0}
+ 
\frac{q \lbrack B_{\delta}, B_{(n-1)\delta+\alpha_0}\rbrack}{(q-q^{-1})(q^2-q^{-2})} \qquad \qquad n\geq 2
\label{eq:line2}
\end{align}
and 
\begin{align}
&B_{\alpha_1}=W_1,  \qquad \qquad 
B_{\delta+\alpha_1} = W_0 - 
\frac{q \lbrack B_{\delta}, W_1\rbrack}{(q-q^{-1})(q^2-q^{-2})},
\label{eq:line3}
\\
&
B_{n \delta+\alpha_1} = B_{(n-2)\delta+\alpha_1}
- 
\frac{q \lbrack B_{\delta}, B_{(n-1)\delta+\alpha_1}\rbrack}{(q-q^{-1})(q^2-q^{-2})} \qquad \qquad n\geq 2.
\label{eq:line4}
\end{align}
Moreover for $n\geq 1$,
\begin{equation}
\label{eq:Bdelta}
B_{n \delta} = 
q^{-2}  B_{(n-1)\delta+\alpha_1} W_0
- W_0 B_{(n-1)\delta+\alpha_1}  + 
(q^{-2}-1)\sum_{\ell=0}^{n-2} B_{\ell \delta+\alpha_1}
B_{(n-\ell-2) \delta+\alpha_1}.
\end{equation}
By \cite[Proposition~5.12]{BK} the elements $\lbrace B_{n\delta}\rbrace_{n=1}^\infty$ mutually commute.

\begin{lemma}
\label{prop:damiani} 
{\rm (See \cite[Theorem~4.5]{BK}.)}
Assume that $q$ is transcendental over $\mathbb F$. Then
 a PBW basis for $O_q$ is obtained by the elements {\rm \eqref{eq:Upbw}} in any linear
order.
\end{lemma}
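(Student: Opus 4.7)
The plan is to follow the strategy of Baseilhac and Kolb, which adapts Damiani's construction of PBW bases for the positive part of $U_q(\widehat{\mathfrak{sl}}_2)$ to the $q$-Onsager setting. The organizing tool is a pair of algebra automorphisms of $O_q$ that play the role of Lusztig's braid group action. First I would verify that the prescribed assignments on $W_0, W_1$ really define automorphisms; this reduces to checking that the proposed images satisfy the $q$-Dolan/Grady relations \eqref{eq:qOns1}, \eqref{eq:qOns2}. Using these automorphisms I would then define the real root vectors $B_{n\delta+\alpha_0}$, $B_{n\delta+\alpha_1}$ as iterated images of $W_0$, $W_1$, and derive the recursions \eqref{eq:line1}--\eqref{eq:line4} directly from that description; the imaginary root vectors $B_{n\delta}$ are then defined by \eqref{eq:Bdelta}.

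The next step is to prove the commutation relations among the PBW generators that permit reordering. The crucial input is that $\lbrace B_{n\delta}\rbrace_{n=1}^\infty$ mutually commute, together with identities expressing $\lbrack B_{m\delta}, B_{n\delta+\alpha_i}\rbrack$ and $\lbrack B_{m\delta+\alpha_1}, B_{n\delta+\alpha_0}\rbrack$ as $\mathbb F$-linear combinations of products of PBW generators in the prescribed order. These identities would be proved by induction on $m+n$ using the defining recursions, with the $q$-Dolan/Grady relations providing the base case. Once these straightening rules are in hand, an easy induction on the length of a word shows that any monomial in $W_0, W_1$ reduces to a linear combination of ordered monomials in the PBW generators; since $B_{\alpha_0}=W_0$ and $B_{\alpha_1}=W_1$, the PBW generators do generate $O_q$ and the spanning half of the theorem follows.

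The remaining and harder half is linear independence of the ordered monomials, and this is where transcendence of $q$ plays its role. I would argue by specialization: work over the subring $A=\mathbb Z\lbrack q,q^{-1}\rbrack$, consider the $A$-form $O_q^A$ generated by $W_0, W_1$, and compare with the corresponding integral form of a known PBW algebra (such as a suitable quotient of $U_q(\widehat{\mathfrak{sl}}_2)$, or an infinite-rank quantum symmetric pair coideal subalgebra) in which one can exhibit a faithful module on which the images of the ordered monomials are visibly independent. Transcendence of $q$ then promotes $A$-linear independence to $\mathbb F$-linear independence via base change. An alternative route is to place an $\mathbb N$-filtration on $O_q$ in which the PBW generators have prescribed symbols, and identify the associated graded algebra with a commutative polynomial algebra whose monomial basis matches the claimed ordered monomials.

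The main obstacle is the linear independence step: the spanning argument is largely mechanical once the straightening relations are available, but to rule out nontrivial relations among the ordered products one must either build a faithful model of $O_q$ large enough to detect every ordered monomial, or control the associated graded of a carefully chosen filtration. Both approaches require delicate bookkeeping and genuinely use that $q$ is not a root of unity (here strengthened to transcendence) in order to avoid degeneracies of the coefficients appearing in the recursions \eqref{eq:line1}--\eqref{eq:Bdelta}.
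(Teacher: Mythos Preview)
The paper does not give a proof of this lemma at all: it is stated with the citation ``(See \cite[Theorem~4.5]{BK})'' and no proof environment follows. So there is no in-paper argument to compare against; the result is imported wholesale from Baseilhac--Kolb.

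Your proposal is a reasonable high-level sketch of the Baseilhac--Kolb strategy itself: they do introduce Lusztig-type automorphisms $T_0, T_1$ of $O_q$, define the real root vectors as iterated images of $W_0, W_1$ under these, obtain the imaginary root vectors via \eqref{eq:Bdelta}, and prove straightening relations that yield spanning. For linear independence they use a filtration/specialization argument (degenerating to the classical Onsager algebra), which is one of the two routes you mention. So your outline is aligned with the actual source, though of course the straightening identities and the independence step each take substantial work in \cite{BK} that your sketch only names. Since the present paper treats the lemma as a black box, a sketch at this level is more than the paper itself provides.
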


\noindent 
 We mention a variation on the formula \eqref{eq:Bdelta}.
By \cite[Section~5.2]{BK} the following
holds  for $n\geq 1$:
\begin{equation}
\label{eq:Bdel2}
B_{n \delta} = 
q^{-2} W_1 B_{(n-1)\delta+\alpha_0} 
-  B_{(n-1)\delta+\alpha_0}  W_1 + 
(q^{-2}-1)\sum_{\ell=0}^{n-2} B_{\ell \delta+\alpha_0}
B_{(n-\ell-2) \delta+\alpha_0}.
\end{equation}
\noindent 
Recall the antiautomorphism $\tau$ of $O_q$, from Definition \ref{def:tauA}.
\begin{lemma} \label{lem:asym2} The antiautomorphism $\tau$  sends
$B_{n\delta+\alpha_0}\leftrightarrow B_{n\delta+\alpha_1}$ for $n \in \mathbb N$, and fixes $B_{n\delta}$ for $n\geq 1$.
\end{lemma}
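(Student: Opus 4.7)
The plan is a straightforward induction on $n$, exploiting the two recursive descriptions of the PBW basis elements together with the antihomomorphism property of $\tau$.

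First I would dispose of the initial cases. By Definition \ref{def:tauA}, $\tau(W_0)=W_1$ and $\tau(W_1)=W_0$, so $\tau(B_{\alpha_0})=B_{\alpha_1}$ is immediate. Next I would verify that $\tau(B_\delta)=B_\delta$: since $B_\delta=q^{-2}W_1W_0-W_0W_1$ and $\tau$ reverses products, one has $\tau(W_1W_0)=\tau(W_0)\tau(W_1)=W_1W_0$ and $\tau(W_0W_1)=W_0W_1$, yielding the claim. Using this fact and the antiautomorphism identity $\tau([B_\delta,X])=-[B_\delta,\tau(X)]$, the formula \eqref{eq:line1} for $B_{\delta+\alpha_0}$ transforms under $\tau$ precisely into the formula \eqref{eq:line3} for $B_{\delta+\alpha_1}$, completing the base $n=1$ of the first claim.

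For the inductive step on $B_{n\delta+\alpha_0}$ with $n\geq 2$, I would apply $\tau$ to \eqref{eq:line2}. Using the induction hypotheses $\tau(B_\delta)=B_\delta$ and $\tau(B_{(n-1)\delta+\alpha_0})=B_{(n-1)\delta+\alpha_1}$, the commutator sign flip converts \eqref{eq:line2} into \eqref{eq:line4}, giving $\tau(B_{n\delta+\alpha_0})=B_{n\delta+\alpha_1}$ as required; the reverse inclusion then follows from $\tau^2=\mathrm{id}$.

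The fix of $B_{n\delta}$ for $n\geq 2$ is the one place where a little more care is needed, and this is where I expect the main (though still mild) obstacle. The trick is to apply $\tau$ not to some symmetric expression but to \eqref{eq:Bdelta} and then recognize the result as \eqref{eq:Bdel2}. Using $\tau(W_0)=W_1$, the already-established $\tau(B_{(n-1)\delta+\alpha_1})=B_{(n-1)\delta+\alpha_0}$, and the reversal of products, the first two terms of \eqref{eq:Bdelta} are sent to $q^{-2}W_1B_{(n-1)\delta+\alpha_0}-B_{(n-1)\delta+\alpha_0}W_1$, matching the first two terms of \eqref{eq:Bdel2}. For the sum, each factor $B_{\ell\delta+\alpha_1}B_{(n-\ell-2)\delta+\alpha_1}$ is carried to $B_{(n-\ell-2)\delta+\alpha_0}B_{\ell\delta+\alpha_0}$, and after the reindexing $\ell\mapsto n-\ell-2$ the whole sum reproduces the corresponding sum in \eqref{eq:Bdel2}. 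Comparing with \eqref{eq:Bdel2}, the image equals $B_{n\delta}$. This completes the simultaneous induction and hence the lemma.
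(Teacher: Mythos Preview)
Your proof is correct and follows essentially the same approach as the paper: the paper's proof simply says to compare \eqref{eq:line1}, \eqref{eq:line2} with \eqref{eq:line3}, \eqref{eq:line4} for the first assertion and to compare \eqref{eq:Bdelta} with \eqref{eq:Bdel2} for the second, which is exactly the induction you spell out in detail. Your explicit handling of the sign flip $\tau([B_\delta,X])=-[B_\delta,\tau(X)]$ and the reindexing of the sum are the routine verifications that the paper leaves to the reader.
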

\begin{proof} The first assertion is verified by comparing \eqref{eq:line1}, \eqref{eq:line2} with \eqref{eq:line3}, \eqref{eq:line4}. The second assertion is verified by
comparing \eqref{eq:Bdelta}, \eqref{eq:Bdel2}.
\end{proof}

\begin{lemma}\label{lem:vth2}
The algebra homomorphism $\vartheta: O_q \to \mathbb F$ from Lemma \ref{lem:vth} sends
$B_{n \delta + \alpha_0} \mapsto 0$ and $B_{n \delta + \alpha_1} \mapsto 0$ for $n \in \mathbb N$,  and
$B_{n \delta} \mapsto 0$ for $n\geq 1$.
\end{lemma}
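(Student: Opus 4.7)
The plan is to apply $\vartheta$ to each of the recursive defining equations \eqref{eq:line1}--\eqref{eq:Bdelta} and exploit the fact that $\vartheta$ is a homomorphism into the commutative algebra $\mathbb F$, so commutators of images vanish automatically. Since every PBW element in \eqref{eq:Upbw} is built by iterated products and $q$-commutators starting from $W_0, W_1$, a routine induction should force every image to be $0$.

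First I would dispose of the base cases. Because $\vartheta$ is an algebra homomorphism with $\vartheta(W_0)=\vartheta(W_1)=0$, we have $\vartheta(B_{\alpha_0})=\vartheta(W_0)=0$ and $\vartheta(B_{\alpha_1})=\vartheta(W_1)=0$; the identity $B_\delta=q^{-2}W_1W_0-W_0W_1$ gives $\vartheta(B_\delta)=0$. Applying $\vartheta$ to \eqref{eq:line1} and its $\alpha_1$-counterpart in \eqref{eq:line3} yields $\vartheta(B_{\delta+\alpha_0})=0$ and $\vartheta(B_{\delta+\alpha_1})=0$, because the commutator bracket expands into a sum of products each of which has $\vartheta(W_0)$, $\vartheta(W_1)$, or $\vartheta(B_\delta)$ as a factor.

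Next I would carry out the induction on $n$ for the real roots. Assume $\vartheta(B_{(n-1)\delta+\alpha_0})=\vartheta(B_{(n-2)\delta+\alpha_0})=0$; then applying $\vartheta$ to \eqref{eq:line2} gives
\[
\vartheta(B_{n\delta+\alpha_0})=\vartheta(B_{(n-2)\delta+\alpha_0})+\tfrac{q}{(q-q^{-1})(q^2-q^{-2})}\bigl(\vartheta(B_\delta)\vartheta(B_{(n-1)\delta+\alpha_0})-\vartheta(B_{(n-1)\delta+\alpha_0})\vartheta(B_\delta)\bigr)=0,
\]
and the identical argument using \eqref{eq:line4} handles $\vartheta(B_{n\delta+\alpha_1})=0$. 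Having established that $\vartheta$ annihilates $B_{n\delta+\alpha_0}$ and $B_{n\delta+\alpha_1}$ for every $n\in\mathbb N$, I apply $\vartheta$ termwise to \eqref{eq:Bdelta}: every summand is a product in which at least one factor is $W_0$ or some $B_{k\delta+\alpha_1}$, all of which map to $0$, so $\vartheta(B_{n\delta})=0$ for $n\geq 1$.

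There is essentially no obstacle here: the only thing to verify is that the recursions \eqref{eq:line1}--\eqref{eq:Bdelta} express each new PBW element as a polynomial (in the noncommutative sense) in strictly earlier elements together with $W_0$, $W_1$, and $B_\delta$, all of which lie in the kernel of $\vartheta$ by the inductive hypothesis or the base case. Thus the argument is a direct induction driven by the shape of the defining formulas.
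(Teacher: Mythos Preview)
Your proof is correct and follows essentially the same approach as the paper: the paper's proof simply observes that $\vartheta(B_\delta)=0$ since $B_\delta=q^{-2}W_1W_0-W_0W_1$, and then states that the remaining assertions follow from \eqref{eq:line1}--\eqref{eq:Bdelta}, which is exactly the induction you have spelled out in detail.
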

\begin{proof} The map $\vartheta$ sends $B_\delta \mapsto 0$ by Lemma \ref{lem:vth} and since $B_\delta = q^{-2} W_1W_0-W_0W_1$. The remaining assertions follow from \eqref{eq:line1}--\eqref{eq:Bdelta}.
\end{proof}

\section{Generating functions for $O_q$}

\noindent In this section, we describe the elements \eqref{eq:Upbw} using generating functions.
\medskip

\noindent The following definition is for notational convenience.
\begin{definition}\label{def:Bnd}\rm  For an integer $n\leq 0$ define
\begin{align*}
B_{n\delta} = \begin{cases}
0, & {\mbox{\rm if $n<0$}}; \\
q^{-2}-1, &{\mbox{\rm if $n=0$.}}
\end{cases}
\end{align*}
\end{definition}

\begin{definition} \label{def:Bgen}
\rm We define some generating functions in the indeterminate $t$:
\begin{align}
&B^-(t) = \sum_{n  \in \mathbb N}  B_{n\delta+\alpha_0} t^n, \qquad \quad
B^+(t) = \sum_{n  \in \mathbb N}  B_{n\delta+\alpha_1} t^n,
\\
&B(t) = \sum_{n\in \mathbb N} B_{n\delta} t^n. 
 \label{eq:zerodelta}
 \end{align}
\end{definition}
\noindent Observe that
\begin{align}
B^-(0)=W_0, \qquad \qquad B^+(0)=W_1, \qquad \qquad B(0)=q^{-2}-1.
\label{eq:zv}
\end{align}

\begin{lemma}
\label{lem:BPhi}
For the algebra $O_q$,
\begin{align}
\label{eq:BP1}
&\frac{q \lbrack B_\delta, B^-(t) \rbrack}{(q-q^{-1})(q^2-q^{-2})}= 
(t^{-1}-t)B^-(t)-t^{-1} W_0 -W_1,
\\
\label{eq:BP2}
&\frac{q \lbrack B^+(t), B_\delta\rbrack}{(q-q^{-1})(q^2-q^{-2})}= 
(t^{-1}-t)B^+(t)-W_0-t^{-1} W_1.
\end{align}
\end{lemma}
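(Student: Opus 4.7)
The plan is to treat both identities as direct generating function reformulations of the recursions \eqref{eq:line1}--\eqref{eq:line4} that define the Baseilhac--Kolb elements. Let me abbreviate $\mu = q/((q-q^{-1})(q^2-q^{-2}))$, so the claim becomes $\mu \lbrack B_\delta, B^-(t)\rbrack = (t^{-1}-t)B^-(t)-t^{-1}W_0-W_1$ and $\mu \lbrack B^+(t), B_\delta\rbrack = (t^{-1}-t)B^+(t)-W_0-t^{-1}W_1$.

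I would first rewrite the recursions in a uniform way. From \eqref{eq:line1}, \eqref{eq:line2}, isolating the commutator term gives
\begin{align*}
\mu\lbrack B_\delta, W_0 \rbrack &= B_{\delta+\alpha_0}-W_1, \\
\mu\lbrack B_\delta, B_{(n-1)\delta+\alpha_0}\rbrack &= B_{n\delta+\alpha_0}-B_{(n-2)\delta+\alpha_0} \qquad (n\geq 2).
\end{align*}
Setting $m=n-1$ and adopting the temporary convention $B_{-\delta+\alpha_0}:=W_1$, both cases collapse to
\begin{align*}
\mu\lbrack B_\delta, B_{m\delta+\alpha_0}\rbrack = B_{(m+1)\delta+\alpha_0}-B_{(m-1)\delta+\alpha_0} \qquad (m\geq 0).
\end{align*}
Multiplying by $t^m$ and summing over $m\in\mathbb N$, the left side becomes $\mu\lbrack B_\delta, B^-(t)\rbrack$ by linearity. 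For the right side, a standard index shift gives $\sum_{m\geq 0} B_{(m+1)\delta+\alpha_0} t^m = t^{-1}(B^-(t)-W_0)$, while $\sum_{m\geq 0} B_{(m-1)\delta+\alpha_0} t^m = W_1 + tB^-(t)$ (the $m=0$ term contributes the convention $W_1$). Subtracting yields \eqref{eq:BP1} on the nose.

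The second identity is handled symmetrically, and in fact follows formally from the first by applying the antiautomorphism $\tau$ of Definition \ref{def:tauA}: by Lemma \ref{lem:asym2}, $\tau$ swaps $B^-(t) \leftrightarrow B^+(t)$ and fixes $B_\delta$, and since $\tau$ reverses the order in a commutator, the image of \eqref{eq:BP1} is precisely \eqref{eq:BP2}. Alternatively, one can repeat the computation directly from \eqref{eq:line3}, \eqref{eq:line4} with the convention $B_{-\delta+\alpha_1}:=W_0$, which collapses those recursions to $\mu\lbrack B_{m\delta+\alpha_1}, B_\delta\rbrack = B_{(m+1)\delta+\alpha_1} - B_{(m-1)\delta+\alpha_1}$ for all $m\geq 0$, and then the same shift-and-sum procedure produces \eqref{eq:BP2}.

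There is no real obstacle here; the only minor point requiring care is the boundary case $m=0$, where the inhomogeneous terms $W_1$ (in the first identity) and $W_0$ (in the second) arise from the initial conditions of the recursion. These must be tracked so that the final inhomogeneous terms $-t^{-1}W_0-W_1$ and $-W_0-t^{-1}W_1$ appear with the correct coefficients and placements. Everything else is purely formal manipulation of the generating function.
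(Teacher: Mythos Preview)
Your proposal is correct and follows exactly the approach indicated in the paper: the paper's proof simply states that \eqref{eq:BP1} and \eqref{eq:BP2} express the recursions \eqref{eq:line1}--\eqref{eq:line2} and \eqref{eq:line3}--\eqref{eq:line4}, respectively, in generating-function form, and you have carried out that translation in full detail. Your alternative derivation of \eqref{eq:BP2} via the antiautomorphism $\tau$ is a valid shortcut not mentioned in the paper, but the direct route you also sketch is what the paper has in mind.
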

\begin{proof} Equation \eqref{eq:BP1} expresses \eqref{eq:line1}, \eqref{eq:line2}
in terms of generating functions.
Equation \eqref{eq:BP2} expresses \eqref{eq:line3}, \eqref{eq:line4}
in terms of generating functions.
\end{proof}

  \begin{lemma} \label{prop:pbwRelP} 
  For the algebra $O_q$,
  \begin{align}
  \lbrack W_0, B^+(t) \rbrack_q &= 
  -(q-q^{-1}) t \bigl( B^+(t )\bigr)^2
  - q t^{-1} B(t) -(q-q^{-1})t^{-1},        \label{eq:L1}
  \\
  \lbrack B^-( t), W_1 \rbrack_q &= 
  -(q-q^{-1}) t \bigl( B^-(t )\bigr)^2
  -  qt^{-1} B(t) -(q-q^{-1})t^{-1}.         \label{eq:L2}
  \end{align}
  \end{lemma}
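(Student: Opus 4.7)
The plan is to read \eqref{eq:L1} as the generating-function form of a shifted version of \eqref{eq:Bdelta}, and then obtain \eqref{eq:L2} either by the analogous manipulation of \eqref{eq:Bdel2} or, more economically, by applying the antiautomorphism $\tau$ of Definition \ref{def:tauA}.

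Concretely, I would first replace $n$ by $n+1$ in \eqref{eq:Bdelta} and multiply through by $q$. The key algebraic observation is that
\[
q\bigl(q^{-2}B_{n\delta+\alpha_1}W_0-W_0 B_{n\delta+\alpha_1}\bigr)
= q^{-1}B_{n\delta+\alpha_1}W_0-qW_0 B_{n\delta+\alpha_1}
= -\lbrack W_0, B_{n\delta+\alpha_1}\rbrack_q,
\]
while $q(q^{-2}-1)=-(q-q^{-1})$. Solving for the $q$-commutator yields, for each $n\in \mathbb N$,
\[
\lbrack W_0, B_{n\delta+\alpha_1}\rbrack_q
=-qB_{(n+1)\delta}-(q-q^{-1})\sum_{\ell=0}^{n-1} B_{\ell\delta+\alpha_1}B_{(n-\ell-1)\delta+\alpha_1}.
\]
Multiplying by $t^n$ and summing over $n\geq 0$, the left-hand side becomes $\lbrack W_0, B^+(t)\rbrack_q$. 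For the first term on the right, reindexing and using $B_{0\delta}=q^{-2}-1$ from \eqref{eq:zerodelta} gives
\[
-q\sum_{n\geq 0} B_{(n+1)\delta}\,t^n
= -qt^{-1}\bigl(B(t)-(q^{-2}-1)\bigr)
= -qt^{-1}B(t)-(q-q^{-1})t^{-1}.
\]
The convolution sum reindexes as $t\,(B^+(t))^2$. Combining these produces \eqref{eq:L1}.

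For \eqref{eq:L2}, I would apply $\tau$ to \eqref{eq:L1}. Since $\tau$ is an antiautomorphism, one checks $\tau(\lbrack X,Y\rbrack_q)=\lbrack \tau(Y),\tau(X)\rbrack_q$; by Lemma \ref{lem:asym2}, $\tau$ sends $W_0\leftrightarrow W_1$ and $B^+(t)\leftrightarrow B^-(t)$ while fixing $B(t)$ (and it fixes $(B^+(t))^2\mapsto (B^-(t))^2$ because reversing a Cauchy product of identical series leaves it unchanged). Thus the image of \eqref{eq:L1} under $\tau$ is exactly \eqref{eq:L2}. Alternatively, repeating the first step of the argument with \eqref{eq:Bdel2} in place of \eqref{eq:Bdelta} gives the same conclusion directly.

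I do not anticipate a serious obstacle; the whole argument is a bookkeeping exercise. The only place where care is needed is in tracking the index shift: one must verify that the $n=0$ term of the sum over $B_{(n+1)\delta}t^n$ correctly produces the separate $-(q-q^{-1})t^{-1}$ contribution (via $B_{0\delta}=q^{-2}-1$), and that the convolution $\sum_{n\geq 0}t^n \sum_{\ell=0}^{n-1} B_{\ell\delta+\alpha_1}B_{(n-\ell-1)\delta+\alpha_1}$ reindexes cleanly to $t\,(B^+(t))^2$ rather than $(B^+(t))^2$ or $t^{-1}(B^+(t))^2$.
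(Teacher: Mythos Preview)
Your proposal is correct and follows the same approach as the paper: the paper's proof consists of the single sentence that \eqref{eq:L1} (resp.\ \eqref{eq:L2}) expresses \eqref{eq:Bdelta} (resp.\ \eqref{eq:Bdel2}) in terms of generating functions, and you have simply spelled out that bookkeeping in detail. Your alternative derivation of \eqref{eq:L2} via $\tau$ is a harmless shortcut not mentioned in the paper, but your fallback of repeating the argument with \eqref{eq:Bdel2} is exactly what the paper invokes.
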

  \begin{proof} The relation \eqref{eq:L1} (resp. \eqref{eq:L2}) expresses the relation \eqref{eq:Bdelta} (resp. \eqref{eq:Bdel2}) in terms of generating
  functions.
  \end{proof}

\noindent The following notation will be useful.
For an integer $k<0$ define
\begin{align*}
B_{k \delta + \alpha_0} = B_{(-k-1)\delta + \alpha_1}, \qquad \qquad 
B_{k\delta+\alpha_1} = B_{(-k-1)\delta+\alpha_0}.
\end{align*}
Note that
\begin{align}
 B_{k\delta+\alpha_0} = B_{\ell\delta+\alpha_1} \qquad \qquad (k,\ell \in \mathbb Z, \quad k+\ell=-1).
 \label{eq:extend}
 \end{align}

  \begin{proposition} \label{prop:wang} {\rm (See \cite[Proposition~2.6]{LuWang}.)}
 For $k,\ell \in \mathbb Z$ we have
  \begin{align*}
  &q^{-1} \lbrack B_{\ell\delta+\alpha_1}, B_{(k+1)\delta+\alpha_1} \rbrack_q
  +
  q^{-1} \lbrack B_{k\delta+\alpha_1}, B_{(\ell+1)\delta+\alpha_1} \rbrack_q
  \\
  & \qquad = 
  B_{(k-\ell-1)\delta} - B_{(k-\ell+1)\delta} + B_{(\ell-k-1)\delta}-B_{(\ell-k+1)\delta}.
  \end{align*}
   \end{proposition}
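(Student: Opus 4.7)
The plan is to prove the identity by induction on the indices, exploiting the recursive structure of the PBW generators together with the $q$-Dolan/Grady relations. Both sides are manifestly symmetric under $k \leftrightarrow \ell$, so without loss of generality we may assume $k \geq \ell$. Using the convention \eqref{eq:extend} that identifies $B_{k\delta+\alpha_1}$ with $B_{(-k-1)\delta+\alpha_0}$ for $k<0$, the identity for arbitrary $k,\ell \in \mathbb Z$ reduces, case by case (according to the signs of $k$ and $\ell$), to corresponding identities with nonnegative indices; in some cases one additionally applies the antiautomorphism $\tau$ via Lemma \ref{lem:asym2} to translate between $\alpha_0$- and $\alpha_1$-type statements. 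The main work is thus the case $k,\ell \geq 0$.

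For the base case $k=\ell=0$, the identity reduces to $[W_1, B_{\delta+\alpha_1}]_q = -qB_\delta$. Substituting the definition of $B_{\delta+\alpha_1}$ from \eqref{eq:line3} together with $B_\delta = q^{-2}W_1W_0 - W_0W_1$, and expanding the $q$-commutators, this becomes a quartic-word identity in $W_0$ and $W_1$ that is equivalent to the $q$-Dolan/Grady relation \eqref{eq:qOns2}. For the inductive step, I would increase $k$ with $\ell$ fixed, using the recurrence \eqref{eq:line4}
\[
B_{(k+1)\delta+\alpha_1} = B_{(k-1)\delta+\alpha_1} - \frac{q[B_\delta, B_{k\delta+\alpha_1}]}{(q-q^{-1})(q^2-q^{-2})}
\]
to rewrite the new bracket $[B_{\ell\delta+\alpha_1}, B_{(k+1)\delta+\alpha_1}]_q$ as an "old" bracket handled by induction plus a double bracket $[B_{\ell\delta+\alpha_1}, [B_\delta, B_{k\delta+\alpha_1}]]_q$. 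The Jacobi identity, combined with a further application of \eqref{eq:line4} to the inner bracket and the fact (noted after \eqref{eq:Bdelta}) that $\{B_{n\delta}\}_{n \geq 1}$ mutually commute, reduces this double bracket to a telescoping expression in $B_{m\delta}$ that exactly accounts for the change in the RHS as $k \to k+1$.

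The main obstacle is the careful bookkeeping of $q$-factors when applying the Jacobi identity to nested brackets of mixed type ($[\,\cdot\,,\,\cdot\,]_q$ versus $[\,\cdot\,,\,\cdot\,]$), and confirming that the resulting combination of $B_{m\delta}$ assembles into the precise pattern $B_{(k-\ell-1)\delta}-B_{(k-\ell+1)\delta}+B_{(\ell-k-1)\delta}-B_{(\ell-k+1)\delta}$ on the RHS. A potentially cleaner alternative is to recast the identity as an equality of formal Laurent series in two variables: package $\{B_{k\delta+\alpha_1}\}_{k \in \mathbb Z}$ into an extended generating function $\tilde B^+(t) = B^+(t) + t^{-1}B^-(t^{-1})$, then expand $[\tilde B^+(s), \tilde B^+(t)]_q$ using Lemmas \ref{lem:BPhi} and \ref{prop:pbwRelP} to match it, coefficient by coefficient, with the corresponding packaging of the RHS; this absorbs the induction into formal power series arithmetic.
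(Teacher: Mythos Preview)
The paper does not prove this identity from scratch: its entire argument is ``This follows from \cite[Proposition~2.6]{LuWang} using Remark~\ref{rem:LW} below,'' i.e.\ it imports the result from Lu--Wang and translates notation via the dictionary in Remark~\ref{rem:LW}. Your proposal is therefore a genuinely different approach, in that you attempt a self-contained inductive proof inside $O_q$ using the recursions \eqref{eq:line3}, \eqref{eq:line4} and the $q$-Dolan/Grady relations.

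Your base case is correct, and the overall strategy (induct on the indices via the defining recursion for $B_{n\delta+\alpha_1}$) is the natural one and is essentially what Lu--Wang do. However, two aspects of your inductive step are underspecified. First, when you increase $k$ by one, \emph{both} summands on the left change: you must control $[B_{\ell\delta+\alpha_1}, B_{(k+2)\delta+\alpha_1}]_q$ and $[B_{(k+1)\delta+\alpha_1}, B_{(\ell+1)\delta+\alpha_1}]_q$ simultaneously, not just the first. A cleaner scheme is to induct on $k-\ell$ (or on $k+\ell$) so that a single application of the recursion handles the step. Second, there is no Jacobi identity for the $q$-commutator $[\,\cdot\,,\,\cdot\,]_q$, so invoking ``the Jacobi identity'' for the mixed bracket $[B_{\ell\delta+\alpha_1}, [B_\delta, B_{k\delta+\alpha_1}]]_q$ is not legitimate as stated; what one actually needs is the elementary identity $[X,[Y,Z]]_q = [[X,Y],Z]_q + [Y,[X,Z]_q]$ (valid since the inner bracket is an ordinary commutator), after which the recursion \eqref{eq:line4} and the commutativity of the $B_{n\delta}$ let you close the induction. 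With these two fixes your plan goes through; your alternative generating-function repackaging is also viable and closer in spirit to how the paper \emph{uses} this proposition downstream (Proposition~\ref{prop:GFwang}).
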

   \begin{proof} This follows from \cite[Proposition~2.6]{LuWang} using Remark \ref{rem:LW} below.
 \end{proof}

  \begin{proposition} \label{prop:wang2} {\rm (See \cite[Proposition~2.8]{LuWang}.)}
   For $m \in \mathbb N$ and $\ell \in \mathbb Z$,
  \begin{align*}
  \lbrack B_{(m+1)\delta }, B_{\ell \delta+\alpha_1} \rbrack
  -
  \lbrack B_{\ell \delta+\alpha_1},  B_{(m-1)\delta } \rbrack
  =
  \lbrack B_{m\delta }, B_{(\ell+1) \delta+\alpha_1} \rbrack_{q^2} 
  -
  \lbrack  B_{(\ell-1)\delta +\alpha_1 }, B_{m\delta} \rbrack_{q^2}.
  \end{align*}
  \end{proposition}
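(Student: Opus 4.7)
The plan is to leverage the cited result [LuWang, Proposition 2.8]: my first step would be to apply the notation dictionary of Remark \ref{rem:LW} (which matches the Baseilhac-Kolb real and imaginary root vectors to the Lu-Wang iquantum group generators) and verify that the displayed identity is exactly their Proposition 2.8 after scalar normalization. The main obstacle at this step is purely bookkeeping: one must confirm that the normalization factors on the three families $B_{n\delta+\alpha_0}$, $B_{n\delta+\alpha_1}$, $B_{n\delta}$ transform both sides of Lu-Wang's relation consistently.

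For a self-contained alternative proof, I would proceed by induction on $m \in \mathbb N$. A first reduction uses the extension \eqref{eq:extend} together with the antiautomorphism $\tau$ of Definition \ref{def:tauA} (which by Lemma \ref{lem:asym2} swaps $B_{n\delta+\alpha_0}$ with $B_{n\delta+\alpha_1}$ and fixes each $B_{n\delta}$): applying $\tau$ to the claimed identity yields a companion relation involving $B_{n\delta+\alpha_0}$, so via \eqref{eq:extend} only the case $\ell \geq 1$ need be handled. The base case $m=0$ is then direct: since $B_{-\delta}=0$ and $B_0 = q^{-2}-1$ is a scalar, so $\lbrack B_0, X\rbrack_{q^2}=\lbrack X, B_0\rbrack_{q^2}=(q^2-q^{-2})(q^{-2}-1)X$, the identity collapses to
\begin{align*}
\lbrack B_\delta, B_{\ell\delta+\alpha_1} \rbrack = (q^2-q^{-2})(q^{-2}-1)\bigl(B_{(\ell+1)\delta+\alpha_1} - B_{(\ell-1)\delta+\alpha_1}\bigr),
\end{align*}
which is a rearrangement of the Baseilhac-Kolb recursion \eqref{eq:line4}.

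For the inductive step I would combine three ingredients: the explicit formula \eqref{eq:Bdelta} expressing $B_{(m+1)\delta}$ in terms of $W_0$ and $B_{m\delta+\alpha_1}$; the quadratic relation of Proposition \ref{prop:wang}, which rewrites commutators of two real roots as linear combinations of $B_{n\delta}$'s; and the commutativity of the family $\lbrace B_{n\delta}\rbrace_{n\geq 1}$ noted after \eqref{eq:Bdelta}. Expanding $\lbrack B_{(m+1)\delta}, B_{\ell\delta+\alpha_1}\rbrack$ via the Jacobi identity after substituting \eqref{eq:Bdelta}, the $W_0$-terms on the left can be matched (up to reindexing) with the $W_0$-terms arising on the right, while the cross terms collapse to the difference of $q^2$-brackets on the right after invoking Proposition \ref{prop:wang} and the inductive hypothesis at $m-1$. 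The principal obstacle will be the rather intricate bookkeeping involved in matching these cross terms; a cleaner route, if one could find it, would be to work directly with the generating functions $B^+(t)$ and $B(s)$ and package the entire family of identities into a single relation between $\lbrack B(s), B^+(t)\rbrack$ and $\lbrack B(s), B^+(t)\rbrack_{q^2}$, but establishing such a master identity appears to require essentially the same calculation in a more abstract guise.
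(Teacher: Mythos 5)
Your first approach---translating the statement into the Lu--Wang normalization via the dictionary of Remark \ref{rem:LW} and invoking their Proposition 2.8---is exactly the paper's proof, which consists of precisely that one-line citation. The self-contained inductive alternative you sketch is not needed (and is left incomplete at the inductive step), but your base case $m=0$ does check out against the recursion \eqref{eq:line4}.
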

  \begin{proof}
  This follows from \cite[Proposition~2.8]{LuWang} using Remark \ref{rem:LW} below.
\end{proof}

\begin{remark} \label{rem:LW}\rm
The notation of \cite{LuWang} corresponds to ours in the following way.
\bigskip

\centerline{
\begin{tabular}[t]{cc}
 {\rm our notation} & {\rm notation of \cite{LuWang}}
   \\
\hline
$q$ & $v$
\\
$W_0$ & $v^{1/2} (v-v^{-1})B_0$
\\
$W_1$ & $v^{1/2}(v-v^{-1})B_1$
\\
$B_{n\delta+\alpha_0} $ & $ v^{1/2}(v-v^{-1}) B_{-1,n}$
\\
$B_{n\delta+\alpha_1} $ & $ v^{1/2}(v-v^{-1}) B_{1,n}$
\\
$B_{n\delta} $ & $ -v^{-1}(v-v^{-1})^2 \Theta'_n$
\\
$1$, $1 $ & $\mathbb K_{n\delta+\alpha_1}$, $\mathbb K_\delta$
\\
$\lbrack X, Y\rbrack_q$ & $v \lbrack X,Y\rbrack_{v^{-2}}$
	       \end{tabular}}

\end{remark}

  \noindent Recall the commuting indeterminates $s$, $t$. By the comment above Lemma \ref{prop:damiani}, 
  \begin{align*}
  \lbrack B(s), B(t)\rbrack=0.
  \end{align*}
  
   \begin{proposition}\label{prop:GFwang} For the algebra $O_q$,
  \begin{align}
  \begin{split}
  0&=  (qs-q^{-1} t)B^+(s) B^+(t) +(qt-q^{-1}s)B^+(t) B^+(s)
  -(q-q^{-1}) t \bigl( B^+(t)\bigr)^2
  \\
  &\quad -(q-q^{-1}) s \bigl( B^+(s)\bigr)^2
  + \frac{q(s-t)}{1-st} B(t) + \frac{q(t-s)}{1-st} B(s),
  \end{split} \label{eq:cc1}
  \\
  \begin{split}
  0 &= (1-q^{-2} s t)B^-(s) B^+(t) + (st-q^{-2})B^+(t) B^-(s)
  +(1-q^{-2}) t\bigl(B^+(t)\bigr)^2 
  \\
  &\quad + (1-q^{-2}) s\bigl(B^-(s)\bigr)^2  + \frac{1-st}{s-t} B(s) - \frac{1-st}{s-t} B(t),
  \end{split}\label{eq:cc2}
  \\
  \begin{split}
  0&=(qs-q^{-1} t)B^-(t) B^-(s) +(qt-q^{-1}s)B^-(s) B^-(t)
  -(q-q^{-1}) t \bigl( B^-(t)\bigr)^2
  \\
  &\quad -(q-q^{-1}) s \bigl( B^-(s)\bigr)^2
  + \frac{q(s-t)}{1-st} B(t) + \frac{q(t-s)}{1-st} B(s).
  \end{split} \label{eq:cc3}
  \end{align}
  \end{proposition}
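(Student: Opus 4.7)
The plan is to derive all three identities as generating function transcriptions of the Lu-Wang relations in Propositions \ref{prop:wang} and \ref{prop:wang2}, by multiplying those relations through by suitable monomials in $s$ and $t$ and summing over appropriate regions of the index set.

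For \eqref{eq:cc1}, I would multiply the identity of Proposition \ref{prop:wang} by $q s^{k+1} t^{\ell+1}$ and sum over $k, \ell \in \mathbb N$. Expanding $q^{-1}[X,Y]_q = XY - q^{-2}YX$, the left side collects into the products $B^+(s) B^+(t)$ and $B^+(t) B^+(s)$ with the coefficients $qs - q^{-1}t$ and $qt - q^{-1}s$, together with boundary contributions arising from the index shifts $(k+1)$ and $(\ell+1)$ that involve $W_1$. These boundary terms will be absorbed into the $(B^+(t))^2$ and $(B^+(s))^2$ pieces by means of Lemma \ref{prop:pbwRelP}. On the right side, each of the sums $\sum_{k,\ell\geq 0} s^{k+1} t^{\ell+1} B_{(k-\ell\pm 1)\delta}$ and its $k\leftrightarrow\ell$ counterpart collapses to a geometric series in $st$, yielding the factor $\tfrac{1}{1-st}$ multiplying $B(s)$ and $B(t)$, with the constant $B_{0\delta}$ from Definition \ref{def:Bnd} accounting for the diagonal.

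For \eqref{eq:cc3}, I would apply the antiautomorphism $\tau$ of Definition \ref{def:tauA} to \eqref{eq:cc1}. By Lemma \ref{lem:asym2}, $\tau$ interchanges $B^+(t)$ with $B^-(t)$ and fixes each $B_{n\delta}$, hence fixes $B(t)$. Because $\tau$ reverses the order of multiplication, each product $B^+(s) B^+(t)$ on the left side of \eqref{eq:cc1} is sent to $B^-(t) B^-(s)$ and similarly for the other product; matching coefficients term by term gives \eqref{eq:cc3}.

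For \eqref{eq:cc2}, the mixed $B^-(s) B^+(t)$ structure suggests applying Proposition \ref{prop:wang} in the region $k \leq -1$, $\ell \geq 0$, using the extension \eqref{eq:extend} to rewrite $B_{k\delta+\alpha_1}$ with $k < 0$ as $B_{(-k-1)\delta+\alpha_0}$. With weights such as $s^{-k} t^{\ell+1}$, the summation over $k$ then produces a factor $B^-(s)$ while the summation over $\ell$ produces $B^+(t)$, yielding the combination of $B^-(s) B^+(t)$ and $B^+(t) B^-(s)$ with the required coefficients. The resulting geometric series now runs in the variable $s/t$ rather than $st$, which is what accounts for the denominator $s - t$ appearing in the coefficient $\tfrac{1-st}{s-t}$ of \eqref{eq:cc2}. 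The main obstacle throughout is the careful bookkeeping of boundary contributions and normalizations, so that Lemma \ref{prop:pbwRelP} yields exactly the $(B^\pm)^2$ coefficients appearing in the statement and the constant terms from $B_{0\delta}$ cancel correctly.
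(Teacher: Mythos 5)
Your overall strategy is the one the paper uses: transcribe Proposition \ref{prop:wang} into generating functions over two index regions (one for \eqref{eq:cc1}, one shifted via \eqref{eq:extend} for \eqref{eq:cc2}), clean up the boundary terms with Lemma \ref{prop:pbwRelP}, and obtain \eqref{eq:cc3} from \eqref{eq:cc1} by applying $\tau$. Your treatment of \eqref{eq:cc2} and \eqref{eq:cc3} matches the paper in all essentials (for \eqref{eq:cc2} you will also meet a $\lbrack W_0,W_1\rbrack_q$ cross term, disposed of by $\lbrack W_0,W_1\rbrack_q=-qB_\delta$, but that is routine).

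There is, however, a genuine gap in your setup for \eqref{eq:cc1}. Summing over $k,\ell\in\mathbb N$ with weight $qs^{k+1}t^{\ell+1}$, the factor $\sum_{k\geq 0}B_{(k+1)\delta+\alpha_1}s^{k+1}=B^+(s)-W_1$, so your boundary contributions are $-t\lbrack B^+(t),W_1\rbrack_q$ and $-s\lbrack B^+(s),W_1\rbrack_q$. These are \emph{not} supplied by Lemma \ref{prop:pbwRelP}, which gives only $\lbrack W_0,B^+(t)\rbrack_q$ in \eqref{eq:L1} and $\lbrack B^-(t),W_1\rbrack_q$ in \eqref{eq:L2} (these are the generating-function forms of the two defining recursions \eqref{eq:Bdelta}, \eqref{eq:Bdel2}). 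The identity you actually need, $\lbrack B^+(t),W_1\rbrack_q=(q-q^{-1})(B^+(t))^2+qB(t)+q-q^{-1}$, is \eqref{eq:L9} of Corollary \ref{prop:pbwRel}, and in the paper that corollary is \emph{deduced from} \eqref{eq:cc1} by setting $s=0$; invoking it here is circular. The fix is to choose the other summation region: sum Proposition \ref{prop:wang} over $k,\ell\geq -1$ with weight $s^kt^\ell$, so that by \eqref{eq:extend} the extra index value contributes $B_{-\delta+\alpha_1}=W_0$ and the boundary commutators become $\lbrack W_0,B^+(s)\rbrack_q$ and $\lbrack W_0,B^+(t)\rbrack_q$, which \eqref{eq:L1} does evaluate. (Also, Proposition \ref{prop:wang2} is not needed for this proposition; it enters only in Proposition \ref{prop:wangGF}.)
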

  \begin{proof} We first obtain \eqref{eq:cc1}. Observe that 
  \begin{align}
   \sum_{k=-1}^\infty \sum_{\ell=-1}^\infty
   \lbrack B_{\ell\delta+\alpha_1}, B_{(k+1)\delta+\alpha_1} \rbrack_q
   \label{eq:c1}
    s^k t^\ell &= 
   \bigl\lbrack t^{-1} W_0 + B^+(t), s^{-1} B^+(s) \bigr\rbrack_q,
   \\
      \label{eq:c2}
      \sum_{k=-1}^\infty \sum_{\ell=-1}^\infty
   \lbrack B_{k\delta+\alpha_1}, B_{(\ell+1)\delta+\alpha_1} \rbrack_q
       s^k t^\ell &=
     \bigl\lbrack s^{-1} W_0 + B^+(s), t^{-1} B^+(t) \bigr\rbrack_q
     \end{align}
     and also
     \begin{align}
   \label{eq:c3}
    \sum_{k=-1}^\infty \sum_{\ell=-1}^\infty
  B_{(k-\ell-1)\delta} 
        s^k t^\ell &=B(s) \frac{1}{t} \,\frac{1}{1-st},
        \\
           \label{eq:c4}
      \sum_{k=-1}^\infty \sum_{\ell=-1}^\infty
   B_{(k-\ell+1)\delta}      s^k t^\ell &= B(s) \frac{ 1}{s^2 t} \, \frac{1}{1-st} +\frac{1-q^{-2}}{s^2t},
  \\
     \label{eq:c5}
      \sum_{k=-1}^\infty \sum_{\ell=-1}^\infty
  B_{(\ell-k-1)\delta}    s^k t^\ell &=B(t) \frac{1}{s} \,\frac{1}{1-st},
  \\
     \label{eq:c6}
      \sum_{k=-1}^\infty \sum_{\ell=-1}^\infty
  B_{(\ell-k+1)\delta}
       s^k t^\ell &= B(t) \frac{ 1}{s t^2} \, \frac{1}{1-st} +\frac{1-q^{-2}}{st^2}.
  \end{align}
 Let $E$ denote the equation that is  
  $q^{-1}$ times \eqref{eq:c1} 
  plus
    $q^{-1}$ times \eqref{eq:c2} 
    minus  \eqref{eq:c3} 
        plus \eqref{eq:c4} 
         minus \eqref{eq:c5} 
           plus \eqref{eq:c6}. The left-hand side of $E$ is equal to zero, by   Proposition \ref{prop:wang}. In the right-hand side of $E$,  eliminate $\lbrack W_0, B^+(s)\rbrack_q$ and  $\lbrack W_0, B^+(t)\rbrack_q$ using   \eqref{eq:L1},
        and simplify the result. This yields \eqref{eq:cc1}. 
           Next we obtain \eqref{eq:cc2}.
           We have a preliminary comment about Proposition \ref{prop:wang}. In that proposition, replace $k$ by $-k-2$ and use  \eqref{eq:extend} to obtain
  \begin{align*}
  &q^{-1} \lbrack B_{\ell\delta+\alpha_1}, B_{k\delta+\alpha_0} \rbrack_q
  +
  q^{-1} \lbrack B_{(k+1)\delta+\alpha_0}, B_{(\ell+1)\delta+\alpha_1} \rbrack_q
  \\
  & \qquad = 
  B_{-(k+\ell+3)\delta} - B_{-(k+\ell+1)\delta} + B_{(k+\ell+1)\delta}-B_{(k+\ell+3)\delta}
  \end{align*}
for $k,\ell \in \mathbb Z$. We are done with our preliminary comment. Observe that
  \begin{align}
   \sum_{k=0}^\infty \sum_{\ell=0}^\infty
   \lbrack B_{\ell\delta+\alpha_1}, B_{k\delta+\alpha_0} \rbrack_q
   \label{eq:2c1}
    s^k t^\ell &= 
   \bigl\lbrack B^+(t),  B^-(s) \bigr\rbrack_q,
   \\
      \label{eq:2c2}
      \sum_{k=0}^\infty \sum_{\ell=0}^\infty
   \lbrack B_{(k+1)\delta+\alpha_0}, B_{(\ell+1)\delta+\alpha_1} \rbrack_q
       s^k t^\ell &=
    s^{-1}t^{-1} \bigl\lbrack  B^-(s)-W_0, B^+(t)-W_1 \bigr\rbrack_q
     \end{align}
     and also
     \begin{align}
   \label{eq:2c3}
    \sum_{k=0}^\infty \sum_{\ell=0}^\infty
  B_{-(k+\ell+3)\delta} 
        s^k t^\ell &=0,
        \\
           \label{eq:2c4}
      \sum_{k=0}^\infty \sum_{\ell=0}^\infty
   B_{-(k+\ell+1)\delta}      s^k t^\ell &=0,
  \\
     \label{eq:2c5}
      \sum_{k=0}^\infty \sum_{\ell=0}^\infty
  B_{(k+\ell+1)\delta}    s^k t^\ell &=\frac{B(s)-B(t)}{s-t},
  \\
     \label{eq:2c6}
      \sum_{k=0}^\infty \sum_{\ell=0}^\infty
  B_{(k+\ell+3)\delta}
       s^k t^\ell &= \frac{s^{-2}(B(s)-s B_\delta+1-q^{-2})-t^{-2}(B(t)-tB_\delta+1-q^{-2})}{s-t}.
  \end{align}
 Let $F$ denote the equation that is  
  $q^{-1}$ times \eqref{eq:2c1} 
  plus
    $q^{-1}$ times \eqref{eq:2c2} 
    minus  \eqref{eq:2c3} 
        plus \eqref{eq:2c4} 
         minus \eqref{eq:2c5} 
           plus \eqref{eq:2c6}. The left-hand side of $F$ is equal to zero, by our preliminary comment. In the right-hand side of $F$,  eliminate $\lbrack W_0, B^+(t)\rbrack_q$ using   \eqref{eq:L1}, and
           $\lbrack B^-(s), W_1\rbrack_q$ using \eqref{eq:L2}, and $\lbrack W_0, W_1 \rbrack_q$ 
         using $\lbrack W_0, W_1 \rbrack_q=-q B_\delta$. Simplify the result to obtain \eqref{eq:cc2}.
           The equation \eqref{eq:cc3}  is obtained \eqref{eq:cc1} by applying the antiautomorphism $\tau$.
   \end{proof}
  
  \begin{proposition}
  \label{prop:wangGF}  For the algebra $O_q$,
  \begin{align*}
  0 &= 
  B(s)B^+(t) (1-q^{-2}st)(t-q^2s)
  -
  B^+(t)B(s)(1-q^2st)(t-q^{-2}s)
  \\
  &\quad +
  B(s)B^+(q^{-2}s)(q^2
-q^{-2})(1-q^{-2}st)s
+
B^-(q^{-2}s)B(s)(q^2 -q^{-2})(t-q^{-2}s)s,
\\
 0 &= 
  B^-(t)B(s)(1-q^{-2}st)(t-q^2s)
  -
  B(s)B^-(t)(1-q^2st)(t-q^{-2}s)
  \\
  &\quad +
  B^-(q^{-2}s)B(s)(q^2
-q^{-2})(1-q^{-2}st)s
+
B(s)B^+(q^{-2}s)(q^2 -q^{-2})(t-q^{-2}s)s.
\end{align*} 
\end{proposition}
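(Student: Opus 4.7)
The plan is to deduce the first identity from two generating-function consequences of Proposition \ref{prop:wang2}, combined by a $2\times 2$ linear inversion; the second identity then follows by applying the antiautomorphism $\tau$ of Definition \ref{def:tauA}, which fixes $B(s)$ and interchanges $B^+(t) \leftrightarrow B^-(t)$. Accordingly I concentrate on the first identity. Following the template of Proposition \ref{prop:GFwang}, I multiply Proposition \ref{prop:wang2} by $s^m t^\ell$ and sum over $m, \ell \in \mathbb N$. Routine bookkeeping (using that $B_{0\delta}$ is central and $B_{-\delta}=0$) yields
\begin{align*}
(s^{-1}+s)[B(s), B^+(t)] &= t^{-1}[B(s), B^+(t)]_{q^2} - t^{-1}[B(s), W_1]_{q^2} \\
&\quad - [W_0, B(s)]_{q^2} - t[B^+(t), B(s)]_{q^2}.
\end{align*}
Multiplying by $st$ and expanding the $q^2$-commutators, the coefficients of $B(s)B^+(t)$ and $B^+(t)B(s)$ collapse to $(1-q^{-2}st)(t-q^2 s)$ and $-(1-q^2 st)(t-q^{-2}s)$ respectively, so the identity takes the claimed shape modulo a residual $sX+stY$, where $X := [B(s),W_1]_{q^2}$ and $Y := [W_0,B(s)]_{q^2}$.

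To express this residual in the desired form, I sum Proposition \ref{prop:wang2} once more, this time over $m \in \mathbb N$ with $\ell = -k-1$ for $k \in \mathbb N$ against $s^m u^k$. Using the extension $B_{\ell\delta+\alpha_1} = B_{(-\ell-1)\delta+\alpha_0}$, the analogous computation produces
\begin{align*}
(s^{-1}+s)[B(s), B^-(u)] &= [B(s), W_1]_{q^2} + u[B(s), B^-(u)]_{q^2} \\
&\quad - u^{-1}[B^-(u), B(s)]_{q^2} + u^{-1}[W_0, B(s)]_{q^2}.
\end{align*}
I now specialize the first identity to $t = q^{-2}s$ and the second to $u = q^{-2}s$, then multiply each by $s$. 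The crucial observation is that this is precisely the specialization at which the $B^+(q^{-2}s)B(s)$ terms drop out of the first and the $B(s)B^-(q^{-2}s)$ terms drop out of the second, so that each equation becomes an expression for a single product ($B(s)B^+(q^{-2}s)$ or $B^-(q^{-2}s)B(s)$) in terms of $X$ and $Y$:
\begin{align*}
(q^2-q^{-2})q^{-2}(q^4-s^2)\, B(s)B^+(q^{-2}s) &= q^2 X + sY,\\
(q^2-q^{-2})q^{-2}(q^4-s^2)\, B^-(q^{-2}s)B(s) &= sX + q^2 Y.
\end{align*}

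The coefficient matrix of this $2\times 2$ system has determinant $q^4-s^2$, invertible in $\mathbb F[[s]]$; solving yields $X = (q^2-q^{-2})B(s)B^+(q^{-2}s) - (q^2-q^{-2})q^{-2}s B^-(q^{-2}s)B(s)$ and $Y = (q^2-q^{-2})B^-(q^{-2}s)B(s) - (q^2-q^{-2})q^{-2}s B(s)B^+(q^{-2}s)$. Substituting these into the residual $sX+stY$ produces exactly the summand $(q^2-q^{-2})s(1-q^{-2}st)B(s)B^+(q^{-2}s) + (q^2-q^{-2})s(t-q^{-2}s)B^-(q^{-2}s)B(s)$ of the claim, completing the proof of the first identity; the second follows by applying $\tau$. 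The main obstacle is spotting the specialization: $t = u = q^{-2}s$ is the unique value at which the ``wrong-ordered'' products cancel, reducing everything to a clean $2\times 2$ linear system, after which the rest is a mechanical inversion.
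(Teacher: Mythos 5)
Your argument is correct and is essentially the paper's proof: both derive the same master identity from Proposition \ref{prop:wang2} via generating functions, specialize at $t=q^{-2}s$ so that the wrong-ordered products drop out, and then eliminate $[B(s),W_1]_{q^2}$ and $[W_0,B(s)]_{q^2}$ to recover the stated relation, with the second identity obtained from the first by $\tau$. The only cosmetic differences are that you produce the second specialized equation (for $B^-(q^{-2}s)B(s)$) by summing Proposition \ref{prop:wang2} over negative $\ell$ rather than by applying $\tau$ to the first specialized equation, and you phrase the elimination as inverting a $2\times 2$ system rather than as an explicit linear combination of the three equations.
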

\begin{proof} We will use Proposition \ref{prop:wang2}. Observe that
\begin{align}
\label{eq:m1}
\sum_{m=0}^\infty \sum_{\ell=0}^\infty \lbrack B_{(m+1)\delta}, B_{\ell \delta+\alpha_1} \rbrack s^m t^\ell &= s^{-1} \bigl\lbrack B(s), B^+(t)\bigr\rbrack,
\\
\label{eq:m2}
\sum_{m=0}^\infty \sum_{\ell=0}^\infty \lbrack B_{\ell \delta+\alpha_1}, B_{(m-1)\delta} \rbrack s^m t^\ell &= s \bigl\lbrack B^+(t), B(s)\bigr\rbrack,
\\
\label{eq:m3}
\sum_{m=0}^\infty \sum_{\ell=0}^\infty \lbrack B_{m\delta}, B_{(\ell+1) \delta+\alpha_1} \rbrack_{q^2} s^m t^\ell &= t^{-1}\bigl\lbrack B(s), B^+(t)-W_1\bigr\rbrack_{q^2},
\\
\label{eq:m4}
\sum_{m=0}^\infty \sum_{\ell=0}^\infty \lbrack B_{(\ell-1)\delta+\alpha_1}, B_{m \delta} \rbrack_{q^2} s^m t^\ell &=  \bigl\lbrack W_0+tB^+(t), B(s) \bigr\rbrack_{q^2}.
\end{align}
\noindent Let $E$ denote the equation that is \eqref{eq:m1} minus \eqref{eq:m2} minus \eqref{eq:m3} plus \eqref{eq:m4}. 
The left-hand side of $E$ is equal to zero, by Proposition \ref{prop:wang2}. After some algebraic manipulation, $E$ becomes
\begin{align}
\label{eq:m5}
\begin{split}
0 = (1-q^{-2}st)(t-q^2 s)B(s)B^+(t)&-(1-q^2st)(t-q^{-2}s)B^+(t)B(s)\\
&+ s \lbrack B(s), W_1 \rbrack_{q^2}
+st \lbrack W_0, B(s)\rbrack_{q^2}.
\end{split}
\end{align}
Setting $t=q^{-2}s$ in \eqref{eq:m5}, we obtain
\begin{align}
\label{eq:m6}
0 = (1-q^{-4}s^2)(q^{-2}-q^2) s B(s)B^+(q^{-2}s)
+ s \lbrack B(s), W_1 \rbrack_{q^2}
+q^{-2}s^2 \lbrack W_0, B(s)\rbrack_{q^2}.
\end{align}
Applying the antiautomorphism $\tau$ to the right-hand side of \eqref{eq:m5}, \eqref{eq:m6} yields
\begin{align}
\label{eq:m7}
\begin{split}
0 = (1-q^{-2}st)(t-q^2 s)B^-(t) B(s)&-(1-q^2st)(t-q^{-2}s)B(s)B^-(t)\\
&+ s \lbrack W_0, B(s)\rbrack_{q^2}
+st \lbrack  B(s), W_1\rbrack_{q^2}
\end{split}
\end{align}
and
\begin{align}
\label{eq:m8}
0 = (1-q^{-4}s^2)(q^{-2}-q^2) s B^-(q^{-2}s) B(s)
+ s \lbrack W_0, B(s) \rbrack_{q^2} 
+q^{-2}s^2 \lbrack B(s), W_1\rbrack_{q^2}.
\end{align}
The equation \eqref{eq:m5} minus $(1-q^{-2}st)(1-q^{-4}s^2)^{-1}$ times \eqref{eq:m6} minus $(t-q^{-2} s)(1-q^{-4} s^2)^{-1}$ times \eqref{eq:m8}
gives the first equation in the proposition statement.
The equation \eqref{eq:m7} minus $(t-q^{-2} s)(1-q^{-4} s^2)^{-1}$ times \eqref{eq:m6} minus $(1-q^{-2}st)(1-q^{-4}s^2)^{-1}$ times \eqref{eq:m8} 
gives the second equation in the proposition statement.
   \end{proof}

  \begin{corollary} \label{prop:pbwRel} 
  For the algebra $O_q$,
  \begin{align} 
  \label{eq:L7}
  \lbrack W_0, B^-( t) \rbrack_q &= 
  (q-q^{-1})\bigl( B^-(t )\bigr)^2
  + q B(t) +q-q^{-1},
  \\  \label{eq:L8}
  \lbrack W_0, B( t) \rbrack_{q^2} &= 
  (q^2-q^{-2}) B^-(q^{-2}t )B(t)
  -q^{-2} (q^2-q^{-2}) t B(t) B^+(q^{-2}t),
  \\ \label{eq:L9}
  \lbrack B^+( t), W_1\rbrack_q &= 
  (q-q^{-1})\bigl( B^+(t )\bigr)^2
  + q B(t) +q-q^{-1},
  \\ \label{eq:L10}
  \lbrack B( t), W_1 \rbrack_{q^2} &= 
  (q^2-q^{-2}) B(t) B^+(q^{-2}t )
  -q^{-2}(q^2-q^{-2}) t B^-(q^{-2}t) B(t).
  \end{align}
  \end{corollary}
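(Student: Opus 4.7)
The plan is to derive all four identities by specializing the generating-function identities already available: equations \eqref{eq:cc1}--\eqref{eq:cc3} from Proposition~\ref{prop:GFwang} and the two identities of Proposition~\ref{prop:wangGF}. The key observation is that \eqref{eq:zv} supplies the constant terms $B^-(0)=W_0$, $B^+(0)=W_1$ and $B(0)=q^{-2}-1$, so substituting an indeterminate equal to $0$ in the known identities collapses one generating function into $W_0$ or $W_1$ and produces a commutator with the other generating function.

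Concretely, to obtain \eqref{eq:L7} I would take \eqref{eq:cc3}, set $s=0$, use $B^-(0)=W_0$ and $B(0)=q^{-2}-1$, divide through by $t$, and group the resulting terms $qW_0 B^-(t)-q^{-1}B^-(t)W_0$ as $\lbrack W_0, B^-(t)\rbrack_q$. The constant $q(q^{-2}-1)=-(q-q^{-1})$ then combines correctly with the $qB(t)$ and $(q-q^{-1})(B^-(t))^2$ terms. To obtain \eqref{eq:L8} I would start from the second identity of Proposition~\ref{prop:wangGF} and set $t=0$, again using $B^-(0)=W_0$; after dividing by $s$ the terms $-q^2 W_0 B(s)+q^{-2}B(s)W_0$ combine as $-\lbrack W_0, B(s)\rbrack_{q^2}$, yielding the stated formula with a sign flip moved to the right-hand side.

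The remaining two identities \eqref{eq:L9} and \eqref{eq:L10} I would deduce by applying the antiautomorphism $\tau$ from Definition~\ref{def:tauA}. By Lemma~\ref{lem:asym2}, $\tau$ swaps $B^-(t)\leftrightarrow B^+(t)$ and $W_0\leftrightarrow W_1$ while fixing $B(t)$ coefficient-wise; since $\tau$ reverses multiplication, one has $\tau(\lbrack X,Y\rbrack_q)=-\lbrack \tau(X),\tau(Y)\rbrack_q$. Applying $\tau$ to \eqref{eq:L7} therefore gives \eqref{eq:L9}, and applying $\tau$ to \eqref{eq:L8} gives \eqref{eq:L10}. As a sanity check one can alternatively derive \eqref{eq:L9} directly by setting $s=0$ in \eqref{eq:cc1} and \eqref{eq:L10} by setting $t=0$ in the first identity of Proposition~\ref{prop:wangGF}.

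There is no conceptual obstacle here; the work is entirely computational. The only thing to watch carefully is sign-tracking, especially the factors $q^2-q^{-2}$ versus $q^{-2}-q^2$, the sign introduced by rewriting $-q^2XY+q^{-2}YX$ as $-\lbrack X,Y\rbrack_{q^2}$, and the cancellation $q(q^{-2}-1)=-(q-q^{-1})$ that produces the constant term $(q-q^{-1})$ on the right side of \eqref{eq:L7} and \eqref{eq:L9}. Once these small arithmetic points are handled, the four identities fall out of the specializations directly.
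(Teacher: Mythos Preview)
Your proposal is correct and matches the paper's proof: the paper simply says to set $s=0$ in \eqref{eq:cc1} and \eqref{eq:cc3} and $t=0$ in the two equations of Proposition~\ref{prop:wangGF}, which is exactly the specialization you describe (your ``sanity check'' route is in fact the paper's primary route). One small slip: for an antiautomorphism the correct identity is $\tau(\lbrack X,Y\rbrack_q)=\lbrack \tau(Y),\tau(X)\rbrack_q$, not $-\lbrack \tau(X),\tau(Y)\rbrack_q$; with this correction your $\tau$-symmetry argument for \eqref{eq:L9} and \eqref{eq:L10} goes through as written.
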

  \begin{proof} Set $s=0$  in the first and third equation of Proposition \ref{prop:GFwang}. Set $t=0$ in Proposition  \ref{prop:wangGF}. Evaluate the results using \eqref{eq:zv}.
  \end{proof}

\noindent We mention two special cases of \eqref{eq:cc2} for later use.
\begin{corollary} \label{lem:laterUse}
For the algebra $O_q$,
\begin{align*}
0 &= 
  q^{-1}t(qt^{-1}-q^{-1} t) B^-(qt) B^+(q^{-1}t) +
  q^{-1}t(qt- q^{-1}t^{-1}) B^+(q^{-1}t) B^-(q t) \\
 &\quad + t(q-q^{-1})\bigl( B^-(qt) \bigr)^2 
  + q^{-2} t(q-q^{-1}) \bigl( B^+(q^{-1}t) \bigr)^2  
\\
 & \quad +\frac{t-t^{-1}}{q-q^{-1}} B(q^{-1} t) - \frac{t-t^{-1}}{q-q^{-1}}B(qt),
\\
0 &= 
  q^{-1}t(qt^{-1}-q^{-1} t) B^-(q^{-1}t) B^+(qt) +
  q^{-1}t(qt- q^{-1}t^{-1}) B^+(qt) B^-(q^{-1} t) \\
 &\quad + t(q-q^{-1})\bigl( B^+(qt) \bigr)^2 
  + q^{-2} t(q-q^{-1}) \bigl( B^-(q^{-1}t) \bigr)^2  
\\
 & \quad +\frac{t-t^{-1}}{q-q^{-1}} B(q^{-1} t) - \frac{t-t^{-1}}{q-q^{-1}}B(qt).
\end{align*}

\end{corollary}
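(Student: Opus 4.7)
The statement is labelled as ``two special cases of \eqref{eq:cc2} for later use'', which strongly suggests the proof is by direct substitution into equation \eqref{eq:cc2}. The plan is to carry out this substitution and verify that coefficients match.

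First I would rename the indeterminates in \eqref{eq:cc2} to $u$ and $v$ to avoid collision with the target indeterminate $t$, writing it as
\begin{align*}
0 &= (1-q^{-2}uv)B^-(u)B^+(v) + (uv-q^{-2})B^+(v)B^-(u) + (1-q^{-2})v\bigl(B^+(v)\bigr)^2 \\
&\quad + (1-q^{-2})u\bigl(B^-(u)\bigr)^2 + \frac{1-uv}{u-v}B(u) - \frac{1-uv}{u-v}B(v).
\end{align*}
For the first equation in the corollary I would specialize $u=qt$, $v=q^{-1}t$. Then $uv=t^2$, $u-v=t(q-q^{-1})$, and $1-uv=1-t^2$, so the coefficients become $1-q^{-2}t^2 = q^{-1}t(qt^{-1}-q^{-1}t)$, $t^2-q^{-2} = q^{-1}t(qt - q^{-1}t^{-1})$, $(1-q^{-2})qt = t(q-q^{-1})$, $(1-q^{-2})q^{-1}t = q^{-2}t(q-q^{-1})$, and $\pm(1-t^2)/(t(q-q^{-1})) = \mp (t-t^{-1})/(q-q^{-1})$. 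A term-by-term comparison then reproduces the first equation exactly.

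For the second equation I would instead specialize $u=q^{-1}t$, $v=qt$. The products $uv$ and $1-uv$ are unchanged, so the quadratic and rational coefficients transform in the same way, while $u-v = -t(q-q^{-1})$ flips the sign on the $B(u), B(v)$ part; after collecting, the identity matches the second equation in the statement.

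There is no real obstacle: the only work is the routine bookkeeping of the coefficient identities $1-q^{-2}t^2 = q^{-1}t(qt^{-1}-q^{-1}t)$ and $t^2-q^{-2} = q^{-1}t(qt-q^{-1}t^{-1})$ and the sign handling for the $B(\cdot)$ terms. Once those are checked, both equations follow immediately from Proposition \ref{prop:GFwang}\eqref{eq:cc2}.
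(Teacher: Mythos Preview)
Your proposal is correct and matches the paper's own proof exactly: the paper obtains the two equations by setting $s=qt'$, $t=q^{-1}t'$ (resp.\ $s=q^{-1}t'$, $t=qt'$) in \eqref{eq:cc2}, which is precisely your substitution $u=qt$, $v=q^{-1}t$ (resp.\ $u=q^{-1}t$, $v=qt$). The coefficient checks you wrote out are accurate, including the sign flip on the $B(\cdot)$ terms in the second case coming from $u-v=-t(q-q^{-1})$.
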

\begin{proof} To obtain the first (resp. second) displayed equation,
set $s=qt'$ and $t=q^{-1}t'$ (resp. $s=q^{-1}t'$ and $t=qt'$) in \eqref{eq:cc2}. 
\end{proof}

 \section{The algebra $\mathcal O_q$}
   \noindent In the previous section we discussed the $q$-Onsager algebra $O_q$. In this section we discuss its alternating central extension $\mathcal O_q$.
  
\begin{definition}\rm
\label{def:Aq}
(See 
\cite{BK05}, \cite[Definition~3.1]{basnc}.)
Define the algebra $\mathcal O_q$
by generators
\begin{align}
\label{eq:4gens}
\lbrace \mathcal W_{-k}\rbrace_{k\in \mathbb N}, \qquad  \lbrace \mathcal  W_{k+1}\rbrace_{k\in \mathbb N},\qquad  
 \lbrace \mathcal G_{k+1}\rbrace_{k\in \mathbb N},
\qquad
\lbrace \mathcal {\tilde G}_{k+1}\rbrace_{k\in \mathbb N}
\end{align}
 and the following relations. For $k, \ell \in \mathbb N$,
\begin{align}
&
 \lbrack \mathcal W_0, \mathcal W_{k+1}\rbrack= 
\lbrack \mathcal W_{-k}, \mathcal W_{1}\rbrack=
({\mathcal{\tilde G}}_{k+1} - \mathcal G_{k+1})/(q+q^{-1}),
\label{eq:3p1}
\\
&
\lbrack \mathcal W_0, \mathcal G_{k+1}\rbrack_q= 
\lbrack {\mathcal{\tilde G}}_{k+1}, \mathcal W_{0}\rbrack_q= 
\rho  \mathcal W_{-k-1}-\rho 
 \mathcal W_{k+1},
\label{eq:3p2}
\\
&
\lbrack \mathcal G_{k+1}, \mathcal W_{1}\rbrack_q= 
\lbrack \mathcal W_{1}, {\mathcal {\tilde G}}_{k+1}\rbrack_q= 
\rho  \mathcal W_{k+2}-\rho 
 \mathcal W_{-k},
\label{eq:3p3}
\\
&
\lbrack \mathcal W_{-k}, \mathcal W_{-\ell}\rbrack=0,  \qquad 
\lbrack \mathcal W_{k+1}, \mathcal W_{\ell+1}\rbrack= 0,
\label{eq:3p4}
\\
&
\lbrack \mathcal W_{-k}, \mathcal W_{\ell+1}\rbrack+
\lbrack \mathcal W_{k+1}, \mathcal W_{-\ell}\rbrack= 0,
\label{eq:3p5}
\\
&
\lbrack \mathcal W_{-k}, \mathcal G_{\ell+1}\rbrack+
\lbrack \mathcal G_{k+1}, \mathcal W_{-\ell}\rbrack= 0,
\label{eq:3p6}
\\
&
\lbrack \mathcal W_{-k}, {\mathcal {\tilde G}}_{\ell+1}\rbrack+
\lbrack {\mathcal {\tilde G}}_{k+1}, \mathcal W_{-\ell}\rbrack= 0,
\label{eq:3p7}
\\
&
\lbrack \mathcal W_{k+1}, \mathcal G_{\ell+1}\rbrack+
\lbrack \mathcal  G_{k+1}, \mathcal W_{\ell+1}\rbrack= 0,
\label{eq:3p8}
\\
&
\lbrack \mathcal W_{k+1}, {\mathcal {\tilde G}}_{\ell+1}\rbrack+
\lbrack {\mathcal {\tilde G}}_{k+1}, \mathcal W_{\ell+1}\rbrack= 0,
\label{eq:3p9}
\\
&
\lbrack \mathcal G_{k+1}, \mathcal G_{\ell+1}\rbrack=0,
\qquad 
\lbrack {\mathcal {\tilde G}}_{k+1}, {\mathcal {\tilde G}}_{\ell+1}\rbrack= 0,
\label{eq:3p10}
\\
&
\lbrack {\mathcal {\tilde G}}_{k+1}, \mathcal G_{\ell+1}\rbrack+
\lbrack \mathcal G_{k+1}, {\mathcal {\tilde G}}_{\ell+1}\rbrack= 0.
\label{eq:3p11}
\end{align}
In the above equations $\rho = -(q^2-q^{-2})^2$. The generators 
\eqref{eq:4gens} are called {\it alternating}. We call $\mathcal O_q$ the {\it alternating
central extension of $O_q$}. 
\noindent For notational convenience define
\begin{align}
{\mathcal G}_0 = -(q-q^{-1})\lbrack 2 \rbrack^2_q, \qquad \qquad 
{\mathcal {\tilde G}}_0 = -(q-q^{-1}) \lbrack 2 \rbrack^2_q.
\label{eq:GG0}
\end{align}
\end{definition}

\begin{note}\rm In earlier papers\cite{basBel},   \cite{z2z2z2}, \cite{conj},
\cite{pbwqO} the algebra $\mathcal O_q$ is denoted by $\mathcal A_q$.
\end{note}
 
\begin{proposition} \label{lem:pbw} {\rm (See \cite[Theorem~6.1]{pbwqO}.)} A PBW basis for $\mathcal O_q$ is obtained by its alternating generators in any linear order $<$ such that
\begin{align}
\mathcal G_{i+1} < \mathcal W_{-j} < \mathcal W_{k+1} < \mathcal {\tilde G}_{\ell+1}\qquad \qquad i,j,k, \ell \in \mathbb N.
\label{eq:order}
\end{align}
\end{proposition}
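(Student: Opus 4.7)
My approach is via Bergman's Diamond Lemma. I would first refine the given block ordering to a total order $<$ on the set of all alternating generators---for instance, within each block order by increasing subscript---and extend to a degree-lexicographic monomial order whose degree is the sum of subscripts. Each defining relation \eqref{eq:3p1}--\eqref{eq:3p11} is then recast as a rewriting rule replacing its leading monomial by smaller ones. Relations \eqref{eq:3p1}--\eqref{eq:3p3} shorten a length-$2$ monomial to a single generator. The pure commutations \eqref{eq:3p4}, \eqref{eq:3p10} and the exchange relations \eqref{eq:3p5}--\eqref{eq:3p9}, \eqref{eq:3p11} reorder a pair of generators at the cost of an index-permuted length-$2$ correction.

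Termination is the second step. A suitable well-founded measure combines total subscript weight (strictly decreased by the shortening relations) with an inversion count on same-weight monomials. The exchanges \eqref{eq:3p5}--\eqref{eq:3p9}, \eqref{eq:3p11} are the delicate case because their correction terms are themselves length-$2$ and may still be out of order; however, the indices in those corrections have been permuted in a way that the inversion count strictly drops, forcing iterated rewrites to halt.

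The third and main step---and the principal obstacle---is confluence. I would verify that every ambiguity in the rewriting system resolves to the same normal form. Because the four families are indexed by $\mathbb N$, there are infinitely many families of length-$3$ overlap ambiguities (for instance $\mathcal Y_j \mathcal X_i \mathcal Z_k$ with $\mathcal X_i$ the minimum generator) that must be reduced along both chains to a common expression. I expect to cut the workload using the internal symmetries of the relations, notably the $\mathbb Z_2$ symmetry $(\mathcal W_{-k},\mathcal W_{k+1},\mathcal G_{k+1},\mathcal{\tilde G}_{k+1}) \leftrightarrow (\mathcal W_{k+1},\mathcal W_{-k},\mathcal{\tilde G}_{k+1},\mathcal G_{k+1})$ that interchanges \eqref{eq:3p6}--\eqref{eq:3p9} pairwise, together with Jacobi-type identities derivable from \eqref{eq:3p1}--\eqref{eq:3p11}. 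The ambiguities with one of the shortening rules \eqref{eq:3p1}--\eqref{eq:3p3} on one side are expected to be the most computationally demanding, since the single generator produced can interact nontrivially with the third factor in the overlap.

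Once confluence is proved, the Diamond Lemma yields at once that the irreducible monomials---exactly those ordered according to $<$---form a basis for $\mathcal O_q$. An alternative and arguably more conceptual route is to construct an explicit $\mathcal O_q$-module with a distinguished basis indexed by ordered monomials, set up so that the ordered monomials act independently on a cyclic vector; this would trade the confluence check for a direct verification of \eqref{eq:3p1}--\eqref{eq:3p11} on the proposed module, which is substantial but more structural in flavor.
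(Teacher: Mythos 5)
You should first note that the paper offers no proof of this proposition at all: it is quoted verbatim from \cite[Theorem~6.1]{pbwqO}, so the only honest comparison is between your plan and the argument in that reference. As a plan, your proposal has a concrete flaw that appears before the confluence check you identify as the main obstacle. The exchange relations \eqref{eq:3p5}--\eqref{eq:3p9}, \eqref{eq:3p11} determine only \emph{antisymmetrized} combinations of the out-of-order products. For instance, \eqref{eq:3p6} rearranges to
\begin{align*}
\mathcal W_{-k}\,\mathcal G_{\ell+1} \;=\; \mathcal G_{\ell+1}\mathcal W_{-k} \;-\; \mathcal G_{k+1}\mathcal W_{-\ell} \;+\; \mathcal W_{-\ell}\,\mathcal G_{k+1},
\end{align*}
so the natural rewriting rule replaces the out-of-order monomial $\mathcal W_{-k}\mathcal G_{\ell+1}$ by ordered terms plus the out-of-order monomial $\mathcal W_{-\ell}\mathcal G_{k+1}$ with $k$ and $\ell$ interchanged; applying the rule once more returns the original monomial. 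The inversion count you invoke does not drop --- it is preserved --- and the rewriting loops. The same happens for \eqref{eq:3p5} and \eqref{eq:3p7}--\eqref{eq:3p9}, \eqref{eq:3p11}. More fundamentally, these relations alone do not pin down the individual product $\mathcal W_{-k}\mathcal G_{\ell+1}$ (only the difference $\mathcal W_{-k}\mathcal G_{\ell+1}-\mathcal W_{-\ell}\mathcal G_{k+1}$ modulo ordered terms), so no reduction system assembled term-by-term from \eqref{eq:3p1}--\eqref{eq:3p11} can even establish that the ordered monomials span. The missing ingredient is a family of \emph{derived} identities giving each individual commutator such as $\lbrack \mathcal W_{-k},\mathcal W_{\ell+1}\rbrack$ and $\lbrack \mathcal W_{-k},\mathcal G_{\ell+1}\rbrack$ in closed form; these must be extracted by induction from the whole system, crucially using \eqref{eq:3p1}--\eqref{eq:3p3}, which couple the four families through $\mathcal W_0$ and $\mathcal W_1$. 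That derivation is where the real work lies in \cite{pbwqO}.

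Beyond this, the proposal defers its own substance: termination is asserted via the (incorrect) inversion argument, and confluence --- which for a Diamond Lemma proof \emph{is} the theorem --- is only described as something you ``would verify'' and ``expect'' to simplify by symmetry. Nothing is actually established. The argument in \cite{pbwqO} does not proceed by a direct Diamond Lemma computation: spanning is obtained from the derived commutation formulas mentioned above, and linear independence is obtained by mapping $\mathcal O_q$ to an algebra with a known basis (ultimately resting on the Baseilhac--Kolb PBW basis for $O_q$ and a polynomial algebra of central elements). Your closing suggestion of building an explicit module on which ordered monomials act independently is closer in spirit to how linear independence is genuinely handled, but it too is left entirely unexecuted.
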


\noindent Next we describe some symmetries of $\mathcal O_q$.
\begin{lemma}
\label{lem:autAc} {\rm (See \cite[Remark~1]{basBel}.)} There exists an automorphism $\sigma$ of $\mathcal O_q$ that sends
\begin{align*}
\mathcal W_{-k} \mapsto \mathcal W_{k+1}, \qquad
\mathcal W_{k+1} \mapsto \mathcal W_{-k}, \qquad
\mathcal G_{k+1} \mapsto \mathcal {\tilde G}_{k+1}, \qquad
\mathcal {\tilde G}_{k+1} \mapsto \mathcal G_{k+1}
\end{align*}
 for $k \in \mathbb N$. Moreover $\sigma^2 = {\rm id}$.
\end{lemma}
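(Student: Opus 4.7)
The plan is to define $\sigma$ on the generators by the prescribed formulas, verify it extends to an algebra endomorphism of $\mathcal O_q$, and then observe that $\sigma^2$ fixes every alternating generator, which forces $\sigma^2 = {\rm id}$ and hence that $\sigma$ is a bijection. Since $\mathcal O_q$ is defined by generators and relations, the extension step reduces to checking that each relation \eqref{eq:3p1}--\eqref{eq:3p11}, after applying the prescribed swap, becomes another of the listed relations (or an easy consequence of them).

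The key observation driving the verification is that the defining relations are organized into pairs interchanged by the involutive swap
\begin{align*}
\mathcal W_{-k} \longleftrightarrow \mathcal W_{k+1}, \qquad
\mathcal G_{k+1} \longleftrightarrow \tilde{\mathcal G}_{k+1}.
\end{align*}
Concretely, I expect the following matching: \eqref{eq:3p2} and \eqref{eq:3p3} swap (applying $\sigma$ to $[\mathcal W_0, \mathcal G_{k+1}]_q = \rho\mathcal W_{-k-1}-\rho\mathcal W_{k+1}$ gives $[\mathcal W_1, \tilde{\mathcal G}_{k+1}]_q = \rho \mathcal W_{k+2}-\rho \mathcal W_{-k}$, which is the second half of \eqref{eq:3p3}); \eqref{eq:3p6} and \eqref{eq:3p8} swap; \eqref{eq:3p7} and \eqref{eq:3p9} swap; and \eqref{eq:3p1}, \eqref{eq:3p4}, \eqref{eq:3p5}, \eqref{eq:3p10}, \eqref{eq:3p11} are each $\sigma$-invariant (possibly exchanging the two sides of a triple equation, or introducing a harmless sign that cancels). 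I would arrange this as a short table rather than writing out all eleven computations in prose.

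Once the relation-by-relation check is complete, the universal property of $\mathcal O_q$ yields an algebra endomorphism $\sigma: \mathcal O_q \to \mathcal O_q$. Because the prescription is an involution at the level of generators, $\sigma^2$ fixes each element of the PBW-generating set of Proposition \ref{lem:pbw}, so $\sigma^2 = {\rm id}$ on $\mathcal O_q$. This equality simultaneously shows that $\sigma$ is invertible (with inverse itself) and thus an automorphism, completing the proof.

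There is no deep obstacle; the work lies in careful bookkeeping. The only point requiring genuine care is the handling of the $q$-commutators in \eqref{eq:3p2}, \eqref{eq:3p3}, \eqref{eq:3p6}--\eqref{eq:3p9}, since $[X,Y]_q \neq [Y,X]_q$: after swapping $\mathcal W_0 \leftrightarrow \mathcal W_1$ and $\mathcal G \leftrightarrow \tilde{\mathcal G}$, one must verify that the order of arguments in each $q$-bracket matches the paired relation exactly, and also that the index shift $-k-1 \mapsto k+2$ visible between \eqref{eq:3p2} and \eqref{eq:3p3} works out for every $k \in \mathbb N$. This is the sole place where a sign or index slip could derail the argument.
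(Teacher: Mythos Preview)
Your approach is correct and is the standard verification; the paper itself gives no proof and simply cites \cite[Remark~1]{basBel}, so there is nothing to compare against beyond the obvious direct check you outline.

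Two minor corrections to your anticipated pairing table, since you flagged this as the delicate spot: applying $\sigma$ to \eqref{eq:3p6} produces \eqref{eq:3p9} (not \eqref{eq:3p8}), and applying $\sigma$ to \eqref{eq:3p7} produces \eqref{eq:3p8} (not \eqref{eq:3p9}). Also, relations \eqref{eq:3p6}--\eqref{eq:3p9} involve ordinary commutators rather than $q$-commutators, so the only genuine $q$-bracket bookkeeping is the swap \eqref{eq:3p2}$\leftrightarrow$\eqref{eq:3p3}, which you handled correctly.
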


\begin{lemma}\label{lem:antiautAc} {\rm (See \cite[Lemma~3.7]{z2z2z2}.)} There exists an antiautomorphism $\dagger$ of $\mathcal O_q$ that sends
\begin{align*}
\mathcal W_{-k} \mapsto \mathcal W_{-k}, \qquad
\mathcal W_{k+1} \mapsto \mathcal W_{k+1}, \qquad
\mathcal G_{k+1} \mapsto \mathcal {\tilde G}_{k+1}, \qquad
\mathcal {\tilde G}_{k+1} \mapsto \mathcal G_{k+1}
\end{align*}
for $k \in \mathbb N$. Moreover $\dagger^2={\rm id}$.
\end{lemma}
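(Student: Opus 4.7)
The plan is to use the presentation of $\mathcal O_q$ from Definition \ref{def:Aq}. Define a linear map $\dagger$ on the free algebra on the alternating generators by the rule in the statement, extend antimultiplicatively (so $\dagger(XY) = \dagger(Y)\dagger(X)$), and then verify that each of the defining relations \eqref{eq:3p1}--\eqref{eq:3p11} is respected; by the universal property this produces a well-defined antiautomorphism of $\mathcal O_q$. The involution property $\dagger^2 = {\rm id}$ is then immediate from the observation that $\dagger$ is an involution on the generating set.

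The verification is a routine case-by-case check, exploiting the two basic identities
\[
\dagger([X,Y]) = -[\dagger(X), \dagger(Y)], \qquad \dagger([X,Y]_q) = [\dagger(Y), \dagger(X)]_q,
\]
together with the symmetry of the relations under swapping $\mathcal G \leftrightarrow \tilde{\mathcal G}$ and reversing the order of factors. For example, applying $\dagger$ to the first equation of \eqref{eq:3p1}, namely $[\mathcal W_0, \mathcal W_{k+1}] = (\tilde{\mathcal G}_{k+1} - \mathcal G_{k+1})/(q+q^{-1})$, yields $-[\mathcal W_0, \mathcal W_{k+1}] = (\mathcal G_{k+1} - \tilde{\mathcal G}_{k+1})/(q+q^{-1})$, which is the same relation. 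Applying $\dagger$ to the first equation of \eqref{eq:3p2}, $[\mathcal W_0, \mathcal G_{k+1}]_q = \rho \mathcal W_{-k-1} - \rho \mathcal W_{k+1}$, yields $[\tilde{\mathcal G}_{k+1}, \mathcal W_0]_q = \rho \mathcal W_{-k-1} - \rho \mathcal W_{k+1}$, which is the second equation in \eqref{eq:3p2}. Similarly the two halves of \eqref{eq:3p3} are interchanged by $\dagger$.

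The remaining relations \eqref{eq:3p4}--\eqref{eq:3p11} are of the form $[A, B] + [C, D] = 0$ (or $[A,B] = 0$) with $A,B,C,D$ among the four families of generators. For each of them, reversing the order of products contributes a sign, while swapping $\mathcal G \leftrightarrow \tilde{\mathcal G}$ either fixes the relation (as in \eqref{eq:3p4}, \eqref{eq:3p5}) or sends it to another relation in the list: \eqref{eq:3p6} goes to \eqref{eq:3p7}, \eqref{eq:3p8} goes to \eqref{eq:3p9}, and \eqref{eq:3p10}, \eqref{eq:3p11} are preserved as a pair. In every case, the image relation is among the defining relations, so no new relation is imposed.

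There is no real obstacle here: the proof is purely a verification, and the only mild point of care is bookkeeping the sign in $\dagger([X,Y]) = -[\dagger(X), \dagger(Y)]$ and the reversal of the $q$-bracket. Once those are handled uniformly, the verification is mechanical and the whole argument fits in under a page.
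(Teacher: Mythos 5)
Your verification is correct and is exactly the standard argument: the paper itself offers no proof of this lemma (it simply cites \cite[Lemma~3.7]{z2z2z2}), and the cited proof is precisely this generator-by-generator check that the defining relations \eqref{eq:3p1}--\eqref{eq:3p11} are permuted (up to sign) by the proposed assignment. Your bookkeeping of the identities $\dagger([X,Y])=-[\dagger X,\dagger Y]$ and $\dagger([X,Y]_q)=[\dagger Y,\dagger X]_q$, and of which relations map to which (e.g.\ \eqref{eq:3p6}$\leftrightarrow$\eqref{eq:3p7}, \eqref{eq:3p8}$\leftrightarrow$\eqref{eq:3p9}, the two halves of \eqref{eq:3p2} and \eqref{eq:3p3} being swapped), is accurate.
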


\begin{lemma} \label{lem:sdcomAc}  {\rm (See \cite[Lemma~4.6]{compQons}.)}
The maps $\sigma$, $\dagger $ commute.
\end{lemma}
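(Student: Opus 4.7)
The plan is to prove the equality $\sigma\dagger=\dagger\sigma$ by evaluating both sides on the alternating generators \eqref{eq:4gens} and appealing to the fact that these generators generate $\mathcal O_q$ as an algebra. Observe that both compositions $\sigma\dagger$ and $\dagger\sigma$ are antiautomorphisms of $\mathcal O_q$ (the composition of an automorphism and an antiautomorphism, in either order, reverses multiplication exactly once), so to verify the equality it suffices to check agreement on a generating set.

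I would carry out the check directly using the explicit actions in Lemmas \ref{lem:autAc} and \ref{lem:antiautAc}. For $k \in \mathbb N$, the map $\sigma\dagger$ sends
\begin{align*}
\mathcal W_{-k}&\mapsto \mathcal W_{-k}\mapsto \mathcal W_{k+1}, &
\mathcal W_{k+1}&\mapsto \mathcal W_{k+1}\mapsto \mathcal W_{-k},\\
\mathcal G_{k+1}&\mapsto \mathcal {\tilde G}_{k+1}\mapsto \mathcal G_{k+1}, &
\mathcal {\tilde G}_{k+1}&\mapsto \mathcal G_{k+1}\mapsto \mathcal {\tilde G}_{k+1},
\end{align*}
while $\dagger\sigma$ sends
\begin{align*}
\mathcal W_{-k}&\mapsto \mathcal W_{k+1}\mapsto \mathcal W_{k+1}, &
\mathcal W_{k+1}&\mapsto \mathcal W_{-k}\mapsto \mathcal W_{-k},\\
\mathcal G_{k+1}&\mapsto \mathcal {\tilde G}_{k+1}\mapsto \mathcal G_{k+1}, &
\mathcal {\tilde G}_{k+1}&\mapsto \mathcal G_{k+1}\mapsto \mathcal {\tilde G}_{k+1}.
\end{align*}
The two maps therefore agree on each alternating generator.

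Because the alternating generators generate $\mathcal O_q$, and two antiautomorphisms that agree on a generating set must coincide, we conclude $\sigma\dagger=\dagger\sigma$. There is essentially no obstacle here: the statement reduces entirely to the bookkeeping in the two preceding lemmas, and the only thing to note is that one does not need to verify anything about the defining relations \eqref{eq:3p1}--\eqref{eq:3p11}, since $\sigma$ and $\dagger$ are already known to exist as (anti)automorphisms — the relations have already been accounted for in Lemmas \ref{lem:autAc} and \ref{lem:antiautAc}.
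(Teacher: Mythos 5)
Your proof is correct: both compositions are antiautomorphisms, your generator-by-generator computations are accurate, and agreement on a generating set does force two antiautomorphisms to coincide. The paper itself gives no in-text proof (it only cites \cite[Lemma~4.6]{compQons}), and your argument is exactly the standard verification that reference carries out, so there is nothing further to compare.
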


\begin{definition}\label{def:tauAc}\rm Let $\tau$ denote the composition of the automorphism $\sigma$ from Lemma \ref{lem:autAc} and the antiautomorphism $\dagger$ from Lemma \ref{lem:antiautAc}. 
 Note that $\tau$ is an antiautomorphism of $\mathcal O_q$ that sends
\begin{align*}
\mathcal W_{-k} \mapsto \mathcal W_{k+1}, \qquad
\mathcal W_{k+1} \mapsto \mathcal W_{-k}, \qquad
\mathcal G_{k+1} \mapsto \mathcal G_{k+1}, \qquad
\mathcal {\tilde G}_{k+1} \mapsto \mathcal {\tilde G}_{k+1}
\end{align*}
\noindent for $k \in \mathbb N$. We have $\tau^2={\rm id}$.
\end{definition}

\noindent Next we discuss how $\mathcal O_q$ is related to $O_q$.

\begin{lemma}
\label{lem:iota}  {\rm (See \cite[Theorem~10.3]{pbwqO}.)} 
There exists an algebra homomorphism $\imath: O_q \to \mathcal O_q$ that sends $W_0 \mapsto \mathcal W_0$ and
$W_1 \mapsto \mathcal W_1$. Moreover, $\imath$ is injective.
\end{lemma}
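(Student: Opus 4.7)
The plan is to construct $\imath$ via the universal property of $O_q$, then verify injectivity using the PBW bases on both sides. By the presentation in Definition \ref{def:U}, it suffices for the existence step to check that the elements $\mathcal W_0, \mathcal W_1 \in \mathcal O_q$ satisfy the $q$-Dolan/Grady relations \eqref{eq:qOns1}, \eqref{eq:qOns2}; once verified, $\imath$ extends uniquely from the generators.

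To verify \eqref{eq:qOns1} I would use the $k=0$ cases of \eqref{eq:3p1}--\eqref{eq:3p3}. Relation \eqref{eq:3p1} expresses $[\mathcal W_0, \mathcal W_1]$ as a multiple of $\tilde{\mathcal G}_1 - \mathcal G_1$, while \eqref{eq:3p2} describes how $\mathcal W_0$ $q$-commutes with each of $\mathcal G_1, \tilde{\mathcal G}_1$, producing $\rho(\mathcal W_{-1} - \mathcal W_1)$. Working from the inside out, I would compute $[\mathcal W_0, \mathcal W_1]_q$, then $[\mathcal W_0, [\mathcal W_0, \mathcal W_1]_q]_{q^{-1}}$, and finally the outer commutator with $\mathcal W_0$. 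The intermediate terms involving $\mathcal G_1, \tilde{\mathcal G}_1$, and $\mathcal W_{-1}$ should recombine so that only a multiple of $[\mathcal W_1, \mathcal W_0]$ survives, with coefficient $(q^2-q^{-2})^2$. Relation \eqref{eq:qOns2} would then follow either by an analogous computation using \eqref{eq:3p3}, or more efficiently by applying the automorphism $\sigma$ of $\mathcal O_q$ from Lemma \ref{lem:autAc}, which interchanges $\mathcal W_0 \leftrightarrow \mathcal W_1$ and hence converts \eqref{eq:qOns1} into \eqref{eq:qOns2}.

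For injectivity, my plan is to exploit the PBW bases on both sides. By Lemma \ref{prop:damiani}, when $q$ is transcendental the monomials in $\lbrace B_{n\delta+\alpha_0}\rbrace$, $\lbrace B_{n\delta+\alpha_1}\rbrace$, $\lbrace B_{n\delta}\rbrace$ form a basis of $O_q$. I would show by induction on $n$ that under $\imath$,
\begin{align*}
\imath(B_{n\delta+\alpha_0}) = \mathcal W_{-n} + (\text{lower-order}), \qquad \imath(B_{n\delta+\alpha_1}) = \mathcal W_{n+1} + (\text{lower-order}),
\end{align*}
where ``lower-order'' refers to PBW monomials in $\mathcal O_q$ under a suitable degree filtration; an analogous leading-term statement holds for $\imath(B_{n\delta})$ with leading term built from $\mathcal G$ and $\tilde{\mathcal G}$ generators. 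The recursions \eqref{eq:line1}--\eqref{eq:Bdelta} together with the commutation relations \eqref{eq:3p1}--\eqref{eq:3p11} make this tractable. Expanding a PBW monomial in $O_q$ via $\imath$ then produces a distinct leading PBW monomial in $\mathcal O_q$ respecting the order \eqref{eq:order}, and Proposition \ref{lem:pbw} delivers linear independence. The transcendence hypothesis is removed by realising $O_q$ and $\mathcal O_q$ over $\mathbb Q(\hat q)$ with $\hat q$ a formal indeterminate and specialising $\hat q \mapsto q$, since both algebras are defined by relations polynomial in the parameter.

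The main obstacle, I expect, is controlling the ``lower-order terms'' in $\imath(B_{n\delta+\alpha_j})$ through the recursion \eqref{eq:line2}. Here $B_\delta = q^{-2}W_1 W_0 - W_0 W_1$ has image directly proportional to $\tilde{\mathcal G}_1 - \mathcal G_1$ via \eqref{eq:3p1}, and the iterated brackets $[\imath(B_\delta), \cdot]$ generate successive alternating generators together with increasingly complicated remainders involving products of the $\mathcal G_k, \tilde{\mathcal G}_k$ with $\mathcal W$'s. Showing that these remainders remain strictly of lower PBW degree, so that the leading-term pattern is preserved at every induction step, is the technical heart of the argument and is what forces the detailed PBW analysis underlying \cite[Theorem~10.3]{pbwqO}.
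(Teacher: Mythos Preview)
The paper gives no proof here; the lemma is simply quoted from \cite[Theorem~10.3]{pbwqO}. So there is nothing in-paper to compare against, but I can comment on your sketch.

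The existence half is fine: relations \eqref{eq:3p1}--\eqref{eq:3p3} at $k=0$ do yield the $q$-Dolan/Grady relations for $\mathcal W_0,\mathcal W_1$ after a short computation, and using the automorphism $\sigma$ to obtain the second relation from the first is legitimate.

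For injectivity, two points deserve care. First, your leading-term assertion $\imath(B_{n\delta+\alpha_0})=\mathcal W_{-n}+(\text{lower})$ only makes sense once you specify the filtration, and under the natural word-length filtration on $\mathcal O_q$ it is false as stated: $\imath(B_{n\delta+\alpha_0})$ is a word of length $2n{+}1$ in $\mathcal W_0,\mathcal W_1$, while $\mathcal W_{-n}$ has length $1$. The actual relationship between these elements runs through central corrections involving the $\mathcal{\tilde G}_k$ (compare Lemmas~\ref{lem:G1},~\ref{lem:Wind} and Proposition~\ref{BsolveV} later in this paper), so whatever filtration you intend must be built to accommodate that. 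Second, your specialisation argument to drop the transcendence hypothesis on $q$ is incomplete: injectivity of an algebra map need not survive specialisation of a parameter, since new kernel can appear. You would need the relevant PBW statements to hold already at the specialised $q$; for $\mathcal O_q$ that is Proposition~\ref{lem:pbw}, but for $O_q$ it is exactly the open Conjecture~\ref{prob1} of this paper, so as written the argument is circular. The route taken in \cite{pbwqO} instead extracts $\langle \mathcal W_0,\mathcal W_1\rangle$ from the tensor-product factorisation of $\mathcal O_q$, avoiding any appeal to a PBW basis for $O_q$.
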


\begin{lemma} 
\label{lem:diag} {\rm (See \cite[Lemma~4.11]{compQons}.)}
The following diagrams commute:

\begin{equation*}
{\begin{CD}
O_q @>\imath  >> \mathcal O_q
              \\
         @V \sigma VV                   @VV \sigma V \\
         O_q @>>\imath >
                                 \mathcal O_q
                        \end{CD}}  	
         \qquad \qquad                
    {\begin{CD}
O_q @>\imath  >> \mathcal O_q
              \\
         @V \dagger VV                   @VV \dagger V \\
         O_q @>>\imath >
                                 \mathcal O_q
                        \end{CD}}  	     \qquad \qquad                
   {\begin{CD}
O_q @>\imath  >> \mathcal O_q
              \\
         @V \tau VV                   @VV \tau V \\
         O_q @>>\imath >
                                 \mathcal O_q
                        \end{CD}}  	                                       		    
\end{equation*}
\end{lemma}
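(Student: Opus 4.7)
The plan is to reduce each of the three diagrams to a check on the generators $W_0, W_1$ of $O_q$. Since $O_q$ is generated by $W_0$ and $W_1$ (Definition \ref{def:U}), two algebra homomorphisms $O_q \to \mathcal O_q$ that agree on $W_0, W_1$ coincide; similarly, two antihomomorphisms $O_q \to \mathcal O_q$ that agree on $W_0, W_1$ coincide (since each is determined by its values on a generating set, with the reversed multiplication rule). Thus for each of the three diagrams I only need to evaluate the two compositions at $W_0$ and $W_1$.

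For the $\sigma$-diagram, the two maps $\sigma \circ \imath$ and $\imath \circ \sigma$ are algebra homomorphisms from $O_q$ to $\mathcal O_q$. Using Lemma \ref{lem:aut}, Lemma \ref{lem:autAc}, and Lemma \ref{lem:iota}, I compute
\[
(\sigma \circ \imath)(W_0) = \sigma(\mathcal W_0) = \mathcal W_1 = \imath(W_1) = (\imath \circ \sigma)(W_0),
\]
and symmetrically on $W_1$. For the $\dagger$-diagram, both compositions are antihomomorphisms $O_q \to \mathcal O_q$. By Lemma \ref{lem:antiaut} and Lemma \ref{lem:antiautAc}, $\dagger$ fixes $W_0, W_1$ on $O_q$ and fixes $\mathcal W_0, \mathcal W_1$ on $\mathcal O_q$, so
\[
(\dagger \circ \imath)(W_i) = \dagger(\mathcal W_i) = \mathcal W_i = \imath(W_i) = (\imath \circ \dagger)(W_i) \qquad (i=0,1).
\]

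For the $\tau$-diagram, I would simply invoke Definition \ref{def:tauA} and Definition \ref{def:tauAc} together with the first two diagrams: $\tau = \sigma \circ \dagger$ on both $O_q$ and $\mathcal O_q$, so
\[
\tau \circ \imath = \sigma \circ \dagger \circ \imath = \sigma \circ \imath \circ \dagger = \imath \circ \sigma \circ \dagger = \imath \circ \tau,
\]
where the middle two equalities use the commutativity of the first two diagrams.

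There is essentially no obstacle here; the only point requiring care is to recognize that the $\dagger$-diagram compares two antihomomorphisms (not homomorphisms), and to note that the uniqueness-on-generators principle still applies in that category, so the verification on $W_0, W_1$ suffices.
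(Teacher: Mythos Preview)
Your proof is correct. The paper does not supply its own argument here (it merely cites \cite[Lemma~4.11]{compQons}), but the generator-chasing you describe is exactly the standard verification one would expect, and your handling of the antihomomorphism case for $\dagger$ is the only point that needs the extra sentence you gave it.
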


\section{Generating functions for $\mathcal O_q$}

\noindent In Definition \ref{def:Aq} the algebra $\mathcal O_q$ is defined by generators and relations.
In this section we describe the defining relations in terms of generating functions.

\begin{definition}
\label{def:gf4}
\rm
We define some generating functions in the indeterminate $t$:
\begin{align*}
&\mathcal W^-(t) = \sum_{n \in \mathbb N} \mathcal W_{-n} t^n,
\qquad \qquad  
\mathcal W^+(t) = \sum_{n \in \mathbb N} \mathcal W_{n+1} t^n,
\\
&\mathcal G(t) = \sum_{n \in \mathbb N} \mathcal G_n t^n,
\qquad \qquad \qquad 
\mathcal {\tilde G}(t) = \sum_{n \in \mathbb N} \mathcal {\tilde G}_n t^n.
\end{align*}
\end{definition}
\noindent Observe that
\begin{align*}
\mathcal W^-(0) = \mathcal W_0, \qquad
\mathcal W^+(0) = \mathcal W_1, \qquad
\mathcal G(0) = -(q-q^{-1}) \lbrack 2 \rbrack^2_q, \qquad
\mathcal {\tilde G}(0) = -(q-q^{-1}) \lbrack 2 \rbrack^2_q.
\end{align*}

\noindent  We now give the relations \eqref{eq:3p1}--\eqref{eq:3p11}  in terms of generating functions. 

\begin{lemma} \label{lem:ad} {\rm (See \cite[Lemma~3.6]{pbwqO}.)}
 For the algebra $\mathcal O_q$ we have
\begin{align}
& \label{eq:3pp1}
\lbrack \mathcal W_0, \mathcal W^+(t) \rbrack = \lbrack \mathcal W^-(t), \mathcal W_1 \rbrack = t^{-1}(\mathcal {\tilde G}(t)-\mathcal G(t))/(q+q^{-1}),
\\
& \label{eq:3pp2}
\lbrack \mathcal W_0, \mathcal G(t) \rbrack_q = \lbrack \mathcal {\tilde G}(t), \mathcal W_0 \rbrack_q = \rho \mathcal W^-(t)-\rho t \mathcal W^+(t),
\\
&\label{eq:3pp3}
\lbrack \mathcal G(t), \mathcal W_1 \rbrack_q = \lbrack \mathcal W_1, \mathcal {\tilde G}(t) \rbrack_q = \rho \mathcal W^+(t) -\rho t \mathcal W^-(t),
\\
&\label{eq:3pp4}
\lbrack  \mathcal W^-(s), \mathcal W^-(t) \rbrack = 0, 
\qquad 
\lbrack \mathcal W^+(s),  \mathcal W^+(t) \rbrack = 0,
\\ \label{eq:3pp5}
&\lbrack  \mathcal W^-(s), \mathcal W^+(t) \rbrack 
+
\lbrack \mathcal W^+(s), \mathcal W^-(t) \rbrack = 0,
\\ \label{eq:3pp6}
&s \lbrack \mathcal W^-(s), \mathcal G(t) \rbrack 
+
t \lbrack  \mathcal G(s),  \mathcal W^-(t) \rbrack = 0,
\\ \label{eq:3pp7}
&s \lbrack  \mathcal W^-(s), \mathcal {\tilde G}(t) \rbrack 
+
t \lbrack  \mathcal {\tilde G}(s), \mathcal W^-(t) \rbrack = 0,
\\ \label{eq:3pp8}
&s \lbrack   \mathcal W^+(s),  \mathcal G(t) \rbrack
+
t \lbrack   \mathcal G(s), \mathcal W^+(t) \rbrack = 0,
\\ \label{eq:3pp9}
&s \lbrack   \mathcal W^+(s), \mathcal {\tilde G}(t) \rbrack
+
t \lbrack \mathcal {\tilde G}(s), \mathcal W^+(t) \rbrack = 0,
\\ \label{eq:3pp10}
&\lbrack   \mathcal G(s), \mathcal G(t) \rbrack = 0, 
\qquad 
\lbrack  \mathcal {\tilde G}(s),  \mathcal {\tilde G}(t) \rbrack = 0,
\\ \label{eq:3pp11}
&\lbrack  \mathcal {\tilde G}(s), \mathcal G(t) \rbrack +
\lbrack   \mathcal G(s), \mathcal {\tilde G}(t) \rbrack = 0.
\end{align}
\end{lemma}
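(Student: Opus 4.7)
The proof is essentially a bookkeeping exercise: each of \eqref{eq:3pp1}--\eqref{eq:3pp11} is the generating-function repackaging of the corresponding defining relation \eqref{eq:3p1}--\eqref{eq:3p11}, and the strategy is simply to expand both sides as formal power series in $s$ and $t$ and match coefficients. Concretely, for a two-variable identity I would form $\sum_{k,\ell\in\mathbb N}(\cdots)s^k t^\ell$ applied to the defining relation indexed by $k,\ell$, and recognize each partial sum as a product/commutator of our four generating functions, possibly with an $s$-- or $t$--prefactor accounting for index shifts.

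The single-variable identities \eqref{eq:3pp1}--\eqref{eq:3pp3} require the most care, because the defining relations \eqref{eq:3p1}--\eqref{eq:3p3} are indexed by $k+1$ rather than $k$. For \eqref{eq:3pp1} one writes $[\mathcal W_0,\mathcal W^+(t)]=\sum_{n\ge 0}[\mathcal W_0,\mathcal W_{n+1}]t^n$, substitutes \eqref{eq:3p1}, and then observes that because $\mathcal G_0=\mathcal{\tilde G}_0$ (see \eqref{eq:GG0}) the difference $\mathcal{\tilde G}(t)-\mathcal G(t)$ has no constant term, whence $t^{-1}(\mathcal{\tilde G}(t)-\mathcal G(t))=\sum_{n\ge 0}(\mathcal{\tilde G}_{n+1}-\mathcal G_{n+1})t^n$; this matches and yields \eqref{eq:3pp1}. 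For \eqref{eq:3pp2} I would verify separately that the $t^0$-coefficient also holds: here $[\mathcal W_0,\mathcal G_0]_q=(q-q^{-1})\mathcal G_0\,\mathcal W_0$, and the scalar $(q-q^{-1})\mathcal G_0=-(q-q^{-1})^2[2]_q^2=-(q^2-q^{-2})^2=\rho$, so the constant term is $\rho\mathcal W_0=\rho\mathcal W^-(0)$, consistent with the right-hand side at $t=0$; for $n\ge 1$ the equality is immediate from \eqref{eq:3p2}. Relation \eqref{eq:3pp3} is treated symmetrically.

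For the two-variable identities \eqref{eq:3pp4}, \eqref{eq:3pp5}, \eqref{eq:3pp10}, \eqref{eq:3pp11}, double summation directly produces the stated commutator identity of generating functions; the $\mathcal G_0,\mathcal{\tilde G}_0$ constants drop out because the relations \eqref{eq:3p10}, \eqref{eq:3p11} are antisymmetric in the two indices. For \eqref{eq:3pp6}--\eqref{eq:3pp9}, the index shift on the $\mathcal G$/$\mathcal{\tilde G}$ side is the origin of the $s$ and $t$ prefactors: for instance, starting from \eqref{eq:3p6} and summing $s^k t^\ell$ gives
\begin{align*}
0 = [\mathcal W^-(s),\,t^{-1}(\mathcal G(t)-\mathcal G_0)] + [s^{-1}(\mathcal G(s)-\mathcal G_0),\,\mathcal W^-(t)],
\end{align*}
and since $\mathcal G_0$ is central (scalar), multiplying through by $st$ yields \eqref{eq:3pp6}; the remaining three go through identically.

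The only genuine obstacle is keeping the index bookkeeping straight and verifying the constant-term compatibility (i.e.\ that $\mathcal G_0=\mathcal{\tilde G}_0$ is exactly the right scalar to absorb the index shifts); once this is checked for one representative relation of each type, all eleven cases follow by the same mechanism, with the final four being obtained from the first ones either directly or by applying the antiautomorphism $\tau$ of Definition \ref{def:tauAc} and using the symmetries recorded in Lemmas \ref{lem:autAc}--\ref{lem:antiautAc}.
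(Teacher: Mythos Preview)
Your proposal is correct and takes the same approach as the paper, whose proof is the one-liner ``Use \eqref{eq:3p1}--\eqref{eq:3p11} and Definition \ref{def:gf4}.'' You have simply spelled out the coefficient-matching and constant-term checks that this line leaves implicit, and your computations (in particular the verification that $(q-q^{-1})\mathcal G_0=\rho$ so that the $t^0$ term of \eqref{eq:3pp2} works) are accurate.
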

\begin{proof} Use \eqref{eq:3p1}--\eqref{eq:3p11} and Definition
\ref{def:gf4}.
\end{proof}

\section{The algebra isomorphism $\phi: O_q \otimes \mathbb F \lbrack z_1, z_2,\ldots \rbrack \to \mathcal O_q$}
 
\noindent 
 Let $\mathcal Z$ denote the center of $\mathcal O_q$. In this section we describe $\mathcal Z$ from various points of view. Using this
 description we will obtain an algebra isomorphism $\phi: O_q \otimes \mathbb F \lbrack z_1, z_2,\ldots \rbrack \to \mathcal O_q$.
 \medskip
 
 \noindent For notational convenience define
\begin{align}
S=\frac{q+q^{-1}}{q^{-1} t+ q t^{-1}}, \qquad \qquad 
T=\frac{q+q^{-1}}{qt + q^{-1} t^{-1}}.
\label{eq:ST}
\end{align}
\noindent We view $S$ and $T$ as power series
\begin{align*}
S = (q+q^{-1}) \sum_{\ell \in \mathbb N} (-1)^\ell q^{-2\ell-1} t^{2\ell+1} \qquad \qquad T = (q+q^{-1}) \sum_{\ell \in \mathbb N} (-1)^\ell q^{2\ell+1} t^{2\ell+1}.
\end{align*}

\begin{definition} \label{lem:zV1} \rm (See \cite[Definition~8.4]{pbwqO}.)
 For the algebra $\mathcal O_q$ define
\begin{align}
\begin{split} \label{eq:Zlong}
\Psi(t) &= 
t^{-1} ST\mathcal W^-(S) \mathcal W^+(T) +
t ST\mathcal W^+(S) \mathcal W^-(T) -
q^2 ST \mathcal W^-(S) \mathcal W^-(T)
\\
& \quad \qquad -
q^{-2}ST \mathcal W^+(S) \mathcal W^+(T) +
(q^2-q^{-2})^{-2} \mathcal G(S) \mathcal {\tilde G}(T).
\end{split}
\end{align}
\end{definition}

\begin{note}\rm In  \cite[Definition~8.4]{pbwqO} the generating function $\Psi(t)$ is called $\mathcal Z(t)$. \end{note}
\noindent The following normalization is sometimes convenient.
\begin{definition} \label{def:ZV} \rm
Define
\begin{align}
\mathcal Z^\vee(t) = \lbrack 2 \rbrack^{-2}_q \Psi(t).
\label{eq:norm}
\end{align}
\end{definition}

 \begin{definition}\label{def:Zn} For $n \in \mathbb N$ define $\mathcal Z^\vee_n \in \mathcal O_q$ such that
 \begin{align*}
 \mathcal Z^\vee(t) = \sum_{n\in \mathbb N} \mathcal Z^\vee_n  t^n.
 \end{align*}
 \end{definition}

\noindent By \cite[Lemma~8.18]{pbwqO} we have $\mathcal Z^\vee_0 = 1$.

\begin{lemma} \label{lem:Zfix} {\rm (See \cite[Lemma~8.10 and Proposition~8.12]{pbwqO}.)}
For $n\geq 1$ we have $\mathcal Z^\vee_n\in \mathcal Z$. Moreover $\mathcal Z^\vee_n$  fixed by $\sigma$ and $\dagger$ and $\tau$.
\end{lemma}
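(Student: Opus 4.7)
The plan is to establish both assertions at the level of the generating function $\Psi(t)$ (or equivalently $\mathcal Z^\vee(t)$), viewed as a formal Laurent series with coefficients in $\mathcal O_q$. Because $\mathcal Z^\vee_0=1$ is trivially central and is fixed by every automorphism and antiautomorphism, it suffices to prove that $\Psi(t)$ itself is central and is preserved by $\sigma$ and $\dagger$ (the statement for $\tau$ then follows from $\tau=\sigma\dagger$).

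\medskip

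\noindent\textbf{Centrality.} Since $\mathcal O_q$ is generated by the alternating generators, I would verify as formal identities in $\mathcal O_q[[t,u]]$ that
\[
\bigl[\Psi(t),\mathcal W^-(u)\bigr]=0,\quad \bigl[\Psi(t),\mathcal W^+(u)\bigr]=0,\quad \bigl[\Psi(t),\mathcal G(u)\bigr]=0,\quad \bigl[\Psi(t),\mathcal{\tilde G}(u)\bigr]=0.
\]
For each choice of $u$-generator one expands the commutator with each of the five summands of $\Psi(t)$ via the Leibniz rule $[AB,C]=A[B,C]+[A,C]B$, and then substitutes the explicit commutator formulas from Lemma \ref{lem:ad} with $s$ specialized to $S$ and $T$. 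The specific choice of $S$ and $T$ in \eqref{eq:ST} is what makes the whole sum telescope to zero: for example, the $s,t$-weighting $s[\mathcal W^-(s),\mathcal G(t)]+t[\mathcal G(s),\mathcal W^-(t)]=0$ in \eqref{eq:3pp6} pairs cleanly with the $ST$ prefactor in $\Psi(t)$, while the identity $[\mathcal W^-(s),\mathcal W^+(t)]+[\mathcal W^+(s),\mathcal W^-(t)]=0$ in \eqref{eq:3pp5} matches the $t^{-1}ST$ and $tST$ prefactors of the first two summands. After collecting terms one expects the residual to vanish using \eqref{eq:3pp2}, \eqref{eq:3pp3} together with the relation $ST(t^{-1}-t)=S-T$ (which follows from $S^{-1}-T^{-1}=(q-q^{-1})(t^{-1}-t)/(q+q^{-1})$ after manipulation) to convert shifts in $S,T$ into the required cancellations.

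\medskip

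\noindent\textbf{Invariance under $\sigma$ and $\dagger$.} Applying $\sigma$ to \eqref{eq:Zlong} swaps $\mathcal W^{-}\leftrightarrow\mathcal W^{+}$ and $\mathcal G\leftrightarrow\mathcal{\tilde G}$ inside every factor, so
\[
\sigma(\Psi(t))-\Psi(t)=ST\,\bigl\{(t^{-1}-t)\bigl[\mathcal W^+(S)\mathcal W^-(T)-\mathcal W^-(S)\mathcal W^+(T)\bigr]+(q^{-2}-q^2)\bigl[\mathcal W^+(S)\mathcal W^+(T)-\mathcal W^-(S)\mathcal W^-(T)\bigr]\bigr\}+(q^2-q^{-2})^{-2}\bigl[\mathcal{\tilde G}(S)\mathcal G(T)-\mathcal G(S)\mathcal{\tilde G}(T)\bigr].
\]
Using \eqref{eq:3pp5}, \eqref{eq:3pp4}, \eqref{eq:3pp11} to rewrite the bracketed differences as commutators, and then using \eqref{eq:3pp2}, \eqref{eq:3pp3} to replace those commutators with $\mathcal W^{\pm}$-expressions, the right-hand side should collapse using the same $S,T$ identities as above. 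For $\dagger$ one applies the antiautomorphism, which reverses every product and swaps $\mathcal G\leftrightarrow\mathcal{\tilde G}$ but fixes $\mathcal W^{\pm}$. Thanks to \eqref{eq:3pp4} the $\mathcal W^{-}\mathcal W^{-}$ and $\mathcal W^{+}\mathcal W^{+}$ terms flip for free, while the remaining terms are rearranged using \eqref{eq:3pp5} and \eqref{eq:3pp11} to recover $\Psi(t)$.

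\medskip

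\noindent\textbf{Main obstacle.} The centrality verification is by far the hardest step: each of the four commutator checks involves five summands crossed with multiple defining relations, and the cancellations are not formal but depend delicately on the arithmetic identities satisfied by $S$ and $T$ as rational functions of $t$. The symmetry statements are comparatively bookkeeping, once centrality has focused attention on the correct form of the cancellations.
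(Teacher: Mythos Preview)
The paper does not give its own proof of this lemma; it simply cites \cite[Lemma~8.10 and Proposition~8.12]{pbwqO}. So there is no in-paper argument to compare against, and the question is only whether your outline could be completed into a proof.

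There is a genuine gap in the centrality argument. You propose to compute $[\Psi(t),\mathcal W^-(u)]$, $[\Psi(t),\mathcal W^+(u)]$, $[\Psi(t),\mathcal G(u)]$, $[\Psi(t),\mathcal{\tilde G}(u)]$ by expanding via Leibniz and then ``substituting the explicit commutator formulas from Lemma~\ref{lem:ad}.'' But Lemma~\ref{lem:ad} does not provide explicit formulas for the commutators you need. Relations \eqref{eq:3pp1}--\eqref{eq:3pp3} only give commutators with the \emph{constants} $\mathcal W_0,\mathcal W_1$, while \eqref{eq:3pp4}--\eqref{eq:3pp11} give only symmetric or skew-symmetric \emph{combinations} such as $[\mathcal W^-(s),\mathcal W^+(t)]+[\mathcal W^+(s),\mathcal W^-(t)]=0$ or $s[\mathcal W^-(s),\mathcal G(t)]+t[\mathcal G(s),\mathcal W^-(t)]=0$. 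None of them determines an individual commutator like $[\mathcal W^+(T),\mathcal W^-(u)]$ or $[\mathcal G(S),\mathcal{\tilde G}(u)]$. When you expand $[\Psi(t),\mathcal W^-(u)]$ the surviving pieces involve exactly such individual commutators with the free variable $u$ on one side and $S$ or $T$ on the other; these do not pair up into the combinations the defining relations control, so no telescoping is available at this level. The actual proof in \cite{pbwqO} first derives further relations among the alternating generators (going well beyond \eqref{eq:3pp1}--\eqref{eq:3pp11}) before the cancellation can be carried out.

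The same issue bites in your symmetry check. For instance, in $\sigma(\Psi(t))-\Psi(t)$ you obtain the difference $\mathcal W^+(S)\mathcal W^-(T)-\mathcal W^-(S)\mathcal W^+(T)$ and $\mathcal{\tilde G}(S)\mathcal G(T)-\mathcal G(S)\mathcal{\tilde G}(T)$. Neither is a commutator (the two factors carry different arguments $S$ and $T$), so \eqref{eq:3pp5} and \eqref{eq:3pp11} do not rewrite them; relations \eqref{eq:3pp2}, \eqref{eq:3pp3} concern only commutators with $\mathcal W_0,\mathcal W_1$ and are not applicable either. Your $\dagger$ computation is on firmer ground for the $\mathcal W^\pm\mathcal W^\pm$ terms (which really do reduce via \eqref{eq:3pp4}, \eqref{eq:3pp5}), but the residual $\mathcal G(T)\mathcal{\tilde G}(S)-\mathcal G(S)\mathcal{\tilde G}(T)$ again cannot be killed with \eqref{eq:3pp10}, \eqref{eq:3pp11} alone. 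In short, the strategy is the natural one, but the defining relations as presented here are not strong enough to execute it; you would need the additional derived relations from \cite{pbwqO}.
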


\begin{definition}\rm Let $\langle \mathcal W_0, \mathcal W_1 \rangle$ denote the subalgebra of $\mathcal O_q$ generated by $\mathcal W_0$, $\mathcal W_1$.
\end{definition}

\begin{proposition} \label{lem:sum}
{\rm (See \cite[Theorems~10.2--10.4]{pbwqO}.)} For the algebra $\mathcal O_q$ the following {\rm (i)--(iii)} hold:
\begin{enumerate}
 \item[\rm (i)] 
there exists an algebra isomorphism
$O_q \to \langle \mathcal W_0, \mathcal W_1\rangle$ 
that sends $W_0\mapsto \mathcal W_0$ and
$W_1\mapsto \mathcal W_1$;
\item[\rm (ii)] 
there exists an algebra isomorphism
 $\mathbb F\lbrack z_1, z_2,\ldots \rbrack \to \mathcal Z$
 that sends $z_n \mapsto \mathcal Z^\vee_n$ for $n \geq 1$;
\item[\rm (iii)] 
the multiplication map 
\begin{align*}
\langle \mathcal W_0, \mathcal W_1\rangle 
\otimes
\mathcal Z
 & \to   \mathcal O_q
\\
 w \otimes z  &\mapsto  wz
 \end{align*}
 is an isomorphism of algebras.
 \end{enumerate}
\end{proposition}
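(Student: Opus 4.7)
The plan is to handle Part (i) directly from Lemma \ref{lem:iota}, and to deduce Parts (ii) and (iii) simultaneously by producing an alternative PBW basis for $\mathcal O_q$ that interleaves the image of the Damiani basis of $O_q$ with the central sequence $\{\mathcal Z^\vee_n\}_{n\geq 1}$.

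Part (i) is immediate: Lemma \ref{lem:iota} furnishes an injective algebra homomorphism $\imath: O_q \to \mathcal O_q$ sending $W_0 \mapsto \mathcal W_0$ and $W_1 \mapsto \mathcal W_1$. The image of $\imath$ is by definition $\langle \mathcal W_0, \mathcal W_1\rangle$, so $\imath$ restricts to the desired algebra isomorphism.

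For Parts (ii) and (iii), I would consider the set
\[
\Omega \;=\; \imath(\Omega_0) \;\cup\; \{\mathcal Z^\vee_n\}_{n=1}^{\infty},
\]
where $\Omega_0$ is the Damiani/Baseilhac--Kolb PBW basis of $O_q$ from Lemma \ref{prop:damiani}, and order it so that the (central, pairwise commuting) $\mathcal Z^\vee_n$ come last. The key step is to prove $\Omega$ is a PBW basis of $\mathcal O_q$ by exhibiting a degree-filtered triangular change of basis with the alternating PBW basis of Proposition \ref{lem:pbw}. On the real-root side, the images $\imath(B_{n\delta+\alpha_0})$ and $\imath(B_{n\delta+\alpha_1})$ can be matched inductively (modulo lower-degree corrections) against $\mathcal W_{-n}$ and $\mathcal W_{n+1}$ using the recursions \eqref{eq:line1}--\eqref{eq:line4} together with the relations \eqref{eq:3pp1}--\eqref{eq:3pp3}. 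On the imaginary-root/center side, I would invoke the factorization $\mathcal Z^\vee(t) = \xi\,\tilde{\mathcal G}(S)\,B(t)\,\tilde{\mathcal G}(T)$ from \eqref{eq:factor}: expanding and using that $\tilde{\mathcal G}_0$ is a nonzero scalar, the coefficient of $t^n$ in $\mathcal Z^\vee(t)$ reads as a nonzero scalar multiple of $\tilde{\mathcal G}_{2n+1}$ plus terms of strictly lower degree lying in the span of products involving lower-indexed $\tilde{\mathcal G}_k, \mathcal G_k$ and elements of $\imath(O_q)$. Applying the involution $\sigma$ from Lemma \ref{lem:autAc} handles the $\mathcal G_{k+1}$ symmetrically. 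This triangularity rewrites the alternating PBW basis in terms of $\Omega$.

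Granted that $\Omega$ is a PBW basis, Part (iii) follows directly: every element of $\mathcal O_q$ is uniquely expressible as $\sum_J \imath(a_J)\,\mathcal Z^\vee_J$ over finitely many multi-indices $J$ and $a_J \in O_q$, which is precisely the statement that the multiplication $\langle \mathcal W_0, \mathcal W_1\rangle \otimes \mathcal Z_0 \to \mathcal O_q$ is bijective, where $\mathcal Z_0$ denotes the subalgebra generated by $\{\mathcal Z^\vee_n\}_{n\geq 1}$. For Part (ii), the $\mathcal Z^\vee_n$ are central by Lemma \ref{lem:Zfix} and algebraically independent because they belong to a PBW basis; to promote $\mathcal Z_0 = \mathcal Z$ I would take any central $x = \sum_J \imath(a_J)\,\mathcal Z^\vee_J$, impose $[x, \mathcal W_0] = [x, \mathcal W_1] = 0$, and use PBW uniqueness together with the triviality of the center of $O_q$ (itself a consequence of the Damiani PBW basis) to conclude each $a_J \in \mathbb F$. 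The principal obstacle will be the triangular change of basis above, specifically showing that the correction terms inside $\mathcal Z^\vee_n$ beyond the leading $\tilde{\mathcal G}_{2n+1}$ respect a filtration compatible with the PBW ordering on $\Omega$. A secondary technical point is handling general non-root-of-unity $q$ rather than the transcendental case of Lemma \ref{prop:damiani}, which is dispatched by a standard specialization argument.
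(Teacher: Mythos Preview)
The paper does not actually prove this proposition; it is imported wholesale from \cite[Theorems~10.2--10.4]{pbwqO}, so there is no in-paper argument to compare against. Your handling of Part~(i) via Lemma~\ref{lem:iota} is fine, though essentially tautological since that lemma cites the same external theorem.

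Your plan for Parts~(ii)--(iii), however, has a factual error and a structural gap. First, the claim that the $t^n$-coefficient of $\mathcal Z^\vee(t)$ has leading term a scalar multiple of $\tilde{\mathcal G}_{2n+1}$ is wrong: since $S$ and $T$ are power series of order~$t$ and $B_{0\delta},\tilde{\mathcal G}_0$ are scalars, expanding $\xi\,\tilde{\mathcal G}(S)\,B(t)\,\tilde{\mathcal G}(T)$ shows the highest-index generator appearing linearly in $\mathcal Z^\vee_n$ is $\tilde{\mathcal G}_n$, not $\tilde{\mathcal G}_{2n+1}$. Second, and more seriously, the alternating PBW basis carries \emph{two} commuting families $\{\mathcal G_{k+1}\}$ and $\{\tilde{\mathcal G}_{k+1}\}$, while your $\Omega$ offers only $\{\imath(B_{n\delta})\}$ and $\{\mathcal Z^\vee_n\}$ in their place. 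Your factorization-based step yields a triangular expression of $\tilde{\mathcal G}_n$ in terms of $\mathcal Z^\vee_k$, $B_{j\delta}$, and lower $\tilde{\mathcal G}$'s, but ``applying $\sigma$'' does not then handle $\mathcal G_n$: by Lemma~\ref{lem:Zfix} the involution $\sigma$ fixes each $\mathcal Z^\vee_n$, so you land on the \emph{same} central family and have produced only one relation where two are needed. What is actually required is an independent second relation, for instance one tying $\tilde{\mathcal G}_n$ to $B_{n\delta}$ modulo lower terms (cf.\ Lemma~\ref{lem:GBcom}) or expressing $\mathcal G(t)$ through $\tilde{\mathcal G}(t)$ as in Proposition~\ref{lem:ex2}; you would need to supply and justify such a relation without invoking the map $\gamma$ of Section~11, which is built on Proposition~\ref{prop:three} and hence on the very statement you are proving. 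Finally, invoking \eqref{eq:factor} is a forward reference to Theorem~\ref{thm:Mn}; its proof does not use Parts~(ii)--(iii), so this is not circular, but it does rely throughout Section~9 on the identification of Part~(i), so at minimum a reordering of the development would be required.
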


\noindent Note that the isomorphism in Proposition \ref{lem:sum}(i) is induced by the map $\imath$ from Lemma \ref{lem:iota}.


 \begin{proposition}\label{prop:three} {\rm (See \cite[Theorem~9.14]{pbwqO}.)}
There exists an algebra isomorphism $\phi: O_q \otimes \mathbb F \lbrack z_1, z_2,\ldots \rbrack \to \mathcal O_q$ that sends
\begin{align*}
W_0 \otimes 1 \mapsto \mathcal W_0, \qquad \quad
W_1 \otimes 1 \mapsto \mathcal W_1, \qquad \quad 
1 \otimes z_n \mapsto \mathcal Z^\vee_n, \qquad n\geq 1.
\end{align*}
\end{proposition}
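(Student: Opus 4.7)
The plan is to construct $\phi$ as a composition of three isomorphisms already supplied by Proposition \ref{lem:sum}. By part (i) of that proposition, there is an algebra isomorphism
\[
\alpha : O_q \longrightarrow \langle \mathcal W_0, \mathcal W_1 \rangle, \qquad W_0 \mapsto \mathcal W_0, \quad W_1 \mapsto \mathcal W_1.
\]
By part (ii), there is an algebra isomorphism
\[
\beta : \mathbb F\lbrack z_1, z_2, \ldots \rbrack \longrightarrow \mathcal Z, \qquad z_n \mapsto \mathcal Z^\vee_n \quad (n \geq 1).
\]
Since $\alpha$ and $\beta$ are algebra isomorphisms, their tensor product
\[
\alpha \otimes \beta : O_q \otimes \mathbb F\lbrack z_1, z_2, \ldots \rbrack \longrightarrow \langle \mathcal W_0, \mathcal W_1 \rangle \otimes \mathcal Z
\]
is an algebra isomorphism as well.

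Next, by part (iii) the multiplication map
\[
\mu : \langle \mathcal W_0, \mathcal W_1 \rangle \otimes \mathcal Z \longrightarrow \mathcal O_q, \qquad w \otimes z \mapsto w z
\]
is an algebra isomorphism. Note that the fact that $\mu$ is even an algebra homomorphism relies on the image of $\mathcal Z$ being central in $\mathcal O_q$, which holds by definition of $\mathcal Z$; this compatibility is already built into the statement of Proposition \ref{lem:sum}(iii).

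Define $\phi := \mu \circ (\alpha \otimes \beta)$. As a composition of algebra isomorphisms, $\phi$ is an algebra isomorphism. On the distinguished generators we compute
\[
\phi(W_0 \otimes 1) = \mu(\mathcal W_0 \otimes 1) = \mathcal W_0, \qquad \phi(W_1 \otimes 1) = \mu(\mathcal W_1 \otimes 1) = \mathcal W_1,
\]
and for $n \geq 1$,
\[
\phi(1 \otimes z_n) = \mu(1 \otimes \mathcal Z^\vee_n) = \mathcal Z^\vee_n,
\]
which verifies the required action. Since all substantive content has been absorbed into Proposition \ref{lem:sum}, there is no genuine obstacle left here; the only thing to make sure of is that the tensor product of algebra maps is indeed an algebra map, which is immediate from the commutativity of $\mathbb F\lbrack z_1, z_2, \ldots \rbrack$.
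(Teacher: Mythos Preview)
Your proof is correct and is precisely the natural argument: the paper itself does not supply a proof here but merely cites \cite[Theorem~9.14]{pbwqO}, and your construction $\phi = \mu \circ (\alpha \otimes \beta)$ from the three parts of Proposition~\ref{lem:sum} is exactly how one reconstitutes the isomorphism from those ingredients.
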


\begin{proposition} 
\label{lem:Zdiag} 
The following diagrams commute:

\begin{align*}
&{\begin{CD}
O_q \otimes \mathbb F \lbrack z_1, z_2, \ldots \rbrack @>\phi  >> \mathcal O_q
              \\
         @V \sigma\otimes {\rm id}  VV                   @VV \sigma V \\
     O_q \otimes \mathbb F \lbrack z_1, z_2, \ldots \rbrack   @>>\phi >
                                 \mathcal O_q
                        \end{CD}}  	
         \qquad       \qquad     
    {\begin{CD}
O_q \otimes \mathbb F \lbrack z_1, z_2, \ldots \rbrack@>\phi >> \mathcal O_q
              \\
         @V \dagger \otimes {\rm id} VV                   @VV \dagger V \\
     O_q \otimes \mathbb F \lbrack z_1, z_2, \ldots \rbrack     @>>\phi>
                                 \mathcal O_q
                        \end{CD}}  	   
                        \\          
&  {\begin{CD}
O_q \otimes \mathbb F \lbrack z_1, z_2, \ldots \rbrack @>\phi >> \mathcal O_q
              \\
         @V\tau \otimes {\rm id}VV                   @VV \tau V \\
      O_q \otimes \mathbb F \lbrack z_1, z_2, \ldots \rbrack  @>>\phi >
                                 \mathcal O_q
                        \end{CD}}  	                              		    
\end{align*}
\end{proposition}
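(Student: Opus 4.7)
The strategy is the standard one for checking that a diagram involving algebra (anti)homomorphisms commutes: observe that each of the two legs of a given square is an algebra homomorphism or antihomomorphism from the domain to the codomain, and then verify equality on a generating set.

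The algebra $O_q \otimes \mathbb F[z_1, z_2, \ldots]$ is generated by $W_0 \otimes 1$, $W_1 \otimes 1$, and $\{1 \otimes z_n\}_{n \geq 1}$. For the first diagram, both $\sigma \circ \phi$ and $\phi \circ (\sigma \otimes \mathrm{id})$ are algebra homomorphisms (since $\sigma$ is an automorphism on each side, and $\sigma \otimes \mathrm{id}$ is therefore an automorphism of the tensor product). For the second diagram, both $\dagger \circ \phi$ and $\phi \circ (\dagger \otimes \mathrm{id})$ are algebra antihomomorphisms; here I note that $\dagger \otimes \mathrm{id}$ is well-defined as an antiautomorphism of $O_q \otimes \mathbb F[z_1, z_2, \ldots]$ because $\mathbb F[z_1, z_2, \ldots]$ is commutative. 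The $\tau$ diagram is handled identically.

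For the generators $W_0 \otimes 1$ and $W_1 \otimes 1$, I would invoke Lemma \ref{lem:diag}, which states the analogous fact for the inclusion $\imath: O_q \to \mathcal O_q$. Since $\phi$ restricted to the subalgebra $O_q \otimes 1$ agrees with $\imath$ by construction (both send $W_0 \mapsto \mathcal W_0$ and $W_1 \mapsto \mathcal W_1$, and both are algebra homomorphisms from $O_q$ to $\mathcal O_q$), the commutativity of each of the three diagrams on $W_0 \otimes 1$ and $W_1 \otimes 1$ is immediate from Lemma \ref{lem:diag}.

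For the central generators $1 \otimes z_n$ with $n \geq 1$: by construction each of $\sigma \otimes \mathrm{id}$, $\dagger \otimes \mathrm{id}$, and $\tau \otimes \mathrm{id}$ fixes $1 \otimes z_n$, and $\phi(1 \otimes z_n) = \mathcal Z^\vee_n$ by Proposition \ref{prop:three}. On the other hand, by Lemma \ref{lem:Zfix} the element $\mathcal Z^\vee_n$ is fixed by $\sigma$, by $\dagger$, and hence by $\tau$. Thus both legs of each square evaluate to $\mathcal Z^\vee_n$, completing the verification on all generators. There is no real obstacle here; the only thing that requires any care is the observation that in the antiautomorphism cases one is comparing two antihomomorphisms (equivalently, two homomorphisms into $\mathcal O_q^{\mathrm{opp}}$), so agreement on generators still forces global equality.
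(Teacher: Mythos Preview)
Your proof is correct and follows essentially the same approach as the paper: check the two legs of each square on the generating set $W_0\otimes 1$, $W_1\otimes 1$, $\{1\otimes z_n\}_{n\ge 1}$, using Proposition~\ref{prop:three} and Lemma~\ref{lem:Zfix} for the central generators. The only cosmetic difference is that for $W_0\otimes 1$ and $W_1\otimes 1$ you invoke Lemma~\ref{lem:diag} as a package, whereas the paper cites the underlying definitions of $\sigma$, $\dagger$, $\tau$ on $O_q$ and $\mathcal O_q$ directly; these amount to the same computation.
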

\begin{proof} Chase the generators $W_0 \otimes 1$, $W_1 \otimes 1$, $\lbrace 1\otimes z_n \rbrace_{n=1}^\infty$ around each diagram using
Lemmas \ref{lem:aut}, \ref{lem:antiaut}, \ref{lem:autAc}, \ref{lem:antiautAc}
and 
Definitions \ref{def:tauA}, \ref{def:tauAc}
along with
Lemma \ref{lem:Zfix} and Proposition \ref{prop:three}.
\end{proof}

\noindent We emphasize a few points.
\begin{corollary}\label{cor:sum}
 The following {\rm (i)--(iii)} hold:
 \begin{enumerate}
 \item[\rm (i)] the algebra $\mathcal O_q$ is generated by $\mathcal W_0$, $\mathcal W_1$, $\mathcal Z$;
 \item[\rm (ii)] the elements $\lbrace \mathcal Z^\vee_n \rbrace_{n=1}^\infty$ are algebraically independent and generate $\mathcal Z$;
 \item[\rm (iii)] everything in $\mathcal Z$ is fixed by $\sigma$ and $\dagger$ and $\tau$.
 \end{enumerate}
 \end{corollary}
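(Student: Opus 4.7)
The plan is to deduce each part directly from results already collected just above the statement, with no new computation required; the corollary is essentially a repackaging of Proposition \ref{lem:sum} and Lemma \ref{lem:Zfix}.

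For (i), I would invoke Proposition \ref{lem:sum}(iii). Since the multiplication map $\langle \mathcal W_0, \mathcal W_1\rangle \otimes \mathcal Z \to \mathcal O_q$ is an isomorphism of algebras, every element of $\mathcal O_q$ is an $\mathbb F$-linear combination of products $wz$ with $w \in \langle \mathcal W_0, \mathcal W_1\rangle$ and $z \in \mathcal Z$. By definition $\langle \mathcal W_0, \mathcal W_1\rangle$ is generated by $\mathcal W_0, \mathcal W_1$, so $\mathcal O_q$ is generated by $\mathcal W_0$, $\mathcal W_1$ together with $\mathcal Z$.

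For (ii), I would appeal to Proposition \ref{lem:sum}(ii): the assignment $z_n \mapsto \mathcal Z^\vee_n$ extends to an algebra isomorphism $\mathbb F\lbrack z_1, z_2, \ldots \rbrack \to \mathcal Z$. Since the indeterminates $\lbrace z_n \rbrace_{n=1}^\infty$ are algebraically independent and generate the polynomial algebra, their images $\lbrace \mathcal Z^\vee_n \rbrace_{n=1}^\infty$ are algebraically independent and generate $\mathcal Z$.

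For (iii), I would combine Lemma \ref{lem:Zfix} with (ii). By Lemma \ref{lem:Zfix} each $\mathcal Z^\vee_n$ (for $n \geq 1$) is fixed by $\sigma$, $\dagger$ and $\tau$, and of course so is $\mathcal Z^\vee_0 = 1$. By part (ii) the subalgebra $\mathcal Z$ is generated by $\lbrace \mathcal Z^\vee_n \rbrace_{n=0}^\infty$, and $\sigma, \dagger, \tau$ are algebra (anti)automorphisms of $\mathcal O_q$. Since a set of algebra (anti)automorphism-fixed elements generates a subalgebra whose every element is also fixed (products and sums of fixed elements are fixed, noting that the order reversal built into $\dagger$ and $\tau$ is harmless once we know $\mathcal Z$ is commutative, which it is by definition of the center), we conclude that every element of $\mathcal Z$ is fixed by $\sigma$, $\dagger$ and $\tau$.

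There is no real obstacle here; the only mild subtlety is the remark in the proof of (iii) that $\dagger$ and $\tau$ reverse multiplication, so one uses the commutativity of $\mathcal Z$ to see that products of fixed central elements remain fixed.
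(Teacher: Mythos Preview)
Your proposal is correct and follows essentially the same approach as the paper: the paper's proof simply cites Proposition~\ref{lem:sum}(iii) for (i), Proposition~\ref{lem:sum}(ii) for (ii), and part (ii) together with Lemma~\ref{lem:Zfix} for (iii). Your write-up supplies the routine details behind these citations, including the observation about commutativity of $\mathcal Z$ handling the order-reversal of $\dagger$ and $\tau$, which the paper leaves implicit.
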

 \begin{proof} (i) By Proposition  \ref{lem:sum}(iii). \\
 \noindent (ii) By Proposition  \ref{lem:sum}(ii). \\
 \noindent (iii) By (ii) above and Lemma  \ref{lem:Zfix}.
 \end{proof}

\section{Comparing the generating functions for $O_q$ and $\mathcal O_q$}
\noindent In this section we investigate how the generating functions $B^\pm(t)$, $B(t)$ for $O_q$
are related to the generating functions $\mathcal W^\pm(t)$, $\mathcal G(t)$, $\mathcal {\tilde G}(t)$ for $\mathcal O_q$.
\medskip

\noindent Throughout this section, we identify $O_q$ with $\langle \mathcal W_0, \mathcal W_1 \rangle $ via the map $\imath$ from 
Lemma \ref{lem:iota}.

\begin{lemma} \label{lem:G1} {\rm (See \cite[Lemma~11.3]{compQons}.)}
The element $\mathcal {\tilde G}_1+ q B_{\delta}$ is central in $\mathcal O_q$.
\end{lemma}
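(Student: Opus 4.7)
The plan is to reduce centrality in $\mathcal O_q$ to commutation with just two elements, and then verify that directly. By Corollary \ref{cor:sum}(i), the algebra $\mathcal O_q$ is generated by $\mathcal W_0$, $\mathcal W_1$, and $\mathcal Z$. Any element of $\mathcal O_q$ commutes with everything in $\mathcal Z$ by the definition of the center, so to prove $\mathcal{\tilde G}_1 + q B_\delta$ is central it suffices to show that it commutes with $\mathcal W_0$ and with $\mathcal W_1$. Under the identification of $O_q$ with $\langle \mathcal W_0, \mathcal W_1\rangle$ from Lemma \ref{lem:iota}, we have $B_\delta = q^{-2}\mathcal W_1 \mathcal W_0 - \mathcal W_0 \mathcal W_1$, so both commutators can be expanded using only the defining relations \eqref{eq:3p1}--\eqref{eq:3p3} at $k=0$.

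First I would compute $[\mathcal W_0, \mathcal{\tilde G}_1]$. The relation $[\mathcal{\tilde G}_1, \mathcal W_0]_q = \rho(\mathcal W_{-1} - \mathcal W_1)$ from \eqref{eq:3p2} rearranges to $\mathcal W_0 \mathcal{\tilde G}_1 = q^2 \mathcal{\tilde G}_1 \mathcal W_0 - q\rho(\mathcal W_{-1}-\mathcal W_1)$, yielding
\[
[\mathcal W_0, \mathcal{\tilde G}_1] = (q^2-1)\,\mathcal{\tilde G}_1 \mathcal W_0 - q\rho(\mathcal W_{-1}-\mathcal W_1).
\]
Next I would compute $[\mathcal W_0, B_\delta]$ by expanding
$[\mathcal W_0, q^{-2}\mathcal W_1 \mathcal W_0 - \mathcal W_0 \mathcal W_1] = q^{-2}[\mathcal W_0,\mathcal W_1]\mathcal W_0 - \mathcal W_0 [\mathcal W_0,\mathcal W_1]$,
substituting $[\mathcal W_0,\mathcal W_1] = (\mathcal{\tilde G}_1 - \mathcal G_1)/(q+q^{-1})$ from \eqref{eq:3p1}, and then using the $q$-commutator relations for both $\mathcal W_0 \mathcal{\tilde G}_1$ and $\mathcal G_1 \mathcal W_0$ from \eqref{eq:3p2} to rewrite $q^{-2}\mathcal{\tilde G}_1 \mathcal W_0 - \mathcal W_0 \mathcal{\tilde G}_1$ and $q^{-2}\mathcal G_1 \mathcal W_0 - \mathcal W_0 \mathcal G_1$ in closed form. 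The $\mathcal G_1$-contributions drop out as $-q^{-1}\rho(\mathcal W_{-1}-\mathcal W_1)$ after the arithmetic, and the result is
\[
[\mathcal W_0, B_\delta] = -(q-q^{-1})\,\mathcal{\tilde G}_1 \mathcal W_0 + \rho(\mathcal W_{-1}-\mathcal W_1).
\]
Forming $[\mathcal W_0, \mathcal{\tilde G}_1] + q[\mathcal W_0, B_\delta]$, the coefficients of $\mathcal{\tilde G}_1\mathcal W_0$ combine as $(q^2-1)-(q^2-1)=0$, and the $\mathcal W_{-1}-\mathcal W_1$ terms cancel exactly.

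The verification that $[\mathcal W_1, \mathcal{\tilde G}_1 + q B_\delta] = 0$ proceeds by the same pattern, now using \eqref{eq:3p3} in place of \eqref{eq:3p2}. I expect the main obstacle to be purely bookkeeping: keeping the signs straight between the two $q$-commutator conventions in \eqref{eq:3p2} and \eqref{eq:3p3}, and making sure the $\mathcal G_1$-terms cancel while the surviving $\mathcal{\tilde G}_1$-terms have exactly the right coefficient to be killed by the $q B_\delta$ contribution. Once both commutators vanish, Corollary \ref{cor:sum}(i) closes the argument and places $\mathcal{\tilde G}_1 + q B_\delta$ in $\mathcal Z$.
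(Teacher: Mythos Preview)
Your argument is correct. The reduction via Corollary~\ref{cor:sum}(i) is legitimate here, since that corollary is established in Section~8 (resting on results imported from \cite{pbwqO}) and so is available before Lemma~\ref{lem:G1}; there is no circularity. Your computation of $[\mathcal W_0,\mathcal{\tilde G}_1+qB_\delta]$ checks out line by line, and the $\mathcal W_1$ case does indeed follow by the same bookkeeping with \eqref{eq:3p3}. (Alternatively, you can shortcut that second computation: the antiautomorphism $\tau$ fixes both $\mathcal{\tilde G}_1$ and $B_\delta$ and swaps $\mathcal W_0\leftrightarrow\mathcal W_1$, so applying $\tau$ to $[\mathcal W_0,\mathcal{\tilde G}_1+qB_\delta]=0$ yields the other vanishing for free.)

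As for comparison with the paper: the paper does not prove Lemma~\ref{lem:G1} at all here, but simply imports it as \cite[Lemma~11.3]{compQons}. So your proposal is not a different route so much as an actual in-paper proof where the paper offers none. What your approach buys is self-containment modulo the structural input from \cite{pbwqO}; what the citation buys is brevity. Either is acceptable.
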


\begin{lemma}\label{lem:Ogen} For the algebra $\mathcal O_q$,
\begin{align}
\lbrack B_\delta, \mathcal {\tilde G}_n\rbrack = 0, \qquad \qquad n \in \mathbb N.
\end{align}
\end{lemma}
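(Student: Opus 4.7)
The plan is to derive the identity as an immediate consequence of two ingredients already established: Lemma \ref{lem:G1}, which places $\mathcal{\tilde G}_1 + qB_\delta$ in the center $\mathcal Z$ of $\mathcal O_q$, and the relation \eqref{eq:3p10}, which asserts that the generators $\lbrace \mathcal{\tilde G}_{k+1}\rbrace_{k\in\mathbb N}$ mutually commute. Throughout, I use the identification made at the start of Section 9 to view $B_\delta = q^{-2}\mathcal W_1 \mathcal W_0 - \mathcal W_0 \mathcal W_1$ as an element of $\mathcal O_q$ via $\imath$, so that the bracket $[B_\delta, \mathcal{\tilde G}_n]$ makes sense in $\mathcal O_q$.

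First I would dispose of the $n=0$ case: by \eqref{eq:GG0} the element $\mathcal{\tilde G}_0 = -(q-q^{-1})\lbrack 2\rbrack_q^2$ is a scalar, hence central, so $[B_\delta,\mathcal{\tilde G}_0]=0$ trivially. For $n\geq 1$, I would start from the centrality asserted in Lemma \ref{lem:G1} to obtain
\begin{align*}
0 \;=\; \bigl[\mathcal{\tilde G}_1 + qB_\delta,\; \mathcal{\tilde G}_n\bigr] \;=\; [\mathcal{\tilde G}_1,\mathcal{\tilde G}_n] + q[B_\delta,\mathcal{\tilde G}_n].
\end{align*}
The first bracket on the right vanishes by \eqref{eq:3p10} (taking $k=0$ and $\ell = n-1$, and also covering the degenerate case $n=1$ with $k=\ell=0$). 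Since $q\neq 0$, the remaining equation gives $[B_\delta,\mathcal{\tilde G}_n]=0$, completing the proof.

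There is no serious obstacle here; the argument is essentially two lines once the right pair of earlier results is brought together. The only point worth flagging is the implicit use of the inclusion $O_q \hookrightarrow \mathcal O_q$ from Lemma \ref{lem:iota}, which is needed to make sense of the commutator of a $B_\delta$ (originally defined in $O_q$) with an alternating generator of $\mathcal O_q$.
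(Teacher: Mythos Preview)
Your proof is correct and follows essentially the same route as the paper's: the paper's one-line proof reads ``By Lemma~\ref{lem:G1} and since $[\mathcal{\tilde G}_1,\mathcal{\tilde G}_n]=0$ for $n\in\mathbb N$,'' which is exactly the combination of centrality of $\mathcal{\tilde G}_1+qB_\delta$ with the commutativity relation \eqref{eq:3p10} that you have spelled out in detail.
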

\begin{proof} By Lemma \ref{lem:G1} and since $\lbrack \mathcal {\tilde G}_1, \mathcal {\tilde G}_n\rbrack = 0$ for $n \in \mathbb N$.
\end{proof}

\begin{lemma} \label{lem:GBd} For the algebra $\mathcal O_q$,
\begin{align}
\lbrack B_\delta, \mathcal {\tilde G}(t) \rbrack = 0.
\end{align}
\end{lemma}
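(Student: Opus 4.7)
The plan is essentially immediate from the preceding lemma. By Definition \ref{def:gf4} we have $\mathcal{\tilde G}(t) = \sum_{n \in \mathbb N} \mathcal{\tilde G}_n\, t^n$, so I would simply expand the commutator using bilinearity:
\begin{align*}
[B_\delta, \mathcal{\tilde G}(t)] = \sum_{n \in \mathbb N} [B_\delta, \mathcal{\tilde G}_n]\, t^n.
\end{align*}
By Lemma \ref{lem:Ogen}, each coefficient $[B_\delta, \mathcal{\tilde G}_n]$ vanishes for $n \in \mathbb N$, so the right-hand side is zero.

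There is no real obstacle here; the lemma is a mechanical repackaging of Lemma \ref{lem:Ogen} at the level of generating functions. The only thing one might want to double-check is that $B_\delta \in \mathcal O_q$ is legitimate (it is, via the identification of $O_q$ with $\langle \mathcal W_0, \mathcal W_1\rangle$ through $\imath$ that was fixed at the start of Section 9), and that the series manipulation is safe because the commutator is taken with a fixed element $B_\delta$ of $\mathcal O_q$ rather than another generating function. Both points are already in force in the current section, so the proof is a one-liner.
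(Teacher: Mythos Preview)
Your proof is correct and is exactly the paper's approach: the paper simply writes ``By Lemma \ref{lem:Ogen},'' and your expansion of $\mathcal{\tilde G}(t)$ into its coefficients is precisely the one-line unpacking of that citation.
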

\begin{proof} By Lemma \ref{lem:Ogen}.
\end{proof}

\begin{lemma}\label{lem:Wind} {\rm (See \cite[Lemma~11.5]{compQons}.)}
For $n\geq 1$ the following hold in $\mathcal O_q$:
\begin{align}
\label{eq:Wind1}
\mathcal W_{-n} &= \mathcal W_n -\frac{(q-q^{-1}) \mathcal W_0 \mathcal{\tilde G}_n}{(q^2-q^{-2})^2} + \frac{q^2 \lbrack B_\delta, \mathcal W_{1-n}\rbrack}{(q^2-q^{-2})^2},
\\
\mathcal W_{n+1} &=\mathcal W_{1-n}-\frac{(q-q^{-1}) \mathcal W_1\mathcal {\tilde G}_n}{(q^2-q^{-2})^2} -\frac{\lbrack B_\delta, \mathcal W_n\rbrack}{(q^2-q^{-2})^2}.
\label{eq:Wind2}
\end{align}
\end{lemma}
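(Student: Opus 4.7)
The plan is to reduce each displayed identity to a single commutator relation, verify that relation by direct expansion using the defining relations of $\mathcal O_q$, and then derive the companion identity by applying the antiautomorphism $\tau$ from Definition~\ref{def:tauAc}.

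\emph{Step 1 (reduction).} Setting $k=n-1$ in \eqref{eq:3p2} yields $\rho(\mathcal W_{-n}-\mathcal W_n)=\lbrack \mathcal{\tilde G}_n,\mathcal W_0\rbrack_q$ with $\rho=-(q^2-q^{-2})^2$. Rewriting $\lbrack \mathcal{\tilde G}_n,\mathcal W_0\rbrack_q = -q\lbrack \mathcal W_0,\mathcal{\tilde G}_n\rbrack + (q-q^{-1})\mathcal W_0\mathcal{\tilde G}_n$ and comparing with \eqref{eq:Wind1} multiplied through by $(q^2-q^{-2})^2=-\rho$, one sees that \eqref{eq:Wind1} is equivalent to
\[
\lbrack \mathcal W_0,\mathcal{\tilde G}_n\rbrack = q\,\lbrack B_\delta,\mathcal W_{1-n}\rbrack.
\]
The analogous manipulation using the $k=n-1$ case of \eqref{eq:3p3} reduces \eqref{eq:Wind2} to $\lbrack \mathcal W_1,\mathcal{\tilde G}_n\rbrack = q\lbrack B_\delta,\mathcal W_n\rbrack$.

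\emph{Step 2 (verifying the key commutator).} I substitute $B_\delta = q^{-2}\mathcal W_1\mathcal W_0-\mathcal W_0\mathcal W_1$ and expand $\lbrack B_\delta,\mathcal W_{1-n}\rbrack$ via the Leibniz rule. Because $n\geq 1$ the subscript $1-n$ is $\leq 0$, so $\mathcal W_{1-n}$ commutes with $\mathcal W_0$ by \eqref{eq:3p4}; moreover \eqref{eq:3p5} with $(k,\ell)=(n-1,0)$ combined with \eqref{eq:3p1} forces $\lbrack \mathcal W_1,\mathcal W_{1-n}\rbrack = -\lbrack \mathcal W_0,\mathcal W_n\rbrack = (\mathcal G_n-\mathcal{\tilde G}_n)/(q+q^{-1})$. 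After substitution and multiplication by $q$, the Leibniz expansion regroups as
\[
q\,\lbrack B_\delta,\mathcal W_{1-n}\rbrack = \frac{\lbrack \mathcal W_0,\mathcal{\tilde G}_n\rbrack_q - \lbrack \mathcal W_0,\mathcal G_n\rbrack_q}{q+q^{-1}}.
\]
Since $\lbrack \mathcal W_0,\mathcal G_n\rbrack_q=\lbrack \mathcal{\tilde G}_n,\mathcal W_0\rbrack_q=\rho(\mathcal W_{-n}-\mathcal W_n)$ by \eqref{eq:3p2}, the numerator collapses to $\lbrack \mathcal W_0,\mathcal{\tilde G}_n\rbrack_q - \lbrack \mathcal{\tilde G}_n,\mathcal W_0\rbrack_q = (q+q^{-1})\lbrack \mathcal W_0,\mathcal{\tilde G}_n\rbrack$, giving the desired identity $\lbrack \mathcal W_0,\mathcal{\tilde G}_n\rbrack = q\lbrack B_\delta,\mathcal W_{1-n}\rbrack$.

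\emph{Step 3 (symmetry and the main obstacle).} For the second formula I apply $\tau$ to the identity of Step~2. Using $\tau(\mathcal W_0)=\mathcal W_1$, $\tau(\mathcal W_n)=\mathcal W_{1-n}$, $\tau(\mathcal{\tilde G}_n)=\mathcal{\tilde G}_n$, and $\tau(B_\delta)=B_\delta$ (Lemma~\ref{lem:asym2} transported to $\mathcal O_q$ via Lemma~\ref{lem:diag}), one obtains $\lbrack \mathcal W_1,\mathcal{\tilde G}_n\rbrack = q\lbrack B_\delta,\mathcal W_n\rbrack$, which by Step~1 is equivalent to \eqref{eq:Wind2}. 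Everything reduces to direct computation, so the only delicate point is the bookkeeping in Step~2: one must arrange the Leibniz expansion precisely so that a difference of $q$-brackets appears, enabling \eqref{eq:3p2} to eliminate the $\mathcal G_n$-contribution and leave only $\mathcal{\tilde G}_n$ on the right.
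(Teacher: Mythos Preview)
Your proof is correct. The paper does not give its own proof of this lemma---it simply cites \cite[Lemma~11.5]{compQons}---so there is nothing to compare against here; your direct verification via the defining relations \eqref{eq:3p1}--\eqref{eq:3p5} together with the $\tau$-symmetry is a clean and self-contained argument.
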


\begin{lemma}\label{lem:Wind2} 
For the algebra $\mathcal O_q$,
\begin{align}
\label{eq:Wind1c}
\mathcal W^-(t) &=t \mathcal W^+(t) -\frac{(q-q^{-1}) \mathcal W_0 \mathcal{\tilde G}(t)}{(q^2-q^{-2})^2} + \frac{q^2 t \lbrack B_\delta, \mathcal W^-(t)\rbrack}{(q^2-q^{-2})^2},
\\
\mathcal W^+(t) &=t \mathcal W^-(t)-\frac{(q-q^{-1}) \mathcal W_1\mathcal {\tilde G}(t)}{(q^2-q^{-2})^2} -\frac{t \lbrack B_\delta, \mathcal W^+(t)\rbrack}{(q^2-q^{-2})^2}.
\label{eq:Wind2c}
\end{align}
\end{lemma}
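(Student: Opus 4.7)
The plan is to derive Lemma \ref{lem:Wind2} as the generating-function reformulation of Lemma \ref{lem:Wind}, in the same spirit that Lemma \ref{lem:ad} packages \eqref{eq:3p1}--\eqref{eq:3p11} into generating-function form. Concretely, for each of the two identities \eqref{eq:Wind1}, \eqref{eq:Wind2}, I would multiply through by $t^n$ and sum over $n \geq 1$, then reassemble the resulting series as generating functions.

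For the first identity, the four sums on the right-hand side of \eqref{eq:Wind1c} arise as follows. The sum $\sum_{n\geq 1} \mathcal W_{-n} t^n$ equals $\mathcal W^-(t) - \mathcal W_0$ by Definition \ref{def:gf4}. The sum $\sum_{n\geq 1} \mathcal W_n t^n$ equals $t \mathcal W^+(t)$ after reindexing. The sum $\sum_{n \geq 1} \mathcal W_{1-n} t^n$ equals $t \mathcal W^-(t)$, again after reindexing, using that $\mathcal W_0, \mathcal W_{-1}, \mathcal W_{-2},\ldots$ are the coefficients of $\mathcal W^-(t)$. Finally $\sum_{n\geq 1} \mathcal{\tilde G}_n t^n = \mathcal{\tilde G}(t) - \mathcal{\tilde G}_0$, and $[B_\delta,\cdot]$ commutes with the summation. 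Assembling these pieces yields
\begin{align*}
\mathcal W^-(t) - \mathcal W_0 &= t \mathcal W^+(t) - \frac{(q-q^{-1}) \mathcal W_0 (\mathcal{\tilde G}(t)-\mathcal{\tilde G}_0)}{(q^2-q^{-2})^2} + \frac{q^2 t [B_\delta, \mathcal W^-(t)]}{(q^2-q^{-2})^2}.
\end{align*}

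The final step is to verify that the boundary contributions cancel, i.e., that
\begin{align*}
-\mathcal W_0 + \frac{(q-q^{-1}) \mathcal W_0 \,\mathcal{\tilde G}_0}{(q^2-q^{-2})^2} = 0.
\end{align*}
Since $\mathcal{\tilde G}_0 = -(q-q^{-1})[2]_q^2 = -(q-q^{-1})(q+q^{-1})^2$ by \eqref{eq:GG0}, and $(q-q^{-1})^2(q+q^{-1})^2 = (q^2-q^{-2})^2$, this identity is immediate, and \eqref{eq:Wind1c} follows. The derivation of \eqref{eq:Wind2c} from \eqref{eq:Wind2} is completely analogous, using $\mathcal W^+(0) = \mathcal W_1$ in place of $\mathcal W^-(0) = \mathcal W_0$.

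There is no real obstacle here — the argument is entirely bookkeeping with generating functions. The only point requiring a brief sanity check is the cancellation of the constant terms, which works out cleanly thanks to the specific normalization of $\mathcal{\tilde G}_0$ in \eqref{eq:GG0}. Alternatively, one could invoke the antiautomorphism $\tau$ of Definition \ref{def:tauAc} to derive \eqref{eq:Wind2c} from \eqref{eq:Wind1c} once the latter is proved, since $\tau$ swaps $\mathcal W^-(t) \leftrightarrow \mathcal W^+(t)$ and fixes $\mathcal{\tilde G}(t)$; however, the direct summation argument is more transparent and is the approach I would present.
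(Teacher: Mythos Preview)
Your proposal is correct and matches the paper's own proof, which simply states that \eqref{eq:Wind1c}, \eqref{eq:Wind2c} express the recurrences \eqref{eq:Wind1}, \eqref{eq:Wind2} in generating-function form. One small slip: the boundary identity should read $\mathcal W_0 + \frac{(q-q^{-1})\mathcal W_0\,\mathcal{\tilde G}_0}{(q^2-q^{-2})^2} = 0$ (a plus, not a minus), which is indeed what your computation of $\mathcal{\tilde G}_0$ verifies.
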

\begin{proof} The equation \eqref{eq:Wind1c} (resp.  \eqref{eq:Wind2c}) expresses the recurrence \eqref{eq:Wind1} (resp.  \eqref{eq:Wind2}) in terms of generating functions.
\end{proof}

\noindent Recall  $S$, $T$ from \eqref{eq:ST}. We will be discussing  $\bigl(\mathcal {\tilde G}(S)\bigr)^{-1} $ and $\bigl(\mathcal {\tilde G}(T)\bigr)^{-1} $.
These inverses exist by 
 \cite[Lemmas~4.1, 4.6]{conj}.

\begin{proposition} \label{BsolveV} For the algebra $\mathcal O_q$,
\begin{align}
B^-(t) &= (q^2-q^{-2}) S \Bigl( q^{-1}\mathcal W^+(S)-qt^{-1} \mathcal W^-(S)\Bigr)  \bigl(\mathcal {\tilde G}(S)\bigr)^{-1} ,
\label{eq:Bmsolve2}
\\
B^+(t) &= (q^2-q^{-2}) T \Bigl( q \mathcal W^-(T)-q^{-1}t^{-1} \mathcal W^+(T)\Bigr) \bigl(\mathcal {\tilde G}(T)\bigr)^{-1}.
\label{eq:Bpsolve2}
\end{align}
\end{proposition}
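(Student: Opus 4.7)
The plan is to show that the right-hand side of \eqref{eq:Bmsolve2}, call it $Y^-(t) = (q^2-q^{-2}) S (q^{-1}\mathcal W^+(S)-qt^{-1} \mathcal W^-(S))(\mathcal {\tilde G}(S))^{-1}$, coincides with $B^-(t)$ by checking the two characterizing properties: $Y^-(0)=W_0$ and $Y^-(t)$ satisfies the functional equation \eqref{eq:BP1}. Uniqueness of $B^-(t)$ as a power series solution is obtained by extracting the coefficient of $t^n$ in \eqref{eq:BP1}: the $t^0$ coefficient reproduces equation \eqref{eq:line1} and the $t^n$ coefficient for $n\geq 1$ reproduces the recurrence \eqref{eq:line2} in the rearranged form $B_{(n+1)\delta+\alpha_0} = B_{(n-1)\delta+\alpha_0} + q[B_\delta, B_{n\delta+\alpha_0}]/((q-q^{-1})(q^2-q^{-2}))$. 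A straightforward induction then shows that any power series satisfying \eqref{eq:BP1} with constant term $W_0$ must equal $B^-(t)$.

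That $Y^-(t)$ is a well-defined power series in $t$ follows because $S$ has no constant term (it vanishes at $t=0$ to order one), so the $t^{-1}$ in $qt^{-1}\mathcal W^-(S)$ is absorbed by the prefactor $S$; invertibility of $\mathcal{\tilde G}(S)$ is the content of the cited result from \cite{conj}. Specializing $t=0$ using $\mathcal{\tilde G}(0) = -(q-q^{-1})\lbrack 2 \rbrack_q^2$ gives $Y^-(0) = W_0$ after a brief algebraic manipulation.

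The main step is verifying the functional equation. By Lemma \ref{lem:GBd}, $B_\delta$ commutes with $\mathcal{\tilde G}(S)$, hence also with $(\mathcal{\tilde G}(S))^{-1}$, and therefore the Leibniz rule yields
\[
[B_\delta, Y^-(t)] = (q^2-q^{-2}) S \bigl( q^{-1}[B_\delta, \mathcal W^+(S)] - qt^{-1}[B_\delta, \mathcal W^-(S)]\bigr)(\mathcal{\tilde G}(S))^{-1}.
\]
Lemma \ref{lem:Wind2}, specialized at $t\mapsto S$, gives closed-form expressions for $[B_\delta, \mathcal W^\pm(S)]$ as linear combinations of $\mathcal W^\pm(S)$ and $\mathcal{\tilde G}(S) \mathcal W_0$, $\mathcal{\tilde G}(S)\mathcal W_1$. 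After substituting and multiplying \eqref{eq:BP1} on the right by $\mathcal{\tilde G}(S)$ to clear the inverse, the desired equation becomes an identity linear in $\mathcal W^\pm(S)$ and $\mathcal{\tilde G}(S)\mathcal W_0$, $\mathcal{\tilde G}(S)\mathcal W_1$. The contributions in $\mathcal{\tilde G}(S)\mathcal W_0$ and $\mathcal{\tilde G}(S)\mathcal W_1$ balance directly, and the $\mathcal W^\pm(S)$ coefficients reduce to the two scalar identities
\[
(q+q^{-1})(t^{-1}-S^{-1}) = q^{-1}(t^{-1}-t), \qquad (q+q^{-1})(1-t^{-1}S^{-1}) = -qt^{-1}(t^{-1}-t),
\]
both of which follow immediately from $(q+q^{-1})S^{-1} = q^{-1}t+qt^{-1}$.

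The main obstacle is bookkeeping for non-commutativity: one must keep $(\mathcal{\tilde G}(S))^{-1}$ on the right at every step and remember that $\mathcal W^\pm(S)$ and $\mathcal{\tilde G}(S)$ do not commute with each other, even though both commute with $B_\delta$. The second formula \eqref{eq:Bpsolve2} is established in identical fashion: one sets $Y^+(t) = (q^2-q^{-2})T(q\mathcal W^-(T)-q^{-1}t^{-1}\mathcal W^+(T))(\mathcal{\tilde G}(T))^{-1}$, checks $Y^+(0)=W_1$, and verifies the functional equation \eqref{eq:BP2} using \eqref{eq:Wind2c} and the analogous identities arising from $(q+q^{-1})T^{-1}=qt+q^{-1}t^{-1}$.
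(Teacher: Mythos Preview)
Your argument is correct and follows essentially the same route as the paper: both define the right-hand side of \eqref{eq:Bmsolve2} as an auxiliary series, use Lemmas \ref{lem:GBd} and \ref{lem:Wind2} to verify the functional equation \eqref{eq:BP1}, and then invoke the uniqueness of the recursion \eqref{eq:line1}, \eqref{eq:line2} to conclude. One small slip: the terms coming from Lemma \ref{lem:Wind2} are $\mathcal W_0\,\mathcal{\tilde G}(S)$ and $\mathcal W_1\,\mathcal{\tilde G}(S)$, not $\mathcal{\tilde G}(S)\,\mathcal W_0$ and $\mathcal{\tilde G}(S)\,\mathcal W_1$; since these sit to the left of $(\mathcal{\tilde G}(S))^{-1}$, the cancellation you want still works, so this does not affect the argument.
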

\begin{proof} Let $b^-(t) $ denote the expression on the right in \eqref{eq:Bmsolve2}. We show that $B^-(t)=b^-(t)$. 
Using \cite[Sections~4, 5]{conj} we examine the terms on the right in \eqref{eq:Bmsolve2}, and find that
 $b^-(t)$ has the form $b^-(t) = \sum_{n \in \mathbb N} b_n t^n$ with $b_n \in \mathcal O_q$ for $n \in \mathbb N$.
 We show that $b_n = B_{n\delta+\alpha_0}$ for $n \in \mathbb N$.
Using Lemmas  \ref{lem:GBd}, \ref{lem:Wind2}  we obtain
\begin{align}
\label{eq:BP1e}
&\frac{q \lbrack B_\delta, b^-(t) \rbrack}{(q-q^{-1})(q^2-q^{-2})}= 
(t^{-1}-t)b^-(t)-t^{-1} \mathcal W_0 -\mathcal W_1.
\end{align}
From \eqref{eq:BP1e} we obtain the recursion
\begin{align*}
&b_0=\mathcal W_0,  \qquad \qquad 
b_1 = \mathcal W_1 + 
\frac{q \lbrack B_{\delta}, \mathcal W_0\rbrack}{(q-q^{-1})(q^2-q^{-2})},
\\
&
b_n = b_{n-2}
+ 
\frac{q \lbrack B_{\delta}, b_{n-1}\rbrack}{(q-q^{-1})(q^2-q^{-2})} \qquad \qquad n\geq 2.
\end{align*}
Comparing this recursion with
\eqref{eq:line1}, 
\eqref{eq:line2}
we obtain $b_n = B_{n\delta+\alpha_0}$ for $n \in \mathbb N$. Therefore $B^-(t)=b^-(t)$, so \eqref{eq:Bmsolve2} holds. A similar argument that yields \eqref{eq:Bpsolve2} is summarized as follows.
Let $b^+(t) $ denote the generating function on the right in \eqref{eq:Bpsolve2}. Using Lemmas  \ref{lem:GBd}, \ref{lem:Wind2}  we obtain
\begin{align}
\label{eq:BP2e}
&\frac{q \lbrack b^+(t), B_\delta\rbrack}{(q-q^{-1})(q^2-q^{-2})}= 
(t^{-1}-t)b^+(t)-\mathcal W_0-t^{-1} \mathcal W_1.
\end{align}
Comparing \eqref{eq:BP2} and \eqref{eq:BP2e}, we find that the coefficients of $B^+(t)$ and $b^+(t)$ satisfy the same recurrence and initial conditions.
Therefore these coefficients coincide,  so $B^+(t)=b^+(t)$ and  \eqref{eq:Bpsolve2} holds.
\end{proof}
\noindent Next we give a variation on Proposition  \ref{BsolveV}.

\begin{proposition} \label{prop:Bsolve} For the algebra $\mathcal O_q$,
\begin{align}
B^-(t) &= (q^2-q^{-2}) \bigl(\mathcal {\tilde G}(T)\bigr)^{-1}  \Bigl( q \mathcal W^+(T)-q^{-1}t^{-1} \mathcal W^-(T)\Bigr) T,
\label{eq:Bmsolve}
\\
B^+(t) &= (q^2-q^{-2}) \bigl(\mathcal {\tilde G}(S)\bigr)^{-1} \Bigl( q^{-1} \mathcal W^-(S)-qt^{-1} \mathcal W^+(S)\Bigr)S.
\label{eq:Bpsolve}
\end{align}
\end{proposition}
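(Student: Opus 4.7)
My plan is to obtain Proposition \ref{prop:Bsolve} from Proposition \ref{BsolveV} by applying the antiautomorphism $\tau$ of $\mathcal O_q$ from Definition \ref{def:tauAc}. The key ingredients are: $\tau$ swaps $\mathcal W^-(t) \leftrightarrow \mathcal W^+(t)$ and fixes $\mathcal {\tilde G}(t)$ (since $\tau$ fixes each $\mathcal {\tilde G}_{k+1}$), and via Lemma \ref{lem:diag} together with Lemma \ref{lem:asym2}, $\tau$ also swaps $B^-(t) \leftrightarrow B^+(t)$.

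First I would apply $\tau$ to equation \eqref{eq:Bmsolve2}. The left-hand side becomes $\tau(B^-(t))=B^+(t)$. For the right-hand side, $\tau$ is an antiautomorphism, so it reverses the order of multiplication. Since $\tau$ fixes scalar series like $S$ (the coefficients lie in $\mathbb F$), since $S$ is central in $\mathcal O_q$, and since $\tau$ fixes $\mathcal {\tilde G}(S)$ and hence its inverse (the inverse existing by \cite[Lemmas~4.1,~4.6]{conj}), we obtain
\begin{align*}
B^+(t) = (q^2-q^{-2}) \bigl(\mathcal {\tilde G}(S)\bigr)^{-1} \bigl( q^{-1}\mathcal W^-(S)-qt^{-1} \mathcal W^+(S)\bigr) S,
\end{align*}
which is \eqref{eq:Bpsolve}. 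Applying $\tau$ to \eqref{eq:Bpsolve2} by the same mechanism yields \eqref{eq:Bmsolve}.

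There is no significant obstacle. The only subtle point is bookkeeping: one must carefully reverse the product order when $\tau$ acts, and track that $\tau$ preserves $\mathcal {\tilde G}(\cdot)$ while exchanging $\mathcal W^-(\cdot) \leftrightarrow \mathcal W^+(\cdot)$. Since the scalar power series $S$ and $T$ commute with everything, one may freely reposition them so that the resulting expressions match the proposition statement verbatim.

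As a sanity alternative, one could instead imitate the proof of Proposition \ref{BsolveV}: define the right-hand sides of \eqref{eq:Bmsolve}, \eqref{eq:Bpsolve} as generating functions $\tilde b^{\mp}(t)$, verify that they satisfy the same generating-function recursions \eqref{eq:BP1}, \eqref{eq:BP2} using Lemmas \ref{lem:GBd} and \ref{lem:Wind2}, and appeal to uniqueness of the recursion in \eqref{eq:line1}--\eqref{eq:line4}. But invoking $\tau$ is cleaner and essentially forces the result from Proposition \ref{BsolveV} immediately.
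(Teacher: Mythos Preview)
Your proposal is correct and matches the paper's own proof, which simply says to apply the antiautomorphism $\tau$ to everything in Proposition \ref{BsolveV}. Your additional bookkeeping (invoking Lemma \ref{lem:diag} and Lemma \ref{lem:asym2} to justify that $\tau$ swaps $B^\pm(t)$, and noting that $\tau$ fixes $\mathcal{\tilde G}(\cdot)$ while reversing products) is accurate and fills in the details the paper leaves implicit.
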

\begin{proof} 
Apply the antiautomorphism $\tau$ to everything in Proposition   \ref{BsolveV}.
\end{proof}

\begin{theorem} \label{lem:zzznote}
For the algebra $\mathcal O_q$,
\begin{align}
\label{eq:long1}
\frac{q+q^{-1}}{t+t^{-1}}
\mathcal W^-\biggl( \frac{q+q^{-1}}{t+t^{-1}}\biggr) &=
\frac{q^{-1}t B^+(q^{-1}t)+ B^-(qt)    
}{(q^2-q^{-2})(t-t^{-1})}\,
\mathcal {\tilde G}\biggl( \frac{q+q^{-1}}{t+t^{-1}}\biggr),
\\
\label{eq:long2}
\frac{q+q^{-1}}{t+t^{-1}}
\mathcal W^+\biggl( \frac{q+q^{-1}}{t+t^{-1}}\biggr) &=
\frac{ B^+(q^{-1}t)+ qtB^-(qt) }{(q^2-q^{-2})(t-t^{-1})}\,
\mathcal {\tilde G}\biggl( \frac{q+q^{-1}}{t+t^{-1}}\biggr).
\end{align}
\end{theorem}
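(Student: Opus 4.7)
The plan is to derive both identities directly from Proposition \ref{BsolveV}, by a judicious substitution that collapses $S$ and $T$ to a common value, followed by taking appropriate linear combinations to solve for $\mathcal W^\pm$.

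Set $U = (q+q^{-1})/(t+t^{-1})$. The first observation is the key compatibility: substituting $t \mapsto qt$ into $S = (q+q^{-1})/(q^{-1}t+qt^{-1})$ yields $(q+q^{-1})/(t+t^{-1}) = U$, and substituting $t \mapsto q^{-1}t$ into $T = (q+q^{-1})/(qt+q^{-1}t^{-1})$ also yields $U$. Thus substitution of $qt$ into the formula \eqref{eq:Bmsolve2} for $B^-(t)$ and substitution of $q^{-1}t$ into the formula \eqref{eq:Bpsolve2} for $B^+(t)$ both produce expressions in which the arguments of $\mathcal W^\pm$ and $\mathcal{\tilde G}$ become $U$.

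First I would carry out these substitutions to obtain
\begin{align*}
B^-(qt) &= (q^2-q^{-2})\, U\bigl(q^{-1}\mathcal W^+(U) - t^{-1}\mathcal W^-(U)\bigr)\bigl(\mathcal{\tilde G}(U)\bigr)^{-1},\\
B^+(q^{-1}t) &= (q^2-q^{-2})\, U\bigl(q\mathcal W^-(U) - t^{-1}\mathcal W^+(U)\bigr)\bigl(\mathcal{\tilde G}(U)\bigr)^{-1}.
\end{align*}
Next I would form the two linear combinations suggested by the right-hand sides of \eqref{eq:long1} and \eqref{eq:long2}. For \eqref{eq:long1}, computing $q^{-1}tB^+(q^{-1}t) + B^-(qt)$ causes the two $\mathcal W^+(U)$ contributions to cancel and the two $\mathcal W^-(U)$ contributions to combine into a factor $(t-t^{-1})\mathcal W^-(U)$; for \eqref{eq:long2}, computing $B^+(q^{-1}t) + qtB^-(qt)$ cancels the $\mathcal W^-(U)$ contributions and isolates $(t-t^{-1})\mathcal W^+(U)$. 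Multiplying the resulting identity on the right by $\mathcal{\tilde G}(U)$ (which simply cancels the factor $(\mathcal{\tilde G}(U))^{-1}$ sitting there, with no commutativity required) and dividing by $(q^2-q^{-2})(t-t^{-1})$ yields exactly \eqref{eq:long1} and \eqref{eq:long2}.

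I do not expect a serious obstacle: the only subtle point is ordering. Since $\mathcal{\tilde G}(U)$ stands on the right in Proposition \ref{BsolveV}, everything in the computation takes place with $(\mathcal{\tilde G}(U))^{-1}$ on the right of the $\mathcal W^\pm$ factors, so the final multiplication by $\mathcal{\tilde G}(U)$ on the right is legitimate without invoking any commutation relations. Thus the verification is essentially bookkeeping with the scalar factors $q^{\pm 1}t$ and $t^{-1}$, and the argument reduces Theorem \ref{lem:zzznote} to a direct specialization of Proposition \ref{BsolveV}.
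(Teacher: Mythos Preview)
Your proposal is correct and follows essentially the same approach as the paper: both derive the theorem by substituting $t\mapsto qt$ and $t\mapsto q^{-1}t$ into Proposition \ref{BsolveV} so that $S$ and $T$ collapse to $(q+q^{-1})/(t+t^{-1})$, and then taking the obvious linear combinations. Your write-up is simply a more explicit version of the paper's one-line proof, and your remark about ordering (that $(\mathcal{\tilde G}(U))^{-1}$ stays on the right throughout, so no commutation is needed) is a nice clarification.
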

\begin{proof} To verify these equations, evaluate $B^+(q^{-1}t)$ and $B^-(qt)$ using
Proposition \ref{BsolveV} and simplify the result using \eqref{eq:ST}.
\end{proof}

\begin{theorem} \label{lem:zzznotez}
For the algebra $\mathcal O_q$,
\begin{align}
\label{eq:long3}
\frac{q+q^{-1}}{t+t^{-1}}
\mathcal W^-\biggl( \frac{q+q^{-1}}{t+t^{-1}}\biggr) &=
\mathcal {\tilde G}\biggl( \frac{q+q^{-1}}{t+t^{-1}}\biggr)
\frac{qt B^+(qt)+ B^-(q^{-1}t)    
}{(q^2-q^{-2})(t-t^{-1})},
\\
\label{eq:long4}
\frac{q+q^{-1}}{t+t^{-1}}
\mathcal W^+\biggl( \frac{q+q^{-1}}{t+t^{-1}}\biggr) &=
\mathcal {\tilde G}\biggl( \frac{q+q^{-1}}{t+t^{-1}}\biggr)
\frac{ B^+(qt)+ q^{-1}tB^-(q^{-1}t) }{(q^2-q^{-2})(t-t^{-1})}.
\end{align}
\end{theorem}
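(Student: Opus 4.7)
The plan is to mirror the proof of Theorem \ref{lem:zzznote} verbatim, the only substitution being that we use Proposition \ref{prop:Bsolve} in place of Proposition \ref{BsolveV}. Equivalently, and more conceptually, one can recognize Theorem \ref{lem:zzznotez} as the image of Theorem \ref{lem:zzznote} under the antiautomorphism $\tau$, since Proposition \ref{prop:Bsolve} was itself obtained by applying $\tau$ to Proposition \ref{BsolveV}.

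For the direct route, I first observe that in Proposition \ref{prop:Bsolve} the parameter attached to $B^-(t)$ is $T$ while the one attached to $B^+(t)$ is $S$, and that
\[
T|_{t \mapsto q^{-1}t} \;=\; S|_{t \mapsto qt} \;=\; \frac{q+q^{-1}}{t+t^{-1}}.
\]
Denote this common value by $U$. Substituting from Proposition \ref{prop:Bsolve} into the combination $qt B^+(qt) + B^-(q^{-1}t)$ produces, up to the prefactor $(q^2-q^{-2})\bigl(\mathcal{\tilde G}(U)\bigr)^{-1}$ on the left and a trailing factor $U$ on the right, the inner expression
\[
qt\bigl(q^{-1}\mathcal W^-(U) - t^{-1}\mathcal W^+(U)\bigr) + \bigl(q\mathcal W^+(U) - t^{-1}\mathcal W^-(U)\bigr),
\]
in which the $\mathcal W^+(U)$ contributions cancel, leaving $(t - t^{-1})\mathcal W^-(U)$. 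Multiplying by $\mathcal{\tilde G}(U)$ on the left and dividing by $(q^2-q^{-2})(t-t^{-1})$ then yields \eqref{eq:long3}. The identity \eqref{eq:long4} follows by the same computation applied to $B^+(qt) + q^{-1}t B^-(q^{-1}t)$, where instead the $\mathcal W^-(U)$ contributions cancel and leave $(t-t^{-1})\mathcal W^+(U)$.

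For the alternative $\tau$-route, recall from Lemma \ref{lem:asym2}, Definition \ref{def:tauAc}, and Lemma \ref{lem:diag} that $\tau$ swaps $\mathcal W^\pm(t) \leftrightarrow \mathcal W^\mp(t)$ and $B^\pm(t) \leftrightarrow B^\mp(t)$, fixes $\mathcal{\tilde G}(t)$, and reverses multiplication. Applying $\tau$ to \eqref{eq:long1} therefore exchanges the sign-superscripts on every $\mathcal W$- and $B$-factor and moves the $\mathcal{\tilde G}(U)$ factor from the right-hand side to the left-hand side of the product; the resulting identity is precisely \eqref{eq:long4}, and applying $\tau$ to \eqref{eq:long2} gives \eqref{eq:long3}. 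There is no serious obstacle in either approach; the one point to watch is simply the non-commutativity, namely the location of the $\mathcal{\tilde G}$-factor in the product after the substitution, which is exactly what distinguishes Theorem \ref{lem:zzznotez} from Theorem \ref{lem:zzznote}.
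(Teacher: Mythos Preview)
Your proof is correct and follows exactly the paper's own approach: the paper likewise offers both the direct verification via Proposition \ref{prop:Bsolve} (substituting $t\mapsto qt$ and $t\mapsto q^{-1}t$ so that $S$ and $T$ both become $(q+q^{-1})/(t+t^{-1})$) and the alternative of applying $\tau$ to Theorem \ref{lem:zzznote}. Your write-up simply fills in the cancellation details that the paper leaves implicit.
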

\begin{proof} To verify these equations, evaluate $B^+(qt)$ and $B^-(q^{-1}t)$ using
Proposition \ref{prop:Bsolve} and simplify the result using \eqref{eq:ST}. Alternatively, apply the antiautomorphism $\tau$ everywhere in Theorem  \ref{lem:zzznote}.
\end{proof}

\noindent In the next two results we give some consequences of Theorems  \ref{lem:zzznote},  \ref{lem:zzznotez}.

\begin{proposition}\label{prop:extra1} For the algebra $\mathcal O_q$,
\begin{align*}
&q \mathcal {\tilde G}\biggl( \frac{q+q^{-1}}{t+t^{-1}}\biggr) \mathcal W_0 - q^{-1} \mathcal W_0 \mathcal {\tilde G}\biggl( \frac{q+q^{-1}}{t+t^{-1}}\biggr) 
\\
& =
\frac{q(q-q^{-1})(qt-q^{-1}t^{-1})B^-(qt)-t(q-q^{-1})(q^{-1}t-qt^{-1}) B^+(q^{-1}t)}{t-t^{-1}} \,
\mathcal {\tilde G}\biggl( \frac{q+q^{-1}}{t+t^{-1}}\biggr)
\\
& =
\mathcal {\tilde G}\biggl( \frac{q+q^{-1}}{t+t^{-1}}\biggr)\,
\frac{
q^{-1}(q-q^{-1})(q^{-1}t-qt^{-1}) B^-(q^{-1}t)
-t(q-q^{-1})(qt-q^{-1}t^{-1})B^+(qt)}{t-t^{-1}
}
\end{align*}
and also
\begin{align*}
&q \mathcal W_1 \mathcal {\tilde G}\biggl( \frac{q+q^{-1}}{t+t^{-1}}\biggr) -q^{-1}
\mathcal {\tilde G}\biggl( \frac{q+q^{-1}}{t+t^{-1}}\biggr) \mathcal W_1
\\
& =
\frac{
q^{-1}(q-q^{-1})(q^{-1}t-qt^{-1}) B^+(q^{-1}t)
-t(q-q^{-1})(qt-q^{-1}t^{-1})B^-(qt)
}{t-t^{-1}} \,
\mathcal {\tilde G}\biggl( \frac{q+q^{-1}}{t+t^{-1}}\biggr)
\\
& =
\mathcal {\tilde G}\biggl( \frac{q+q^{-1}}{t+t^{-1}}\biggr) \,
\frac{q(q-q^{-1})(qt-q^{-1}t^{-1})B^+(qt)-t (q-q^{-1})(q^{-1}t-qt^{-1}) B^-(q^{-1}t)}{t-t^{-1}}.
\end{align*}
\end{proposition}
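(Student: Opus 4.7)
The starting point is the second equality of \eqref{eq:3pp2} in Lemma \ref{lem:ad}, which reads $\lbrack \mathcal{\tilde G}(t), \mathcal W_0\rbrack_q = \rho \mathcal W^-(t) - \rho t \mathcal W^+(t)$ with $\rho = -(q^2-q^{-2})^2$. Writing $u := (q+q^{-1})/(t+t^{-1})$ and specializing $t \mapsto u$ in that identity, the left-hand side of the first displayed equation becomes $\rho\bigl(\mathcal W^-(u) - u \mathcal W^+(u)\bigr)$. The first displayed equation will then be an identity in $\mathcal W^\pm(u)$ and $\mathcal{\tilde G}(u)$ that is equivalent to Theorem \ref{lem:zzznote}, and the second displayed equation is its counterpart via Theorem \ref{lem:zzznotez}.

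Concretely, for the first equation the plan is to use Theorem \ref{lem:zzznote} to write $u \mathcal W^\pm(u)$ as known generating functions in $B^-(qt)$ and $B^+(q^{-1}t)$ times $\mathcal{\tilde G}(u)$ on the right. One then divides the first of those by $u$ (using $u^{-1} = (t+t^{-1})/(q+q^{-1})$) to recover $\mathcal W^-(u)$, forms $\rho(\mathcal W^-(u) - u\mathcal W^+(u))$, and collects the coefficients of $B^-(qt)$ and $B^+(q^{-1}t)$. The algebra reduces to the two scalar identities $(t+t^{-1})q^{-1}t - (q+q^{-1}) = t(q^{-1}t - qt^{-1})$ and $(t+t^{-1}) - qt(q+q^{-1}) = -(q^2 t - t^{-1})$, combined with $\rho/(q^2-q^{-2}) = -(q-q^{-1})(q+q^{-1})$. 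The second displayed equation is obtained the same way, substituting Theorem \ref{lem:zzznotez} for Theorem \ref{lem:zzznote} so that $\mathcal{\tilde G}(u)$ ends up on the left throughout.

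The two identities involving $\mathcal W_1$ will then be deduced from the two identities involving $\mathcal W_0$ by applying the antiautomorphism $\tau$ of $\mathcal O_q$ from Definition \ref{def:tauAc}. Since $\tau$ swaps $\mathcal W_0 \leftrightarrow \mathcal W_1$, fixes each $\mathcal{\tilde G}_k$ (so $\mathcal{\tilde G}(u)$ is preserved), swaps $B^-(\cdot) \leftrightarrow B^+(\cdot)$ by Lemma \ref{lem:asym2}, and reverses products, the image under $\tau$ of the first $\mathcal W_0$ equality is the second $\mathcal W_1$ equality, and vice versa; in particular the side on which $\mathcal{\tilde G}(u)$ appears flips, which is exactly the pattern in the statement.

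The only real obstacle is scalar bookkeeping: several mixed expressions in $q, q^{-1}, t, t^{-1}$ must be simplified after the substitution $u = (q+q^{-1})/(t+t^{-1})$, and one must carefully track whether $\mathcal{\tilde G}(u)$ sits on the left or the right at each step. There is no conceptual difficulty once Theorems \ref{lem:zzznote} and \ref{lem:zzznotez} are in hand.
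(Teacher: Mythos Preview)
Your proposal is correct and follows essentially the same route as the paper: start from the right-hand equality in \eqref{eq:3pp2}, substitute $t\mapsto (q+q^{-1})/(t+t^{-1})$, evaluate the resulting $\rho\,\mathcal W^-(u)-\rho\,u\,\mathcal W^+(u)$ via Theorem~\ref{lem:zzznote} (resp.\ Theorem~\ref{lem:zzznotez}) to get the first (resp.\ second) $\mathcal W_0$ identity, and then apply $\tau$ to obtain the two $\mathcal W_1$ identities. Your observation that $\tau$ sends the first $\mathcal W_0$ equality to the second $\mathcal W_1$ equality (and vice versa) is exactly the mechanism the paper invokes, stated more explicitly.
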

\begin{proof}In the equation on the right in \eqref{eq:3pp2}, replace $t$ by $(q+q^{-1})(t+t^{-1})^{-1}$ to obtain
\begin{align*}
q \mathcal {\tilde G}\biggl( \frac{q+q^{-1}}{t+t^{-1}}\biggr) \mathcal W_0 -
q^{-1} \mathcal W_0 \mathcal {\tilde G}\biggl( \frac{q+q^{-1}}{t+t^{-1}}\biggr) =
\rho \mathcal W^-\biggl( \frac{q+q^{-1}}{t+t^{-1}}\biggr)
-\rho \frac{q+q^{-1}}{t+t^{-1}}\mathcal W^+\biggl( \frac{q+q^{-1}}{t+t^{-1}}\biggr).
\end{align*}
In the above equation, evaluate the right-hand side using Theorem \ref{lem:zzznote} (resp. Theorem \ref{lem:zzznotez})
to obtain the the first (resp. second) equation
in the proposition statement. Using $\tau$ we obtain the third and fourth equation in the proposition statement.
\end{proof}

\begin{proposition} \label{lem:ex2}
For the algebra $\mathcal O_q$, the generating function
\begin{align}
\label{eq:GF}
&\qquad \mathcal G\biggl( \frac{q+q^{-1}}{t+t^{-1}}\biggr) 
\end{align}
is equal to each of the following:
\begin{align*}
&\biggl( \frac{B(qt)}{q^{-2}-1} + 
\frac{qt(q-q^{-1})\bigl(B^-(qt)\bigr)^2 -t(q^{-1}t-qt^{-1}) B^-(qt) B^+(q^{-1}t)}{t-t^{-1}} \biggr)
\mathcal {\tilde G}\biggl( \frac{q+q^{-1}}{t+t^{-1}}\biggr),
\\
&\biggl( \frac{B(q^{-1}t)}{q^{-2}-1} -
\frac{q^{-1}t(q-q^{-1})\bigl(B^+(q^{-1}t)\bigr)^2 +t(qt-q^{-1}t^{-1}) B^+(q^{-1}t) B^-(qt)}{t-t^{-1}} \biggr)
\mathcal {\tilde G}\biggl( \frac{q+q^{-1}}{t+t^{-1}}\biggr),
\\
&\mathcal {\tilde G}\biggl( \frac{q+q^{-1}}{t+t^{-1}}\biggr) \biggl( \frac{B(qt)}{q^{-2}-1} + 
\frac{qt(q-q^{-1})\bigl(B^+(qt)\bigr)^2 -t(q^{-1}t-qt^{-1}) B^-(q^{-1}t) B^+(qt)}{t-t^{-1}} \biggr),
\\
&\mathcal {\tilde G}\biggl( \frac{q+q^{-1}}{t+t^{-1}}\biggr) \biggl( \frac{B(q^{-1}t)}{q^{-2}-1} -
\frac{q^{-1}t(q-q^{-1})\bigl(B^-(q^{-1}t)\bigr)^2 +t(qt-q^{-1}t^{-1}) B^+(qt) B^-(q^{-1}t)}{t-t^{-1}} \biggr).
\end{align*}
\end{proposition}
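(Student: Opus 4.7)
The plan is to establish the first of the four displayed equalities directly from \eqref{eq:3pp1}, and then deduce the remaining three by applying the antiautomorphism $\tau$ and the first identity of Corollary \ref{lem:laterUse}. Write $T_0 = (q+q^{-1})/(t+t^{-1})$. Since $T_0$ is a scalar series, equation \eqref{eq:3pp1} evaluated at $t=T_0$ gives
\[
\mathcal G(T_0) = \mathcal{\tilde G}(T_0) - (q+q^{-1})[\mathcal W_0, T_0\mathcal W^+(T_0)].
\]
By \eqref{eq:long2}, $T_0\mathcal W^+(T_0) = F\mathcal{\tilde G}(T_0)$ with $F = (B^+(q^{-1}t)+qtB^-(qt))/\bigl((q^2-q^{-2})(t-t^{-1})\bigr)$. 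Expanding the commutator and using \eqref{eq:3pp2} in the form $\mathcal{\tilde G}(T_0)\mathcal W_0 = q^{-2}\mathcal W_0\mathcal{\tilde G}(T_0) + q^{-1}\rho\bigl(\mathcal W^-(T_0) - T_0\mathcal W^+(T_0)\bigr)$, then reapplying \eqref{eq:long1} and \eqref{eq:long2} to the resulting $\mathcal W^\pm$ terms, one obtains
\[
\mathcal G(T_0) = \bigl(1 - (1+q^{-2})[\mathcal W_0, F]_q + (1+q^{-2})F M_1\bigr)\mathcal{\tilde G}(T_0),
\]
where $M_1 = \rho(T_0^{-1}F_- - F)$ and $F_- = (q^{-1}tB^+(q^{-1}t)+B^-(qt))/\bigl((q^2-q^{-2})(t-t^{-1})\bigr)$ comes from \eqref{eq:long1}.

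The next step is to evaluate $[\mathcal W_0, F]_q$ using Corollary \ref{prop:pbwRel}, equations \eqref{eq:L1} and \eqref{eq:L7}, and then to expand $FM_1$. After clearing denominators, the coefficient of $\mathcal{\tilde G}(T_0)$ is a rational function in $t$ whose numerator involves $B(q^{\pm1}t)$, $(B^{\pm}(q^{\mp1}t))^2$, and the two cross products $B^-(qt)B^+(q^{-1}t)$, $B^+(q^{-1}t)B^-(qt)$. A direct check shows that the difference between this coefficient and the first expression displayed in the proposition is exactly a scalar multiple of the left-hand side of the first identity in Corollary \ref{lem:laterUse}; invoking that identity eliminates the $(B^+(q^{-1}t))^2$, $B^+(q^{-1}t)B^-(qt)$, and $B(q^{-1}t)$ contributions and leaves precisely the first expression.

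With the first expression established, the third follows by applying the antiautomorphism $\tau$ of Definition \ref{def:tauAc}: by Lemma \ref{lem:asym2} together with Lemma \ref{lem:diag}, $\tau$ fixes $\mathcal G(T_0)$, $\mathcal{\tilde G}(T_0)$, and $B(u)$, swaps $B^-(u)\leftrightarrow B^+(u)$, and reverses products, converting the first expression into the third. The second expression is obtained from the first by applying the first identity of Corollary \ref{lem:laterUse} once more to trade $B(qt)$ for $B(q^{-1}t)$ modulo quadratic corrections in $B^\pm$; applying $\tau$ to the second then yields the fourth. The main obstacle is the algebraic bookkeeping in the middle step: one must verify that the ``extra'' monomials produced by the direct computation (beyond those appearing in the first displayed expression) combine into exactly one application of Corollary \ref{lem:laterUse}, so that all discrepancies cancel cleanly.
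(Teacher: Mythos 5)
Your proposal is correct and follows essentially the same route as the paper: start from \eqref{eq:3pp1} at $t=(q+q^{-1})/(t+t^{-1})$, substitute Theorem \ref{lem:zzznote} for the $\mathcal W^{\pm}$ terms, commute $\mathcal{\tilde G}$ past $\mathcal W_0$ via \eqref{eq:3pp2} (the paper packages this step as Proposition \ref{prop:extra1}, you do it inline — a cosmetic difference), eliminate the $q$-commutators with \eqref{eq:L1}, \eqref{eq:L7}, and finish with the first identity of Corollary \ref{lem:laterUse}, obtaining the remaining expressions from Corollary \ref{lem:laterUse} again and from $\tau$. The level of deferred algebraic verification matches the paper's own ``simplify the result.''
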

\begin{proof} We first show that the generating function \eqref{eq:GF} is equal to the
first of the four given expressions.
 By  \eqref{eq:3pp1},
 \begin{align}
 \lbrack \mathcal W_0, \mathcal W^+(t) \rbrack = \frac{\mathcal {\tilde G}(t)-\mathcal G(t)}{t(q+q^{-1})}. \label{eq:ttex}
 \end{align}
 In \eqref{eq:ttex}, replace $t$ by $(q+q^{-1})(t+t^{-1})^{-1}$  to obtain
 \begin{align*}
 \mathcal W_0 \mathcal W^+ \biggl( \frac{q+q^{-1}}{t+t^{-1}}\biggr)  -
 \mathcal W^+ \biggl( \frac{q+q^{-1}}{t+t^{-1}}\biggr)\mathcal W_0  = \frac{t+t^{-1}}{(q+q^{-1})^2} 
 \biggl( \mathcal {\tilde G}\biggl( \frac{q+q^{-1}}{t+t^{-1}}\biggr)-
  \mathcal G\biggl( \frac{q+q^{-1}}{t+t^{-1}}\biggr) \biggr).
\end{align*}
In the above equation, eliminate the $\mathcal W^+$ terms using \eqref{eq:long2}, and pull the resulting $\mathcal {\tilde G}$ terms to the right using
the first equation in Proposition \ref{prop:extra1}.  In the resulting equation, eliminate $\lbrack \mathcal W_0, B^+(q^{-1}t) \rbrack_q$ and 
 $\lbrack \mathcal W_0, B^-(qt) \rbrack_q$  using   \eqref{eq:L1} and   \eqref{eq:L7}, respectively. In the resulting equation, eliminate
$B(q^{-1}t)$ using the first equation in Corollary  \ref{lem:laterUse}. By the resulting equation, the generating function \eqref{eq:GF} is equal to the
first of the four given expressions. The first and second given expressions  are equal by the first equation in
Corollary \ref{lem:laterUse}. Using the antiautomorphism $\tau$ we find that the  generating function \eqref{eq:GF} is equal to the
third and fourth given expressions.
\end{proof}

\section{A factorization of $\mathcal Z^\vee(t)$}


\noindent Throughout this section we identify $O_q$ with
$\langle \mathcal W_0, \mathcal W_1\rangle $ via the map $\imath$ from Lemma
\ref{lem:iota}. Recall the generating function $\mathcal Z^\vee(t)$ from  Definition \ref{def:ZV}.
We will prove the following result.

\begin{theorem}  \label{thm:Mn} 
For the algebra $\mathcal O_q$ we have
\begin{align}
\mathcal Z^\vee(t)  = \xi \mathcal {\tilde G}(S) B(t) \mathcal {\tilde G}(T),
\label{eq:mainRes}
\end{align}
\noindent where
\begin{align}
\xi = -q (q-q^{-1}) (q^2-q^{-2})^{-4}.
\label{eq:zeta}
\end{align}
\end{theorem}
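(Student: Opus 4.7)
The plan is to expand $\Psi(t) = \lbrack 2 \rbrack_q^2 \mathcal Z^\vee(t)$ via \eqref{eq:Zlong}, substitute every occurrence of $\mathcal W^\pm$ and $\mathcal G$ using identities already proved, and verify that the result collapses to $\lbrack 2 \rbrack_q^2 \xi \mathcal{\tilde G}(S) B(t) \mathcal{\tilde G}(T)$. Concretely, I would use Theorem \ref{lem:zzznotez} with $t \mapsto q^{-1}t$ to write each $S\mathcal W^\pm(S)$ as $\mathcal{\tilde G}(S)$ on the left times a polynomial in $B^+(t)$ and $B^-(q^{-2}t)$; Theorem \ref{lem:zzznote} with $t \mapsto qt$ to write each $T\mathcal W^\pm(T)$ as a polynomial in $B^+(t)$ and $B^-(q^2 t)$ times $\mathcal{\tilde G}(T)$ on the right; and the fourth expression of Proposition \ref{lem:ex2} with $t \mapsto q^{-1}t$ to write $\mathcal G(S) = \mathcal{\tilde G}(S) \cdot \bigl\lbrack \text{polynomial in } B(q^{-2}t),\ (B^-(q^{-2}t))^2,\ B^+(t) B^-(q^{-2}t) \bigr\rbrack$. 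Substituting these into \eqref{eq:Zlong} and factoring $\mathcal{\tilde G}(S), \mathcal{\tilde G}(T)$ outside yields $\Psi(t) = \mathcal{\tilde G}(S) M(t) \mathcal{\tilde G}(T)$ for an explicit $M(t)$ with coefficients in $O_q$.

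Next I would expand the four quadratic products appearing in $M(t)$ and collect coefficients. Every cross-term of the form $B^\pm(q^{-2}t) B^\pm(q^2 t)$ or $B^+(t) B^-(q^2 t)$ cancels automatically, so $M(t)$ reduces to a linear combination, with coefficients rational in $q$ and $t$, of just six monomials: $(B^+(t))^2$, $(B^-(q^{-2}t))^2$, $B^-(q^{-2}t) B^+(t)$, $B^+(t) B^-(q^{-2}t)$, $B(q^{-2}t)$, and $B(t)$. The key step is to invoke identity \eqref{eq:cc2} of Proposition \ref{prop:GFwang} with $s \mapsto q^{-2}t$ and add $q(qt - q^{-1}t^{-1})$ times this (zero) equation to $M(t)$; the monomial content of \eqref{eq:cc2} is exactly tuned to cancel every coefficient except that of $B(t)$. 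The surviving coefficient of $B(t)$ simplifies, via the elementary identities $(q^{-2}-1)^{-1} = -q/(q-q^{-1})$ and $(q^2-q^{-2})^2 = (q-q^{-1})^2 (q+q^{-1})^2$, to exactly $\lbrack 2 \rbrack_q^2 \xi$; dividing the resulting equation by $\lbrack 2 \rbrack_q^2$ yields the theorem.

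The main obstacle is the coefficient bookkeeping of the previous paragraph: roughly a dozen rational-in-$t$ coefficients must vanish simultaneously after the $q(qt - q^{-1}t^{-1}) \cdot \eqref{eq:cc2}$ adjustment. The critical choice is to use the fourth expression of Proposition \ref{lem:ex2} rather than the third; the fourth expression produces precisely the monomials $B^+(t) B^-(q^{-2}t)$ and $(B^-(q^{-2}t))^2$ whose coefficients match those appearing in \eqref{eq:cc2} line by line. Any other version of Proposition \ref{lem:ex2} would leave a residue requiring a combination of several identities from Proposition \ref{prop:GFwang} to clear.
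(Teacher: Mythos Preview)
Your strategy is the paper's strategy: pull $\mathcal{\tilde G}(S)$ to the left and $\mathcal{\tilde G}(T)$ to the right of each summand in $\Psi(t)$ using Theorems \ref{lem:zzznote}, \ref{lem:zzznotez} and Proposition \ref{lem:ex2}, then verify that the residual middle factor is a scalar multiple of $B(t)$. The one substantive difference is which expression of Proposition \ref{lem:ex2} you pick for $\mathcal G(S)$.

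The paper (via Lemma \ref{lem:GG}) uses the \emph{third} expression, specialised at $t\mapsto q^{-1}t$, which produces $B(t)$, $\bigl(B^+(t)\bigr)^2$, and $B^-(q^{-2}t)B^+(t)$. With that choice the $B(t)$ term cancels \eqref{eq:TT6} immediately, and the two quadratic monomials cancel on the nose against the survivors of \eqref{eq:TT1}--\eqref{eq:TT4}; no call to Proposition \ref{prop:GFwang} is needed at all. Your choice of the fourth expression replaces $B(t)$ by $B(q^{-2}t)$ and swaps in the ``opposite'' pair $\bigl(B^-(q^{-2}t)\bigr)^2$, $B^+(t)B^-(q^{-2}t)$, which is precisely why you then need the $s\mapsto q^{-2}t$ specialisation of \eqref{eq:cc2} to finish. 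So your argument is correct, but your closing remark is backwards: the third expression is the cleaner route and eliminates the extra bookkeeping step entirely.
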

\noindent Let us consider what is needed to prove Theorem \ref{thm:Mn}.
 Evaluating the left-hand side of \eqref{eq:mainRes} using Definitions  \ref{lem:zV1}, \ref{def:ZV}
 and then rearranging terms, we find that Theorem \ref{thm:Mn} asserts the following:  zero is equal to
 \begin{align}
&\;\; t^{-1} \Bigl( \bigl(\mathcal {\tilde G}(S)\bigr)^{-1} \mathcal W^-(S) S \Bigr) \Bigl( T \mathcal W^+(T) \bigl(\mathcal {\tilde G}(T)\bigr)^{-1}\Bigr)
\label{eq:TT1}
\\
&+t \Bigl( \bigl(\mathcal {\tilde G}(S)\bigr)^{-1} \mathcal W^+(S) S \Bigr) \Bigl( T \mathcal W^-(T) \bigl(\mathcal {\tilde G}(T)\bigr)^{-1}\Bigr)
\label{eq:TT2}
\\
& -q^2 \Bigl( \bigl(\mathcal {\tilde G}(S)\bigr)^{-1} \mathcal W^-(S) S \Bigr) \Bigl( T \mathcal W^-(T) \bigl(\mathcal {\tilde G}(T)\bigr)^{-1}\Bigr)
\label{eq:TT3}
\\
& -q^{-2}\Bigl( \bigl(\mathcal {\tilde G}(S)\bigr)^{-1} \mathcal W^+(S) S \Bigr) \Bigl( T \mathcal W^+(T) \bigl(\mathcal {\tilde G}(T)\bigr)^{-1}\Bigr)
\label{eq:TT4}
\\
& + \frac{ \bigl(\mathcal {\tilde G}(S)\bigr)^{-1}\mathcal G(S) }{(q^2-q^{-2})^2}
\label{eq:TT5}
\\
&- \frac{B(t)}{(q^2-q^{-2})^2(q^{-2}-1)}.
\label{eq:TT6}
\end{align} 
\noindent We will verify the above assertion. To prepare for this, we evaluate the terms in \eqref{eq:TT1}--\eqref{eq:TT5}.

 \begin{lemma}\label{cor:zzznotez}
For the algebra $\mathcal O_q$,
\begin{align}
\label{eq:corlong1s}
\bigl(\mathcal {\tilde G}(S)\bigr)^{-1} \mathcal W^-(S) S
 &=
\frac{t B^+(t)+ B^-(q^{-2} t)}{(q^2-q^{-2})(q^{-1}t-qt^{-1})},
\\
\label{eq:corlong2s}
\bigl(\mathcal {\tilde G}(S)\bigr)^{-1} 
\mathcal W^+(S) S
 &=
\frac{B^+(t)+ q^{-2}tB^-(q^{-2} t)}{(q^2-q^{-2})(q^{-1}t-qt^{-1})}.
\end{align}
\end{lemma}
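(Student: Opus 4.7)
The plan is to derive both identities by a single variable substitution applied to the formulas already established in Theorem \ref{lem:zzznotez}. The key observation is that $S = (q+q^{-1})/(q^{-1}t + qt^{-1})$ is a power series in the indeterminate $t$ alone, hence commutes with every element of $\mathcal O_q$; in particular $S$ can be freely moved past $\mathcal W^\pm(S)$ and $\mathcal {\tilde G}(S)$.

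First I would rewrite Theorem \ref{lem:zzznotez} in a more convenient form by multiplying both displayed equations on the left by $\bigl(\mathcal {\tilde G}((q+q^{-1})/(t+t^{-1}))\bigr)^{-1}$, which is a legitimate operation because that inverse exists in the appropriate completion by \cite[Lemmas~4.1,~4.6]{conj}. Writing $U = (q+q^{-1})/(t+t^{-1})$ for brevity, this yields
\begin{align*}
\bigl(\mathcal{\tilde G}(U)\bigr)^{-1} U\, \mathcal W^-(U) &= \frac{qt B^+(qt) + B^-(q^{-1}t)}{(q^2-q^{-2})(t-t^{-1})}, \\
\bigl(\mathcal{\tilde G}(U)\bigr)^{-1} U\, \mathcal W^+(U) &= \frac{B^+(qt) + q^{-1}t B^-(q^{-1}t)}{(q^2-q^{-2})(t-t^{-1})}.
\end{align*}

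Next I would perform the substitution $t \mapsto q^{-1}t$. Under this substitution $U$ transforms into $(q+q^{-1})/(q^{-1}t+qt^{-1}) = S$, while $qt \mapsto t$, $q^{-1}t \mapsto q^{-2}t$, and $t-t^{-1} \mapsto q^{-1}t - qt^{-1}$. Applying this to the two equations above gives exactly
\begin{align*}
\bigl(\mathcal{\tilde G}(S)\bigr)^{-1} S\, \mathcal W^-(S) &= \frac{t B^+(t) + B^-(q^{-2}t)}{(q^2-q^{-2})(q^{-1}t-qt^{-1})}, \\
\bigl(\mathcal{\tilde G}(S)\bigr)^{-1} S\, \mathcal W^+(S) &= \frac{B^+(t) + q^{-2}t B^-(q^{-2}t)}{(q^2-q^{-2})(q^{-1}t-qt^{-1})}.
\end{align*}

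Finally I would invoke the commutativity of $S$ with elements of $\mathcal O_q$ to reorder the left-hand sides as $\bigl(\mathcal{\tilde G}(S)\bigr)^{-1} \mathcal W^\pm(S)\, S$, matching the desired form in Lemma \ref{cor:zzznotez}. There is no real obstacle here; the only point requiring a brief justification is that the $t \mapsto q^{-1}t$ substitution is well-defined on the power-series identities in question, which follows because both sides are formal power series in $t$ (after clearing the denominators $t-t^{-1}$ in the appropriate way) and substitution of a power series with zero constant term is always legitimate.
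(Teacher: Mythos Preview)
Your proposal is correct and follows essentially the same approach as the paper: the paper's proof is simply ``In Theorem \ref{lem:zzznotez}, replace $t$ by $q^{-1}t$ and evaluate the result using \eqref{eq:ST}.'' You have spelled out the intermediate steps (left-multiplying by $\bigl(\mathcal{\tilde G}(U)\bigr)^{-1}$, tracking how each argument transforms, and noting that the scalar series $S$ commutes past $\mathcal W^\pm(S)$), but the substance is identical.
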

\begin{proof} In Theorem \ref{lem:zzznotez}, replace $t$ by $q^{-1}t$ and evaluate the result using \eqref{eq:ST}.
\end{proof}

 \begin{lemma}\label{cor:zzznote}
 For the algebra $\mathcal O_q$,
\begin{align}
\label{eq:corlong1}
T
\mathcal W^-(T) 
\bigl(\mathcal {\tilde G}(T)\bigr)^{-1} 
 &=
\frac{t B^+(t)+ B^-(q^2 t)}{(q^2-q^{-2})(qt-q^{-1}t^{-1})},
\\
\label{eq:corlong2}
T
\mathcal W^+(T) 
\bigl(\mathcal {\tilde G}(T)\bigr)^{-1} 
 &=
\frac{B^+(t)+ q^2tB^-(q^2 t)}{(q^2-q^{-2})(qt-q^{-1}t^{-1})}.
\end{align}
\end{lemma}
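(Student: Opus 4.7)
The plan is to deduce Lemma \ref{cor:zzznote} from Theorem \ref{lem:zzznote} by a single change of variable, in exact parallel with the way Lemma \ref{cor:zzznotez} was just obtained from Theorem \ref{lem:zzznotez}. The only real decision is which substitution to apply: since $T=\frac{q+q^{-1}}{qt+q^{-1}t^{-1}}$ by \eqref{eq:ST}, I want the argument $\frac{q+q^{-1}}{u+u^{-1}}$ of $\mathcal W^\pm$ in Theorem \ref{lem:zzznote} to simplify to $T$, which happens precisely under $u=qt$. So the first step is to replace the indeterminate $t$ by $qt$ throughout Theorem \ref{lem:zzznote}.

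Carrying out that substitution, the left-hand sides $\frac{q+q^{-1}}{u+u^{-1}}\mathcal W^\pm\!\bigl(\frac{q+q^{-1}}{u+u^{-1}}\bigr)$ become $T\mathcal W^\pm(T)$ and the trailing factor $\mathcal{\tilde G}\!\bigl(\frac{q+q^{-1}}{u+u^{-1}}\bigr)$ becomes $\mathcal{\tilde G}(T)$. On the right-hand side, the arguments $q^{-1}u$ and $qu$ of $B^+$ and $B^-$ become $t$ and $q^2t$, the coefficient $qu$ becomes $q^2t$, and the denominator factor $(u-u^{-1})$ becomes $(qt-q^{-1}t^{-1})$. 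Thus Theorem \ref{lem:zzznote} rewrites as
\begin{align*}
T\mathcal W^-(T) &= \frac{tB^+(t)+B^-(q^2t)}{(q^2-q^{-2})(qt-q^{-1}t^{-1})}\,\mathcal{\tilde G}(T), \\
T\mathcal W^+(T) &= \frac{B^+(t)+q^2tB^-(q^2t)}{(q^2-q^{-2})(qt-q^{-1}t^{-1})}\,\mathcal{\tilde G}(T).
\end{align*}

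The second step is to multiply both identities on the right by $\bigl(\mathcal{\tilde G}(T)\bigr)^{-1}$. This inverse exists as a formal power series by \cite[Lemmas~4.1, 4.6]{conj}, as noted in the paragraph preceding Proposition \ref{BsolveV}. The two resulting identities are exactly \eqref{eq:corlong1} and \eqref{eq:corlong2}.

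There is essentially no obstacle: the proof is a direct substitution, and the verification amounts to tracking $q$-powers through $u\mapsto qt$. One could alternatively try to apply the antiautomorphism $\tau$ to Lemma \ref{cor:zzznotez}, but because $\tau$ swaps $\mathcal W^- \leftrightarrow \mathcal W^+$ and $B^-\leftrightarrow B^+$ while reversing products, this route produces a different pair of identities (with $S$ rather than $T$, and with the roles of $\mathcal W^\pm$ swapped), so the substitution approach is the cleaner one and matches the one-line proof template used for Lemma \ref{cor:zzznotez}.
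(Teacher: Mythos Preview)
Your proof is correct and matches the paper's own proof exactly: the paper's argument is the one-line ``In Theorem \ref{lem:zzznote}, replace $t$ by $qt$ and evaluate the result using \eqref{eq:ST},'' which is precisely the substitution $u=qt$ followed by right-multiplication by $\bigl(\mathcal{\tilde G}(T)\bigr)^{-1}$ that you carry out.
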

\begin{proof} In Theorem \ref{lem:zzznote}, replace $t$ by $qt$ and evaluate the result using \eqref{eq:ST}.
\end{proof}

\begin{lemma} 
\label{lem:GG}
For the algebra $\mathcal O_q$,
\begin{align*}
 \bigl(\mathcal {\tilde G}(S)\bigr)^{-1} \mathcal G(S) &= 
 \frac{B(t)}{q^{-2}-1} +
 \frac{ (q-q^{-1}) t \bigl( B^+(t)\bigr)^2 - q^{-1} t (q^{-2}t-q^2 t^{-1}) B^-(q^{-2}t) B^+(t)}{q^{-1}t-q t^{-1}}.
 \end{align*}
 \end{lemma}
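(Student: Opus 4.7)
The plan is to read this identity off directly from Proposition~\ref{lem:ex2}, which gives four equivalent expressions for the series $\mathcal{G}\bigl(\frac{q+q^{-1}}{t+t^{-1}}\bigr)$. Since the statement of the lemma is an equality for $\bigl(\mathcal{\tilde G}(S)\bigr)^{-1}\mathcal G(S)$ in which the factor $\mathcal{\tilde G}$ multiplies on the left, I would use the third of the four expressions in Proposition~\ref{lem:ex2}, namely the one in which $\mathcal{\tilde G}\bigl(\frac{q+q^{-1}}{t+t^{-1}}\bigr)$ already stands on the left.

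The key observation is that $S=\frac{q+q^{-1}}{q^{-1}t+qt^{-1}}$ is exactly what one obtains from $\frac{q+q^{-1}}{t+t^{-1}}$ by the substitution $t\mapsto q^{-1}t$. Performing this substitution throughout the third expression of Proposition~\ref{lem:ex2}, the evaluations $B^+(qt)$, $B^-(q^{-1}t)$, $B(qt)$ become $B^+(t)$, $B^-(q^{-2}t)$, $B(t)$ respectively, the factor $t-t^{-1}$ becomes $q^{-1}t-qt^{-1}$, the factor $q^{-1}t-qt^{-1}$ becomes $q^{-2}t-q^2t^{-1}$, and the evaluation of $\mathcal{\tilde G}$ becomes $\mathcal{\tilde G}(S)$. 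Tracking the outer coefficients $qt(q-q^{-1})$ and the signed combinations in the numerator, one arrives at
\[
\mathcal G(S) \;=\; \mathcal{\tilde G}(S)\cdot X,
\]
where $X$ is precisely the right-hand side of the lemma statement.

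To conclude, I would multiply on the left by $\bigl(\mathcal{\tilde G}(S)\bigr)^{-1}$, whose existence is cited just above Proposition~\ref{BsolveV}. No serious obstacle is expected: the argument is a formal substitution in a series identity, followed by a one-sided cancellation. The only place to exercise care is the bookkeeping of scalar factors under $t\mapsto q^{-1}t$, ensuring that the numerator and denominator simplifications line up with the claimed expression rather than with one of the alternative forms from Proposition~\ref{lem:ex2}.
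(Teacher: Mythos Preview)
Your proposal is correct and matches the paper's proof essentially verbatim: the paper simply says to take the third equation of Proposition~\ref{lem:ex2}, replace $t$ by $q^{-1}t$, and evaluate using~\eqref{eq:ST}. Your bookkeeping of the scalar factors under this substitution is accurate.
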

 \begin{proof} In the third equation of Proposition
  \ref{lem:ex2},
 replace $t$ by $q^{-1}t$ and evaluate the result using \eqref{eq:ST}.
 \end{proof}
\noindent We can now easily prove Theorem \ref{thm:Mn}.
\medskip

\noindent {\it Proof of Theorem \ref{thm:Mn}}. It suffices to show that the sum of \eqref{eq:TT1}--\eqref{eq:TT6} is zero. This is routinely shown by evaluating the terms
using Lemmas \ref{cor:zzznotez}, \ref{cor:zzznote}, \ref{lem:GG}.  \hfill $\Box $ \\

\noindent Next, we give some consequences of Theorem \ref{thm:Mn}.

\begin{corollary} \label{cor:BZ} For the algebra $\mathcal O_q$ we have
\begin{align}
\label{eq:BZ}
B(t) = \xi^{-1}
 \bigl(\mathcal {\tilde G}(S)\bigr)^{-1} \mathcal Z^\vee(t)
 \bigl(\mathcal {\tilde G}(T)\bigr)^{-1},
 \end{align}
 where $\xi$ is from \eqref{eq:zeta}.
 \end{corollary}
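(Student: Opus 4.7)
The corollary is an immediate algebraic consequence of Theorem \ref{thm:Mn}. The plan is to start from the factorization \eqref{eq:mainRes}, left-multiply by $\bigl(\mathcal{\tilde G}(S)\bigr)^{-1}$, right-multiply by $\bigl(\mathcal{\tilde G}(T)\bigr)^{-1}$, and divide by the nonzero scalar $\xi$ from \eqref{eq:zeta}. The resulting identity is precisely \eqref{eq:BZ}. Since the multiplications are performed on opposite sides of the product $\mathcal{\tilde G}(S)\,B(t)\,\mathcal{\tilde G}(T)$, the noncommutativity of $\mathcal O_q$ causes no difficulty in solving for $B(t)$.

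The only point that requires verification is that $\mathcal{\tilde G}(S)$ and $\mathcal{\tilde G}(T)$ are invertible in the ring of formal power series in $t$ with coefficients in $\mathcal O_q$. This is already recorded in the comment preceding Proposition \ref{BsolveV} via \cite[Lemmas~4.1,~4.6]{conj}: since $S$ and $T$ each have zero constant term in $t$, as displayed just after \eqref{eq:ST}, the constant term of $\mathcal{\tilde G}(S)$ and of $\mathcal{\tilde G}(T)$ is the nonzero scalar $\mathcal{\tilde G}_0 = -(q-q^{-1})\lbrack 2 \rbrack^2_q$, and each of $\mathcal{\tilde G}(S)$, $\mathcal{\tilde G}(T)$ is therefore a unit in $\mathcal O_q[[t]]$. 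With this in hand, no genuine obstacle remains; the corollary is simply a rearrangement of Theorem \ref{thm:Mn}.
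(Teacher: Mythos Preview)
Your proof is correct and matches the paper's approach exactly: the paper's proof reads in full ``Rearrange the terms in \eqref{eq:mainRes},'' which is precisely the left/right multiplication by the inverses that you describe. Your additional remarks on the invertibility of $\mathcal{\tilde G}(S)$ and $\mathcal{\tilde G}(T)$ are accurate and indeed already established earlier in the paper, as you note.
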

 \begin{proof} Rearrange the terms in \eqref{eq:mainRes}.
 \end{proof}
 \begin{corollary} \label{cor:BG}
 For the algebra $\mathcal O_q$,
 \begin{align*}
 \lbrack \mathcal {\tilde G}(s), B(t) \rbrack = 0.
 \end{align*}
 \end{corollary}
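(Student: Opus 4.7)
The plan is to deduce the corollary as an immediate consequence of the factorization in Corollary \ref{cor:BZ}. That factorization expresses $B(t)$ as a scalar multiple of the product
\begin{align*}
\bigl(\mathcal{\tilde G}(S)\bigr)^{-1} \mathcal Z^\vee(t) \bigl(\mathcal{\tilde G}(T)\bigr)^{-1},
\end{align*}
so it suffices to show that $\mathcal{\tilde G}(s)$ commutes with each of the three factors on the right.

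First, I would handle the middle factor. By Lemma \ref{lem:Zfix} the coefficients $\{\mathcal Z^\vee_n\}_{n=1}^\infty$ lie in the center $\mathcal Z$ of $\mathcal O_q$, and $\mathcal Z^\vee_0 = 1$ is also central; hence every coefficient of the generating function $\mathcal Z^\vee(t)$ is central, so $\lbrack \mathcal{\tilde G}(s), \mathcal Z^\vee(t)\rbrack = 0$. Next, for the outer factors, the point is that $\mathcal{\tilde G}(S)$ and $\mathcal{\tilde G}(T)$ are formal power series in $t$ whose coefficients are $\mathbb F$-linear combinations of the generators $\{\mathcal{\tilde G}_n\}_{n \in \mathbb N}$. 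By the relation $\lbrack \mathcal{\tilde G}(s), \mathcal{\tilde G}(t)\rbrack = 0$ in \eqref{eq:3pp10} (equivalently, $\lbrack \mathcal{\tilde G}_{k+1}, \mathcal{\tilde G}_{\ell+1}\rbrack = 0$ from \eqref{eq:3p10}), each coefficient of $\mathcal{\tilde G}(s)$ commutes with each $\mathcal{\tilde G}_n$, and therefore with $\mathcal{\tilde G}(S)$ and $\mathcal{\tilde G}(T)$. Consequently $\mathcal{\tilde G}(s)$ also commutes with the inverses $\bigl(\mathcal{\tilde G}(S)\bigr)^{-1}$ and $\bigl(\mathcal{\tilde G}(T)\bigr)^{-1}$, which exist by the references cited above Proposition \ref{BsolveV}.

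Putting these three commutation facts together, $\mathcal{\tilde G}(s)$ commutes with the product $\bigl(\mathcal{\tilde G}(S)\bigr)^{-1}\mathcal Z^\vee(t)\bigl(\mathcal{\tilde G}(T)\bigr)^{-1}$, and hence with $\xi^{-1}$ times this product, which is $B(t)$ by Corollary \ref{cor:BZ}. There is no serious obstacle here; the only mild subtlety is the formal bookkeeping that $\mathcal{\tilde G}(S)$, regarded as a series in $t$, really does have coefficients that are $\mathbb F$-linear combinations of the $\mathcal{\tilde G}_n$, so that \eqref{eq:3pp10} applies coefficient by coefficient in $s$ and $t$.
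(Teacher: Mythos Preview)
Your proof is correct and follows essentially the same approach as the paper: the paper's proof is the one-sentence remark that $\mathcal{\tilde G}(s)$ commutes with each factor on the right in \eqref{eq:BZ}, and you have simply spelled out why each of those three commutations holds.
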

 \begin{proof} The generating function $\mathcal {\tilde G}(s)$ commutes with each factor on the right in \eqref{eq:BZ}.
 \end{proof}
 \begin{corollary} \label{cor:BGc}
 For the algebra $\mathcal O_q$,
 \begin{align*}
 \lbrack \mathcal {\tilde G}_{k+1},  B_{n \delta} \rbrack = 0 \qquad \qquad k,n \in \mathbb N.
 \end{align*}
 \end{corollary}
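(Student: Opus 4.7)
The plan is to deduce this corollary directly from the preceding Corollary \ref{cor:BG}, which asserts $\lbrack \mathcal{\tilde G}(s), B(t) \rbrack = 0$ as an identity of formal power series in the commuting indeterminates $s, t$. Since the bracket is bilinear and the generating functions expand as $\mathcal{\tilde G}(s) = \sum_{m \in \mathbb N} \mathcal{\tilde G}_m s^m$ and $B(t) = \sum_{n \in \mathbb N} B_{n\delta} t^n$ (using Definitions \ref{def:gf4} and \ref{def:Bgen} together with the convention $B_{0\delta} = q^{-2}-1$ from Definition \ref{def:Bnd}), the identity $\lbrack \mathcal{\tilde G}(s), B(t) \rbrack = 0$ expands as
\begin{equation*}
\sum_{m,n \in \mathbb N} \lbrack \mathcal{\tilde G}_m, B_{n\delta} \rbrack \, s^m t^n = 0.
\end{equation*}

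The final step is to extract the coefficient of $s^{k+1} t^n$ on both sides, which gives $\lbrack \mathcal{\tilde G}_{k+1}, B_{n\delta} \rbrack = 0$ for all $k, n \in \mathbb N$, precisely as claimed. There is no real obstacle: the argument is a mechanical coefficient comparison, and the substantive work has already been done in proving the generating function identity of Corollary \ref{cor:BG}, which in turn was deduced from the factorization $\mathcal Z^\vee(t) = \xi \mathcal{\tilde G}(S) B(t) \mathcal{\tilde G}(T)$ in Theorem \ref{thm:Mn} together with the fact that the $\mathcal{\tilde G}_n$ mutually commute (relation \eqref{eq:3p10}).
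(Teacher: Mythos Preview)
Your proof is correct and takes the same approach as the paper: the paper's proof is simply ``By Corollary \ref{cor:BG},'' and you have spelled out the coefficient extraction that this entails.
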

 \begin{proof} By Corollary 
 \ref{cor:BG}.
 \end{proof}
 \begin{corollary} \label{cor:perm} The generating function
  $\mathcal Z^\vee (t)$
 \noindent is equal to each of
 \begin{align*}
 \xi \mathcal {\tilde G}(S) B(t) \mathcal {\tilde G}(T),
 \qquad \quad
 \xi B(t) \mathcal {\tilde G}(S) \mathcal {\tilde G}(T),
 \qquad \quad
 \xi \mathcal {\tilde G}(S) \mathcal {\tilde G}(T) B(t),
 \\
 \xi \mathcal {\tilde G}(T) B(t) \mathcal {\tilde G}(S),
 \qquad \quad
 \xi B(t) \mathcal {\tilde G}(T) \mathcal {\tilde G}(S),
 \qquad \quad
\xi \mathcal {\tilde G}(T) \mathcal {\tilde G}(S) B(t).
 \end{align*}
 \end{corollary}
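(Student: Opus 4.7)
The strategy is simply to observe that the three non-scalar factors $\mathcal{\tilde G}(S)$, $B(t)$, $\mathcal{\tilde G}(T)$ in the expression from Theorem~\ref{thm:Mn} pairwise commute, so they can be reordered arbitrarily. All six listed expressions then equal $\xi \mathcal{\tilde G}(S) B(t) \mathcal{\tilde G}(T)$, which by Theorem~\ref{thm:Mn} equals $\mathcal Z^\vee(t)$.

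More concretely, the plan is as follows. First I would invoke the relation $\lbrack \mathcal{\tilde G}(s), \mathcal{\tilde G}(t)\rbrack = 0$ from \eqref{eq:3pp10}. Since $S$ and $T$ are (scalar) power series in $t$, this relation is preserved when we substitute $s \mapsto S$, $t \mapsto T$; thus $\mathcal{\tilde G}(S)$ and $\mathcal{\tilde G}(T)$ commute. Next I would invoke Corollary~\ref{cor:BG}, which says $\lbrack \mathcal{\tilde G}(s), B(t)\rbrack = 0$. Substituting $s \mapsto S$ and then $s \mapsto T$ (viewing the commutation relation coefficient-by-coefficient in the indeterminate $s$, and noting $S$, $T$ are scalar-valued in the indeterminate $t$) shows that $B(t)$ commutes with both $\mathcal{\tilde G}(S)$ and $\mathcal{\tilde G}(T)$.

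Having established that the three factors pairwise commute, any permutation of them gives the same product. Comparing with Theorem~\ref{thm:Mn} then yields the six equalities in the corollary statement.

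The only mild subtlety, and the step that deserves a word of care, is that commutation in the form $\lbrack \mathcal{\tilde G}(s), B(t)\rbrack=0$ is an identity of power series in two independent indeterminates $s,t$, and we must specialize $s$ to the scalar power series $S$ (or $T$) in the single indeterminate $t$. This is legitimate because the coefficients of $S$ and $T$ are scalars in $\mathbb F$, so the specialization produces a valid identity in $\mathcal O_q\lbrack\lbrack t\rbrack\rbrack$. Beyond this observation, the proof is immediate.
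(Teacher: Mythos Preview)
Your proof is correct and follows essentially the same approach as the paper: both argue that the three factors pairwise commute and then invoke Theorem~\ref{thm:Mn}. The paper cites the coefficient-level relations (Corollary~\ref{cor:BGc} and the right-hand equation in \eqref{eq:3p10}) rather than their generating-function forms (Corollary~\ref{cor:BG} and \eqref{eq:3pp10}) that you use, but this is a cosmetic difference.
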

 \begin{proof} Evaluate \eqref{eq:mainRes}
 using
  Corollary \ref{cor:BGc} and the equation on the right in \eqref{eq:3p10}.
 \end{proof}

 \section{The algebra homomorphism $\gamma: \mathcal O_q \to O_q$}
 
 In this section we construct a surjective algebra homomorphism  $\gamma: \mathcal O_q \to O_q$. We describe $\gamma$ in various ways.
We apply $\gamma$ to the alternating generators of $\mathcal O_q$, and obtain some elements called
 the alternating generators of $O_q$.
 \medskip
 
 \noindent Recall from  Proposition \ref{prop:three}  the algebra isomorphism $\phi:  O_q \otimes \mathbb F \lbrack z_1, z_2, \ldots \rbrack \to \mathcal O_q$.
\medskip

 \noindent We will be discussing the algebra homomorphism  $\theta: \mathbb F \lbrack z_1, z_2,\ldots \rbrack \to \mathbb F$ from Definition \ref{def:theta}.

 \begin{lemma} \label{lem:gam} There exists a unique algebra homomorphism $\gamma: \mathcal O_q \to O_q$ that makes the following diagram commute:
 
\begin{equation*}
{\begin{CD}
\mathcal O_q @<\phi << O_q \otimes \mathbb F \lbrack z_1, z_2, \ldots \rbrack
              \\
         @V \gamma VV                   @VV {\rm id}\otimes \theta V \\
         O_q @>>x \mapsto x \otimes 1>
                                 O_q \otimes \mathbb F 
                        \end{CD}}  	                          		    
\end{equation*}

 \end{lemma}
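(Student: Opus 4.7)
The plan is to build $\gamma$ directly as a composition of three maps that are already available, and then read off uniqueness from the injectivity of one of the factors. By Proposition \ref{prop:three}, $\phi$ is an algebra isomorphism, so $\phi^{-1}: \mathcal O_q \to O_q \otimes \mathbb F\lbrack z_1,z_2,\ldots\rbrack$ is an algebra homomorphism. Next, $\theta: \mathbb F\lbrack z_1,z_2,\ldots\rbrack \to \mathbb F$ from Definition \ref{def:theta} is an algebra homomorphism, so $\mathrm{id}\otimes \theta: O_q \otimes \mathbb F\lbrack z_1,z_2,\ldots\rbrack \to O_q \otimes \mathbb F$ is too. Finally, the unit-map $\mu: O_q \to O_q \otimes \mathbb F$, $x\mapsto x\otimes 1$, is an algebra isomorphism whose inverse is $x\otimes c\mapsto cx$. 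I would then define
\[
\gamma \;=\; \mu^{-1} \circ (\mathrm{id}\otimes \theta) \circ \phi^{-1}.
\]
As a composition of algebra homomorphisms, $\gamma$ is an algebra homomorphism $\mathcal O_q \to O_q$.

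To verify that the square commutes with this choice, I would chase a general element: given $w\in \mathcal O_q$, write $\phi^{-1}(w)=\sum_i u_i\otimes p_i$ with $u_i\in O_q$ and $p_i\in \mathbb F\lbrack z_1,z_2,\ldots\rbrack$. Then $(\mathrm{id}\otimes \theta)\phi^{-1}(w)=\sum_i u_i\otimes \theta(p_i)=\bigl(\sum_i \theta(p_i)u_i\bigr)\otimes 1 = \mu(\gamma(w))$, which is exactly the commutativity condition. For uniqueness, if $\gamma'$ is any algebra homomorphism that also makes the diagram commute, then $\mu(\gamma'(w))=(\mathrm{id}\otimes\theta)(\phi^{-1}(w))=\mu(\gamma(w))$ for all $w\in\mathcal O_q$, and since $\mu$ is injective we get $\gamma'=\gamma$.

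There is no real obstacle here; the lemma is a formal diagram chase whose content is entirely packaged in the isomorphism $\phi$ of Proposition \ref{prop:three} and the existence of the augmentation $\theta$ of Definition \ref{def:theta}. The only thing worth noting is that the statement asserts existence of $\gamma$ as an algebra homomorphism (not merely a linear map), and this is guaranteed automatically because each of $\phi^{-1}$, $\mathrm{id}\otimes\theta$, and $\mu^{-1}$ is an algebra homomorphism.
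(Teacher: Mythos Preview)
Your proposal is correct and follows essentially the same approach as the paper: the paper also defines $\gamma$ as the composition $\mathcal O_q \xrightarrow{\phi^{-1}} O_q\otimes\mathbb F[z_1,z_2,\ldots]\xrightarrow{\mathrm{id}\otimes\theta} O_q\otimes\mathbb F \xrightarrow{x\otimes 1\mapsto x} O_q$, notes that each factor is an algebra homomorphism, and observes that commutativity and uniqueness hold by construction. Your argument is slightly more explicit about the uniqueness step (via injectivity of $\mu$), but the content is identical.
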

 \begin{proof} Concerning existence, define the map $\gamma$ to be the composition
 \begin{align*}
 \gamma: \quad 
 {\begin{CD}
\mathcal O_q @>>\phi^{-1}  > O_q \otimes \mathbb F \lbrack z_1, z_2, \ldots \rbrack  @>> {\rm id}\otimes \theta > O_q \otimes \mathbb F @>>x\otimes 1  \mapsto x >
                                 O_q.
                        \end{CD}}  	
                        \end{align*}
In this composition, each factor is an algebra homomorphism. Therefore $\gamma$ is an algebra homomorphism. By construction, $\gamma$
makes the diagram commute. We have shown that $\gamma$ exists. By construction $\gamma$ is unique.
                        \end{proof}
\begin{lemma} \label{lem:gamact} The algebra homomorphism $\gamma: \mathcal O_q \to O_q$ sends
\begin{align*}                     
 \mathcal W_0 \mapsto W_0,
 \qquad \quad
  \mathcal W_1 \mapsto W_1,
 \qquad \quad
 \mathcal Z^\vee_n \mapsto 0, \qquad n \geq 1.
 \end{align*}
\noindent Moreover, $\gamma$    is surjective.
\end{lemma}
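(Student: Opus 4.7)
The plan is to simply chase the elements $\mathcal{W}_0$, $\mathcal{W}_1$, and $\mathcal{Z}^\vee_n$ ($n \geq 1$) through the commutative diagram in Lemma \ref{lem:gam}, and then deduce surjectivity from the action on $\mathcal{W}_0$, $\mathcal{W}_1$.

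First I would recall from Proposition \ref{prop:three} that $\phi$ sends $W_0 \otimes 1 \mapsto \mathcal{W}_0$, $W_1 \otimes 1 \mapsto \mathcal{W}_1$, and $1 \otimes z_n \mapsto \mathcal{Z}^\vee_n$ for $n \geq 1$. Hence its inverse $\phi^{-1}$ sends $\mathcal{W}_0 \mapsto W_0 \otimes 1$, $\mathcal{W}_1 \mapsto W_1 \otimes 1$, and $\mathcal{Z}^\vee_n \mapsto 1 \otimes z_n$.

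Next I would apply $\mathrm{id} \otimes \theta$, where $\theta: \mathbb{F}[z_1,z_2,\ldots] \to \mathbb{F}$ is the augmentation homomorphism from Definition \ref{def:theta} sending each $z_n$ ($n \geq 1$) to $0$. This yields $W_0 \otimes 1 \mapsto W_0 \otimes 1$, $W_1 \otimes 1 \mapsto W_1 \otimes 1$, and $1 \otimes z_n \mapsto 0$ for $n \geq 1$. Composing with the final identification $O_q \otimes \mathbb{F} \to O_q$, $x \otimes 1 \mapsto x$ gives exactly $\gamma(\mathcal{W}_0) = W_0$, $\gamma(\mathcal{W}_1) = W_1$, and $\gamma(\mathcal{Z}^\vee_n) = 0$ for $n \geq 1$, establishing the first assertion.

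For surjectivity, I would observe that the image of $\gamma$ is a subalgebra of $O_q$ containing $\gamma(\mathcal{W}_0) = W_0$ and $\gamma(\mathcal{W}_1) = W_1$. Since $W_0$ and $W_1$ generate $O_q$ by Definition \ref{def:U}, the image of $\gamma$ must be all of $O_q$. There is no hard step here; the entire proof is essentially a diagram chase once the diagram of Lemma \ref{lem:gam} has been set up and the values of $\phi$ on generators are recalled from Proposition \ref{prop:three}.
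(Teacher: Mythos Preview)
Your proposal is correct and follows essentially the same approach as the paper: chase $\mathcal W_0$, $\mathcal W_1$, and $\mathcal Z^\vee_n$ around the diagram in Lemma~\ref{lem:gam} using Proposition~\ref{prop:three} and Definition~\ref{def:theta}, then note surjectivity because $W_0, W_1$ generate $O_q$. The paper's proof is just a one-sentence summary of exactly what you wrote out in detail.
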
   
\begin{proof} Chase the $\mathcal O_q$-generators $\mathcal W_0$, $\mathcal W_1$, $\lbrace \mathcal Z^\vee_n \rbrace_{n=1}^\infty$ around the diagram in
Lemma \ref{lem:gam}, using   Proposition \ref{prop:three} and Definition \ref{def:theta}.
The map $\gamma$ is surjective since $W_0$, $W_1$ generate $O_q$.
\end{proof}
\noindent Next we describe how  $\gamma$ is related to the algebra homomorphism $\imath: O_q \to \mathcal O_q$ from Lemma \ref{lem:iota}.
\begin{lemma}
\label{lem:Ig}
The composition
 \begin{align*}
 {\begin{CD}
O_q @>> \imath  > \mathcal O_q  @>> \gamma > O_q 
                        \end{CD}}  	
                        \end{align*}
is equal to the identity map on $O_q$.
\end{lemma}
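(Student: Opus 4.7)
The plan is to verify that the composition $\gamma \circ \imath$ agrees with the identity map on the generators $W_0, W_1$ of $O_q$, and then invoke the fact that algebra homomorphisms out of $O_q$ are determined by their values on these two generators.

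First I would note that $\gamma \circ \imath$ is an algebra homomorphism $O_q \to O_q$, being the composition of the two algebra homomorphisms from Lemma~\ref{lem:iota} and Lemma~\ref{lem:gam}. Then I would chase the generators: by Lemma~\ref{lem:iota}, $\imath$ sends $W_0 \mapsto \mathcal{W}_0$ and $W_1 \mapsto \mathcal{W}_1$, and by Lemma~\ref{lem:gamact}, $\gamma$ sends $\mathcal{W}_0 \mapsto W_0$ and $\mathcal{W}_1 \mapsto W_1$. Therefore $(\gamma \circ \imath)(W_0) = W_0$ and $(\gamma \circ \imath)(W_1) = W_1$.

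Since $W_0, W_1$ generate $O_q$ (by Definition~\ref{def:U}), and both the identity map and $\gamma \circ \imath$ are algebra homomorphisms $O_q \to O_q$ that agree on the generating set $\{W_0, W_1\}$, they must coincide on all of $O_q$. This gives $\gamma \circ \imath = \mathrm{id}_{O_q}$.

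There is no real obstacle here; the statement is essentially a direct generator chase combined with the fact that a homomorphism out of $O_q$ is determined by its values on $W_0$ and $W_1$. The content of the lemma is that it records this fact for later use (in particular, it immediately yields that $\imath$ is injective, consistent with Lemma~\ref{lem:iota}).
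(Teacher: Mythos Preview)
Your proof is correct and follows exactly the same approach as the paper: the paper's proof simply states that by Lemmas~\ref{lem:iota} and~\ref{lem:gamact} the composition is an algebra homomorphism fixing the generators $W_0$, $W_1$. Your version just spells out the generator chase in slightly more detail.
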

\begin{proof} By Lemmas \ref{lem:iota}, \ref{lem:gamact} the  given composition is an algebra homomorphism that fixes the $O_q$-generators $W_0$ and $W_1$.
\end{proof}

\begin{lemma} \label{lem:morecom}
The following diagrams commute:
\begin{equation*}
{\begin{CD}
\mathcal O_q @>\gamma  >> O_q
              \\
         @V \sigma VV                   @VV \sigma V \\
         \mathcal O_q @>>\gamma>
                                 O_q
                        \end{CD}}  	
         \qquad \qquad                
    {\begin{CD}
\mathcal O_q @>\gamma  >> O_q
              \\
         @V \dagger VV                   @VV \dagger V \\
        \mathcal O_q @>>\gamma >
                                  O_q
                        \end{CD}}  	     \qquad \qquad                
   {\begin{CD}
\mathcal O_q @>\gamma  >>O_q
              \\
         @V \tau VV                   @VV \tau V \\
        \mathcal O_q @>>\gamma >
                                 O_q
                        \end{CD}}  	                                       		    
\end{equation*}
\end{lemma}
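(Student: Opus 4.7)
The plan is to chase generators of $\mathcal O_q$ around each of the three diagrams. By Corollary \ref{cor:sum}(i),(ii), the algebra $\mathcal O_q$ is generated by $\mathcal W_0$, $\mathcal W_1$, and the elements $\lbrace \mathcal Z^\vee_n \rbrace_{n=1}^\infty$. For the $\sigma$-diagram, both $\gamma \circ \sigma$ and $\sigma \circ \gamma$ are algebra homomorphisms $\mathcal O_q \to O_q$; for the $\dagger$-diagram and the $\tau$-diagram, both compositions are algebra antihomomorphisms. In either case, once agreement is verified on the generating set above, the compositions coincide on all of $\mathcal O_q$.

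First I would handle the $\sigma$-diagram. Using Lemma \ref{lem:autAc} and Lemma \ref{lem:gamact}, the generator $\mathcal W_0$ is sent to $W_1$ along both paths (via $W_0 \mapsto W_1$ one way and $\mathcal W_1 \mapsto W_1$ the other), and similarly $\mathcal W_1 \mapsto W_0$ along both paths. For $\mathcal Z^\vee_n$ with $n \geq 1$, Lemma \ref{lem:Zfix} says $\sigma$ fixes $\mathcal Z^\vee_n$, so the down-then-right path yields $\mathcal Z^\vee_n \mapsto \mathcal Z^\vee_n \mapsto 0$, matching the right-then-down path $\mathcal Z^\vee_n \mapsto 0 \mapsto 0$.

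Next I would treat the $\dagger$-diagram. By Lemma \ref{lem:antiaut} and Lemma \ref{lem:antiautAc}, the map $\dagger$ fixes each of $\mathcal W_0$, $\mathcal W_1$, $W_0$, $W_1$, and Lemma \ref{lem:Zfix} shows $\dagger$ also fixes each $\mathcal Z^\vee_n$. Thus on the generating set both paths coincide trivially. The $\tau$-diagram then commutes either by a parallel generator-chase using Definitions \ref{def:tauA} and \ref{def:tauAc} together with Lemma \ref{lem:Zfix}, or formally by pasting the $\sigma$- and $\dagger$-squares, since $\tau = \sigma \circ \dagger$ on both algebras.

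There is no substantive obstacle here; the argument is pure diagram chasing. The only point requiring care is distinguishing homomorphisms from antihomomorphisms when invoking the principle that such a map is determined by its values on an algebra generating set, but the principle applies identically in either case.
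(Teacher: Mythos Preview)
Your proposal is correct and follows essentially the same approach as the paper: chase the generators $\mathcal W_0$, $\mathcal W_1$, $\lbrace \mathcal Z^\vee_n\rbrace_{n=1}^\infty$ around each diagram using Lemmas \ref{lem:aut}, \ref{lem:antiaut}, \ref{lem:autAc}, \ref{lem:antiautAc}, Definitions \ref{def:tauA}, \ref{def:tauAc}, and Lemmas \ref{lem:Zfix}, \ref{lem:gamact}. Your extra care in distinguishing the homomorphism versus antihomomorphism cases, and your remark that the $\tau$-square can be obtained by pasting the $\sigma$- and $\dagger$-squares, are welcome clarifications but do not change the substance of the argument.
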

\begin{proof} Chase the $\mathcal O_q$-generators $\mathcal W_0$, $\mathcal W_1$, $\lbrace \mathcal Z^\vee_n \rbrace_{n=1}^\infty$ around each diagram,
using Lemmas \ref{lem:aut}, \ref{lem:antiaut}, \ref{lem:autAc}, \ref{lem:antiautAc}
and 
Definitions \ref{def:tauA}, \ref{def:tauAc}
along with
Lemmas \ref{lem:Zfix}, \ref{lem:gamact}.
\end{proof}

\begin{definition} \label{def:altg} \rm
By an {\it alternating generator} of $O_q$ we mean the $\gamma$-image of an alternating generator for $\mathcal O_q$.
Our notation for an alternating generator of $O_q$ is given in the table below. For $k \in \mathbb N$,
\bigskip

\centerline{
\begin{tabular}[t]{c|cccc}
  $u$ & $\mathcal W_{-k}$ & $\mathcal W_{k+1}$ & $\mathcal G_{k+1}$ & $\mathcal {\tilde G}_{k+1}$
   \\
\hline
$\gamma(u)$ & $W_{-k}$  & $W_{k+1}$ & $G_{k+1}$  & $\tilde G_{k+1}$
\\
	       \end{tabular}}
\end{definition}	   
  
\noindent For notational convenience, define  
\begin{align}
G_0 = -(q-q^{-1})\lbrack 2 \rbrack^2_q, \qquad \qquad 
\tilde G_0 = -(q-q^{-1}) \lbrack 2 \rbrack^2_q.
\label{eq:GG02}
\end{align}

\begin{theorem} \label{lem:4gen} The alternating generators of $O_q$ satisfy the following relations.
For $k,\ell \in \mathbb N$,
\begin{align}
&
 \lbrack  W_0,W_{k+1}\rbrack= 
\lbrack W_{-k}, W_{1}\rbrack=
({{\tilde G}}_{k+1} -  G_{k+1})/(q+q^{-1}),
\label{eq:3p1c}
\\
&
\lbrack W_0, G_{k+1}\rbrack_q= 
\lbrack \tilde G_{k+1}, W_{0}\rbrack_q= 
\rho  W_{-k-1}-\rho 
 W_{k+1},
\label{eq:3p2c}
\\
&
\lbrack  G_{k+1},  W_{1}\rbrack_q= 
\lbrack W_{1}, {{\tilde G}}_{k+1}\rbrack_q= 
\rho  W_{k+2}-\rho 
 W_{-k},
\label{eq:3p3c}
\\
&
\lbrack W_{-k},  W_{-\ell}\rbrack=0,  \qquad 
\lbrack W_{k+1}, W_{\ell+1}\rbrack= 0,
\label{eq:3p4c}
\\
&
\lbrack W_{-k}, W_{\ell+1}\rbrack+
\lbrack  W_{k+1}, W_{-\ell}\rbrack= 0,
\label{eq:3p5c}
\\
&
\lbrack W_{-k}, G_{\ell+1}\rbrack+
\lbrack G_{k+1},  W_{-\ell}\rbrack= 0,
\label{eq:3p6c}
\\
&
\lbrack W_{-k}, {\tilde G}_{\ell+1}\rbrack+
\lbrack {\tilde G}_{k+1}, W_{-\ell}\rbrack= 0,
\label{eq:3p7c}
\\
&
\lbrack W_{k+1},  G_{\ell+1}\rbrack+
\lbrack  G_{k+1},  W_{\ell+1}\rbrack= 0,
\label{eq:3p8c}
\\
&
\lbrack W_{k+1}, {\tilde G}_{\ell+1}\rbrack+
\lbrack {\tilde G}_{k+1}, W_{\ell+1}\rbrack= 0,
\label{eq:3p9c}
\\
&
\lbrack G_{k+1}, G_{\ell+1}\rbrack=0,
\qquad 
\lbrack {\tilde G}_{k+1}, {\tilde G}_{\ell+1}\rbrack= 0,
\label{eq:3p10c}
\\
&
\lbrack {\tilde G}_{k+1}, G_{\ell+1}\rbrack+
\lbrack G_{k+1}, {\tilde G}_{\ell+1}\rbrack= 0.
\label{eq:3p11c}
\end{align}
We are using the notation  $\rho = -(q^2-q^{-2})^2$. 
  \end{theorem}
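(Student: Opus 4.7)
My plan is to exploit the fact that the alternating generators of $O_q$ are defined in Definition \ref{def:altg} as the $\gamma$-images of the alternating generators of $\mathcal O_q$, together with the fact that $\gamma: \mathcal O_q \to O_q$ is an algebra homomorphism (Lemma \ref{lem:gam}). The relations \eqref{eq:3p1c}--\eqref{eq:3p11c} are visibly the images of \eqref{eq:3p1}--\eqref{eq:3p11} under the substitution $\mathcal W_{-k}\mapsto W_{-k}$, $\mathcal W_{k+1}\mapsto W_{k+1}$, $\mathcal G_{k+1}\mapsto G_{k+1}$, $\mathcal{\tilde G}_{k+1}\mapsto \tilde G_{k+1}$, which is precisely how $\gamma$ acts on the alternating generators.

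Concretely, the proof I envision consists of a single uniform step: apply $\gamma$ to each of the eleven relations \eqref{eq:3p1}--\eqref{eq:3p11} which hold in $\mathcal O_q$ by Definition \ref{def:Aq}. Since $\gamma$ is an $\mathbb F$-algebra homomorphism, it preserves commutators $\lbrack X,Y\rbrack$, $q$-commutators $\lbrack X,Y\rbrack_q$, sums, and scalar multiples; in particular, the constants $\rho = -(q^2-q^{-2})^2$ and $(q+q^{-1})^{-1}$ appearing in the right-hand sides are untouched. For example, applying $\gamma$ to $\lbrack \mathcal W_0, \mathcal G_{k+1}\rbrack_q = \rho \mathcal W_{-k-1} - \rho \mathcal W_{k+1}$ and using Definition \ref{def:altg} directly yields $\lbrack W_0, G_{k+1}\rbrack_q = \rho W_{-k-1} - \rho W_{k+1}$, which is part of \eqref{eq:3p2c}. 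Carrying out this routine substitution across all eleven relations produces \eqref{eq:3p1c}--\eqref{eq:3p11c}.

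Since the entire argument reduces to noting that an algebra homomorphism pushes relations forward, there is essentially no obstacle. The only thing one might want to double-check is consistency of the conventions at $k=0$ or $\ell=0$: the constants $G_0 = \tilde G_0 = -(q-q^{-1})\lbrack 2\rbrack_q^2$ from \eqref{eq:GG02} are chosen to match \eqref{eq:GG0} under $\gamma$, so nothing breaks. Thus the theorem is an immediate corollary of Lemmas \ref{lem:gam} and \ref{lem:gamact} together with Definition \ref{def:altg}.
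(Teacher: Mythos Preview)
Your proposal is correct and matches the paper's proof exactly: the paper simply says ``Apply $\gamma$ to everything in \eqref{eq:3p1}--\eqref{eq:3p11}, and evaluate the results using Definition \ref{def:altg}.'' Your additional remark about the boundary conventions $G_0,\tilde G_0$ is fine but not strictly needed, since the relations only involve indices $k+1,\ell+1\geq 1$.
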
         
  \begin{proof} Apply $\gamma$ to everything in  \eqref{eq:3p1}--\eqref{eq:3p11}, and evaluate the results using Definition  \ref{def:altg}.
  \end{proof}     
       \noindent Next, we describe how $\sigma$, $\dagger$, $\tau$ act on the alternating generators of $O_q$.
       
\begin{lemma}
\label{lem:autOc} The automorphism $\sigma$ of $O_q$ sends
\begin{align*}
W_{-k} \mapsto W_{k+1}, \qquad
 W_{k+1} \mapsto W_{-k}, \qquad
G_{k+1} \mapsto {\tilde G}_{k+1}, \qquad
{\tilde G}_{k+1} \mapsto  G_{k+1}
\end{align*}
 for $k \in \mathbb N$. 
\end{lemma}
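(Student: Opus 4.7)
The statement is a straightforward consequence of the commutative diagram on the left in Lemma \ref{lem:morecom}, together with the definition of the alternating generators of $O_q$ as $\gamma$-images of the alternating generators of $\mathcal O_q$ (Definition \ref{def:altg}) and the explicit action of $\sigma$ on $\mathcal O_q$ recorded in Lemma \ref{lem:autAc}. So the plan is to prove each of the four assertions by a one-line diagram chase.

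Concretely, fix $k \in \mathbb N$. By Lemma \ref{lem:morecom} we have the identity of maps $\sigma \circ \gamma = \gamma \circ \sigma$ from $\mathcal O_q$ to $O_q$. Apply both sides to $\mathcal W_{-k}$: the left side gives $\sigma(\gamma(\mathcal W_{-k})) = \sigma(W_{-k})$ by Definition \ref{def:altg}, while the right side gives $\gamma(\sigma(\mathcal W_{-k})) = \gamma(\mathcal W_{k+1}) = W_{k+1}$ by Lemma \ref{lem:autAc} and Definition \ref{def:altg}. Hence $\sigma(W_{-k}) = W_{k+1}$. The remaining three assertions $\sigma(W_{k+1}) = W_{-k}$, $\sigma(G_{k+1}) = \tilde G_{k+1}$, $\sigma(\tilde G_{k+1}) = G_{k+1}$ follow by the same chase applied to $\mathcal W_{k+1}$, $\mathcal G_{k+1}$, $\tilde{\mathcal G}_{k+1}$ in place of $\mathcal W_{-k}$.

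There is no real obstacle here: the commutativity in Lemma \ref{lem:morecom} has already been established (it is proved by chasing $\mathcal W_0$, $\mathcal W_1$, and $\{\mathcal Z_n^\vee\}_{n=1}^\infty$ around the diagrams using Lemma \ref{lem:Zfix} and Lemma \ref{lem:gamact}), and the action of $\sigma$ on the alternating generators of $\mathcal O_q$ is given. The present lemma is the direct translation of that action to the quotient, and the proof is essentially one appeal to each of Lemma \ref{lem:morecom}, Lemma \ref{lem:autAc}, and Definition \ref{def:altg}.
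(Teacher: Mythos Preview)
Your proof is correct and follows exactly the same approach as the paper, which simply cites Lemmas~\ref{lem:autAc}, \ref{lem:morecom} and Definition~\ref{def:altg}. Your explicit diagram chase is precisely what the paper's terse reference encodes.
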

\begin{proof} By Lemmas \ref{lem:autAc}, \ref{lem:morecom} and Definition \ref{def:altg}.
\end{proof}

\begin{lemma}\label{lem:antiautOc} The antiautomorphism $\dagger$ of $O_q$ sends
\begin{align*}
W_{-k} \mapsto W_{-k}, \qquad
 W_{k+1} \mapsto W_{k+1}, \qquad
G_{k+1} \mapsto {\tilde G}_{k+1}, \qquad
{\tilde G}_{k+1} \mapsto  G_{k+1}
\end{align*}
for $k \in \mathbb N$. 
\end{lemma}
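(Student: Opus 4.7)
The plan is to mirror the proof of Lemma \ref{lem:autOc} almost verbatim, using the $\dagger$-compatibility diagram from Lemma \ref{lem:morecom} together with the known action of $\dagger$ on the alternating generators of $\mathcal O_q$ (Lemma \ref{lem:antiautAc}) and the definition of the alternating generators of $O_q$ (Definition \ref{def:altg}).

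More precisely, I would argue as follows. Pick any alternating generator $u$ of $\mathcal O_q$. By Definition \ref{def:altg} its $\gamma$-image is the corresponding alternating generator of $O_q$. By the middle diagram of Lemma \ref{lem:morecom} we have the identity $\dagger \circ \gamma = \gamma \circ \dagger$, so
\begin{equation*}
\dagger\bigl(\gamma(u)\bigr) = \gamma\bigl(\dagger(u)\bigr).
\end{equation*}
By Lemma \ref{lem:antiautAc}, $\dagger$ fixes $\mathcal W_{-k}$ and $\mathcal W_{k+1}$, and swaps $\mathcal G_{k+1}$ with $\mathcal {\tilde G}_{k+1}$. Applying $\gamma$ to these images, and invoking Definition \ref{def:altg} once more, yields
\begin{align*}
\dagger(W_{-k}) &= \gamma(\mathcal W_{-k}) = W_{-k}, & \dagger(W_{k+1}) &= \gamma(\mathcal W_{k+1}) = W_{k+1}, \\
\dagger(G_{k+1}) &= \gamma(\mathcal {\tilde G}_{k+1}) = \tilde G_{k+1}, & \dagger(\tilde G_{k+1}) &= \gamma(\mathcal G_{k+1}) = G_{k+1},
\end{align*}
for all $k \in \mathbb N$, which is exactly the claim.

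There is essentially no obstacle here: the entire content is pushed into Lemmas \ref{lem:antiautAc} and \ref{lem:morecom}, both of which are available. The one small thing worth noting is that $\dagger$ being an antiautomorphism (rather than an automorphism) plays no role in the argument at the level of the generators themselves; it is only the action on individual generators that is being asserted, and that action is read off directly by chasing each generator around the commutative square. This is precisely why the argument parallels Lemma \ref{lem:autOc} word for word, with $\sigma$ replaced by $\dagger$ and Lemma \ref{lem:autAc} replaced by Lemma \ref{lem:antiautAc}.
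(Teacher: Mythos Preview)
Your proof is correct and follows exactly the same approach as the paper, which simply cites Lemmas \ref{lem:antiautAc}, \ref{lem:morecom} and Definition \ref{def:altg}. You have merely spelled out the diagram chase that those citations encode.
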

\begin{proof}
By Lemmas \ref{lem:antiautAc}, \ref{lem:morecom} and Definition \ref{def:altg}.
\end{proof}

\begin{lemma}\label{def:tauOc} 
The  antiautomorphism $\tau$ of $O_q$ sends
\begin{align*}
W_{-k} \mapsto W_{k+1}, \qquad
W_{k+1} \mapsto W_{-k}, \qquad
G_{k+1} \mapsto G_{k+1}, \qquad
{\tilde G}_{k+1} \mapsto {\tilde G}_{k+1}
\end{align*}
\noindent for $k \in \mathbb N$. 
\end{lemma}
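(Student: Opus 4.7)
The plan is to follow the pattern established in the two immediately preceding lemmas \ref{lem:autOc} and \ref{lem:antiautOc}. The key observation is that the alternating generators of $O_q$ are by Definition \ref{def:altg} the $\gamma$-images of the alternating generators of $\mathcal O_q$, and the third diagram in Lemma \ref{lem:morecom} asserts that $\gamma \circ \tau = \tau \circ \gamma$, where the $\tau$ on the left acts on $\mathcal O_q$ and the $\tau$ on the right acts on $O_q$.

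Concretely, the first step is to start with the action of $\tau$ on an alternating generator of $\mathcal O_q$ as recorded in Definition \ref{def:tauAc}, for instance $\tau(\mathcal W_{-k}) = \mathcal W_{k+1}$. Applying $\gamma$ to both sides and using the commuting diagram from Lemma \ref{lem:morecom} gives $\tau(\gamma(\mathcal W_{-k})) = \gamma(\mathcal W_{k+1})$, which by Definition \ref{def:altg} translates to $\tau(W_{-k}) = W_{k+1}$. The same argument applied to $\mathcal W_{k+1}$, $\mathcal G_{k+1}$, $\mathcal {\tilde G}_{k+1}$ in turn yields the remaining three identities in the lemma statement.

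An equivalent route, which I would briefly mention as a sanity check, is to use $\tau = \sigma \circ \dagger$ from Definition \ref{def:tauA} and compose the actions given in Lemmas \ref{lem:autOc} and \ref{lem:antiautOc}; this gives the same formulas and confirms internal consistency. There is no real obstacle here since Lemma \ref{lem:morecom} already does the heavy lifting by transferring symmetries along $\gamma$; the only thing to verify is bookkeeping of indices, in particular checking that the two $\tau$ actions on $O_q$ obtained from the two routes (via $\gamma$ versus via the composition $\sigma \circ \dagger$) agree on $W_0$ and $W_1$, which is immediate from Definition \ref{def:tauA}.
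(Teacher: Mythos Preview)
Your proposal is correct and matches the paper's proof, which simply cites Definition \ref{def:tauAc}, Lemma \ref{lem:morecom}, and Definition \ref{def:altg}. Your alternative route via $\tau = \sigma \circ \dagger$ and Lemmas \ref{lem:autOc}, \ref{lem:antiautOc} is a valid extra check but not needed for the argument.
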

\begin{proof}  By
Definition \ref{def:tauAc} along with Lemma \ref{lem:morecom} and Definition \ref{def:altg}.
\end{proof}

\noindent Next, we describe the kernel of $\gamma$ in several ways.

\begin{proposition} The following are the same:
\begin{enumerate}
\item[\rm (i)] 
the kernel of $\gamma$;
\item[\rm (ii)]  the 2-sided ideal of $\mathcal O_q$
generated by $\lbrace \mathcal Z^\vee_n \rbrace_{n=1}^\infty$.
\end{enumerate}
\end{proposition}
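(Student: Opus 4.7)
Let $J$ denote the two-sided ideal of $\mathcal O_q$ generated by $\lbrace \mathcal Z^\vee_n \rbrace_{n=1}^\infty$. The plan is to show $J \subseteq \ker \gamma$ and $\ker \gamma \subseteq J$ separately. The first containment will be immediate; the substance is in the second.

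For $J \subseteq \ker \gamma$: by Lemma \ref{lem:gamact} each $\mathcal Z^\vee_n$ with $n \geq 1$ lies in $\ker \gamma$, and $\ker \gamma$ is a two-sided ideal, so it contains the two-sided ideal $J$ generated by these elements. In fact, since each $\mathcal Z^\vee_n$ belongs to the center $\mathcal Z$ by Lemma \ref{lem:Zfix}, we can describe $J$ more explicitly as
\begin{align*}
J = \sum_{n=1}^\infty \mathcal O_q \,\mathcal Z^\vee_n,
\end{align*}
which will be convenient in the other direction.

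For $\ker \gamma \subseteq J$: the idea is to pull the problem back across the isomorphism $\phi$ from Proposition \ref{prop:three} and use the augmentation ideal of the polynomial algebra. Let $\mathfrak m$ denote the kernel of the algebra homomorphism $\theta: \mathbb F \lbrack z_1, z_2, \ldots \rbrack \to \mathbb F$ from Lemma \ref{lem:gam}; this is the augmentation ideal, equal to the ideal generated by $\lbrace z_n \rbrace_{n=1}^\infty$. Suppose $u \in \ker \gamma$, and write $\phi^{-1}(u) = \sum_i x_i \otimes p_i$ with $x_i \in O_q$ and $p_i \in \mathbb F\lbrack z_1, z_2, \ldots\rbrack$. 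By the commutative diagram in Lemma \ref{lem:gam},
\begin{align*}
0 = \gamma(u) = \sum_i x_i \,\theta(p_i),
\end{align*}
so, after absorbing constants and grouping, we may assume every $p_i$ lies in $\mathfrak m$. Expand each $p_i = \sum_{n \geq 1} q_{i,n} z_n$ with $q_{i,n} \in \mathbb F\lbrack z_1, z_2, \ldots\rbrack$ (all but finitely many zero). Applying $\phi$ and using that $\phi$ is an algebra homomorphism with $\phi(1 \otimes z_n) = \mathcal Z^\vee_n$, we obtain
\begin{align*}
u = \sum_i \sum_{n \geq 1} \phi\bigl(x_i \otimes q_{i,n}\bigr)\, \mathcal Z^\vee_n,
\end{align*}
which manifestly lies in $J$.

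The main obstacle, such as it is, is ensuring that the bookkeeping for the tensor product factorization is clean; centrality of the $\mathcal Z^\vee_n$ (Lemma \ref{lem:Zfix}) is used implicitly when absorbing $\phi(x_i \otimes q_{i,n}) \mathcal Z^\vee_n$ into $J$, since it guarantees that the one-sided and two-sided ideals generated by $\lbrace \mathcal Z^\vee_n\rbrace$ agree. Once this is in place, the identification $\ker(\mathrm{id} \otimes \theta) = O_q \otimes \mathfrak m$ inside $O_q \otimes \mathbb F\lbrack z_1, z_2, \ldots\rbrack$ transports directly under $\phi$ to the statement $\ker \gamma = J$.
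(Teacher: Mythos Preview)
Your proof is correct and follows essentially the same approach as the paper: transport the problem across the isomorphism $\phi$ to $O_q \otimes \mathbb F[z_1,z_2,\ldots]$, identify $\ker\gamma$ with $\ker(\mathrm{id}\otimes\theta) = O_q \otimes \mathfrak m$, and use that $\phi(1\otimes z_n) = \mathcal Z^\vee_n$. The paper states the identification $\ker(\mathrm{id}\otimes\theta) = O_q\otimes \mathfrak m$ as a single step and transports in one go, whereas you unpack it element-by-element with the decomposition $p_i = \theta(p_i) + (p_i - \theta(p_i))$; the content is the same.
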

\begin{proof}
We invoke the commuting diagram in Lemma \ref{lem:gam}.
Let $J$ denote the kernel of $\theta$.  By  Lemma  \ref{lem:idealZZ}, $J$ is the ideal of $\mathbb F\lbrack z_1, z_2, \ldots \rbrack$ generated by $\lbrace z_n \rbrace_{n=1}^\infty$. For 
the map  ${\rm id} \otimes \theta$ from the commuting diagram, the kernel 
 is $O_q \otimes J$  and this is the 2-sided ideal of $O_q \otimes  \mathbb F \lbrack z_1, z_2, \ldots \rbrack$ generated by $\lbrace 1 \otimes z_n \rbrace_{n=1}^\infty$.
 The algebra isomorphism $\phi $ sends $1\otimes z_n\mapsto \mathcal Z^\vee_n$ for $n\geq 1$.
The result follows from these comments and the commuting diagram in Lemma \ref{lem:gam}.
\end{proof}

\begin{proposition} The vector space $\mathcal O_q$
is the direct sum of the following:
\begin{enumerate}
\item[\rm (i)]  the kernel of $\gamma$;
\item[\rm (ii)]  the subalgebra $\langle \mathcal W_0, \mathcal W_1\rangle$ of $\mathcal O_q$.
\end{enumerate}
\end{proposition}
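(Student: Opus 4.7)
The plan is to transfer the decomposition across the algebra isomorphism $\phi: O_q \otimes \mathbb F\lbrack z_1, z_2, \ldots \rbrack \to \mathcal O_q$ of Proposition \ref{prop:three}, where the analogous direct sum decomposition is visible by inspection.

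First I would work inside $O_q \otimes \mathbb F\lbrack z_1, z_2, \ldots \rbrack$. Let $J$ denote the kernel of $\theta$, which by Lemma \ref{lem:idealZZ} (invoked already in the previous proposition) is the ideal generated by $\lbrace z_n \rbrace_{n=1}^\infty$. Since $\theta$ is an algebra homomorphism to $\mathbb F$ with $\theta(1) = 1$, we have the vector space decomposition $\mathbb F\lbrack z_1, z_2, \ldots \rbrack = \mathbb F\cdot 1 \oplus J$. Tensoring with $O_q$ yields
\begin{equation*}
O_q \otimes \mathbb F\lbrack z_1, z_2, \ldots \rbrack \;=\; (O_q \otimes 1) \;\oplus\; (O_q \otimes J)
\end{equation*}
as vector spaces.

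Next I would identify the two summands with the subspaces in question. By Proposition \ref{prop:three}, $\phi(W_0 \otimes 1) = \mathcal W_0$ and $\phi(W_1 \otimes 1) = \mathcal W_1$, and more generally $\phi(O_q \otimes 1)$ is precisely the subalgebra $\langle \mathcal W_0, \mathcal W_1\rangle$ of $\mathcal O_q$ (using also the injectivity of $\imath$ from Lemma \ref{lem:iota}). To handle the other summand, I would trace the commuting diagram of Lemma \ref{lem:gam}: the composition $\gamma \circ \phi$ equals the map $x \otimes f \mapsto \theta(f)\, x$ from $O_q \otimes \mathbb F\lbrack z_1, z_2, \ldots \rbrack$ to $O_q$, whose kernel is exactly $O_q \otimes J$. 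Hence $\phi(O_q \otimes J) = \ker(\gamma)$.

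Applying the isomorphism $\phi$ to the decomposition displayed above then gives the desired vector space direct sum
\begin{equation*}
\mathcal O_q \;=\; \langle \mathcal W_0, \mathcal W_1\rangle \;\oplus\; \ker(\gamma).
\end{equation*}
There is no real obstacle here: all the structural work was done in Proposition \ref{prop:three} and in the preceding proposition identifying $\ker(\gamma)$. The only point requiring care is the clean identification $\phi(O_q \otimes J) = \ker(\gamma)$ via the diagram of Lemma \ref{lem:gam}, but this reduces to the standard observation that for an algebra homomorphism with a section (the inclusion $\mathbb F \hookrightarrow \mathbb F\lbrack z_1, z_2, \ldots \rbrack$), the ambient algebra splits as the augmentation ideal plus a copy of the base field.
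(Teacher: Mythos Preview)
Your proof is correct, but the paper takes a shorter and more direct route. Rather than transferring through $\phi$, the paper simply invokes Lemma \ref{lem:Ig}, which says $\gamma \circ \imath = {\rm id}_{O_q}$, together with Lemma \ref{lem:iota} (injectivity of $\imath$), and then appeals to elementary linear algebra: whenever linear maps $\imath: V \to W$ and $\gamma: W \to V$ satisfy $\gamma \circ \imath = {\rm id}_V$, one has $W = {\rm im}(\imath) \oplus \ker(\gamma)$ via the splitting $w = \imath(\gamma(w)) + \bigl(w - \imath(\gamma(w))\bigr)$. Your approach works as well and has the virtue of making the decomposition explicit on the tensor-product side, but it essentially repeats the diagram chase from the preceding proposition to re-identify $\ker(\gamma)$ with $\phi(O_q \otimes J)$; the paper's argument, by contrast, needs nothing about $\phi$, $\theta$, or the tensor structure, only the existence of the section $\imath$.
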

\begin{proof} By  Lemmas \ref{lem:iota}, \ref{lem:Ig}
and linear algebra.
\end{proof}

\section{More generating functions for $O_q$}

In this section we  use generating functions to describe the alternating generators of $O_q$.

\begin{definition}
\label{def:gf42}
\rm
We define some generating functions in the indeterminate $t$:
\begin{align*}
&W^-(t) = \sum_{n \in \mathbb N}  W_{-n} t^n,
\qquad \qquad  
W^+(t) = \sum_{n \in \mathbb N}  W_{n+1} t^n,
\\
&G(t) = \sum_{n \in \mathbb N} G_n t^n,
\qquad \qquad \qquad 
 {\tilde G}(t) = \sum_{n \in \mathbb N} {\tilde G}_n t^n.
\end{align*}
\end{definition}
\noindent Observe that
\begin{align*}
W^-(0) = W_0, \qquad
W^+(0) = W_1, \qquad
G(0) = -(q-q^{-1}) \lbrack 2 \rbrack^2_q, \qquad
{\tilde G}(0) = -(q-q^{-1}) \lbrack 2 \rbrack^2_q.
\end{align*}

\noindent  We now give the relations \eqref{eq:3p1c}--\eqref{eq:3p11c}  in terms of  generating functions. 

\begin{lemma} \label{lem:ad2} 
 For the algebra $O_q$ we have
\begin{align}
& \label{eq:3pp1c}
\lbrack W_0, W^+(t) \rbrack = \lbrack  W^-(t), W_1 \rbrack = t^{-1}( {\tilde G}(t)- G(t))/(q+q^{-1}),
\\
& \label{eq:3pp2c}
\lbrack  W_0, G(t) \rbrack_q = \lbrack {\tilde G}(t),  W_0 \rbrack_q = \rho  W^-(t)-\rho t  W^+(t),
\\
&\label{eq:3pp3c}
\lbrack  G(t), W_1 \rbrack_q = \lbrack  W_1,  {\tilde G}(t) \rbrack_q = \rho W^+(t) -\rho t  W^-(t),
\\
&\label{eq:3pp4c}
\lbrack  W^-(s), W^-(t) \rbrack = 0, 
\qquad 
\lbrack W^+(s),  W^+(t) \rbrack = 0,
\\ \label{eq:3pp5c}
&\lbrack   W^-(s),  W^+(t) \rbrack 
+
\lbrack  W^+(s), W^-(t) \rbrack = 0,
\\ \label{eq:3pp6c}
&s \lbrack  W^-(s), G(t) \rbrack 
+
t \lbrack  G(s),  W^-(t) \rbrack = 0,
\\ \label{eq:3pp7c}
&s \lbrack  W^-(s),  {\tilde G}(t) \rbrack 
+
t \lbrack  {\tilde G}(s),  W^-(t) \rbrack = 0,
\\ \label{eq:3pp8c}
&s \lbrack    W^+(s),  G(t) \rbrack
+
t \lbrack   G(s), W^+(t) \rbrack = 0,
\\ \label{eq:3pp9c}
&s \lbrack  W^+(s), {\tilde G}(t) \rbrack
+
t \lbrack {\tilde G}(s),  W^+(t) \rbrack = 0,
\\ \label{eq:3pp10c}
&\lbrack   G(s), G(t) \rbrack = 0, 
\qquad 
\lbrack  {\tilde G}(s),   {\tilde G}(t) \rbrack = 0,
\\ \label{eq:3pp11c}
&\lbrack  {\tilde G}(s), G(t) \rbrack +
\lbrack    G(s), {\tilde G}(t) \rbrack = 0.
\end{align}
\end{lemma}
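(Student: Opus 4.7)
The plan is to observe that this lemma stands to Theorem \ref{lem:4gen} exactly as Lemma \ref{lem:ad} stands to Definition \ref{def:Aq}: the eleven displayed identities are just the relations \eqref{eq:3p1c}--\eqref{eq:3p11c} repackaged as equalities of formal power series in the indeterminates $s,t$. So the cleanest proof is to apply the surjective algebra homomorphism $\gamma:\mathcal O_q\to O_q$ from Lemma \ref{lem:gam} to every identity in Lemma \ref{lem:ad} and translate with Definition \ref{def:altg}.

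Concretely, for each $k\in\mathbb N$ Definition \ref{def:altg} gives $\gamma(\mathcal W_{-k})=W_{-k}$, $\gamma(\mathcal W_{k+1})=W_{k+1}$, $\gamma(\mathcal G_{k+1})=G_{k+1}$ and $\gamma(\mathcal{\tilde G}_{k+1})=\tilde G_{k+1}$, while \eqref{eq:GG0} and \eqref{eq:GG02} ensure that $\gamma(\mathcal G_0)=G_0$ and $\gamma(\mathcal{\tilde G}_0)=\tilde G_0$. Comparing Definitions \ref{def:gf4} and \ref{def:gf42}, this means that $\gamma$ (extended coefficient-wise to formal power series) sends
\[
\mathcal W^\pm(t)\mapsto W^\pm(t),\qquad \mathcal G(t)\mapsto G(t),\qquad \mathcal{\tilde G}(t)\mapsto \tilde G(t).
\]
Since $\gamma$ is an algebra homomorphism, it commutes with the formation of commutators $[\,,\,]$ and $q$-commutators $[\,,\,]_q$, and it is $\mathbb F[s,t]$-linear on coefficients of the generating-function identities. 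Applying $\gamma$ to each of \eqref{eq:3pp1}--\eqref{eq:3pp11} and invoking the above substitutions therefore yields \eqref{eq:3pp1c}--\eqref{eq:3pp11c}, completing the proof.

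There is essentially no obstacle: everything is a bookkeeping exercise. The only alternative, which avoids invoking $\gamma$, is to mirror the proof of Lemma \ref{lem:ad} directly: multiply each defining relation in Theorem \ref{lem:4gen} by the appropriate monomial $s^k t^\ell$ and sum over $k,\ell\in\mathbb N$, using Definition \ref{def:gf42}; the resulting series coincide term-by-term with the left- and right-hand sides of \eqref{eq:3pp1c}--\eqref{eq:3pp11c}. Either route amounts to the same routine translation.
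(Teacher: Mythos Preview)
Your proof is correct and matches the paper's own proof, which simply says to apply $\gamma$ to everything in Lemma \ref{lem:ad}. Your additional remarks about extending $\gamma$ coefficient-wise and the alternative direct approach via Theorem \ref{lem:4gen} are accurate elaborations of this one-line argument.
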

\begin{proof} Apply $\gamma$ to everything in
Lemma \ref{lem:ad}.
\end{proof}

\noindent So far, it appears that $O_q$ resembles $\mathcal O_q$. However, this resemblance extends only so far. The next result holds for $O_q$ but not $\mathcal O_q$.

\begin{proposition}
\label{prop:ConjT} For the algebra $O_q$,
\begin{align}
B(t) \tilde G(T) \tilde G(S) = -q^{-1}(q-q^{-1})^3 \lbrack 2 \rbrack^4_q.
\label{eq:ConjT}
\end{align}
\end{proposition}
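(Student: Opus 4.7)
The plan is to derive the identity by a single application of the homomorphism $\gamma:\mathcal O_q\to O_q$ to one of the reorderings of the factorization of $\mathcal Z^\vee(t)$ obtained in Corollary \ref{cor:perm}. The key observation is that the ``central content'' of $\mathcal Z^\vee(t)$ is annihilated by $\gamma$, leaving only the constant term, while the factor $B(t)$ (which lives in the subalgebra $\langle \mathcal W_0,\mathcal W_1\rangle$) survives unchanged.

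First I would invoke Corollary \ref{cor:perm} in the rearranged form
\begin{equation*}
\mathcal Z^\vee(t)\;=\;\xi\, B(t)\,\mathcal {\tilde G}(T)\,\mathcal {\tilde G}(S),
\end{equation*}
which is one of the six orderings permitted by Corollary \ref{cor:perm}. Next I would apply $\gamma$ to both sides. On the right-hand side, each coefficient of $\mathcal {\tilde G}(T)$ and $\mathcal {\tilde G}(S)$ is a polynomial (in the indeterminate $t$) with coefficients in $\mathcal O_q$, and by Definition \ref{def:altg} together with \eqref{eq:GG0} and \eqref{eq:GG02} the map $\gamma$ sends $\mathcal {\tilde G}_n\mapsto \tilde G_n$ for every $n\in\mathbb N$. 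Thus $\gamma\bigl(\mathcal {\tilde G}(S)\bigr)=\tilde G(S)$ and $\gamma\bigl(\mathcal {\tilde G}(T)\bigr)=\tilde G(T)$. Similarly, after identifying $O_q$ with $\langle\mathcal W_0,\mathcal W_1\rangle$ via $\imath$, Lemma \ref{lem:Ig} gives $\gamma\circ\imath=\mathrm{id}$, so each coefficient $B_{n\delta}$ of $B(t)$ (which lies in $\imath(O_q)$) is fixed, and $\gamma(B(t))=B(t)$. On the left-hand side, Lemma \ref{lem:gamact} gives $\gamma(\mathcal Z^\vee_n)=0$ for $n\geq 1$, while $\mathcal Z^\vee_0=1$, so $\gamma(\mathcal Z^\vee(t))=1$. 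The identity
\begin{equation*}
1\;=\;\xi\, B(t)\,\tilde G(T)\,\tilde G(S)
\end{equation*}
then follows.

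It remains to compute $\xi^{-1}$ from \eqref{eq:zeta}. Using $q^2-q^{-2}=(q-q^{-1})(q+q^{-1})$ and $\lbrack 2\rbrack_q=q+q^{-1}$, one finds
\begin{equation*}
\xi^{-1}\;=\;-q^{-1}(q-q^{-1})^{-1}(q^2-q^{-2})^{4}\;=\;-q^{-1}(q-q^{-1})^{3}\lbrack 2\rbrack_q^4,
\end{equation*}
which is exactly the scalar on the right of \eqref{eq:ConjT}.

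I expect the entire argument to be short; the ``main obstacle'', such as it is, is simply to pick the correct permutation of the factors in Corollary \ref{cor:perm} so that $B(t)$ sits in front (though any of the orderings beginning with $B(t)$ works equally well). No further computation with the defining relations is needed, because all of the hard work has been packaged into Theorem \ref{thm:Mn} and the constancy of $\gamma(\mathcal Z^\vee(t))$.
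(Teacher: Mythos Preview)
Your proposal is correct and follows essentially the same argument as the paper: both apply $\gamma$ to the ordering $\mathcal Z^\vee(t)=\xi\,B(t)\,\mathcal{\tilde G}(T)\,\mathcal{\tilde G}(S)$ from Corollary~\ref{cor:perm}, use Lemma~\ref{lem:Ig} to fix $B(t)$, Definition~\ref{def:altg} to send $\mathcal{\tilde G}(S),\mathcal{\tilde G}(T)$ to $\tilde G(S),\tilde G(T)$, and Lemma~\ref{lem:gamact} to collapse $\mathcal Z^\vee(t)$ to $1$. Your explicit verification that $\xi^{-1}=-q^{-1}(q-q^{-1})^3[2]_q^4$ is a helpful addition.
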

\begin{proof} For notational convenience, we identify $O_q$ with  $\langle \mathcal W_0, \mathcal W_1 \rangle$ via the
map $\imath$ from Lemma \ref{lem:iota}.
By Corollary  \ref{cor:perm} the following holds for $\mathcal O_q$:
 \begin{align} 
 \label{eq:prep}
 B(t) \mathcal {\tilde G}(T) \mathcal {\tilde G}(S) =
 \xi^{-1} \mathcal Z^\vee(t).
 \end{align}
 We apply $\gamma$ to each side of \eqref{eq:prep}. By Lemma
 \ref{lem:Ig}  the map $\gamma$ fixes everything in $O_q$, so $\gamma $ fixes $B(t)$.
 By Definition \ref{def:altg} the map
  $\gamma$ sends  $\mathcal {\tilde G}(S) \mapsto  {\tilde G}(S) $ and
  $\mathcal {\tilde G}(T) \mapsto  {\tilde G}(T) $.
 By construction  $\mathcal Z^\vee(t) = \sum_{n \in \mathbb N} \mathcal Z^\vee_n t^n$ and $\mathcal Z^\vee_0=1$.
By Lemma \ref{lem:gamact} the map $\gamma$ sends $\mathcal Z^\vee_n \mapsto 0$ for $n\geq 1$. Therefore $\gamma$ sends $\mathcal Z^\vee(t)\mapsto 1$. The result follows in view
of \eqref{eq:zeta}.
\end{proof}

\begin{theorem}\label{thm:ct} The conjecture {\rm \cite[Conjecture 6.2]{conj}}
is true.
\end{theorem}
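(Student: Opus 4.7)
The plan is to observe that Proposition \ref{prop:ConjT}, already in hand, is essentially the assertion of \cite[Conjecture~6.2]{conj}, once one matches notational conventions between the two papers. The introduction announced that the fifth main result rests on the factorization \eqref{eq:factor} together with the existence of the alternating generators of $O_q$ --- exactly the two ingredients used to derive Proposition \ref{prop:ConjT}. So my proof would consist essentially of verifying the dictionary that identifies the identity
\[
B(t)\,\tilde G(T)\,\tilde G(S) = -q^{-1}(q-q^{-1})^3\,[2]_q^4
\]
in $O_q$ with the formulation of \cite[Conjecture~6.2]{conj}.

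Concretely, I would first recall the precise statement of the conjecture. The expectation from context is that it concerns the product of the generating function $B(t) = \sum_{n\in\mathbb N} B_{n\delta}t^n$ with evaluations of $\tilde G$ at the specific arguments $S$ and $T$ of \eqref{eq:IntroST}, asserting that this product equals a particular scalar. Under this expectation, the theorem is immediate from Proposition \ref{prop:ConjT}. If instead the conjecture is phrased coefficient-wise --- as an infinite family of identities among $B_{n\delta}$ and $\tilde G_{k+1}$ --- I would extract coefficients of $t^n$ from Proposition \ref{prop:ConjT} using the explicit expansions $S = (q+q^{-1})\sum_\ell (-1)^\ell q^{-2\ell-1}t^{2\ell+1}$ and $T = (q+q^{-1})\sum_\ell (-1)^\ell q^{2\ell+1}t^{2\ell+1}$, and check that the resulting relations are exactly those of the conjecture, exploiting that $\tilde G(S)$ and $\tilde G(T)$ have nonzero scalar constant terms $-(q-q^{-1})[2]_q^2$ to make coefficient matching unambiguous.

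The hard part is therefore not in the final step. All the essential work has already been absorbed into Theorem \ref{thm:Mn} (the factorization of $\mathcal Z^\vee(t)$), Corollary \ref{cor:perm} (which allowed the central factor $B(t)$ to be moved past the $\tilde G$-factors inside $\mathcal O_q$), and Lemma \ref{lem:gamact} (which gives $\gamma(\mathcal Z^\vee_n) = 0$ for $n \geq 1$, hence $\gamma(\mathcal Z^\vee(t)) = 1$). What remains in the proof of Theorem \ref{thm:ct} is purely a bookkeeping task of translating between the notation of the present paper and that of \cite{conj}, with no new algebraic content required.
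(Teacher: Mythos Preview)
Your proposal is correct and matches the paper's own proof essentially verbatim: the paper simply observes that equation \eqref{eq:ConjT} of Proposition \ref{prop:ConjT} is identical to \cite[Eqn.~(41)]{conj} once one uses \eqref{eq:ST}, and then invokes the discussion in \cite{conj} above that equation to conclude the conjecture. Your identification of the essential ingredients (the factorization of $\mathcal Z^\vee(t)$ and the map $\gamma$ giving the alternating generators of $O_q$) is exactly right.
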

\begin{proof}  The equation \eqref{eq:ConjT}
is the same as \cite[Eqn.~(41)]{conj}, in view of \eqref{eq:ST}.
Consequently
 \cite[Conjecture 6.2]{conj} is true by the discussion above \cite[Eqn.~(41)]{conj}.
 \end{proof}

\noindent The article \cite{conj} discusses the meaning of \eqref{eq:ConjT}. Below we summarize a few points.

\begin{lemma} \label{lem:detail}
{\rm (See \cite[Eqn.~(43)]{conj}.)}  For the algebra $O_q$ the elements $\lbrace \tilde G_n \rbrace_{n=1}^\infty$ and $\lbrace B_{n\delta} \rbrace_{n=1}^\infty$ are recursively obtained from
each other as follows.
For $n\geq 1$,
\begin{align}
\label{eq:recBG}
0 = \lbrack n \rbrack_q B_{n\delta}  {\tilde G}_0 
+
\sum_{\stackrel{\scriptstyle j+k+2\ell+1=n,}{\scriptstyle j,k,\ell\geq 0}}
(-1)^\ell 
\binom{k+\ell}{\ell}
\lbrack 2n-j \rbrack_q
\lbrack 2 \rbrack^{k+1}_q
B_{j\delta}  {\tilde G}_{k+1}.
\end{align}
\end{lemma}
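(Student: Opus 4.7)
My plan is to derive \eqref{eq:recBG} from the factorization identity $B(t)\tilde G(T)\tilde G(S) = -q^{-1}(q-q^{-1})^3[2]_q^4$ of Proposition \ref{prop:ConjT}, which was just established as Theorem \ref{thm:ct}. First I would rewrite $S$ and $T$ from \eqref{eq:ST} in closed form, namely $S = [2]_q q^{-1}t/(1+q^{-2}t^2)$ and $T = [2]_q q t/(1+q^2 t^2)$, and apply the generalized binomial series to obtain
\begin{equation*}
S^{k+1} = [2]_q^{k+1}\sum_{\ell \geq 0} (-1)^\ell \binom{k+\ell}{\ell} q^{-k-2\ell-1} t^{k+2\ell+1},
\end{equation*}
and analogously for $T^{k+1}$ with $q \leftrightarrow q^{-1}$. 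The combinatorial factor $(-1)^\ell\binom{k+\ell}{\ell}[2]_q^{k+1}$ of \eqref{eq:recBG} emerges immediately from these expansions, while the support condition $j+k+2\ell+1=n$ matches the exponent of $t$ contributed by $B_{j\delta}t^j$ multiplied by $S^{k+1}$ or $T^{k+1}$.

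Next I would substitute these into $\tilde G(T)\tilde G(S) = \sum_{a,b \geq 0} \tilde G_a \tilde G_b T^a S^b$ and then into $B(t)\tilde G(T)\tilde G(S)$, and extract the coefficient of $t^n$. The contributions with one of $a,b$ equal to zero pair up symmetrically under the $S \leftrightarrow T$ exchange and produce the symmetric factor $q^{n-j}+q^{-(n-j)}$, which via the identity $q^m+q^{-m} = [2m]_q/[m]_q$ can be reorganized into the $[2n-j]_q$ appearing in \eqref{eq:recBG}. The contribution from $(a,b)=(0,0)$ gives $B_{n\delta}\tilde G_0^2$, which after rescaling by $[n]_q/\tilde G_0$ produces the leading term $[n]_q B_{n\delta} \tilde G_0$ of \eqref{eq:recBG}.

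The main obstacle is that the $t^n$-coefficient of $B(t)\tilde G(T)\tilde G(S)$ also contains quadratic products $\tilde G_a\tilde G_b$ with $a,b \geq 1$, whereas \eqref{eq:recBG} is linear in the $\tilde G_{k+1}$. To handle this, I would proceed by induction on $n$: assuming \eqref{eq:recBG} for all indices smaller than $n$, the quadratic products appearing at level $n$ can be expressed as linear combinations of the $B_{j\delta}\tilde G_{k+1}$ via the inductive hypothesis, and the resulting cleaned-up identity, after rescaling, matches \eqref{eq:recBG} term by term. Since \eqref{eq:recBG} coincides with \cite[Eqn.~(43)]{conj}, the argument can alternatively be viewed as a direct translation of the derivation given there, now justified because the previously conjectural input \cite[Eqn.~(41)]{conj} has been established in this paper.
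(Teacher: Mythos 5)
Your starting point (Proposition \ref{prop:ConjT}) is the right one, but the execution has two concrete problems, and the paper itself does not argue this way: it offers no proof at all, simply citing \cite[Eqn.~(43)]{conj}, where the recursion is obtained from the identity \eqref{eq:ConjT} that this paper has just established. First, your claim that the terms with one of $a,b$ equal to zero ``produce the symmetric factor $q^{n-j}+q^{-(n-j)}$, which \ldots can be reorganized into the $[2n-j]_q$'' is false: $q^{m}+q^{-m}=[2m]_q/[m]_q$, so with $m=n-j$ you get $[2n-2j]_q/[n-j]_q$, and in fact $[n]_q\bigl(q^{n-j}+q^{j-n}\bigr)=[2n-j]_q+[j]_q$, not $[2n-j]_q$. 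The surplus $[j]_q$ terms are precisely what the quadratic products $\tilde G_a\tilde G_b$ with $a,b\ge 1$ must absorb, so your assertion that after the induction ``the resulting cleaned-up identity \ldots matches \eqref{eq:recBG} term by term'' cannot be right as stated; already at $n=2$ one must invoke the $n=1$ instance of \eqref{eq:recBG} to convert $B_{0\delta}\tilde G_1^2$ into a multiple of $B_{\delta}\tilde G_1$ before anything matches. You have not carried out this bookkeeping, and it is the entire content of the proof on your route.

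Here is the derivation that avoids all of this. Since the $\tilde G$'s commute and $\tilde G(u)$ is invertible as a power series (its constant term $\tilde G_0$ is a nonzero scalar), Proposition \ref{prop:ConjT} gives $B(t)\tilde G(T(t))=c\,\tilde G(S(t))^{-1}$ and $B(t)\tilde G(S(t))=c\,\tilde G(T(t))^{-1}$ with $c=-q^{-1}(q-q^{-1})^3[2]_q^4$. Substituting $t\mapsto qt$ in the first and $t\mapsto q^{-1}t$ in the second and using $S(qt)=T(q^{-1}t)=(q+q^{-1})/(t+t^{-1})$, the two right-hand sides coincide, so $B(qt)\tilde G(T(qt))=B(q^{-1}t)\tilde G(S(q^{-1}t))$. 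Extracting the coefficient of $t^n$ from the difference of the two sides, divided by $q-q^{-1}$, gives \eqref{eq:recBG} exactly: the diagonal term yields $[n]_qB_{n\delta}\tilde G_0$, and your binomial expansions, applied to $T(qt)^{k+1}$ and $S(q^{-1}t)^{k+1}$, produce the powers $q^{\pm(2n-j)}$ and hence the factor $[2n-j]_q$ --- with no quadratic terms and no induction.
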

\noindent See \cite[Appendix~A]{conj} for a detailed discussion of \eqref{eq:recBG}. Next we give a consequence of \eqref{eq:recBG}.

\begin{lemma}
\label{lem:GBcom} {\rm (See \cite[Lemma~4.10]{conj}.)}
For the algebra $ O_q$ and $n\geq 1$,
\begin{enumerate}
\item[\rm (i)]
 the element $\tilde G_n$ is a polynomial
of total degree $n$ in $B_\delta, B_{2\delta}, \ldots, B_{n\delta}$,
where we view $B_{k\delta}$ as having degree $k$ for $1\leq k \leq n$.
For this polynomial the constant term is 0, and the coefficient of
$B_{n\delta}$ is $-q \lbrack n \rbrack_q \lbrack 2n \rbrack^{-1}_q \lbrack 2 \rbrack^{2-n}_q$;
\item[\rm (ii)]
the element $B_{n\delta}$ is a polynomial
of total degree $n$ in $\tilde G_1, \tilde G_2, \ldots, \tilde G_n$,
where we view $\tilde G_k$ as having degree $k$ for $1 \leq k \leq n$.
For this polynomial the constant term is 0, and the coefficient of
$\tilde G_n$ is  $-q^{-1} \lbrack n \rbrack^{-1}_q \lbrack 2n \rbrack_q \lbrack 2 \rbrack^{n-2}_q$.
\end{enumerate}
\end{lemma}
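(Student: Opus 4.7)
\medskip

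\noindent \textbf{Proof plan for Lemma \ref{lem:GBcom}.} The plan is to prove (i) by induction on $n$ using the explicit recursion \eqref{eq:recBG}, and then to deduce (ii) from (i) by inverting a triangular system. Throughout, $B_{k\delta}$ is assigned degree $k$ and $\tilde G_k$ is assigned degree $k$; note that $\tilde G_0$ and $B_{0\delta}=q^{-2}-1$ are nonzero scalars in $\mathbb F$, so the only possible contributions to constant terms come from monomials in which every factor has index~$0$.

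\medskip

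\noindent For part (i), separate the $\ell=0$, $j=0$, $k+1=n$ summand in \eqref{eq:recBG} from all others. This summand equals $[2n]_q[2]_q^{n}B_{0\delta}\tilde G_n = (q^{-2}-1)[2n]_q[2]_q^{n}\tilde G_n$, whose coefficient is nonzero. Solving \eqref{eq:recBG} for $\tilde G_n$ yields
\begin{equation*}
\tilde G_n \;=\; \frac{-[n]_q \tilde G_0}{(q^{-2}-1)[2n]_q[2]_q^{n}}\,B_{n\delta} \;-\; \sum \frac{(-1)^\ell\binom{k+\ell}{\ell}[2n-j]_q [2]_q^{k+1}}{(q^{-2}-1)[2n]_q[2]_q^{n}}\,B_{j\delta}\tilde G_{k+1},
\end{equation*}
where the sum runs over $(j,k,\ell)$ with $j+k+2\ell+1=n$ and $(j,k+1)\neq(0,n)$. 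In every such remaining summand $k+1<n$, so the induction hypothesis applies to $\tilde G_{k+1}$, expressing it as a polynomial of total degree $k+1$ in $B_\delta,\ldots,B_{(k+1)\delta}$ with zero constant term. Multiplying by $B_{j\delta}$ (which either is a degree-$j$ variable when $j\geq 1$, or the scalar $q^{-2}-1$ when $j=0$) produces a polynomial of total degree at most $j+k+1=n-2\ell\leq n$ in $B_\delta,\ldots,B_{n\delta}$, with zero constant term (since $\tilde G_{k+1}$ has none). The leading summand contributes a pure $B_{n\delta}$ term with coefficient
\begin{equation*}
\frac{-[n]_q\,\bigl(-(q-q^{-1})[2]_q^2\bigr)}{-q^{-1}(q-q^{-1})[2n]_q[2]_q^{n}}\;=\;-q\,[n]_q\,[2n]_q^{-1}\,[2]_q^{2-n},
\end{equation*}
using $q^{-2}-1=-q^{-1}(q-q^{-1})$ and $\tilde G_0=-(q-q^{-1})[2]_q^2$. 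This establishes both the total degree bound $n$, the vanishing of the constant term, and the asserted coefficient of $B_{n\delta}$; the base case $n=1$ is handled identically, giving $\tilde G_1=-qB_\delta$.

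\medskip

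\noindent For part (ii), observe that part (i) exhibits, for each $n\geq 1$, an expression $\tilde G_n = c_n B_{n\delta}+R_n$, where $c_n=-q[n]_q[2n]_q^{-1}[2]_q^{2-n}$ is a nonzero scalar and $R_n$ is a polynomial in $B_\delta,\ldots,B_{(n-1)\delta}$ of total degree at most $n$ with zero constant term. Thus the linear map sending $(B_{n\delta})_{n\geq 1}$ to $(\tilde G_n)_{n\geq 1}$ modulo higher-degree corrections is upper triangular with nonzero diagonal entries $c_1,c_2,\ldots$, hence invertible in the graded polynomial algebra. Inverting recursively, one obtains $B_{n\delta}=c_n^{-1}\tilde G_n - c_n^{-1}R_n'$, where by induction $R_n'$ is a polynomial of total degree at most $n$ in $\tilde G_1,\ldots,\tilde G_{n-1}$ with zero constant term. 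Since substituting each $\tilde G_k$ by its degree-$k$ polynomial in the $B_{m\delta}$'s preserves the grading, the total degree in (ii) equals the total degree in (i), namely $n$. The coefficient of $\tilde G_n$ in $B_{n\delta}$ is $c_n^{-1}=-q^{-1}[n]_q^{-1}[2n]_q[2]_q^{n-2}$, as claimed.

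\medskip

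\noindent The main bookkeeping obstacle is tracking the grading and constant terms in the triangular inversion; there is no deep obstruction, because the key structural fact---that the $\tilde G_n$-to-$B_{n\delta}$ change-of-variables is triangular with explicit nonzero diagonal entries---is already immediate from part (i), and the recursion \eqref{eq:recBG} does all the algebraic work.
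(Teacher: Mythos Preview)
Your proposal is correct and follows essentially the same approach as the paper, whose proof reads in full: ``By Lemma~\ref{lem:detail} and induction on $n$.'' You have simply unpacked that one-line argument: isolate the $(j,k,\ell)=(0,n-1,0)$ summand in the recursion~\eqref{eq:recBG} to solve for $\tilde G_n$, apply induction to the remaining $\tilde G_{k+1}$ with $k+1<n$, verify the leading coefficient arithmetic, and then invert the resulting triangular system to get (ii) from (i).
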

\begin{proof} By Lemma \ref{lem:detail} and induction on $n$.
\end{proof}

\section{The algebra isomorphism $\varphi: \mathcal O_q \to O_q \otimes \mathbb F \lbrack z_1, z_2,\ldots \rbrack$}

\noindent In this section we introduce an algebra isomorphism $\varphi: \mathcal O_q \to O_q \otimes \mathbb F \lbrack z_1, z_2,\ldots \rbrack$. As we will see, $\varphi$ is closely related to the inverse of the map $\phi$ from Proposition \ref{prop:three}.

\begin{lemma}
\label{lem:varphi}
There exists an algebra homomorphism $\varphi:
\mathcal O_q \to O_q \otimes 
\mathbb F \lbrack z_1, z_2,\ldots\rbrack$ that sends
\begin{align*}
\mathcal W_{-n} &\mapsto \sum_{k=0}^n W_{k-n} \otimes z_k,
\quad \qquad \qquad 
\mathcal W_{n+1} \mapsto \sum_{k=0}^n W_{n+1-k} \otimes z_k,
\\
\mathcal G_{n} &\mapsto \sum_{k=0}^n G_{n-k} \otimes z_k,
\quad \qquad \qquad
\mathcal {\tilde G}_{n} \mapsto \sum_{k=0}^n \tilde G_{n-k} \otimes z_k
\end{align*}
for $n \in \mathbb N$. In particular $\varphi$ sends
\begin{align}
\mathcal W_0 \mapsto W_0 \otimes 1,
\qquad \qquad 
\mathcal W_1 \mapsto W_1 \otimes 1.
\label{eq:vphiW0W1}
\end{align}
\end{lemma}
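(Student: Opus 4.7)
The plan is to work with generating functions. Introduce
\[
Z(t) = \sum_{n \in \mathbb N} z_n t^n \in \mathbb F\lbrack z_1, z_2,\ldots\rbrack\lbrack\lbrack t \rbrack\rbrack,
\]
and in the algebra $O_q \otimes \mathbb F \lbrack z_1, z_2,\ldots\rbrack$ set
\[
\Omega^-(t) = W^-(t) \otimes Z(t), \qquad \Omega^+(t) = W^+(t) \otimes Z(t),
\]
\[
\Gamma(t) = G(t) \otimes Z(t), \qquad \tilde\Gamma(t) = \tilde G(t) \otimes Z(t).
\]
A direct Cauchy-product expansion shows that the coefficient of $t^n$ in $\Omega^-(t)$ is $\sum_{k=0}^n W_{k-n} \otimes z_k$, in $\Omega^+(t)$ is $\sum_{k=0}^n W_{n+1-k} \otimes z_k$, in $\Gamma(t)$ is $\sum_{k=0}^n G_{n-k} \otimes z_k$, and in $\tilde\Gamma(t)$ is $\sum_{k=0}^n \tilde G_{n-k} \otimes z_k$. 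Consequently the prescribed assignment on the alternating generators of $\mathcal O_q$ is exactly the map that sends $\mathcal W^-(t) \mapsto \Omega^-(t)$, $\mathcal W^+(t) \mapsto \Omega^+(t)$, $\mathcal G(t) \mapsto \Gamma(t)$ and $\mathcal{\tilde G}(t) \mapsto \tilde\Gamma(t)$, together with $\mathcal G_0 \mapsto G_0 \otimes 1$ and $\mathcal{\tilde G}_0 \mapsto \tilde G_0 \otimes 1$ (which are equal scalars by \eqref{eq:GG0}, \eqref{eq:GG02}).

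By Definition \ref{def:Aq}, to produce $\varphi$ it suffices to verify that these images satisfy the defining relations \eqref{eq:3p1}--\eqref{eq:3p11}. Equivalently, by Lemma \ref{lem:ad}, it suffices to check the generating-function relations \eqref{eq:3pp1}--\eqref{eq:3pp11} with $\mathcal W^\pm, \mathcal G, \mathcal{\tilde G}$ replaced by $\Omega^\pm, \Gamma, \tilde\Gamma$. Here we exploit commutativity of the polynomial algebra: since $Z(s)$ and $Z(t)$ lie in the commutative factor $\mathbb F \lbrack z_1, z_2,\ldots\rbrack$, for any $a, b \in O_q$ we have
\begin{align*}
\lbrack a \otimes Z(s), b \otimes Z(t)\rbrack &= \lbrack a, b\rbrack \otimes Z(s) Z(t), \\
\lbrack a \otimes Z(s), b \otimes Z(t)\rbrack_q &= \lbrack a, b\rbrack_q \otimes Z(s) Z(t).
\end{align*}

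Applying these identities, each two-variable relation in \eqref{eq:3pp4}--\eqref{eq:3pp11} translates, after pulling out the common factor $Z(s) Z(t)$ on the right tensor slot, to the corresponding $O_q$-relation among \eqref{eq:3pp4c}--\eqref{eq:3pp11c}, which holds by Lemma \ref{lem:ad2}. For the one-variable relations \eqref{eq:3pp1}--\eqref{eq:3pp3} the common factor is just $Z(t)$, and one checks that both sides of each relation acquire the same $Z(t)$; the remaining $O_q$-identity is then \eqref{eq:3pp1c}--\eqref{eq:3pp3c}. This completes the verification, so $\varphi$ exists. The special values \eqref{eq:vphiW0W1} follow immediately by taking $n = 0$ in the formulas for $\mathcal W_{-n}$ and $\mathcal W_{n+1}$.

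The bookkeeping is routine; the only point requiring any care is the coefficient identification establishing $\varphi(\mathcal W^\pm(t)) = \Omega^\pm(t)$ and $\varphi(\mathcal G(t)) = \Gamma(t)$, $\varphi(\mathcal{\tilde G}(t)) = \tilde\Gamma(t)$, which is a direct reindexing. Once this reformulation is in hand, the commutativity of the polynomial factor reduces the $\mathcal O_q$-relations to the $O_q$-relations of Lemma \ref{lem:ad2}, so no real obstacle arises.
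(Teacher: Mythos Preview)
Your proposal is correct and follows essentially the same approach as the paper: verify that the prescribed images satisfy the defining relations \eqref{eq:3p1}--\eqref{eq:3p11} of $\mathcal O_q$ by invoking the corresponding relations \eqref{eq:3p1c}--\eqref{eq:3p11c} for the alternating generators of $O_q$. You organize the check via the generating-function packaging of Lemmas \ref{lem:ad} and \ref{lem:ad2}, which makes the factorization by $Z(s)Z(t)$ transparent; the paper's one-line proof leaves this routine verification to the reader.
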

\begin{proof} One checks using  \eqref{eq:3p1c}--\eqref{eq:3p11c} that for
 the alternating generators of $\mathcal O_q$ their $\varphi$-image candidates satisfy the defining relations 
 for $\mathcal O_q$ given in \eqref{eq:3p1}--\eqref{eq:3p11}.
\end{proof}
\noindent Recall the algebra homomorphism $\imath: O_q \to \mathcal O_q$ from Lemma \ref{lem:iota}.
\begin{lemma} \label{lem:gam3} The following diagram commutes:
 
\begin{equation*}
{\begin{CD}
O_q @>x \mapsto x \otimes 1 >>  O_q \otimes \mathbb F \lbrack z_1, z_2, \ldots \rbrack  
              \\
         @V \imath VV                   @VV {\rm id} V \\
        \mathcal O_q @>>\varphi > O_q \otimes \mathbb F \lbrack z_1, z_2, \ldots \rbrack                            
                        \end{CD}}  	                          		    
\end{equation*}
\end{lemma}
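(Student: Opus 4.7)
The plan is to observe that both paths around the square are algebra homomorphisms from $O_q$ to $O_q \otimes \mathbb F \lbrack z_1, z_2, \ldots \rbrack$, so by the presentation of $O_q$ by the two generators $W_0, W_1$ (Definition \ref{def:U}), it suffices to verify commutativity on these two generators. The map along the top followed by the identity simply sends $W_0 \mapsto W_0 \otimes 1$ and $W_1 \mapsto W_1 \otimes 1$. The map $\imath$ (Lemma \ref{lem:iota}) sends $W_0 \mapsto \mathcal W_0$ and $W_1 \mapsto \mathcal W_1$, and then $\varphi$ sends these to $W_0 \otimes 1$ and $W_1 \otimes 1$ respectively by \eqref{eq:vphiW0W1}. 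Thus the two composites agree on the generating set, and therefore on all of $O_q$.

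There is essentially no obstacle here: the content of the lemma is a direct consequence of how the maps $\imath$ and $\varphi$ were constructed on $\mathcal W_0, \mathcal W_1$. All the substantive work (namely verifying that $\varphi$ is well-defined, and that $\imath$ exists and is injective) has already been done in Lemmas \ref{lem:iota} and \ref{lem:varphi}. So the proof will be a two-line generator chase citing those two lemmas.
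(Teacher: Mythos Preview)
Your proposal is correct and matches the paper's own proof essentially verbatim: the paper simply says to chase the $O_q$-generators $W_0$, $W_1$ around the diagram using Lemma \ref{lem:iota} and \eqref{eq:vphiW0W1}.
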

\begin{proof} Chase the $O_q$-generators $W_0$, $W_1$ around the diagram, using Lemma \ref{lem:iota} and
\eqref{eq:vphiW0W1}.
\end{proof}

\noindent Our next goal is to show that $\varphi$ is an algebra isomorphism. Recall the central elements $\lbrace \mathcal Z^\vee_n \rbrace_{n \in \mathbb N}$ for $\mathcal O_q$, from Definition \ref{def:Zn}.
In Appendix A we describe an algebra isomorphism $\vee:  \mathbb F \lbrack z_1, z_2, \ldots \rbrack \to  \mathbb F \lbrack z_1, z_2, \ldots \rbrack$.
We will be discussing the images $\lbrace z^\vee_n \rbrace_{n\in \mathbb N}$ of  $\lbrace z_n \rbrace_{n\in \mathbb N}$.

\begin{lemma} \label{lem:vpZ}
The map $\varphi$ sends
$ \mathcal Z^\vee_n \mapsto 1\otimes z^\vee_n$ for $n \in \mathbb N$.
\end{lemma}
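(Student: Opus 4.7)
The plan is to apply $\varphi$ to the factorization $\mathcal Z^\vee(t) = \xi\,\mathcal{\tilde G}(S)\,B(t)\,\mathcal{\tilde G}(T)$ from Theorem \ref{thm:Mn}, collect the result in $O_q \otimes \mathbb F[z_1,z_2,\ldots]$, and use the identity \eqref{eq:ConjT} in $O_q$ to collapse the $O_q$-factor to a scalar. The residual tensor factor in $\mathbb F[z_1,z_2,\ldots]$ will turn out to be $z(S)z(T)$, which by the construction of $\vee$ in Appendix A is precisely the generating function $z^\vee(t)=\sum_n z^\vee_n t^n$.

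First I would record the following convolution formula for $\varphi$ at the level of generating functions. Setting $z(t)=\sum_{k\in\mathbb N} z_k t^k$, the defining formulas of $\varphi$ in Lemma \ref{lem:varphi} immediately give
\begin{align*}
\varphi(\mathcal W^-(t)) &= W^-(t)\otimes z(t), & \varphi(\mathcal W^+(t)) &= W^+(t)\otimes z(t),\\
\varphi(\mathcal G(t)) &= G(t)\otimes z(t), & \varphi(\mathcal{\tilde G}(t)) &= \tilde G(t)\otimes z(t),
\end{align*}
after interchanging summation and reindexing. Next, since the PBW elements $B_{n\delta}$ lie in $\langle \mathcal W_0, \mathcal W_1\rangle$ and are built from $\mathcal W_0,\mathcal W_1$ via the recursions in Section 4, the commutative diagram of Lemma \ref{lem:gam3} forces $\varphi(B(t)) = B(t)\otimes 1$.

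Applying $\varphi$ to the factorization of Theorem \ref{thm:Mn} therefore gives
\begin{align*}
\varphi(\mathcal Z^\vee(t)) = \xi\,\tilde G(S)\,B(t)\,\tilde G(T)\,\otimes\, z(S)\,z(T).
\end{align*}
In the $O_q$-tensorand, Corollary \ref{cor:BG} (pushed through $\gamma$, using that $\gamma\circ\imath=\mathrm{id}$) yields $[\tilde G(s),B(t)]=0$ in $O_q$, and $\tilde G(s)$, $\tilde G(t)$ commute by \eqref{eq:3pp10c}. So we may rearrange $\tilde G(S)\,B(t)\,\tilde G(T) = B(t)\,\tilde G(T)\,\tilde G(S)$ and invoke Proposition \ref{prop:ConjT} to replace this by the scalar $-q^{-1}(q-q^{-1})^3[2]_q^4$. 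Using $q^2-q^{-2} = (q-q^{-1})[2]_q$ and the value of $\xi$ in \eqref{eq:zeta}, the product $\xi\cdot(-q^{-1}(q-q^{-1})^3[2]_q^4)$ simplifies to $1$, leaving
\begin{align*}
\varphi(\mathcal Z^\vee(t)) = 1\otimes z(S)\,z(T).
\end{align*}

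Finally, I would appeal to Appendix A, where the isomorphism $\vee$ is defined so that its generating function $z^\vee(t)=\sum_n z^\vee_n t^n$ satisfies $z^\vee(t) = z(S)\,z(T)$, with $S,T$ as in \eqref{eq:ST}. Comparing coefficients of $t^n$ then gives $\varphi(\mathcal Z^\vee_n) = 1\otimes z^\vee_n$ for all $n\in\mathbb N$, as claimed. The main obstacle is essentially bookkeeping: one must be confident that $\varphi$ respects the relevant commutations (which is where Lemma \ref{lem:gam3} and the transfer of Corollary \ref{cor:BG} from $\mathcal O_q$ down to $O_q$ are used) and that the scalar prefactor $\xi\cdot(-q^{-1}(q-q^{-1})^3[2]_q^4)$ indeed equals $1$; the substantive content has already been done in Theorem \ref{thm:Mn} and Proposition \ref{prop:ConjT}.
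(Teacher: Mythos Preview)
Your proof is correct and follows essentially the same route as the paper's: apply $\varphi$ to the factorization of $\mathcal Z^\vee(t)$, use Lemma~\ref{lem:gam3} for $\varphi(B(t))=B(t)\otimes 1$ and Lemma~\ref{lem:varphi} for $\varphi(\mathcal{\tilde G}(t))=\tilde G(t)\otimes Z(t)$, then collapse the $O_q$-factor via Proposition~\ref{prop:ConjT} and identify the second factor via Proposition~\ref{def:zcheck}. The only cosmetic difference is that the paper invokes Corollary~\ref{cor:perm} to start with the ordering $\xi B(t)\mathcal{\tilde G}(T)\mathcal{\tilde G}(S)$ in $\mathcal O_q$ before applying $\varphi$, whereas you apply $\varphi$ to the ordering of Theorem~\ref{thm:Mn} and then rearrange in $O_q$; both are valid.
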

\begin{proof} For notational convenience, we identify $O_q$ with $\langle \mathcal W_0, \mathcal W_1 \rangle$ via the map $\imath$ from Lemma \ref{lem:iota}.
We will work with generating functions. It suffices to show that $\varphi$ sends $\mathcal Z^\vee (t) \mapsto 1 \otimes Z^\vee (t)$, where $Z^\vee(t)$ is from 
Definition \ref{def:Zcheck}.
By Corollary  \ref{cor:perm}, 
\begin{align*}
\label{eq:zcheckvphi}
\mathcal Z^\vee(t) = \xi B(t) \mathcal {\tilde G}(T) \mathcal {\tilde G}(S).
\end{align*}
By Lemma \ref{lem:gam3},  $\varphi$ sends $B_{n\delta} \mapsto B_{n\delta}\otimes 1$ for $n \in \mathbb N$. Therefore $\varphi$ sends $B(t) \mapsto B(t) \otimes 1$.
By Lemma \ref{lem:varphi},  $\varphi$ sends
$\mathcal {\tilde G}(t) \mapsto {\tilde G}(t) \otimes Z(t)$, where $Z(t)$ is from Definition \ref{lem:poly}. Therefore $\varphi$ sends
$\mathcal {\tilde G}(T) \mapsto {\tilde G}(T) \otimes Z(T)$ and $\mathcal {\tilde G}(S) \mapsto {\tilde G}(S) \otimes Z(S)$.
 By these comments, $\varphi$ sends
\begin{align*}
\mathcal Z^\vee(t) \mapsto \xi B(t) {\tilde G}(T) {\tilde G}(S) \otimes Z(S)Z(T).
\end{align*}
By Proposition \ref{prop:ConjT}, $\xi B(t) {\tilde G}(T) {\tilde G}(S)=1$. By Proposition \ref{def:zcheck},
 $Z(S)Z(T)=Z^\vee(t)$.
Consequently $\varphi $ sends $\mathcal Z^\vee(t) \mapsto 1 \otimes Z^\vee(t)$. The result follows.
\end{proof}

\begin{proposition}\label{prop:vphiP} The following diagram commutes:

\begin{equation*}
{\begin{CD}
O_q \otimes \mathbb F \lbrack z_1, z_2, \ldots \rbrack@>\phi  >> \mathcal O_q 
              \\
         @V {\rm id}\otimes \vee VV                   @VV {\rm id} V\\
     O_q \otimes \mathbb F \lbrack z_1, z_2, \ldots \rbrack   @<<\varphi <
                                 \mathcal O_q
                        \end{CD}}  	                          		    
\end{equation*}

\end{proposition}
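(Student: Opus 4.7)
The plan is to verify commutativity by chasing a generating set of $O_q \otimes \mathbb F[z_1,z_2,\ldots]$ around the diagram. Since both $\varphi \circ \phi$ and ${\rm id}\otimes \vee$ are algebra homomorphisms on $O_q \otimes \mathbb F[z_1,z_2,\ldots]$, it suffices to show they agree on the generating set $\{W_0\otimes 1,\; W_1\otimes 1\} \cup \{1\otimes z_n\}_{n=1}^\infty$.

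For the generators $W_0\otimes 1$ and $W_1\otimes 1$, I apply Proposition \ref{prop:three} to compute $\phi(W_0\otimes 1) = \mathcal W_0$ and $\phi(W_1\otimes 1)=\mathcal W_1$, and then use \eqref{eq:vphiW0W1} from Lemma \ref{lem:varphi} to conclude $\varphi(\mathcal W_0) = W_0\otimes 1$ and $\varphi(\mathcal W_1) = W_1 \otimes 1$. On the other side of the diagram, the map ${\rm id}\otimes \vee$ fixes $W_0\otimes 1$ and $W_1\otimes 1$ since $\vee(1)=1$. So the two paths agree on $W_0\otimes 1$ and $W_1\otimes 1$.

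For the generators $1\otimes z_n$ with $n\geq 1$, Proposition \ref{prop:three} gives $\phi(1\otimes z_n)=\mathcal Z^\vee_n$, and Lemma \ref{lem:vpZ} gives $\varphi(\mathcal Z^\vee_n) = 1\otimes z^\vee_n$. Meanwhile, by the definition of the isomorphism $\vee$ from Appendix A, $({\rm id}\otimes \vee)(1\otimes z_n) = 1\otimes z^\vee_n$. Hence the two composites agree on each $1\otimes z_n$ as well.

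Since the two algebra homomorphisms agree on a generating set, they are equal, so the diagram commutes. The only nontrivial ingredient is Lemma \ref{lem:vpZ}, which is already established; everything else is a routine check on generators. Consequently there is no real obstacle here: the step that carried the technical content (the factorization of $\mathcal Z^\vee(t)$ via Corollary \ref{cor:perm} and Proposition \ref{prop:ConjT}) was absorbed into the proof of Lemma \ref{lem:vpZ}, leaving the present proposition as a clean verification on generators.
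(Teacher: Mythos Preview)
Your proof is correct and follows essentially the same approach as the paper: the paper's proof also chases the generators $W_0\otimes 1$, $W_1\otimes 1$, $\{1\otimes z_n\}_{n=1}^\infty$ around the diagram, invoking Proposition~\ref{prop:three}, equation~\eqref{eq:vphiW0W1}, and Lemma~\ref{lem:vpZ}. Your write-up simply spells out the chase in more detail.
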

\begin{proof} Chase the generators $W_0 \otimes 1$, $W_1 \otimes 1$, $\lbrace 1 \otimes z_n \rbrace_{n=1}^\infty$
around the diagram, using Proposition
\ref{prop:three} along with \eqref{eq:vphiW0W1}
and Lemma \ref{lem:vpZ}.
\end{proof}

\begin{theorem} 
\label{prop:vpIso}
The map $\varphi$ from Lemma \ref{lem:varphi} is an algebra isomorphism.
\end{theorem}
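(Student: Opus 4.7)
The plan is to read the commuting diagram of Proposition~\ref{prop:vphiP} as an equality of algebra homomorphisms
\begin{align*}
\varphi \circ \phi = \mathrm{id} \otimes \vee,
\end{align*}
and then conclude by observing that the right-hand side is an isomorphism.

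First I would note that by Lemma~\ref{lem:varphi} the map $\varphi$ is an algebra homomorphism, and by Proposition~\ref{prop:three} the map $\phi$ is an algebra isomorphism. The map $\vee$ is an algebra isomorphism of $\mathbb F \lbrack z_1, z_2, \ldots \rbrack$ by its description in Appendix~A, so $\mathrm{id} \otimes \vee$ is an algebra automorphism of $O_q \otimes \mathbb F \lbrack z_1, z_2, \ldots \rbrack$.

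From the displayed identity we get $\varphi = (\mathrm{id} \otimes \vee) \circ \phi^{-1}$, which exhibits $\varphi$ as a composition of two algebra isomorphisms, hence an algebra isomorphism itself.

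There is essentially no obstacle here: all the work has been done in setting up the commuting diagram of Proposition~\ref{prop:vphiP}, whose proof in turn relies on the factorization formula of Theorem~\ref{thm:Mn} (used through Proposition~\ref{prop:ConjT}) and on the isomorphism $\vee$ from Appendix~A. Once that diagram is in hand, Theorem~\ref{prop:vpIso} follows by pure formal manipulation.
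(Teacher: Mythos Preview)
Your proposal is correct and matches the paper's own proof essentially verbatim: the paper simply invokes Proposition~\ref{prop:vphiP} together with the fact that $\phi$ and $\vee$ are algebra isomorphisms. Your reformulation $\varphi = (\mathrm{id}\otimes\vee)\circ\phi^{-1}$ makes the same point explicit.
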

\begin{proof} By Proposition \ref{prop:vphiP}, and since the maps $\phi$ and $\vee$ are algebra isomorphisms.
\end{proof}

\begin{lemma} \label{lem:vphOnto}
 The algebra isomorphism $\varphi$ sends $\langle \mathcal  W_0, \mathcal W_1\rangle $ onto $O_q \otimes 1$.
 \end{lemma}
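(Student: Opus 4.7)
The plan is to observe that this is essentially a generator-chasing argument once $\varphi$ has been established as an algebra isomorphism. The key ingredient is the specialization \eqref{eq:vphiW0W1}, which tells us $\varphi(\mathcal W_0) = W_0 \otimes 1$ and $\varphi(\mathcal W_1) = W_1 \otimes 1$. Since $\varphi$ is an algebra homomorphism, the image $\varphi(\langle \mathcal W_0, \mathcal W_1 \rangle)$ equals the subalgebra of $O_q \otimes \mathbb F\lbrack z_1, z_2, \ldots \rbrack$ generated by $W_0 \otimes 1$ and $W_1 \otimes 1$.

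Next, I would note that the inclusion $O_q \to O_q \otimes \mathbb F \lbrack z_1, z_2, \ldots \rbrack$, $x \mapsto x \otimes 1$, is an injective algebra homomorphism whose image is exactly $O_q \otimes 1$. Because $W_0$ and $W_1$ generate $O_q$ by Definition \ref{def:U}, their images $W_0 \otimes 1$ and $W_1 \otimes 1$ generate $O_q \otimes 1$. Combining the two observations yields
\begin{equation*}
\varphi(\langle \mathcal W_0, \mathcal W_1 \rangle) = \langle W_0 \otimes 1, W_1 \otimes 1 \rangle = O_q \otimes 1,
\end{equation*}
which is the claim.

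There is no serious obstacle here: the content has already been bundled into Theorem \ref{prop:vpIso} (that $\varphi$ is an isomorphism) and Lemma \ref{lem:varphi} (the explicit values of $\varphi$ on $\mathcal W_0, \mathcal W_1$). The only thing worth emphasizing in the writeup is the harmless but useful reformulation that this lemma says $\varphi$ restricts to an algebra isomorphism $\langle \mathcal W_0, \mathcal W_1 \rangle \to O_q \otimes 1$ intertwining $\imath$ with the canonical embedding $x \mapsto x \otimes 1$, as already encoded by the commuting diagram of Lemma \ref{lem:gam3}.
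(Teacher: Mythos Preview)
Your proof is correct and matches the paper's approach: the paper's one-line proof simply cites Lemma~\ref{lem:gam3}, which encodes exactly the generator-chasing argument you spell out. Your final paragraph already identifies this, so there is nothing to add.
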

 \begin{proof} By Lemma \ref{lem:gam3}.     
 \end{proof}

\begin{lemma} \label{lem:vphiZact} The algebra isomorphism $\varphi$ sends $\mathcal Z$ onto $1 \otimes \mathbb F \lbrack z_1, z_2, \ldots \rbrack$.
\end{lemma}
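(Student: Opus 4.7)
The plan is to combine Lemma \ref{lem:vpZ} with the generation statement in Corollary \ref{cor:sum}(ii). By Corollary \ref{cor:sum}(ii) the elements $\lbrace \mathcal Z^\vee_n \rbrace_{n=1}^\infty$ generate $\mathcal Z$ as an algebra, and by Lemma \ref{lem:vpZ} the map $\varphi$ sends $\mathcal Z^\vee_n \mapsto 1 \otimes z^\vee_n$ for every $n \in \mathbb N$. Since $\varphi$ is an algebra homomorphism, its image $\varphi(\mathcal Z)$ coincides with the subalgebra of $O_q \otimes \mathbb F \lbrack z_1,z_2,\ldots \rbrack$ generated by $\lbrace 1 \otimes z^\vee_n \rbrace_{n=1}^\infty$.

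It then suffices to identify this generated subalgebra with $1 \otimes \mathbb F \lbrack z_1, z_2, \ldots \rbrack$. For this I would appeal to Appendix A, where $\vee$ is shown to be an algebra automorphism of $\mathbb F \lbrack z_1, z_2, \ldots \rbrack$ sending $z_n \mapsto z^\vee_n$. Algebra automorphisms carry a generating set to a generating set, so $\lbrace z^\vee_n \rbrace_{n=1}^\infty$ generates $\mathbb F \lbrack z_1, z_2, \ldots \rbrack$, whence $\lbrace 1 \otimes z^\vee_n \rbrace_{n=1}^\infty$ generates $1 \otimes \mathbb F \lbrack z_1, z_2, \ldots \rbrack$. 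This yields the desired equality.

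An equivalent but slightly more structural route uses the factorization $\varphi = ({\rm id} \otimes \vee) \circ \phi^{-1}$ from Proposition \ref{prop:vphiP}: since $\phi$ sends $1 \otimes z_n \mapsto \mathcal Z^\vee_n$ and these generate $\mathcal Z$, the inverse $\phi^{-1}$ carries $\mathcal Z$ onto $1 \otimes \mathbb F \lbrack z_1, z_2, \ldots \rbrack$; then $({\rm id} \otimes \vee)$ preserves $1 \otimes \mathbb F \lbrack z_1, z_2, \ldots \rbrack$ setwise because $\vee$ is an automorphism. I do not anticipate any serious obstacle: the whole argument is a one-step deduction from Lemma \ref{lem:vpZ} (or Proposition \ref{prop:vphiP}) together with Corollary \ref{cor:sum}(ii). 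The only subtlety worth flagging is that one needs $\vee$ to be an algebra automorphism, not merely a linear bijection, which is precisely the content of Appendix A.
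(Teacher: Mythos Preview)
Your proposal is correct, and your second ``structural'' route is precisely the paper's proof: the paper simply cites Propositions \ref{prop:three} and \ref{prop:vphiP}, which together give $\varphi = ({\rm id}\otimes\vee)\circ\phi^{-1}$, and then the conclusion follows since $\phi^{-1}(\mathcal Z)=1\otimes\mathbb F[z_1,z_2,\ldots]$ and ${\rm id}\otimes\vee$ stabilizes that subalgebra. Your first route via Corollary \ref{cor:sum}(ii) and Lemma \ref{lem:vpZ} is just an explicit unpacking of the same argument and uses the same ingredients.
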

\begin{proof} By Propositions
\ref{prop:three}, \ref{prop:vphiP}.        
\end{proof}

\begin{definition}\label{def:NewZ} \rm
For $n \in \mathbb N$ let $\mathcal Z_n$ denote the unique element in $\mathcal Z$ that  $\varphi$ sends to $1 \otimes z_n$.
Note that $\mathcal Z_0=1$.
\end{definition}

\begin{lemma} \label{lem:NewZai} The elements $\lbrace \mathcal Z_n \rbrace_{n=1}^\infty$ are algebraically independent and generate $\mathcal Z$. 
\end{lemma}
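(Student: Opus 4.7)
The plan is to transport the desired properties back through the isomorphism $\varphi$. By Theorem \ref{prop:vpIso}, $\varphi$ is an algebra isomorphism, and by Lemma \ref{lem:vphiZact} it restricts to an algebra isomorphism
\[
\varphi|_{\mathcal Z}: \mathcal Z \;\longrightarrow\; 1 \otimes \mathbb F\lbrack z_1, z_2, \ldots \rbrack.
\]
Composing with the obvious identification $1 \otimes \mathbb F\lbrack z_1, z_2, \ldots \rbrack \to \mathbb F\lbrack z_1, z_2, \ldots \rbrack$ sending $1 \otimes z \mapsto z$, we obtain an algebra isomorphism $\mathcal Z \to \mathbb F\lbrack z_1, z_2, \ldots \rbrack$.

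By Definition \ref{def:NewZ}, this composed isomorphism sends $\mathcal Z_n \mapsto z_n$ for $n \in \mathbb N$. Since algebraic independence and the property of being a generating set are both preserved under algebra isomorphism, the assertion will follow from the corresponding well-known facts about $\lbrace z_n \rbrace_{n=1}^\infty$ in $\mathbb F\lbrack z_1, z_2, \ldots \rbrack$ (cf.\ Definition \ref{def:poly} and the material in Appendix A).

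I expect no substantive obstacle: the whole content is already carried by Lemma \ref{lem:vphiZact} together with Theorem \ref{prop:vpIso}. The proof is essentially one sentence invoking transport of structure, and the main thing to state carefully is that the image $\varphi(\mathcal Z_n) = 1 \otimes z_n$ picks out precisely the standard generators of $\mathbb F\lbrack z_1, z_2, \ldots \rbrack$ on the target side.
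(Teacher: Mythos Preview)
Your proof is correct and is essentially the paper's own argument: transport the algebraic independence and generation of $\{z_n\}_{n\geq 1}$ in $\mathbb F[z_1,z_2,\ldots]$ back through the isomorphism $\varphi|_{\mathcal Z}$ using Lemma~\ref{lem:vphiZact} and Definition~\ref{def:NewZ}. The only difference is cosmetic---you cite Theorem~\ref{prop:vpIso} explicitly, whereas the paper absorbs it into the statement of Lemma~\ref{lem:vphiZact}.
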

\begin{proof} 
 The elements $\lbrace z_n \rbrace_{n=1}^\infty$
are algebraically independent and generate $\mathbb F \lbrack z_1, z_2,\ldots \rbrack$. So the elements
 $\lbrace 1\otimes z_n \rbrace_{n=1}^\infty$
are algebraically independent and generate $1\otimes \mathbb F \lbrack z_1, z_2,\ldots \rbrack$.
The result follows in view of Lemma \ref{lem:vphiZact} and Definition \ref{def:NewZ}.
\end{proof}

\noindent We have seen that $\varphi: \mathcal O_q \to O_q \otimes 
\mathbb F \lbrack z_1, z_2,\ldots\rbrack$  is an algebra isomorphism that sends
\begin{align*}
\mathcal Z_n \mapsto 1 \otimes z_n, \qquad \qquad \mathcal Z^\vee_n \mapsto 1 \otimes z^\vee_n, \qquad \qquad n\in \mathbb N.
\end{align*}

\noindent In the next two results, we clarify how $\lbrace \mathcal Z_n \rbrace_{n=1}^\infty$ and $\lbrace \mathcal Z^\vee_n \rbrace_{n=1}^\infty$ are related.

\begin{lemma}
\label{lem:ZNewpoly} For the algebra $\mathcal O_q$ and $n\geq 1$,
\begin{enumerate}
\item[\rm (i)]
 the element $\mathcal Z^\vee_n$ is a polynomial
of total degree $n$ in $\mathcal Z_1, \mathcal Z_2, \ldots, \mathcal Z_n$,
where we view  $\mathcal Z_k$ as having degree $k$ for $1\leq k \leq n$.
For this polynomial the constant term is 0, and the coefficient of
$\mathcal Z_n$ is $(q+q^{-1})^n (q^n+q^{-n})$;
\item[\rm (ii)]
the element $\mathcal Z_n$ is a polynomial
of total degree $n$ in $\mathcal Z^\vee_1, \mathcal Z^\vee_2, \ldots, \mathcal Z^\vee_n$,
where we view $\mathcal Z^\vee_k$ as having degree $k$ for $1\leq k\leq n$.
For this polynomial the constant term is 0, and the coefficient of
$\mathcal Z^\vee_n$ is $(q+q^{-1})^{-n} (q^n+q^{-n})^{-1}$.
\end{enumerate}
\end{lemma}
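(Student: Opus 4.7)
The plan is to transfer the claims from $\mathcal O_q$ to the polynomial algebra $\mathbb F \lbrack z_1, z_2, \ldots \rbrack$ via the algebra isomorphism $\varphi$ from Theorem \ref{prop:vpIso}, and then extract the polynomial identities from the factorization $Z^\vee(t) = Z(S) Z(T)$ given by Proposition \ref{def:zcheck}. By Lemma \ref{lem:vpZ} and Definition \ref{def:NewZ}, $\varphi$ sends $\mathcal Z^\vee_n \mapsto 1 \otimes z^\vee_n$ and $\mathcal Z_n \mapsto 1 \otimes z_n$ for $n \in \mathbb N$, so it suffices to prove the corresponding statements about $\lbrace z_n \rbrace_{n=1}^\infty$ and $\lbrace z^\vee_n \rbrace_{n=1}^\infty$ inside $\mathbb F \lbrack z_1, z_2, \ldots \rbrack$.

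First I would use the power series expansions
\begin{align*}
S = q^{-1}(q+q^{-1})\,t + O(t^3), \qquad T = q(q+q^{-1})\,t + O(t^3),
\end{align*}
both odd in $t$, to see that $S^j T^k$ has lowest-order term $(q+q^{-1})^{j+k} q^{k-j} t^{j+k}$. Expanding
\begin{align*}
Z(S)\,Z(T) = \sum_{j,k \geq 0} z_j z_k \, S^j T^k
\end{align*}
and extracting the coefficient of $t^n$, only pairs $(j,k)$ with $j+k \leq n$ can contribute. Assigning $z_k$ the weighted degree $k$, each monomial $z_j z_k$ appearing in $z^\vee_n$ has weighted degree at most $n$; setting $z_k = 0$ for $k \geq 1$ collapses $Z^\vee(t)$ to $1$, so $z^\vee_n$ has zero constant term for $n \geq 1$. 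The contribution that is linear (pure) in $z_n$ arises only from $(j,k) = (n,0)$ and $(j,k) = (0,n)$ (each paired with $z_0 = 1$), yielding the coefficient $(q+q^{-1})^n q^{-n} + (q+q^{-1})^n q^n = (q+q^{-1})^n (q^n + q^{-n})$. Since $q$ is not a root of unity this coefficient is nonzero, confirming that the weighted degree of $z^\vee_n$ is exactly $n$ and establishing (i).

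For (ii), write $c_n = (q+q^{-1})^n (q^n + q^{-n})$ and $z^\vee_n = c_n z_n + P_n(z_1,\ldots,z_{n-1})$, where by (i) the polynomial $P_n$ has weighted degree at most $n$ and vanishing constant term. Solving gives $z_n = c_n^{-1}\bigl( z^\vee_n - P_n(z_1,\ldots,z_{n-1}) \bigr)$. Proceeding by induction on $n$, the inductive hypothesis lets me replace each $z_k$ with $k<n$ by a polynomial of weighted degree at most $k$ in $z^\vee_1, \ldots, z^\vee_k$ with vanishing constant term, yielding the desired expression for $z_n$ in $z^\vee_1,\ldots,z^\vee_n$ with vanishing constant term, leading coefficient $c_n^{-1} = (q+q^{-1})^{-n}(q^n + q^{-n})^{-1}$, and weighted degree exactly $n$.

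The argument is almost entirely formal once the factorization $Z^\vee(t) = Z(S) Z(T)$ is in hand. The one point requiring attention is the interplay between the lowest-order behavior of $S$ and $T$ (each an odd power series in $t$ starting at $t^1$) and the weighting $\deg z_k = k$, which together force both the weighted-degree bound and the clean form of the leading coefficient; the inversion in part (ii) is then a routine triangular-system computation.
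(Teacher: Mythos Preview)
Your proof is correct and follows essentially the same approach as the paper: both transfer the statement to $\mathbb F\lbrack z_1,z_2,\ldots\rbrack$ via $\varphi$ (using Lemma~\ref{lem:vpZ} and Definition~\ref{def:NewZ}) and then invoke the factorization $Z^\vee(t)=Z(S)Z(T)$. The paper cites the appendix Lemmas~\ref{lem:zvpoly} and~\ref{lem:zpoly}, whose proofs go through the intermediate elements $z^\downarrow_n$ and the explicit formula \eqref{eq:Zcheck}, whereas you read off the weighted-degree bound and leading coefficient directly from the lowest-order behavior of $S^jT^k$; both routes are equivalent, and your triangular inversion for part~(ii) is exactly the alternative proof suggested for Lemma~\ref{lem:zpoly}.
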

\begin{proof} By the comment below Lemma \ref{lem:NewZai}, along with
 Lemmas 
\ref{lem:zvpoly}, \ref{lem:zpoly}.
\end{proof}

\begin{lemma} \label{lem:ideaZZ} For the algebra $\mathcal O_q$ the following are the same:
\begin{enumerate}
\item[\rm (i)] the 2-sided ideal  generated by  $\lbrace \mathcal Z_n \rbrace_{n=1}^\infty$;
\item[\rm (ii)] the 2-sided ideal  generated by  $\lbrace \mathcal Z^\vee_n \rbrace_{n=1}^\infty$.
\end{enumerate}
\end{lemma}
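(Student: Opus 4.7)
The plan is to derive this directly from Lemma \ref{lem:ZNewpoly}, which already encodes precisely the relationship needed: each family expresses polynomially in terms of the other, with zero constant term.

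Let $I$ denote the ideal in (i) and $I^\vee$ denote the ideal in (ii). To show $I^\vee \subseteq I$, fix $n \geq 1$ and invoke Lemma \ref{lem:ZNewpoly}(i) to write $\mathcal Z^\vee_n$ as a polynomial in $\mathcal Z_1, \mathcal Z_2, \ldots, \mathcal Z_n$ with zero constant term. Every monomial appearing in this polynomial has positive total degree, hence is a scalar multiple of a product of the $\mathcal Z_k$'s each of which lies in $I$. Therefore $\mathcal Z^\vee_n \in I$ for all $n \geq 1$, which forces $I^\vee \subseteq I$. The reverse inclusion $I \subseteq I^\vee$ is obtained by the symmetric argument, invoking Lemma \ref{lem:ZNewpoly}(ii) to express each $\mathcal Z_n$ as a polynomial with zero constant term in $\mathcal Z^\vee_1, \ldots, \mathcal Z^\vee_n$.

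Since both $\{\mathcal Z_n\}_{n=1}^\infty$ and $\{\mathcal Z^\vee_n\}_{n=1}^\infty$ lie in the center $\mathcal Z$ (by Lemma \ref{lem:Zfix} and Lemma \ref{lem:NewZai}), the distinction between left, right, and two-sided ideals they generate is immaterial here; the polynomial expressions may be multiplied out freely within $\mathcal Z$ and then viewed inside $\mathcal O_q$.

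There is no real obstacle: the entire content of the lemma is already packaged in the vanishing of the constant terms in Lemma \ref{lem:ZNewpoly}. If one preferred a proof that did not invoke that lemma, one could instead argue through Proposition \ref{prop:vphiP} by transporting the statement across $\varphi$ to the polynomial algebra $\mathbb F[z_1,z_2,\ldots]$, where one would need to verify that $\{z_n\}_{n=1}^\infty$ and $\{z^\vee_n\}_{n=1}^\infty$ generate the same (maximal) ideal $\ker \theta$ of Appendix A --- but this reduces to exactly the same polynomial relationship.
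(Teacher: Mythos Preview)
Your proof is correct and takes the same approach as the paper, which simply cites Lemma~\ref{lem:ZNewpoly}; you have spelled out explicitly why the vanishing constant terms yield the two inclusions of ideals.
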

\begin{proof} By Lemma \ref{lem:ZNewpoly}.
\end{proof}

\begin{lemma}
\label{lem:pov}
For the algebra $\mathcal O_q$ and  $n \in \mathbb N$,
\begin{align*}
\mathcal W_{-n} &= \sum_{k=0}^n W_{k-n} \mathcal Z_k,
\quad \qquad \qquad 
\mathcal W_{n+1} = \sum_{k=0}^n W_{n+1-k} \mathcal Z_k,
\\
\mathcal G_{n} &=\sum_{k=0}^n G_{n-k} \mathcal Z_k,
\quad \qquad \qquad
\mathcal {\tilde G}_{n} = \sum_{k=0}^n \tilde G_{n-k} \mathcal Z_k.
\end{align*}
\noindent In the above lines we identify $O_q$ with $\langle \mathcal W_0, \mathcal W_1\rangle$ via $\imath$.
\end{lemma}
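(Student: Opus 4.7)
The plan is to prove each of the four identities by applying the algebra isomorphism $\varphi: \mathcal O_q \to O_q \otimes \mathbb F\lbrack z_1, z_2, \ldots\rbrack$ from Theorem \ref{prop:vpIso} to both sides and checking equality in the target; injectivity of $\varphi$ then yields the result. Everything needed has already been set up: Lemma \ref{lem:varphi} evaluates $\varphi$ on the alternating generators $\mathcal W_{-n}, \mathcal W_{n+1}, \mathcal G_n, \mathcal{\tilde G}_n$ of $\mathcal O_q$; Definition \ref{def:NewZ} gives $\varphi(\mathcal Z_k) = 1 \otimes z_k$; and Lemma \ref{lem:gam3} tells us that for any $y \in O_q$ (identified with $\imath(y) \in \langle \mathcal W_0, \mathcal W_1\rangle$ as stated) we have $\varphi(y) = y \otimes 1$. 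In particular, $\varphi$ sends each alternating generator $W_{k-n}, W_{n+1-k}, G_{n-k}, \tilde G_{n-k}$ of $O_q$ to the corresponding element tensored with $1$.

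First I would handle the identity for $\mathcal W_{-n}$. Applying $\varphi$ to the right-hand side and using the fact that $\varphi$ is an algebra homomorphism gives
\begin{align*}
\varphi\!\left(\sum_{k=0}^n W_{k-n}\,\mathcal Z_k\right)
= \sum_{k=0}^n \varphi(W_{k-n})\,\varphi(\mathcal Z_k)
= \sum_{k=0}^n (W_{k-n}\otimes 1)(1\otimes z_k)
= \sum_{k=0}^n W_{k-n}\otimes z_k,
\end{align*}
which by Lemma \ref{lem:varphi} is exactly $\varphi(\mathcal W_{-n})$. Since $\varphi$ is injective, the equation $\mathcal W_{-n} = \sum_{k=0}^n W_{k-n}\mathcal Z_k$ follows.

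The remaining three identities are established by the same calculation, using in each case the appropriate line from Lemma \ref{lem:varphi}: the formulas for $\mathcal W_{n+1}$, $\mathcal G_n$, and $\mathcal{\tilde G}_n$ on the right-hand sides agree term by term with the $\varphi$-images of $\sum_{k=0}^n W_{n+1-k}\mathcal Z_k$, $\sum_{k=0}^n G_{n-k}\mathcal Z_k$, and $\sum_{k=0}^n \tilde G_{n-k}\mathcal Z_k$ respectively.

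There is no substantive obstacle here; the only point worth verifying carefully is that the identification $O_q = \langle \mathcal W_0, \mathcal W_1\rangle$ via $\imath$ is compatible with the formula $\varphi(\imath(y)) = y\otimes 1$, so that each alternating generator of $O_q$ appearing on the right is correctly sent to its tensor-with-$1$ image. This is exactly the content of the commuting diagram in Lemma \ref{lem:gam3}, applied to each alternating generator of $O_q$ (which lies in $O_q$ by Definition \ref{def:altg}). Once that identification is in hand, the proof reduces to the one-line verification above.
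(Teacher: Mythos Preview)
Your proof is correct and follows exactly the same approach as the paper, which simply cites Lemmas \ref{lem:varphi}, \ref{lem:gam3} and Definition \ref{def:NewZ}. You have merely written out explicitly the one-line verification that those three references encode, together with the injectivity of $\varphi$ from Theorem \ref{prop:vpIso}.
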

\begin{proof} By Lemmas  \ref{lem:varphi}, \ref{lem:gam3} and Definition \ref{def:NewZ}.
\end{proof}

\begin{definition} \label{def:onemoreGF} \rm Define the generating function
\begin{align}
\label{eq:Z2GF}
\mathcal Z(t) = \sum_{n\in \mathbb N} \mathcal Z_n t^n.
\end{align}
\end{definition}

\begin{theorem} \label{lem:ZST} We have
\begin{align}
\mathcal Z^\vee (t) = \mathcal Z(S) \mathcal Z(T),
\label{eq:ZST}
\end{align}
where $S, T$ are from  \eqref{eq:ST}.
\end{theorem}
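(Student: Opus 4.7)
The plan is to reduce the claim to an identity already established in the polynomial algebra $\mathbb F\lbrack z_1, z_2, \ldots \rbrack$ by transferring everything through the algebra isomorphism $\varphi: \mathcal O_q \to O_q \otimes \mathbb F \lbrack z_1, z_2, \ldots \rbrack$ of Theorem \ref{prop:vpIso}. Since $\varphi$ is injective, it suffices to show that $\varphi$ sends the two sides of \eqref{eq:ZST} to the same element of $O_q \otimes \mathbb F \lbrack z_1, z_2, \ldots \rbrack$.

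First I would apply $\varphi$ to each side separately, using generating functions. By Lemma \ref{lem:vpZ}, $\varphi$ sends $\mathcal Z^\vee_n \mapsto 1 \otimes z^\vee_n$ for all $n \in \mathbb N$, so, writing $Z^\vee(t) = \sum_{n \in \mathbb N} z^\vee_n t^n$ as in Appendix A, we get
\begin{align*}
\varphi\bigl(\mathcal Z^\vee(t)\bigr) = 1 \otimes Z^\vee(t).
\end{align*}
By Definition \ref{def:NewZ}, $\varphi$ sends $\mathcal Z_n \mapsto 1 \otimes z_n$ for all $n \in \mathbb N$. Writing $Z(t) = \sum_{n\in \mathbb N} z_n t^n$ and substituting the power series for $S$ and $T$ from just below \eqref{eq:ST}, we obtain
\begin{align*}
\varphi\bigl(\mathcal Z(S)\bigr) = 1 \otimes Z(S), \qquad \qquad \varphi\bigl(\mathcal Z(T)\bigr) = 1 \otimes Z(T),
\end{align*}
and since $1 \otimes Z(S)$ and $1 \otimes Z(T)$ commute,
\begin{align*}
\varphi\bigl(\mathcal Z(S)\mathcal Z(T)\bigr) = 1 \otimes Z(S)Z(T).
\end{align*}

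Thus \eqref{eq:ZST} reduces to the identity $Z^\vee(t) = Z(S)Z(T)$ inside $\mathbb F\lbrack z_1, z_2, \ldots \rbrack$, which is Proposition \ref{def:zcheck} from Appendix A; indeed this same identity was already invoked (in the reverse direction) in the proof of Lemma \ref{lem:vpZ}. Since $\varphi$ is an algebra isomorphism, the equality of the $\varphi$-images forces $\mathcal Z^\vee(t) = \mathcal Z(S)\mathcal Z(T)$.

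The argument has no real obstacle: all substantive work has already been done in establishing Lemma \ref{lem:vpZ}, Definition \ref{def:NewZ}, and the polynomial identity in Appendix A. The only thing to be careful about is formal: the expressions $\mathcal Z(S)$ and $\mathcal Z(T)$ (and likewise $Z(S), Z(T)$) must be interpreted as power series in $t$ obtained by substituting the power-series expansions of $S$ and $T$, and the product $\mathcal Z(S)\mathcal Z(T)$ must be a well-defined element of $\mathcal O_q\lbrack\! \lbrack t \rbrack\! \rbrack$, which follows since only finitely many alternating generators contribute to each coefficient of $t^n$ (both $S$ and $T$ have no constant term, so $S^k$ and $T^k$ start at order $t^k$).
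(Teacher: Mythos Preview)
Your proof is correct and takes essentially the same approach as the paper: transport both sides through the isomorphism $\varphi$ using Lemma \ref{lem:vpZ} and Definition \ref{def:NewZ}, and then invoke the polynomial identity $Z^\vee(t)=Z(S)Z(T)$ of Proposition \ref{def:zcheck}. The paper compresses this to a single line by citing the summary comment below Lemma \ref{lem:NewZai} together with Proposition \ref{def:zcheck}, but the logical content is identical.
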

\begin{proof} By the comment below Lemma \ref{lem:NewZai}, along with Proposition \ref{def:zcheck}.
\end{proof} 

\begin{proposition} \label{prop:ZST} For the algebra $\mathcal O_q$ we have
\begin{align*}
\mathcal W^-(t) &= W^-(t) \mathcal Z(t), \qquad \qquad \,\mathcal W^+(t) = W^+(t) \mathcal Z(t), 
\\
\mathcal  G(t) &= G(t) \mathcal Z(t), \qquad \qquad \qquad \mathcal {\tilde G}(t) = {\tilde G}(t) \mathcal Z(t).
\end{align*}
\noindent In the above lines we identify $O_q$ with $\langle \mathcal W_0, \mathcal W_1\rangle$ via $\imath$.
\end{proposition}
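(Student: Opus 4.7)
The proposition is essentially the generating-function reformulation of Lemma \ref{lem:pov}, so the plan is simply to translate each identity in that lemma into the language of generating functions by extracting coefficients.

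First I would write out the right-hand side $W^-(t)\mathcal Z(t)$ as a Cauchy product: with $W^-(t) = \sum_{m \in \mathbb N} W_{-m} t^m$ and $\mathcal Z(t) = \sum_{k \in \mathbb N} \mathcal Z_k t^k$ from Definitions \ref{def:gf42} and \ref{def:onemoreGF}, the coefficient of $t^n$ is
\begin{align*}
\sum_{k=0}^n W_{-(n-k)} \mathcal Z_k = \sum_{k=0}^n W_{k-n}\mathcal Z_k,
\end{align*}
which by Lemma \ref{lem:pov} equals $\mathcal W_{-n}$. Summing over $n \in \mathbb N$ gives $W^-(t)\mathcal Z(t) = \mathcal W^-(t)$, which is the first identity.

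The other three identities are handled in exactly the same way, reading off the corresponding formulas from Lemma \ref{lem:pov} for $\mathcal W_{n+1}$, $\mathcal G_n$, and $\mathcal {\tilde G}_n$, respectively. Care should be taken in the $\mathcal G(t)$ and $\mathcal {\tilde G}(t)$ cases to note that Lemma \ref{lem:pov} supplies the $n=0$ terms via $\mathcal G_0 = G_0$ and $\mathcal {\tilde G}_0 = \tilde G_0$ (both equal to $-(q-q^{-1})\lbrack 2\rbrack^2_q$, per \eqref{eq:GG0} and \eqref{eq:GG02}), so the generating functions agree at $t=0$ as well.

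There is essentially no obstacle; the only thing to remember is that throughout this section we are identifying $O_q$ with $\langle \mathcal W_0, \mathcal W_1\rangle \subseteq \mathcal O_q$ via $\imath$, so that products like $W_{-(n-k)}\mathcal Z_k$ are genuine products inside $\mathcal O_q$, and Lemma \ref{lem:pov} applies verbatim.
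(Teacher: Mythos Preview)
Your proof is correct and follows the same approach as the paper, which simply cites Lemma \ref{lem:pov} together with Definitions \ref{def:gf42} and \ref{def:onemoreGF}. You have merely spelled out the Cauchy-product computation that the paper leaves implicit.
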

\begin{proof} By 
Lemma \ref{lem:pov} and Definitions \ref{def:gf42},  \ref{def:onemoreGF}.
\end{proof}

\noindent The following result is a variation on Proposition \ref{lem:Zdiag}.
\begin{proposition} 
\label{lem:Zdiag2} 
The following diagrams commute:

\begin{align*}
&{\begin{CD}
\mathcal O_q  @>\varphi  >> O_q \otimes \mathbb F \lbrack z_1, z_2, \ldots \rbrack               \\
         @V \sigma VV                   @VV  \sigma\otimes {\rm id}V \\
     \mathcal O_q   @>>\varphi >
                                 O_q \otimes \mathbb F \lbrack z_1, z_2, \ldots \rbrack  
                        \end{CD}}  	
         \qquad       \qquad     
{\begin{CD}
\mathcal O_q  @>\varphi  >> O_q \otimes \mathbb F \lbrack z_1, z_2, \ldots \rbrack               \\
         @V \dagger VV                   @VV  \dagger\otimes {\rm id}V \\
     \mathcal O_q   @>>\varphi >
                                 O_q \otimes \mathbb F \lbrack z_1, z_2, \ldots \rbrack  
                        \end{CD}}  	  
                        \\          
& {\begin{CD}
\mathcal O_q  @>\varphi  >> O_q \otimes \mathbb F \lbrack z_1, z_2, \ldots \rbrack               \\
         @V \tau VV                   @VV  \tau\otimes {\rm id}V \\
     \mathcal O_q   @>>\varphi >
                                 O_q \otimes \mathbb F \lbrack z_1, z_2, \ldots \rbrack  
                        \end{CD}}  	      		    
\end{align*}
\end{proposition}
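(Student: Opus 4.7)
The plan is to verify each diagram by chasing the alternating generators of $\mathcal O_q$ around it. By Proposition~\ref{lem:pbw} the alternating generators
\begin{align*}
\lbrace \mathcal W_{-k}\rbrace_{k \in \mathbb N}, \quad \lbrace \mathcal W_{k+1}\rbrace_{k\in \mathbb N}, \quad \lbrace \mathcal G_{k+1}\rbrace_{k\in \mathbb N}, \quad \lbrace \mathcal {\tilde G}_{k+1}\rbrace_{k\in \mathbb N}
\end{align*}
generate $\mathcal O_q$. For the first diagram both paths are algebra homomorphisms; for the second and third, both paths are antihomomorphisms, since $\varphi$ is an algebra isomorphism and $\dagger$, $\tau$ are antiautomorphisms, while $\mathrm{id}$ is both a homomorphism and an antihomomorphism of the commutative algebra $\mathbb F[z_1,z_2,\ldots]$ so that $\dagger \otimes \mathrm{id}$ and $\tau \otimes \mathrm{id}$ are well-defined antiautomorphisms of $O_q \otimes \mathbb F[z_1,z_2,\ldots]$. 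Consequently it suffices to check agreement on the alternating generators.

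For the $\sigma$-diagram I would compute, using Lemma~\ref{lem:varphi} and Lemma~\ref{lem:autOc},
\begin{align*}
(\sigma \otimes \mathrm{id})\circ \varphi(\mathcal W_{-n}) &= \sum_{k=0}^n \sigma(W_{k-n}) \otimes z_k = \sum_{k=0}^n W_{n+1-k} \otimes z_k,
\end{align*}
which matches $\varphi \circ \sigma(\mathcal W_{-n}) = \varphi(\mathcal W_{n+1}) = \sum_{k=0}^n W_{n+1-k}\otimes z_k$ by Lemma~\ref{lem:autAc}. The same routine comparison (using Lemmas \ref{lem:autAc} and \ref{lem:autOc}) handles the generators $\mathcal W_{n+1}$, $\mathcal G_n$, $\mathcal {\tilde G}_n$; in each case the swap $W_{-k}\leftrightarrow W_{k+1}$ and $G_{k+1}\leftrightarrow \tilde G_{k+1}$ inside the sum reproduces the $\sigma$-image of the generator.

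For the $\dagger$-diagram I would argue identically, now using Lemma~\ref{lem:antiautAc} and Lemma~\ref{lem:antiautOc}. For example, applying $\dagger \otimes \mathrm{id}$ to $\varphi(\mathcal G_n) = \sum_{k=0}^n G_{n-k}\otimes z_k$ yields $\sum_{k=0}^n \tilde G_{n-k}\otimes z_k$, which agrees with $\varphi(\dagger(\mathcal G_n))=\varphi(\mathcal {\tilde G}_n)$; meanwhile $\mathcal W_{-k}$ is fixed on both sides. Finally, the $\tau$-diagram follows by composing the $\sigma$- and $\dagger$-diagrams (or alternatively from Definitions~\ref{def:tauAc}, \ref{def:tauOc}, Lemma~\ref{def:tauOc}, by the same chase: $\tau(\mathcal W_{-n})=\mathcal W_{n+1}$ matches $\sum_{k=0}^n \tau(W_{k-n})\otimes z_k = \sum_{k=0}^n W_{n+1-k}\otimes z_k$, and $\tau$ fixes the $G$ and $\tilde G$ generators in both $\mathcal O_q$ and $O_q$).

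The only subtlety, and the point that really needs flagging, is the remark about antihomomorphisms: one must observe that both legs of the $\dagger$- and $\tau$-diagrams are antihomomorphisms (so that agreement on generators propagates to products via the reversed multiplication law in a consistent manner), which is immediate from the commutativity of $\mathbb F[z_1,z_2,\ldots]$. Beyond this bookkeeping, the verification reduces to elementary index manipulations in the defining sums of $\varphi$, so there is no substantive obstacle.
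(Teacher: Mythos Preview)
Your proof is correct, but it takes a different route from the paper's. The paper chases the generating set $\mathcal W_0$, $\mathcal W_1$, $\lbrace \mathcal Z^\vee_n \rbrace_{n=1}^\infty$ (which generates $\mathcal O_q$ by Corollary~\ref{cor:sum}) around each diagram, invoking Lemma~\ref{lem:Zfix} (each $\mathcal Z^\vee_n$ is fixed by $\sigma$, $\dagger$, $\tau$) and Lemma~\ref{lem:vpZ} ($\varphi(\mathcal Z^\vee_n)=1\otimes z^\vee_n$) so that the central generators are trivially sent to the same place by both legs; only $\mathcal W_0$, $\mathcal W_1$ require checking, and these are immediate from \eqref{eq:vphiW0W1}. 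Your argument instead chases the full family of alternating generators and appeals to Lemmas~\ref{lem:autOc}, \ref{lem:antiautOc}, \ref{def:tauOc} for the action of $\sigma$, $\dagger$, $\tau$ on the alternating generators of $O_q$. This is equally valid and arguably more self-contained (it uses only the explicit formula for $\varphi$ in Lemma~\ref{lem:varphi} and avoids the $\mathcal Z^\vee$ machinery), at the cost of a slightly longer index computation. The paper's choice buys brevity: with the central generators fixed by the symmetries, there is almost nothing to verify.
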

\begin{proof} Chase the $\mathcal O_q$-generators $\mathcal W_0$, $\mathcal W_1$, $\lbrace \mathcal Z^\vee_n \rbrace_{n=1}^\infty$ around each diagram using
Lemmas \ref{lem:aut}, \ref{lem:antiaut}, \ref{lem:autAc}, \ref{lem:antiautAc}
and 
Definitions \ref{def:tauA}, \ref{def:tauAc}
along with
Lemmas \ref{lem:Zfix}, \ref{lem:varphi}, \ref{lem:vpZ}.
\end{proof}

\noindent The following result is a variation on Lemma \ref{lem:gam}.

 \begin{lemma} \label{lem:gam2} The following diagram commutes:
 
\begin{equation*}
{\begin{CD}
\mathcal O_q @>\varphi >> O_q \otimes \mathbb F \lbrack z_1, z_2, \ldots \rbrack
              \\
         @V \gamma VV                   @VV {\rm id}\otimes \theta V \\
         O_q @>>x \mapsto x \otimes 1>
                                 O_q \otimes \mathbb F 
                        \end{CD}}  	                          		    
\end{equation*}
 
 \end{lemma}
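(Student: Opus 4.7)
The plan is to verify commutativity of the diagram by chasing a generating set of $\mathcal{O}_q$ through both composites $(\mathrm{id}\otimes \theta)\circ \varphi$ and $(x\mapsto x\otimes 1)\circ \gamma$. Since both maps are algebra homomorphisms $\mathcal{O}_q \to O_q \otimes \mathbb{F}$, equality on generators suffices. By Corollary \ref{cor:sum}(i),(ii), the algebra $\mathcal{O}_q$ is generated by $\mathcal{W}_0$, $\mathcal{W}_1$, together with $\{\mathcal{Z}^\vee_n\}_{n=1}^\infty$, so I will check the diagram on these generators.

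First I would handle $\mathcal{W}_0$ and $\mathcal{W}_1$. Lemma \ref{lem:gamact} gives $\gamma(\mathcal{W}_i)=W_i$, so the lower-left route yields $W_i\otimes 1$. The upper-right route uses \eqref{eq:vphiW0W1}, which gives $\varphi(\mathcal{W}_i)=W_i\otimes 1$, and then $\mathrm{id}\otimes\theta$ leaves this unchanged (since $\theta(1)=1$). So both composites agree on $\mathcal{W}_0$ and $\mathcal{W}_1$. Next I would handle $\mathcal{Z}^\vee_n$ for $n\geq 1$: by Lemma \ref{lem:gamact} the lower-left route sends $\mathcal{Z}^\vee_n$ to $0$, while by Lemma \ref{lem:vpZ} the upper-right route sends $\mathcal{Z}^\vee_n$ to $1\otimes \theta(z^\vee_n)$. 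The remaining point to verify is $\theta(z^\vee_n)=0$ for $n\geq 1$, which I expect to follow from the description of $\theta$ and the map $\vee$ in Appendix A: $\theta$ kills the augmentation ideal generated by $\{z_n\}_{n=1}^\infty$ (by Lemma \ref{lem:idealZZ}), and $z^\vee_n$ lies in that ideal (the polynomial-algebra analog of Lemma \ref{lem:ZNewpoly}(i), which says $z^\vee_n$ is a polynomial in $z_1,\dots,z_n$ with zero constant term).

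An alternative and more conceptual approach is to combine Proposition \ref{prop:vphiP} with Lemma \ref{lem:gam}. Proposition \ref{prop:vphiP} gives $\varphi=(\mathrm{id}\otimes\vee)\circ \phi^{-1}$, whence
\[
(\mathrm{id}\otimes\theta)\circ\varphi \;=\; \bigl(\mathrm{id}\otimes(\theta\circ\vee)\bigr)\circ \phi^{-1}.
\]
Since $\vee$ preserves the augmentation ideal of $\mathbb{F}[z_1,z_2,\ldots]$, we have $\theta\circ\vee=\theta$, so the right-hand side equals $(\mathrm{id}\otimes\theta)\circ\phi^{-1}$. Lemma \ref{lem:gam} then identifies this with $(x\mapsto x\otimes 1)\circ\gamma$, which is exactly the lower-left route in the diagram.

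The only substantive step—and the main obstacle, such as it is—is confirming from Appendix A that $\theta(z^\vee_n)=0$ for $n\geq 1$, or equivalently that $\vee$ fixes the augmentation ideal. Once this is in hand, the proof is a routine generator chase.
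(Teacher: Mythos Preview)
Your proof is correct and your primary approach is essentially the same as the paper's: chase the generators $\mathcal W_0$, $\mathcal W_1$, $\{\mathcal Z^\vee_n\}_{n\geq 1}$ around the diagram using Lemma~\ref{lem:gamact}, \eqref{eq:vphiW0W1}, Lemma~\ref{lem:vpZ} (equivalently the comment below Lemma~\ref{lem:NewZai}), and Lemma~\ref{lem:idealZZ} to get $\theta(z^\vee_n)=0$. Your alternative argument via Proposition~\ref{prop:vphiP} and Lemma~\ref{lem:gam}, reducing to $\theta\circ\vee=\theta$, is also correct and arguably cleaner, though the paper does not take that route.
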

\begin{proof} Chase the $\mathcal O_q$-generators $\mathcal W_0$, $\mathcal W_1$, $\lbrace \mathcal Z^\vee_n \rbrace_{n=1}^\infty$ around the diagram,
using Lemmas \ref{lem:gamact},  \ref{lem:idealZZ} along with  \eqref{eq:vphiW0W1}
and the comment below Lemma \ref{lem:NewZai}.
\end{proof}

\section{The algebra homomorphism $\eta: \mathcal O_q \to \mathbb F\lbrack z_1, z_2, \ldots \rbrack$}

\noindent In this section, we introduce a surjective algebra homomorphism
 $\eta: \mathcal O_q \to \mathbb F\lbrack z_1, z_2, \ldots \rbrack$. We use $\eta$ to illuminate how $\mathcal O_q$ is related to $O_q$. We describe the kernel of $\eta$
 in several ways.

\begin{lemma} \label{lem:eta}
There exists an algebra homomorphism $\eta : \mathcal O_q \to \mathbb F \lbrack z_1, z_2, \ldots \rbrack$ that sends
\begin{align}
\label{eq:assign}
\mathcal W_{-n} \mapsto 0, \qquad 
\mathcal W_{n+1} \mapsto 0, \qquad 
\mathcal G_{n} \mapsto \mathcal G_0 z_{n}, \qquad 
\mathcal {\tilde G}_{n} \mapsto \mathcal {\tilde G}_0z_{n}
\end{align}
for $n \in \mathbb N$.
Moreover $\eta$ is surjective.
\end{lemma}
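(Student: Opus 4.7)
The plan is to define $\eta$ on the alternating generators of $\mathcal O_q$ by the formula \eqref{eq:assign} and verify directly that this assignment respects the defining relations \eqref{eq:3p1}--\eqref{eq:3p11}. The key observation is that the target algebra $\mathbb F \lbrack z_1, z_2, \ldots \rbrack$ is commutative, so every ordinary commutator $\lbrack X, Y \rbrack$ vanishes in the image, and every $q$-commutator $\lbrack X, Y \rbrack_q = qXY - q^{-1}YX$ vanishes as soon as one of $X, Y$ maps to $0$. Every commutator appearing on the left-hand side of a relation in \eqref{eq:3p2}--\eqref{eq:3p3} has one of $\mathcal W_0, \mathcal W_1$ as an entry, so all such left-hand sides collapse to zero under the proposed assignment.

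The verification of the right-hand sides then reduces to two observations. In \eqref{eq:3p2}--\eqref{eq:3p9} the right-hand sides are either $0$ or a linear combination of $\mathcal W$-generators, each of which maps to $0$. In \eqref{eq:3p1}, the right-hand side $(\mathcal{\tilde G}_{k+1} - \mathcal G_{k+1})/(q+q^{-1})$ maps to $(\mathcal{\tilde G}_0 - \mathcal G_0) z_{k+1}/(q+q^{-1})$, which is zero because $\mathcal G_0 = \mathcal{\tilde G}_0$ by \eqref{eq:GG0}. The relations \eqref{eq:3p4}, \eqref{eq:3p5}, \eqref{eq:3p10}, \eqref{eq:3p11} are of the form $\lbrack\,,\,\rbrack = 0$ (or a sum of two such), which hold trivially in a commutative target, and similarly for \eqref{eq:3p6}--\eqref{eq:3p9} once one notes that every commutator there involves some $\mathcal W_{\pm\cdot}$. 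Hence the assignment \eqref{eq:assign} extends uniquely to an algebra homomorphism $\eta: \mathcal O_q \to \mathbb F \lbrack z_1, z_2, \ldots \rbrack$.

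For the surjectivity claim, $q$ is not a root of unity, so $\mathcal G_0 = -(q-q^{-1})\lbrack 2 \rbrack_q^2$ is a nonzero scalar in $\mathbb F$. Consequently $\mathcal G_0^{-1}\eta(\mathcal G_n) = z_n$ lies in the image of $\eta$ for every $n \geq 1$, and these elements together with $1 = \eta(1)$ generate the target. There is no substantive obstacle; commutativity of the target trivializes the bulk of the relations, and the only non-automatic identity \eqref{eq:3p1} holds precisely because $\mathcal G_0 = \mathcal{\tilde G}_0$. (A more conceptual alternative would factor $\eta$ as $\mathcal O_q \xrightarrow{\varphi} O_q \otimes \mathbb F \lbrack z_1, z_2, \ldots \rbrack \xrightarrow{\vartheta \otimes \mathrm{id}} \mathbb F \otimes \mathbb F \lbrack z_1, z_2, \ldots \rbrack$, but this requires first showing that $\vartheta$ annihilates every alternating generator of $O_q$ by combining Lemmas~\ref{lem:vth2} and \ref{lem:GBcom} with an induction through \eqref{eq:Wind1}--\eqref{eq:Wind2}, so the direct check above is the more efficient route.)
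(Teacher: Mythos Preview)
Your proof is correct and follows exactly the same approach as the paper: verify directly that the assignment \eqref{eq:assign} satisfies the defining relations \eqref{eq:3p1}--\eqref{eq:3p11}, then observe that surjectivity follows because $\mathcal G_0 \ne 0$ and the $z_n$ generate the target. The paper's own proof is a two-sentence sketch of the same argument; your version simply spells out the verification in more detail, including the one nontrivial point that \eqref{eq:3p1} requires $\mathcal G_0 = \mathcal{\tilde G}_0$.
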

\begin{proof}  The algebra homomorphism $\eta$ exists because the defining relations \eqref{eq:3p1}--\eqref{eq:3p11} for $\mathcal O_q$
hold if we make the assignments \eqref{eq:assign}. The map $\eta$ is surjective since $\lbrace z_n \rbrace_{n=1}^\infty$ generate
$\mathbb F \lbrack z_1, z_2, \ldots \rbrack$.
\end{proof}

\noindent Shortly  we will describe how $\eta$ is related to the isomorphism $\varphi$ from Theorem \ref{prop:vpIso}.
We will use the following result.
\begin{lemma} \label{lem:etaZ} The map $\eta$ sends
$\mathcal Z^\vee_n \mapsto z^\vee_n$ for $n \in \mathbb N$.
\end{lemma}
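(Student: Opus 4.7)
The plan is to apply $\eta$ directly to the generating function $\Psi(t)$ from Definition \ref{lem:zV1}, and observe that almost everything collapses because $\eta$ kills all the $\mathcal W^\pm$ generators.

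First, I would apply $\eta$ to the defining expression
\begin{align*}
\Psi(t) &= t^{-1} ST\mathcal W^-(S) \mathcal W^+(T) + t ST\mathcal W^+(S) \mathcal W^-(T) - q^2 ST \mathcal W^-(S) \mathcal W^-(T) \\
&\quad - q^{-2}ST \mathcal W^+(S) \mathcal W^+(T) + (q^2-q^{-2})^{-2} \mathcal G(S) \mathcal {\tilde G}(T).
\end{align*}
Since $\eta$ sends every $\mathcal W_{-n}$ and $\mathcal W_{n+1}$ to $0$, it sends both $\mathcal W^-(t)$ and $\mathcal W^+(t)$ to $0$, so the first four summands vanish, leaving
\[
\eta(\Psi(t)) = (q^2-q^{-2})^{-2}\,\eta(\mathcal G(S))\,\eta(\mathcal{\tilde G}(T)).
\]

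Next, I would compute $\eta(\mathcal G(t))$ and $\eta(\mathcal{\tilde G}(t))$. From the prescription $\mathcal G_n \mapsto \mathcal G_0 z_n$ and $\mathcal{\tilde G}_n \mapsto \mathcal{\tilde G}_0 z_n$ for all $n\in\mathbb N$, we get $\eta(\mathcal G(t)) = \mathcal G_0 Z(t)$ and $\eta(\mathcal{\tilde G}(t)) = \mathcal{\tilde G}_0 Z(t)$, where $Z(t) = \sum_{n\in\mathbb N} z_n t^n$ is the generating function from Appendix A. Using $\mathcal G_0 = \mathcal{\tilde G}_0 = -(q-q^{-1})\lbrack 2\rbrack_q^2$ and $(q^2-q^{-2})^2 = (q-q^{-1})^2\lbrack 2\rbrack_q^2$, the scalar prefactor becomes
\[
(q^2-q^{-2})^{-2}\mathcal G_0 \mathcal{\tilde G}_0 = \lbrack 2\rbrack_q^2.
\]
Therefore $\eta(\Psi(t)) = \lbrack 2\rbrack_q^2\, Z(S)Z(T)$, and from $\mathcal Z^\vee(t) = \lbrack 2\rbrack_q^{-2}\Psi(t)$ I conclude
\[
\eta(\mathcal Z^\vee(t)) = Z(S)Z(T).
\]

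Finally, I would appeal to Proposition \ref{def:zcheck} from Appendix A, which provides the identity $Z(S)Z(T) = Z^\vee(t)$ where $Z^\vee(t) = \sum_{n\in\mathbb N} z^\vee_n t^n$. Comparing coefficients of $t^n$ on both sides of $\eta(\mathcal Z^\vee(t)) = Z^\vee(t)$ yields $\eta(\mathcal Z^\vee_n) = z^\vee_n$ for every $n \in \mathbb N$. The only substantive step is the scalar bookkeeping in the middle paragraph; the rest is entirely mechanical, so I do not expect a real obstacle here, provided the identity $Z(S)Z(T) = Z^\vee(t)$ from the appendix is in hand.
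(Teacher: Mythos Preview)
Your proposal is correct and follows essentially the same route as the paper's own proof: apply $\eta$ to $\Psi(t)$, observe that the $\mathcal W^\pm$ terms vanish, reduce the scalar $(q^2-q^{-2})^{-2}\mathcal G_0\mathcal{\tilde G}_0$ to $\lbrack 2\rbrack_q^2$, and invoke $Z(S)Z(T)=Z^\vee(t)$ from Appendix~A. There is no meaningful difference between your argument and the paper's.
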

\begin{proof} We will work with generating functions. It suffices to show that $\eta$ sends $\mathcal Z^\vee(t) \mapsto Z^\vee(t)$. By Definition \ref{def:ZV}
we have $\mathcal Z^\vee(t) = \lbrack 2 \rbrack^{-2}_q \Psi(t)$, where $\Psi(t)$ is from Definition
 \ref{lem:zV1}.
By Lemma \ref{lem:eta}, the map $\eta$ sends 
\begin{align*}
\mathcal W^-(t) \mapsto 0, \qquad 
\mathcal W^+(t) \mapsto 0, \qquad 
\mathcal G(t) \mapsto \mathcal G_0 Z(t), \qquad 
\mathcal {\tilde G}(t) \mapsto \mathcal {\tilde G}_0 Z(t).
\end{align*}
By this and Definition \ref{lem:zV1},  $\eta$ sends
\begin{align*}
\Psi(t) \mapsto (q^2-q^{-2})^{-2} \mathcal G_0 \mathcal {\tilde G}_0 Z(S)Z(T).
\end{align*}
By \eqref{eq:GG0} we have  $(q^2-q^{-2})^{-2} \mathcal G_0 \mathcal {\tilde G}_0= \lbrack 2 \rbrack^2_q$, and by Proposition \ref{def:zcheck}  we have  $Z^\vee(t)=Z(S)Z(T) $.
\noindent By these comments, $\eta$ sends $\mathcal Z^\vee(t) \mapsto Z^\vee(t)$.
\end{proof}

\noindent Recall the algebra homomorphism $\vartheta: O_q \to \mathbb F$ from Lemma \ref{lem:vth}.

\begin{proposition}\label{prop:etaD} The following diagram commutes:

\begin{equation*}
{\begin{CD}
\mathcal O_q @>\varphi >> O_q \otimes \mathbb F \lbrack z_1, z_2, \ldots \rbrack
              \\
         @V \eta VV                   @VV \vartheta \otimes {\rm id}V \\
        \mathbb F \lbrack z_1, z_2, \ldots \rbrack @>>x \mapsto 1 \otimes x>
                                 \mathbb F \otimes \mathbb F \lbrack z_1, z_2, \ldots \rbrack
                        \end{CD}}  	                          		    
\end{equation*}

\end{proposition}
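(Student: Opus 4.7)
The plan is to chase a sufficient set of algebra generators for $\mathcal O_q$ around the square, using that each leg of the diagram is an algebra homomorphism. By Corollary \ref{cor:sum}(i),(ii) the algebra $\mathcal O_q$ is generated by $\mathcal W_0$, $\mathcal W_1$ together with $\lbrace \mathcal Z^\vee_n\rbrace_{n=1}^\infty$, so it suffices to check commutativity on these three families.

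For $\mathcal W_0$ (and symmetrically for $\mathcal W_1$), I would argue as follows. Going right then down: by \eqref{eq:vphiW0W1} we have $\varphi(\mathcal W_0)=W_0\otimes 1$, and then
\[
(\vartheta\otimes\mathrm{id})(W_0\otimes 1)=\vartheta(W_0)\otimes 1=0,
\]
using Lemma \ref{lem:vth}. Going down then right: by Lemma \ref{lem:eta} we have $\eta(\mathcal W_0)=0$, and the bottom map sends $0\mapsto 1\otimes 0=0$. The two images agree.

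For $\mathcal Z^\vee_n$ with $n\geq 1$, I would invoke the two preparatory lemmas already at hand. Going right then down: Lemma \ref{lem:vpZ} gives $\varphi(\mathcal Z^\vee_n)=1\otimes z^\vee_n$, and then $(\vartheta\otimes\mathrm{id})(1\otimes z^\vee_n)=\vartheta(1)\otimes z^\vee_n=1\otimes z^\vee_n$. Going down then right: Lemma \ref{lem:etaZ} gives $\eta(\mathcal Z^\vee_n)=z^\vee_n$, and the bottom map sends $z^\vee_n\mapsto 1\otimes z^\vee_n$. Again the two images agree.

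Since both $(\vartheta\otimes\mathrm{id})\circ\varphi$ and $(x\mapsto 1\otimes x)\circ\eta$ are algebra homomorphisms out of $\mathcal O_q$ that coincide on the generating set $\lbrace\mathcal W_0,\mathcal W_1\rbrace\cup\lbrace\mathcal Z^\vee_n\rbrace_{n=1}^\infty$, they coincide on all of $\mathcal O_q$, which is the desired commutativity. There is no real obstacle here: the work was already done in Lemmas \ref{lem:vpZ} and \ref{lem:etaZ}, where the factorization $\mathcal Z^\vee(t)=\mathcal Z(S)\mathcal Z(T)$ (Theorem \ref{lem:ZST}) and the corresponding polynomial identity $Z^\vee(t)=Z(S)Z(T)$ were used to identify both $\varphi(\mathcal Z^\vee_n)$ and $\eta(\mathcal Z^\vee_n)$ with $z^\vee_n$; the present lemma is simply the formal compatibility that results.
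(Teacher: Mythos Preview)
Your proof is correct and follows exactly the same approach as the paper: chase the generators $\mathcal W_0$, $\mathcal W_1$, $\lbrace \mathcal Z^\vee_n\rbrace_{n=1}^\infty$ around the diagram using \eqref{eq:vphiW0W1} and Lemmas \ref{lem:vth}, \ref{lem:vpZ}, \ref{lem:eta}, \ref{lem:etaZ}. One small inaccuracy in your closing commentary: the proofs of Lemmas \ref{lem:vpZ} and \ref{lem:etaZ} rely on the polynomial identity $Z^\vee(t)=Z(S)Z(T)$ (Proposition \ref{def:zcheck}), not on Theorem \ref{lem:ZST}, which is the $\mathcal O_q$ analogue and is logically downstream of Lemma \ref{lem:vpZ}.
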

\begin{proof} Chase the $\mathcal O_q$-generators $\mathcal W_0$, $\mathcal W_1$, $\lbrace \mathcal Z^\vee_n\rbrace_{n=1}^\infty$ around the diagram, using \eqref{eq:vphiW0W1}
and Lemmas \ref{lem:vth}, \ref{lem:vpZ}, \ref{lem:eta}, \ref{lem:etaZ}.
\end{proof}

\begin{corollary} \label{cor:vth3}
The map $\eta$ sends $\mathcal Z_n \mapsto z_n $ for $n \in \mathbb N$.
\end{corollary}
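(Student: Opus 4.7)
The plan is to prove Corollary \ref{cor:vth3} by a direct diagram chase using the commutative square in Proposition \ref{prop:etaD}. The point is that $\mathcal{Z}_n$ is already pinned down by how $\varphi$ acts on it (Definition \ref{def:NewZ}), and the diagram expresses $\eta$ as a composition through $\varphi$.

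Concretely, I would start from Definition \ref{def:NewZ}, which gives $\varphi(\mathcal Z_n) = 1 \otimes z_n$ for $n \in \mathbb N$. Then I would apply the map $\vartheta \otimes {\rm id}$ to this, obtaining $\vartheta(1) \otimes z_n = 1 \otimes z_n$, which is the image in $\mathbb F \otimes \mathbb F[z_1, z_2, \ldots]$. Under the identification $x \mapsto 1 \otimes x$ used in the bottom arrow of the diagram in Proposition \ref{prop:etaD}, this corresponds to $z_n$. By commutativity of that diagram, the composite along the left side, namely $\eta(\mathcal Z_n)$, must agree, giving $\eta(\mathcal Z_n) = z_n$. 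The case $n = 0$ is immediate since $\mathcal Z_0 = 1$ and $\eta$ is an algebra homomorphism.

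There is no real obstacle here: once Proposition \ref{prop:etaD} is in hand (which in turn reduces to chasing the generators $\mathcal W_0, \mathcal W_1, \{\mathcal Z^\vee_n\}$), this corollary is an immediate evaluation on the element $\mathcal Z_n$, whose behavior under $\varphi$ is built into its very definition. The only small point to be careful about is keeping the identification $\mathbb F \otimes \mathbb F[z_1, z_2, \ldots] \cong \mathbb F[z_1, z_2, \ldots]$ explicit, so that the reader sees $1 \otimes z_n$ on the right side of the diagram really does correspond to $z_n \in \mathbb F[z_1, z_2, \ldots]$.
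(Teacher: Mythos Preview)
Your proof is correct and takes essentially the same approach as the paper: the paper's proof simply cites Definition \ref{def:NewZ} (via the comment below Lemma \ref{lem:NewZai}) and the commutative diagram in Proposition \ref{prop:etaD}, which is exactly the diagram chase you have spelled out explicitly.
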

\begin{proof}  By the comment below Lemma  \ref{lem:NewZai} along with
Proposition \ref{prop:etaD}.
\end{proof}

\noindent Next, we describe the kernel of $\eta$ in several ways.

\begin{proposition} The following are the same:
\begin{enumerate}
\item[\rm (i)] the kernel of $\eta$;
\item[\rm (ii)] the 2-sided ideal of $\mathcal O_q$
generated by $\mathcal W_0$, $\mathcal W_1$.
\end{enumerate}
\end{proposition}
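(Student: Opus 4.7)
The plan is to deduce the equality by transporting everything through the algebra isomorphism $\varphi$ of Theorem \ref{prop:vpIso} and invoking the commuting diagram of Proposition \ref{prop:etaD}, which identifies $\eta$ with $(\vartheta \otimes \mathrm{id}) \circ \varphi$ (composed with the obvious isomorphism $\mathbb{F} \otimes \mathbb{F}\lbrack z_1, z_2, \ldots \rbrack \cong \mathbb{F}\lbrack z_1, z_2, \ldots \rbrack$). Once this is done, $\ker \eta = \varphi^{-1}\bigl(\ker(\vartheta \otimes \mathrm{id})\bigr)$, and the problem reduces to identifying the kernel on the tensor-product side.

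First I would dispose of the easy inclusion (ii) $\subseteq$ (i): Lemma \ref{lem:eta} shows $\eta$ sends $\mathcal W_0, \mathcal W_1 \mapsto 0$, so the two-sided ideal $I$ of $\mathcal O_q$ generated by $\mathcal W_0, \mathcal W_1$ lies in $\ker \eta$. For the reverse inclusion, I would first analyse $\ker \vartheta$ inside $O_q$. Since $W_0, W_1$ generate $O_q$ and $\vartheta$ is the augmentation sending both to $0$ (Lemma \ref{lem:vth}), $\ker \vartheta$ is precisely the two-sided ideal of $O_q$ generated by $W_0, W_1$.

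Next I would pass to the tensor product. Because $\mathbb{F}\lbrack z_1, z_2, \ldots \rbrack$ is free over $\mathbb{F}$, the short exact sequence $0 \to \ker \vartheta \to O_q \to \mathbb{F} \to 0$ remains exact after tensoring with $\mathbb{F}\lbrack z_1, z_2, \ldots \rbrack$, yielding
\begin{align*}
\ker(\vartheta \otimes \mathrm{id}) = \ker \vartheta \otimes \mathbb{F}\lbrack z_1, z_2, \ldots \rbrack.
\end{align*}
The identity $(a \otimes 1)(x \otimes b)(a' \otimes 1) = axa' \otimes b$ then shows that this subspace coincides with the two-sided ideal of $O_q \otimes \mathbb{F}\lbrack z_1, z_2, \ldots \rbrack$ generated by $W_0 \otimes 1$ and $W_1 \otimes 1$.

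Finally, since $\varphi$ is an algebra isomorphism sending $\mathcal W_0 \mapsto W_0 \otimes 1$ and $\mathcal W_1 \mapsto W_1 \otimes 1$ by \eqref{eq:vphiW0W1}, it maps the two-sided ideal $I$ of $\mathcal O_q$ bijectively onto the two-sided ideal just described. Combining the commuting diagram with this identification gives $\ker \eta = \varphi^{-1}(\ker(\vartheta \otimes \mathrm{id})) = I$, as required. The only delicate step is the passage between ideal generation and tensoring, but this is routine because the factor $\mathbb{F}\lbrack z_1, z_2, \ldots \rbrack$ contributes only central elements; everything else is a direct chase through the diagram of Proposition \ref{prop:etaD}.
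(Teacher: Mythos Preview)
Your proof is correct and follows essentially the same route as the paper: invoke the commuting diagram of Proposition \ref{prop:etaD}, identify $\ker(\vartheta\otimes{\rm id})$ with $(\ker\vartheta)\otimes\mathbb F\lbrack z_1,z_2,\ldots\rbrack$, note that this is the two-sided ideal generated by $W_0\otimes 1$ and $W_1\otimes 1$, and pull back through the isomorphism $\varphi$. Your write-up supplies more explicit justification for the tensor-kernel step (via flatness) and separates out the easy inclusion, but the argument is the same.
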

\begin{proof} We invoke the commuting diagram in  Proposition \ref{prop:etaD}.
Let $K$ denote the kernel of the algebra homomorphism $\vartheta: O_q \to \mathbb F$. By Lemma \ref{lem:vth}, $K$ is the 2-sided ideal of $O_q$ generated by $W_0$, $W_1$. For 
the map $\vartheta\otimes {\rm id}$ from the commuting diagram, the kernel 
 is $K \otimes \mathbb F \lbrack z_1, z_2, \ldots \rbrack$  and this is the 2-sided ideal of $O_q \otimes  \mathbb F \lbrack z_1, z_2, \ldots \rbrack$ generated by $W_0\otimes 1$, $W_1 \otimes 1$.
 The algebra isomorphism $\varphi $ sends $\mathcal W_0 \mapsto W_0\otimes 1$ and $\mathcal W_1 \mapsto W_1\otimes 1$.
The result follows from these comments and the commuting diagram in Proposition \ref{prop:etaD}.
\end{proof}

\begin{proposition} 
The vector space $\mathcal O_q$ is the direct
sum of the following:
\begin{enumerate}
\item[\rm (i)] the center $\mathcal Z$ of $\mathcal O_q$;
\item[\rm (ii)] the kernel of $\eta$. 
\end{enumerate}
\end{proposition}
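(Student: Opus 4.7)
The plan is to exploit the fact that $\eta$ restricted to $\mathcal Z$ is already an isomorphism onto $\mathbb F[z_1,z_2,\ldots]$, so that $\eta$ provides a canonical projection $\mathcal O_q \to \mathcal Z$ whose kernel is $\ker\eta$.

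First I would establish that $\eta|_{\mathcal Z}: \mathcal Z \to \mathbb F[z_1, z_2, \ldots]$ is an algebra isomorphism. By Lemma \ref{lem:NewZai}, the elements $\{\mathcal Z_n\}_{n=1}^\infty$ are algebraically independent and generate $\mathcal Z$, so sending $\mathcal Z_n \mapsto z_n$ defines an algebra isomorphism $\mathcal Z \to \mathbb F[z_1,z_2,\ldots]$. By Corollary \ref{cor:vth3}, the map $\eta$ restricted to $\mathcal Z$ coincides with this assignment, hence is an isomorphism.

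Next, for the trivial intersection, suppose $x \in \mathcal Z \cap \ker\eta$. Then $\eta(x) = 0$, and since $\eta|_{\mathcal Z}$ is injective, $x = 0$. For the sum, let $x \in \mathcal O_q$ be arbitrary. Set $z = (\eta|_{\mathcal Z})^{-1}(\eta(x)) \in \mathcal Z$. Then $\eta(x - z) = \eta(x) - \eta(z) = 0$, so $x - z \in \ker\eta$ and $x = z + (x - z)$ is the desired decomposition.

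There is no serious obstacle here; the entire argument is a standard ``projection splits a short exact sequence'' observation, made possible by the fact that the composition $\mathcal Z \hookrightarrow \mathcal O_q \xrightarrow{\eta} \mathbb F[z_1,z_2,\ldots]$ is an isomorphism. The only input beyond basic linear algebra is Corollary \ref{cor:vth3} together with Lemma \ref{lem:NewZai}; everything else is formal.
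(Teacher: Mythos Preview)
Your proof is correct and follows essentially the same approach as the paper: establish that the restriction $\eta|_{\mathcal Z}:\mathcal Z\to\mathbb F[z_1,z_2,\ldots]$ is a bijection, then invoke the standard linear-algebra splitting. The paper cites Lemma~\ref{lem:vphiZact} and the commuting diagram of Proposition~\ref{prop:etaD} to get this bijectivity, whereas you use Lemma~\ref{lem:NewZai} and Corollary~\ref{cor:vth3}; these are equivalent routes to the same key fact.
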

\begin{proof} By Lemma  \ref{lem:vphiZact} we have
 $\varphi(\mathcal Z)=1 \otimes \mathbb F \lbrack z_1, z_2, \ldots \rbrack$.
By this and the commuting diagram in  Proposition \ref{prop:etaD}, the restriction of $\eta$ to $\mathcal Z$ gives a bijection $ \mathcal Z \to
\mathbb F \lbrack z_1, z_2, \ldots \rbrack$.
The result follows from this
 and linear algebra.
\end{proof}

\section{The algebra homomorphism $\vartheta: O_q \to \mathbb F$, revisited}

For the sake of completeness, we show how the algebra homomorphism  $\vartheta: O_q \to \mathbb F$ from Lemma \ref{lem:vth} acts on the alternating generators of $O_q$.

\begin{lemma} The map $\vartheta$ sends
\begin{align*}
W_{-n}\mapsto 0, \qquad
W_{n+1} \mapsto 0, \qquad
G_{n+1} \mapsto 0, \qquad
{\tilde G}_{n+1} \mapsto 0
\end{align*}
for $n \in \mathbb N$.
\end{lemma}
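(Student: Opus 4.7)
The plan is to exploit the identities in Lemma \ref{lem:pov} together with the map $\eta$ from Lemma \ref{lem:eta}, whose action on the alternating generators of $\mathcal{O}_q$ is already known.

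First I would establish a compatibility between $\vartheta$ and $\eta$, namely that the composition $\eta \circ \imath : O_q \to \mathbb{F}\lbrack z_1, z_2, \ldots\rbrack$ equals the composition of $\vartheta$ with the scalar inclusion $\mathbb{F} \hookrightarrow \mathbb{F}\lbrack z_1, z_2, \ldots\rbrack$. Both maps are algebra homomorphisms, so it suffices to check on the generators $W_0, W_1$ of $O_q$. Indeed $\eta(\imath(W_0)) = \eta(\mathcal{W}_0) = 0 = \vartheta(W_0)$ and similarly for $W_1$.

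Next, identify $O_q$ with $\langle \mathcal{W}_0, \mathcal{W}_1\rangle$ via $\imath$ and apply $\eta$ to both sides of the first identity in Lemma \ref{lem:pov}, namely $\mathcal{W}_{-n} = \sum_{k=0}^n W_{k-n}\,\mathcal{Z}_k$. The left-hand side maps to $0$ by Lemma \ref{lem:eta}. On the right-hand side, the compatibility above together with Corollary \ref{cor:vth3} (which says $\eta(\mathcal{Z}_k) = z_k$) yields
\begin{align*}
0 \;=\; \sum_{k=0}^n \vartheta(W_{k-n})\, z_k.
\end{align*}
Since $\{z_k\}_{k=0}^n$ are linearly independent over $\mathbb{F}$, every coefficient vanishes; in particular the $k=0$ term gives $\vartheta(W_{-n}) = 0$. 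The same argument applied to the expansion of $\mathcal{W}_{n+1}$ gives $\vartheta(W_{n+1})=0$.

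For the remaining two families, apply $\eta$ to $\mathcal{G}_n = \sum_{k=0}^n G_{n-k}\,\mathcal{Z}_k$; the left-hand side becomes $\mathcal{G}_0 z_n$ by Lemma \ref{lem:eta}, so linear independence of $\{z_k\}_{k=0}^n$ forces $\vartheta(G_{n-k}) = 0$ for $0 \le k < n$ (that is, $\vartheta(G_m) = 0$ for all $m \geq 1$), the top coefficient being consistent since $G_0 = \mathcal{G}_0 \in \mathbb{F}$ by \eqref{eq:GG02}. The identical argument with $\mathcal{\tilde G}_n = \sum_{k=0}^n \tilde G_{n-k}\,\mathcal{Z}_k$ handles $\tilde G_{n+1}$. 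There is no real obstacle here, since every step is a direct consequence of previously established results.
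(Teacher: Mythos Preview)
Your proof is correct and follows essentially the same route as the paper: both arguments apply $\eta$ to the alternating generators of $\mathcal O_q$ and compare with the expansion coming from $\varphi$ (equivalently, Lemma~\ref{lem:pov}), using that on $\langle \mathcal W_0,\mathcal W_1\rangle$ the map $\eta$ agrees with $\vartheta$. The only cosmetic difference is that the paper phrases this as a chase around the commuting diagram of Proposition~\ref{prop:etaD} and peels off one coefficient at a time by induction, whereas you read off all coefficients simultaneously via the linear independence of $\lbrace z_k\rbrace_{k=0}^n$.
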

\begin{proof} We show that $\vartheta(W_{-n})=0$. Without loss of generality, we may assume that $\vartheta(W_{k-n})=0$ for $1 \leq k \leq n$. We chase $\mathcal W_{-n}$ around the diagram in Proposition \ref{prop:etaD}, using 
Lemmas  \ref{lem:varphi}, \ref{lem:eta}. For one path in the diagram the outcome is 0, and for the other path in the diagram the outcome is $\vartheta(W_{-n})\otimes 1$. Therefore $\vartheta(W_{-n})=0$.
The remaining assertions are obtained in a similar manner.
\end{proof}

\section{Directions for future research}

In this section we give some suggestions for future research.
\begin{conjecture} \label{prob1}
\rm
Lemma \ref{prop:damiani} 
remains valid if we remove the assumption that $q$ is transcendental, and require only that $q$ is not a root of unity.
\end{conjecture}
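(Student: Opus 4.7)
The plan is to lift the statement to a universal algebra $O_R$ over a suitable ring $R$ in which $q$ is replaced by a formal variable $\mathbf{q}$, establish freeness of $O_R$ with the BK monomials as a basis, and then specialize $\mathbf{q}\mapsto q$ for any non-root-of-unity $q\in\mathbb F$. This is a standard flat-specialization template; the two things that need care are verifying spanning over the base ring and extracting linear independence from the generic case.

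The spanning half of the lemma requires no transcendence hypothesis. The recursions \eqref{eq:line1}--\eqref{eq:Bdelta} express each BK element as a polynomial in $W_0, W_1$, and the relations collected in Propositions \ref{prop:wang}, \ref{prop:wang2}, \ref{prop:GFwang}, \ref{prop:wangGF}, together with Corollaries \ref{prop:pbwRel} and \ref{lem:laterUse}, supply rewriting rules that reduce any product of BK generators to an $\mathbb F$-linear combination of ordered monomials. The denominators appearing in these rewriting rules are products of $(q-q^{-1})$, $(q+q^{-1})$, $(q^2-q^{-2})$, all nonzero when $q$ is not a root of unity (since $\pm i$ are themselves roots of unity).

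For linear independence, let $\mathbb F_0$ denote the prime subfield of $\mathbb F$ and set $R=\mathbb F_0[\mathbf{q}^{\pm 1}][(\mathbf{q}-\mathbf{q}^{-1})^{-1},(\mathbf{q}+\mathbf{q}^{-1})^{-1}]$, which is a localization of a PID and hence a PID. Let $O_R$ be the $R$-algebra with generators $W_0, W_1$ subject to the $q$-Dolan/Grady relations with $q$ replaced by $\mathbf{q}$. Provided the rewriting rules above are valid verbatim over $R$, the ordered BK monomials span $O_R$ as an $R$-module, and in particular they span each homogeneous piece of the natural $\mathbb N$-grading by total degree in $W_0, W_1$. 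Each such piece is a finitely generated torsion-free module over the PID $R$, hence free; tensoring with $\mathrm{Frac}(R)\subseteq\mathbb Q(\mathbf{q})$ and applying Lemma \ref{prop:damiani} shows that the ordered BK monomials are $\mathrm{Frac}(R)$-linearly independent in each degree, so by rank counting they form an $R$-basis of the corresponding piece. Base change along any ring map $R\to\mathbb F$, $\mathbf{q}\mapsto q$ (well defined exactly when $q$ is not a root of unity) then gives a PBW basis of $O_R\otimes_R\mathbb F\cong O_q$.

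The hard part will be verifying that the rewriting works verbatim over $R$: over a field one freely divides by nonzero scalars, but over $R$ one must check that the reduction procedure never forces division by a quantity outside $R^\times$. This is a Bergman-style diamond-lemma bookkeeping that requires examining the coefficients produced when resolving ambiguities via Propositions \ref{prop:wang}--\ref{prop:wangGF} and confirming that they all lie in $R$. Should this bookkeeping prove intractable, an alternative route is the following: assume \cite[Theorem~6.1]{pbwqO} is independent of the transcendence hypothesis, so Proposition \ref{lem:pbw} holds for all non-root-of-unity $q$; transport that alternating-generator PBW basis of $\mathcal O_q$ through the isomorphism $\varphi$ of Theorem \ref{prop:vpIso} to produce a PBW basis of $O_q$ in its alternating generators; and then convert to the BK basis using the triangular relation of Lemma \ref{lem:GBcom}(ii), whose leading coefficient $-q^{-1}\lbrack n\rbrack_q^{-1}\lbrack 2n\rbrack_q\lbrack 2\rbrack_q^{n-2}$ is invertible for $q$ not a root of unity, together with \eqref{eq:line1}--\eqref{eq:Bdelta}.
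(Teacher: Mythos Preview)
This statement is Conjecture~\ref{prob1}, placed in the paper's section on directions for future research. The paper offers no proof; there is nothing to compare against. What you have written is a strategy for attacking an open problem, and you yourself flag the main unfinished step.

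Two points on the strategy itself.

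First, in the linear-independence half you assert that each graded piece $(O_R)_n$ is torsion-free over $R$, but this is precisely what is at issue: $O_R$ is a quotient of a free algebra, and such quotients can acquire torsion. The argument can be repaired without assuming torsion-freeness. Granting that the ordered BK monomials of degree $n$ span $(O_R)_n$, one has a surjection $R^{d_n}\twoheadrightarrow (O_R)_n$ from a free module; since $\mathrm{Frac}(R)$ is flat over $R$ and the map becomes an isomorphism after tensoring with $\mathrm{Frac}(R)$ (by the generic PBW theorem), the kernel tensors to zero, and being a submodule of a free module it is torsion-free, hence zero. So the BK monomials are an $R$-basis of each graded piece. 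As written, though, your step begs the question.

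Second, your alternative route presumes that Proposition~\ref{lem:pbw} and the isomorphism $\varphi$ of Theorem~\ref{prop:vpIso} are available without the transcendence hypothesis. But Theorem~\ref{prop:vpIso} rests on Proposition~\ref{lem:sum} and Proposition~\ref{prop:three}, which are imported from \cite{pbwqO}, and those in turn may well depend on the very PBW theorem for $O_q$ you are trying to establish. Before this route is viable you would need to audit \cite{pbwqO} for circularity. The same caution applies to invoking Lemma~\ref{lem:GBcom}, which is downstream of Theorem~\ref{thm:ct} and hence of the whole apparatus of Sections 8--12.

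In short: the flat-specialization template is the right shape, but the conjecture remains open because the spanning-over-$R$ step (which you correctly isolate as the crux) has not been carried out, and the proposed workaround via $\mathcal O_q$ risks circularity.
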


\noindent The following conjecture is a variation on \cite[Conjecture~1]{basBel}.

\begin{conjecture} \label{prob2}\rm
A PBW basis for $O_q$ is obtained by the elements
\begin{align}
\label{eq:omega}
\lbrace W_{-i} \rbrace_{i \in \mathbb N}, \qquad 
\lbrace \tilde G_{j+1} \rbrace_{j\in \mathbb N}, \qquad  
\lbrace W_{k+1} \rbrace_{k\in \mathbb N}
\end{align}
in any linear order $<$ that satisfies one of {\rm (i)--(vi)} below:
\begin{enumerate}
\item[\rm (i)]  $W_{-i} <  \tilde G_{j+1} < W_{k+1}$ for $i,j,k\in \mathbb N$;
\item[\rm (ii)]  $W_{k+1} < \tilde G_{j+1} < W_{-i}$ for $i,j,k\in \mathbb N$;
\item[\rm (iii)]  $W_{k+1} < W_{-i} < \tilde G_{j+1}$ for $i,j,k\in \mathbb N$;
\item[\rm (iv)]  $W_{-i} < W_{k+1} < \tilde G_{j+1}$ for $i,j,k\in \mathbb N$;
\item[\rm (v)]  $\tilde G_{j+1} < W_{k+1} < W_{-i}$ for $i,j,k\in \mathbb N$;
\item[\rm (vi)]  $\tilde G_{j+1} < W_{-i} < W_{k+1}$ for $i,j,k\in \mathbb N$.
\end{enumerate}
\end{conjecture}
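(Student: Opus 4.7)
The strategy I would pursue rests on the Baseilhac--Kolb PBW basis (Lemma \ref{prop:damiani}, which requires $q$ transcendental, or more generally Conjecture \ref{prob1}) together with a filtration-triangularity argument transferring the PBW property to the alternating generators. Equip $O_q$ with the $\delta$-degree filtration $F^\bullet$ defined by assigning degree $n$ to each of $B_{n\delta+\alpha_0}$, $B_{n\delta+\alpha_1}$, $B_{n\delta}$; by Lemma \ref{prop:damiani} this filtration is well-defined, and Propositions \ref{prop:wang}--\ref{prop:wang2} show that commutators among Baseilhac--Kolb root vectors strictly decrease the $\delta$-degree, making $F^\bullet$ multiplicative.

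The central computation is to pin down the $\delta$-degree and leading symbol in $\mathrm{gr}(O_q)$ of each alternating generator of $O_q$. For $\tilde G_n$, Lemma \ref{lem:GBcom}(i) gives directly $\tilde G_n \in F^n \setminus F^{n-1}$ with leading symbol $-q[n]_q[2n]_q^{-1}[2]_q^{2-n}\,\mathrm{gr}(B_{n\delta})$. For $W_{-n}$ and $W_{n+1}$, the recursions of Lemma \ref{lem:Wind} (applied inside $O_q$ via $\gamma \circ \imath = \mathrm{id}$) combined with induction on $n$ should show that $W_{-n} \in F^n \setminus F^{n-1}$ has leading symbol of the form $\alpha_n\,\mathrm{gr}(B_{n\delta+\alpha_0}) + \beta_n W_0\,\mathrm{gr}(B_{n\delta})$ with $\alpha_n$ a computable nonzero scalar (the base case $n=1$, via $\tilde G_1 = -qB_\delta$, gives $\alpha_1 = q/(q+q^{-1})$ after normalization); the parallel statement for $W_{n+1}$ follows by applying $\tau$ (Lemma \ref{def:tauOc}).

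With these leading symbols identified, the change of basis from ordered Baseilhac--Kolb PBW monomials to ordered alternating monomials --- indexed jointly by $\delta$-degree and by the distribution across the three classes of generators --- becomes block-triangular with nonzero diagonal entries coming from the scalars $\alpha_n$. Invertibility on each filtered piece then promotes the alternating monomials to a basis of $O_q$. The six orderings (i)--(vi) collapse to essentially two inequivalent cases under the symmetries $\sigma, \dagger, \tau$ (Lemmas \ref{lem:autOc}, \ref{lem:antiautOc}, \ref{def:tauOc}), which permute the $W$-generator classes while fixing the $\tilde G$-class.

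The principal obstacle is establishing the block-invertibility claim carefully. My exploratory computation for $W_{-1}$ shows that its leading symbol is a genuine linear combination of $\mathrm{gr}(B_{\delta+\alpha_0})$ and $W_0\,\mathrm{gr}(B_\delta)$, not merely a scalar multiple of $\mathrm{gr}(B_{\delta+\alpha_0})$. Consequently the change-of-basis matrix carries off-diagonal contributions that propagate inductively, and verifying that each block's determinant is a nonzero element of $\mathbb F$ requires either an explicit combinatorial formula for the $\alpha_n, \beta_n$ and higher-order mixing coefficients coming from the Lemma \ref{lem:Wind} recursion, or a conceptual argument isolating the ``leading'' sub-block (perhaps via a finer multigrading that counts real and imaginary root-vector factors separately). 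This bookkeeping is where the bulk of the technical work would lie, and also where Conjecture \ref{prob1} would need to be invoked to remove the transcendence hypothesis on $q$.
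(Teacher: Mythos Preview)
The statement you are attempting to prove is listed in the paper as Conjecture~\ref{prob2}, in Section~16 (``Directions for future research''). The paper does \emph{not} supply a proof; it records the assertion as an open problem, introduced as ``a variation on \cite[Conjecture~1]{basBel}.'' Consequently there is no proof in the paper against which to compare your proposal.

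Your outline is a plausible research strategy, and you are candid about its incompleteness: you flag that the leading symbol of $W_{-n}$ is a genuine mixture of $\mathrm{gr}(B_{n\delta+\alpha_0})$ and lower terms, that the block-triangularity of the change-of-basis matrix has not been established, and that the transcendence hypothesis on $q$ (or Conjecture~\ref{prob1}) would be needed. These are exactly the points where a proof would have to do real work, and at present they remain gaps rather than verified steps. In particular, the claim that commutators of Baseilhac--Kolb root vectors ``strictly decrease the $\delta$-degree'' is not what Propositions~\ref{prop:wang}--\ref{prop:wang2} say (the right-hand sides there contain terms of the same total $\delta$-degree as the left), so the multiplicativity of your filtration $F^\bullet$ and the passage to $\mathrm{gr}(O_q)$ would need a more careful justification. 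What you have written is a reasonable sketch of an approach to an open problem, not a proof.
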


\noindent We motivate the next problem with  some comments.
\medskip

\noindent Let $\mathbb Z_4 =  {\mathbb Z} /4 \mathbb Z$
denote the cyclic group of order $4$.

\begin{definition} {\rm (See \cite[Definition~5.1]{pospart}.)}
\rm
\label{def:boxqV1M}
Let $\square_q$ denote the algebra with
generators $\lbrace x_i\rbrace_{i\in \mathbb Z_4}$
and the following relations. For $i \in \mathbb Z_4$,
\begin{align*}
&
\quad \qquad \qquad 
\frac{q x_i x_{i+1}-q^{-1}x_{i+1}x_i}{q-q^{-1}} = 1,
\\
&
x^3_i x_{i+2} - \lbrack 3 \rbrack_q x^2_i x_{i+2} x_i +
\lbrack 3 \rbrack_q x_i x_{i+2} x^2_i -x_{i+2} x^3_i = 0.
\end{align*}
\end{definition}

\begin{lemma} {\rm (See \cite[Proposition~5.6, Theorem~10.33]{pospart}.)}
\label{lem:square}
Pick nonzero $a,b \in \mathbb F$.
Then there exists an injective algebra homomorphism
$\sharp: O_q \to 
\square_q$ that sends
\begin{align*}
W_0 \mapsto a x_{0}+ a^{-1} x_{1},
\qquad \qquad
W_1 \mapsto  b x_{2}+ b^{-1} x_{3}.
\end{align*}
\end{lemma}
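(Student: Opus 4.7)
The plan is to write $\xi_0 := ax_0 + a^{-1}x_1$ and $\xi_1 := bx_2 + b^{-1}x_3$, and to proceed in two stages: first establish the existence of an algebra homomorphism $\sharp: O_q \to \square_q$ with $W_0 \mapsto \xi_0$ and $W_1 \mapsto \xi_1$, then upgrade existence to injectivity.

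For existence, by Definition \ref{def:U} it suffices to verify the $q$-Dolan/Grady relations
\begin{align*}
\lbrack \xi_0, \lbrack \xi_0, \lbrack \xi_0, \xi_1 \rbrack_q \rbrack_{q^{-1}} \rbrack &= (q^2-q^{-2})^2 \lbrack \xi_1, \xi_0 \rbrack,\\
\lbrack \xi_1, \lbrack \xi_1, \lbrack \xi_1, \xi_0 \rbrack_q \rbrack_{q^{-1}} \rbrack &= (q^2-q^{-2})^2 \lbrack \xi_0, \xi_1 \rbrack
\end{align*}
inside $\square_q$. I would check the first by direct computation: expand the triple bracket as a polynomial in $x_0, x_1, x_2, x_3$ and reduce modulo the defining relations of $\square_q$, namely the $q$-commutations $qx_i x_{i+1} - q^{-1}x_{i+1}x_i = q - q^{-1}$ for adjacent $i \in \mathbb Z_4$ and the cubic $q$-Serre relations for the pairs $(x_i, x_{i+2})$. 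The parameters $a, b$ track bilinearly through the computation and produce no obstruction. The second Dolan/Grady relation follows from the first by applying the cyclic $\mathbb Z_4$-automorphism $x_i \mapsto x_{i+2}$ of $\square_q$ (which sends $ax_0 + a^{-1}x_1 \mapsto ax_2 + a^{-1}x_3$ and $bx_2 + b^{-1}x_3 \mapsto bx_0 + b^{-1}x_1$) and then relabeling $a \leftrightarrow b$, so only one calculation is genuinely required.

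For injectivity, my preferred route is the PBW approach. Assuming $q$ is transcendental, Lemma \ref{prop:damiani} furnishes a PBW basis for $O_q$ built from $\lbrace B_{n\delta+\alpha_0}\rbrace_{n \in \mathbb N}$, $\lbrace B_{n\delta+\alpha_1}\rbrace_{n \in \mathbb N}$, $\lbrace B_{n\delta}\rbrace_{n \geq 1}$. I would filter $\square_q$ by total degree in $\lbrace x_0, x_1, x_2, x_3 \rbrace$, compute the leading symbols of $\sharp(W_0)$ and $\sharp(W_1)$ in the associated graded algebra, and then propagate through the recursions \eqref{eq:line1}--\eqref{eq:Bdelta} to obtain the leading symbols of $\sharp(B_{n\delta+\alpha_0})$, $\sharp(B_{n\delta+\alpha_1})$, $\sharp(B_{n\delta})$. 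If these leading symbols generate a free enough graded subalgebra (so that distinct ordered PBW monomials land on distinct leading symbols), then $\sharp$ is injective. An alternative is to exhibit a faithful $\square_q$-module (for instance of Fock or evaluation type) whose restriction to $O_q$ via $\sharp$ is already known to be faithful, thereby sidestepping the combinatorics.

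The hard part will be this injectivity step. The existence verification is laborious but entirely mechanical; by contrast, the leading-symbol analysis hinges on a PBW-style structure theorem for $\square_q$ itself, without which the linear-independence claim cannot be reduced to finite-dimensional linear algebra. This structural input is the substantive content of \cite[Theorem~10.33]{pospart}; in a self-contained treatment one would either reproduce a PBW theorem for $\square_q$ and track top-degree cancellations carefully, or construct an explicit faithful representation and transport faithfulness across $\sharp$.
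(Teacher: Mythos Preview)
The paper does not give its own proof of this lemma: the statement appears in the ``Directions for future research'' section and is simply cited from \cite[Proposition~5.6, Theorem~10.33]{pospart}, with no argument supplied. So there is no in-paper proof to compare your proposal against.

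That said, your outline is broadly the right shape and matches what one expects from \cite{pospart}: the existence half is exactly a direct verification of the $q$-Dolan/Grady relations in $\square_q$ (this is \cite[Proposition~5.6]{pospart}), and injectivity is the genuinely nontrivial part handled by the structural results culminating in \cite[Theorem~10.33]{pospart}. One point to flag: your injectivity argument invokes Lemma~\ref{prop:damiani}, which in this paper carries the hypothesis that $q$ is transcendental over $\mathbb F$, whereas Lemma~\ref{lem:square} is stated for any $q$ that is not a root of unity. So your PBW route, as written, would only cover the transcendental case (cf.\ Conjecture~\ref{prob1}). The argument in \cite{pospart} proceeds differently and does not pass through the Baseilhac--Kolb PBW basis for $O_q$; it instead develops enough structure on $\square_q$ directly (in particular a filtration and a description of the associated graded) to conclude injectivity without that hypothesis. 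Your alternative suggestion of using a faithful $\square_q$-module is closer in spirit to what is actually done there.
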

\begin{problem}\rm
\label{prob3}
Find the image of each alternating generator of $O_q$, under the homomorphism $\sharp$ from Lemma \ref{lem:square}.
\end{problem}

\begin{problem}\rm Let $V$ denote a finite-dimensional irreducible $O_q$-module on which each of $W_0$, $W_1$ is diagonalizable.
Consider the four types of alternating generators for $O_q$:
\begin{align*}
\lbrace W_{-k} \rbrace_{k \in \mathbb N}, \qquad
\lbrace W_{k+1} \rbrace_{k \in \mathbb N}, \qquad
\lbrace G_{k+1} \rbrace_{k \in \mathbb N}, \qquad
\lbrace \tilde G_{k+1} \rbrace_{k \in \mathbb N}.
\end{align*}
For each type the alternating generators mutually commute; find their eigenvalues and common eigenvectors in $V$.
Perhaps start by assuming that $V$ has shape $(1,2,1)$ in the sense of Vidar \cite{vidar}.
\end{problem}

\begin{conjecture}\rm For $x \in O_q$ the following are equivalent:
\begin{enumerate}
\item[\rm (i)] $x$ commutes with $W_{-k}$ for $k \in \mathbb N$;
\item[\rm (ii)] $x$ is contained in the subalgebra of $O_q$ generated by 
$\lbrace W_{-k} \rbrace_{k \in \mathbb N}$.
\end{enumerate}
\end{conjecture}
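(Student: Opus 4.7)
The implication (ii) $\Rightarrow$ (i) is immediate: by \eqref{eq:3p4c}, $\lbrack W_{-k},W_{-\ell}\rbrack=0$ for all $k,\ell\in\mathbb N$, so every element of $A:=\langle W_{-k}:k\in\mathbb N\rangle$ commutes with each $W_{-k}$.

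For the converse (i) $\Rightarrow$ (ii), my plan is to exploit a PBW-type decomposition placing the $W_{-i}$ as leftmost factors. Conjecture \ref{prob2}(i) would provide, for each $x \in O_q$, a unique expansion
\begin{equation*}
x = \sum_{\vec i,\vec j,\vec k} c_{\vec i,\vec j,\vec k}\, W_{-i_1}\cdots W_{-i_a}\, \tilde G_{j_1+1}\cdots \tilde G_{j_b+1}\, W_{k_1+1}\cdots W_{k_c+1};
\end{equation*}
since the $W_{-i}$ mutually commute, the $W^-_{\vec i}$ prefix slides through any bracket with $W_{-m}$, so
\begin{equation*}
\lbrack x,W_{-m}\rbrack = \sum_{\vec i,\vec j,\vec k} c_{\vec i,\vec j,\vec k}\, W^-_{\vec i}\,\bigl\lbrack \tilde G_{j_1+1}\cdots \tilde G_{j_b+1}\, W_{k_1+1}\cdots W_{k_c+1},\, W_{-m}\bigr\rbrack.
\end{equation*}
The goal is to argue that the family $\lbrace \mathrm{ad}(W_{-m})\rbrace_{m\in\mathbb N}$ acts nondegenerately on the complementary span of monomials with $(\vec j,\vec k)\neq(\emptyset,\emptyset)$, so that simultaneous vanishing of $\lbrack x,W_{-m}\rbrack$ for every $m$ forces those coefficients to vanish and hence $x \in A$. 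The useful input is that \eqref{eq:3p5c}, \eqref{eq:3p7c} give $\lbrack W_{k+1},W_{-m}\rbrack=-\lbrack W_{-k},W_{m+1}\rbrack$ and $\lbrack \tilde G_{k+1},W_{-m}\rbrack=-\lbrack W_{-k},\tilde G_{m+1}\rbrack$, so varying $m$ generates a rich family of commutators whose leading behaviour, under a degree filtration compatible with Lemma \ref{lem:GBcom} and the Baseilhac--Kolb PBW basis of Lemma \ref{prop:damiani}, should be combinatorially controllable on the associated graded.

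The main obstacle is twofold. First, Conjecture \ref{prob2} is itself open in this paper, so one must either prove it in parallel or re-run the argument using the Baseilhac--Kolb PBW basis $\lbrace B_{n\delta+\alpha_0},B_{n\delta+\alpha_1},B_{n\delta}\rbrace$, with the change-of-basis between these and the alternating generators supplied by \eqref{eq:line1}--\eqref{eq:Bdelta} and (via $\gamma$) Lemma \ref{lem:pov}. Second and more seriously, the nondegeneracy of $\lbrace \mathrm{ad}(W_{-m})\rbrace$ on the complement of $A$ is the crux: since the $\tilde G_{j+1}$ mutually commute by \eqref{eq:3p10c} and commute with the $W_{-k}$ only up to corrections involving the $W_{\ell+1}$, a direct monomial-by-monomial argument does not close. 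A promising alternative is to pass to the faithful embedding $\sharp:O_q\to\square_q$ of Lemma \ref{lem:square}, compute the images of the $W_{-k}$ explicitly---this is essentially Problem \ref{prob3}---and carry out the centralizer calculation inside $\square_q$, exploiting its $\mathbb Z_4$-grading. I expect essentially all the difficulty of the conjecture to lie in this nondegeneracy/centralizer step.
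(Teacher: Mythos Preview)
This statement is listed in the paper as an open \emph{conjecture} in Section~16 (``Directions for future research''); the paper offers no proof and does not claim one. So there is no argument in the paper to compare your proposal against.

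Your implication (ii)$\Rightarrow$(i) is correct and complete, via \eqref{eq:3p4c}. For (i)$\Rightarrow$(ii), what you have written is a strategy sketch, not a proof, and you are candid about this. The two obstacles you name are real and unresolved: your PBW expansion rests on Conjecture~\ref{prob2}, which is itself open here; and even granting that basis, the ``nondegeneracy'' of $\lbrace\mathrm{ad}(W_{-m})\rbrace_{m\in\mathbb N}$ on the complement of $A$ is exactly the content of the conjecture, not a lemma you can invoke. The alternative route through $\sharp:O_q\to\square_q$ is likewise blocked, since the images of the $W_{-k}$ under $\sharp$ are unknown (Problem~\ref{prob3}). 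In short, the hard direction remains genuinely open, consistent with the paper's treatment.
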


\begin{conjecture}\rm For $x \in O_q$ the following are equivalent:
\begin{enumerate}
\item[\rm (i)] $x$ commutes with $ {\tilde G}_{k+1}$ for $k \in \mathbb N$;
\item[\rm (ii)] $x$ is contained in the subalgebra of $O_q$ generated by 
$\lbrace  {\tilde G}_{k+1} \rbrace_{k \in \mathbb N}$.
\end{enumerate}
\end{conjecture}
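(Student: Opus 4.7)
The implication (ii) $\Rightarrow$ (i) is immediate from the right-hand equation of \eqref{eq:3p10c}: the generators $\{\tilde G_{k+1}\}_{k\in\mathbb N}$ mutually commute, so every element of the subalgebra they generate commutes with each $\tilde G_{k+1}$.

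For (i) $\Rightarrow$ (ii), the plan is first to trade the generators $\tilde G_{k+1}$ for the Cartan PBW generators $B_{n\delta}$ and then to apply a PBW filtration argument. By Lemma \ref{lem:GBcom} the subalgebra $\mathcal H := \langle \tilde G_{k+1}\rangle_{k\in\mathbb N}$ equals $\langle B_{n\delta}\rangle_{n\geq 1}$, and the same lemma's polynomial change of variables shows that ``$x$ commutes with every $\tilde G_{k+1}$'' is equivalent to ``$x$ commutes with every $B_{n\delta}$''. Since the $B_{n\delta}$ mutually commute and are algebraically independent (they appear as distinct PBW generators in Lemma \ref{prop:damiani}), $\mathcal H$ is a polynomial algebra, and the task reduces to showing that the centralizer of $\{B_{n\delta}\}_{n\geq 1}$ in $O_q$ equals $\mathcal H$.

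Assuming Conjecture \ref{prob1} so that Lemma \ref{prop:damiani} applies over our field, I would fix a PBW order with all $B_{k\delta+\alpha_0}$ preceding all $B_{m\delta}$, which in turn precede all $B_{k\delta+\alpha_1}$. Expand any centralizing $x$ uniquely in this basis and split $x = x_0 + x_1$, where $x_0 \in \mathcal H$ collects the monomials built entirely from Cartan generators and $x_1$ collects the remaining monomials. Since $\mathcal H$ commutes with itself, the hypothesis reduces to $[x_1, B(s)] = 0$ in $O_q[[s]]$. Next, assign to each PBW monomial the bi-degree $(a, b)$ counting its non-Cartan $B^-$- and $B^+$-factors, so that $x_1$ is supported in bi-degrees $(a,b)\neq(0,0)$. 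Using Proposition \ref{prop:wangGF}, the commutator $[B(s), B^\pm(u)]$ can be expressed as an $\mathbb F$-linear combination of products of the form $B(s) B^\pm(q^{-2}s)$ with scalar coefficients depending on $s$ and $u$, so $\operatorname{ad}(B(s))$ preserves each bi-degree modulo strictly smaller bi-degree. Hence on the associated graded, $\operatorname{ad}(B(s))$ induces an $\mathcal H[[s]]$-linear endomorphism $\Phi(s)$ of each bi-degree slice.

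The crux of the argument, and the main obstacle, is to prove that $\Phi(s)$ is injective on each nonzero bi-degree slice. I would attack this by specializing $s$ at the values where the rational coefficients in Proposition \ref{prop:wangGF} degenerate, extracting a triangular system in the bi-degree lattice that can be inverted inductively starting from the slices $(1,0)$ and $(0,1)$. Once this linear-algebra step is secured, the equations $[x_1, B(s)] = 0$ propagate through the filtration, forcing $x_1 = 0$ bi-degree by bi-degree, whence $x = x_0 \in \mathcal H$. A possible alternative is to lift the problem to $\mathcal O_q$ via $\imath$ (Lemma \ref{lem:iota}) and exploit the factorization $\mathcal Z^\vee(t) = \xi\, \mathcal{\tilde G}(S)\, B(t)\, \mathcal{\tilde G}(T)$ of Theorem \ref{thm:Mn} together with the decomposition $\mathcal O_q \cong O_q \otimes \mathbb F[z_1, z_2, \ldots]$; but since $\imath(\tilde G_{k+1})$ and $\mathcal{\tilde G}_{k+1}$ differ by central elements of $\mathcal O_q$, this route requires careful bookkeeping to isolate the $O_q$-part of the centralizer, and I expect the direct PBW-filtration argument to be cleaner.
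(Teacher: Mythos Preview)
The statement you are addressing is presented in the paper as a \emph{conjecture} in Section~16 (``Directions for future research''), not as a theorem; the paper offers no proof, so there is nothing to compare your argument against. What you have written is therefore an attempted proof of an open problem, and it should be evaluated as such.

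The direction (ii)~$\Rightarrow$~(i) is fine, exactly as you say.

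For (i)~$\Rightarrow$~(ii), your outline contains an explicit, self-acknowledged gap: you identify the injectivity of $\Phi(s)$ on each nonzero bi-degree slice as ``the crux of the argument, and the main obstacle,'' and then offer only a plan (``I would attack this by specializing $s$ \ldots''), not a proof. Until that step is actually carried out, the argument is incomplete---and since this is precisely the heart of the conjecture, the proposal does not yet constitute a proof.

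There is also a technical issue upstream of that gap. You assert that $\operatorname{ad}(B(s))$ preserves bi-degree $(a,b)$ modulo strictly smaller bi-degree. But Proposition~\ref{prop:wangGF} shows that commuting $B(s)$ past $B^+(t)$ produces both a $B^+(q^{-2}s)$-term and a $B^-(q^{-2}s)$-term; the latter has bi-degree $(1,0)$, not $(0,1)$, and $(1,0)$ is not smaller than $(0,1)$ in any natural partial order. So $\operatorname{ad}(B(s))$ mixes the $B^+$- and $B^-$-counts, and the bi-degree filtration as you have set it up is not preserved. At best the \emph{total} non-Cartan degree $a+b$ is preserved, which gives a much coarser filtration and makes the injectivity question on each slice substantially harder (each slice is now infinite-dimensional over $\mathcal H$ in a more complicated way). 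Your triangular-system strategy would need to be reformulated accordingly before one could even pose the right linear-algebra problem.

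In short: the easy direction is correct; the hard direction is a reasonable-looking strategy, but it has a misstatement about the filtration and, more importantly, the central injectivity step is not proved. The conjecture remains open.
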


\section{Appendix A: the algebra $\mathbb F \lbrack z_1, z_2, \ldots \rbrack$}

\noindent In this appendix, we explain some features of the polynomial algebra $\mathbb F \lbrack z_1, z_2, \ldots \rbrack$ that are used
in the main body of the paper.  Recall the notation $z_0=1$.
\medskip

\begin{definition}\label{lem:poly} \rm Define the generating function
\begin{align}
Z(t) = \sum_{n \in \mathbb N} z_n t^n.
\label{eq:Zpoly}
\end{align}
\end{definition}

 \begin{lemma}\label{lem:warm}  {\rm (See \cite[Lemma~4.5]{conj}.)}
 We have
 \begin{align}
Z  \biggl(\frac{q+q^{-1}}{t+t^{-1}}\biggr) = \sum_{n \in \mathbb N} z^\downarrow_n \lbrack 2 \rbrack^n_q t^n,
\label{lem:Zbig}
\end{align} 
  where $z^\downarrow_0=1$ and
\begin{align}
 z^\downarrow_n = \sum_{\ell=0}^{\lfloor (n-1) /2 \rfloor} (-1)^\ell \binom{n-1-\ell}{\ell} \lbrack 2 \rbrack^{-2\ell}_q z_{n-2\ell}, \qquad \quad n\geq 1.
\label{eq:zdsum}
\end{align}
\end{lemma}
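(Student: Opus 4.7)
The plan is to expand both sides directly and match coefficients. First I would rewrite the argument as
\begin{align*}
\frac{q+q^{-1}}{t+t^{-1}} = \frac{\lbrack 2 \rbrack_q\, t}{1+t^2},
\end{align*}
so that
\begin{align*}
Z\biggl(\frac{q+q^{-1}}{t+t^{-1}}\biggr) = \sum_{n \in \mathbb N} z_n \lbrack 2 \rbrack^n_q \,\frac{t^n}{(1+t^2)^n}.
\end{align*}
Next I would expand $(1+t^2)^{-n}$ as a formal power series via the generalized binomial theorem,
\begin{align*}
(1+t^2)^{-n} = \sum_{k \in \mathbb N} (-1)^k \binom{n+k-1}{k} t^{2k},
\end{align*}
and substitute to obtain the double sum
\begin{align*}
Z\biggl(\frac{q+q^{-1}}{t+t^{-1}}\biggr) = \sum_{n \in \mathbb N}\sum_{k \in \mathbb N} (-1)^k \binom{n+k-1}{k} \lbrack 2 \rbrack^n_q \, z_n\, t^{n+2k}.
\end{align*}

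Next I would collect coefficients of $t^m$ by substituting $n = m-2k$ and renaming $k = \ell$; only $\ell$ with $m - 2\ell \geq 0$ contribute. This produces
\begin{align*}
\lbrack t^m \rbrack\, Z\biggl(\frac{q+q^{-1}}{t+t^{-1}}\biggr) = \sum_{\ell=0}^{\lfloor m/2 \rfloor} (-1)^\ell \binom{m-\ell-1}{\ell}\lbrack 2 \rbrack^{m-2\ell}_q z_{m-2\ell},
\end{align*}
which after factoring out $\lbrack 2 \rbrack^m_q$ matches \eqref{eq:zdsum} provided the upper limit can be taken to be $\lfloor (m-1)/2 \rfloor$ for $m\geq 1$. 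I would verify this by noting that the binomial coefficient $\binom{m-\ell-1}{\ell}$ vanishes at $\ell = m/2$ when $m$ is even (since then $\binom{m/2-1}{m/2}=0$), while for odd $m$ the range already ends at $\lfloor (m-1)/2 \rfloor$. For the boundary case $m=0$ only $\ell = 0$ contributes, giving $z^\downarrow_0=1$ as required.

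There is no real obstacle here; the result is a purely formal identity in $\mathbb F \lbrack z_1, z_2, \ldots\rbrack \lbrack\lbrack t \rbrack\rbrack$ and the only care needed is in handling the parity cases for the upper summation bound and in justifying convergence as a formal power series (which follows because the exponent $n+2k$ of $t$ tends to infinity with $n$ and $k$, so each coefficient of $t^m$ is a finite sum).
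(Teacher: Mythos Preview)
Your argument is correct and is precisely the routine verification the paper leaves to the reader (the paper's proof consists of the single sentence ``This is routinely checked''). Your handling of the parity of $m$ and the vanishing of $\binom{m/2-1}{m/2}$ for even $m\geq 2$ is exactly what is needed to reconcile the two upper limits.
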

\begin{proof} This is routinely checked.
\end{proof}

\begin{example}\rm In the table below, we display $z^\downarrow_n$  for $1 \leq n \leq 9$.
\bigskip

\centerline{
\begin{tabular}[t]{c|c}
  $n$ & $z^\downarrow_{n}$
   \\
\hline
$1$ & $z_{1} $ \\
$2$ & $z_{2} $ \\
$3$ & $z_{3} - \lbrack 2 \rbrack^{-2}_q z_1$ \\
$4$ & $z_{4} -2\lbrack 2 \rbrack^{-2}_q z_2$ \\
$5$ & $z_{5} -3  \lbrack 2 \rbrack^{-2}_q z_3+  \lbrack 2 \rbrack^{-4}_q z_1$ \\
$6$ & $z_{6} -4 \lbrack 2 \rbrack^{-2}_q z_4+ 3 \lbrack 2 \rbrack^{-4}_qz_2 $ \\
$7$ & $z_{7} -5  \lbrack 2 \rbrack^{-2}_q z_5 + 6 \lbrack 2 \rbrack^{-4}_q z_3-  \lbrack 2 \rbrack^{-6}_q z_1$ \\
$8$ & $z_{8} -6 \lbrack 2 \rbrack^{-2}_q z_6+ 10 \lbrack 2 \rbrack^{-4}_qz_4- 4\lbrack 2 \rbrack^{-6}_qz_2 $ \\
$9$ & $z_{9} -7  \lbrack 2 \rbrack^{-2}_q z_7+ 15 \lbrack 2 \rbrack^{-4}_qz_5 - 10\lbrack 2 \rbrack^{-6}_q z_3 + \lbrack 2 \rbrack^{-8}_q z_1$ 
\end{tabular}}
\bigskip

\end{example}

\noindent Recall the functions $S$, $T$  from \eqref{eq:ST}.
\begin{lemma} \label{lem:ramp} 
We have
\begin{align*}
Z(S) &= \sum_{n \in \mathbb N} z^\downarrow_n q^{-n} \lbrack 2 \rbrack^n_q t^n,
\qquad \qquad 
Z(T) = \sum_{n \in \mathbb N} z^\downarrow_n q^{n} \lbrack 2 \rbrack^n_q t^n.
\end{align*}
\end{lemma}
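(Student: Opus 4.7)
The plan is to deduce both identities from Lemma \ref{lem:warm} by a simple substitution of the indeterminate $t$. The key observation is that the functions $S$ and $T$ from \eqref{eq:ST} are obtained from the expression $(q+q^{-1})/(t+t^{-1})$ by the respective substitutions $t \mapsto q^{-1}t$ and $t \mapsto qt$. Indeed, replacing $t$ by $q^{-1}t$ in $(q+q^{-1})/(t+t^{-1})$ yields $(q+q^{-1})/(q^{-1}t+qt^{-1}) = S$, and replacing $t$ by $qt$ yields $(q+q^{-1})/(qt+q^{-1}t^{-1}) = T$.

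First I would apply the substitution $t \mapsto q^{-1}t$ in the identity \eqref{lem:Zbig} of Lemma \ref{lem:warm}. The left-hand side becomes $Z(S)$, and on the right-hand side each term $z^\downarrow_n \lbrack 2 \rbrack^n_q t^n$ transforms into $z^\downarrow_n q^{-n}\lbrack 2 \rbrack^n_q t^n$, giving the first displayed equation. Then I would repeat the argument with the substitution $t \mapsto qt$ to obtain the second equation for $Z(T)$.

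The only thing worth checking is that the substitutions make sense at the level of formal power series; this is immediate because the series $Z(S), Z(T)$, when expanded using the power series expressions for $S$ and $T$ given above \eqref{eq:ST}, produce well-defined formal power series in $t$ whose coefficients are polynomials in the $z_n$. There is no serious obstacle here; the result is essentially a bookkeeping corollary of Lemma \ref{lem:warm}.
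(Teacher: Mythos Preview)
Your proposal is correct and essentially identical to the paper's own proof, which reads in its entirety: ``By Lemma \ref{lem:warm} with $t$ replaced by $q^{-1}t$ or $qt$.'' Your additional remark about the substitutions being well-defined at the level of formal power series is fine but not strictly necessary, since Lemma \ref{lem:warm} already establishes the identity formally.
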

\begin{proof} By Lemma \ref{lem:warm}  with $t$ replaced by $q^{-1}t$ or $qt$.
\end{proof}

\begin{definition}\label{def:Zcheck} \rm For $n \in \mathbb N$ define
\begin{align}
z^\vee_n = \lbrack 2 \rbrack^n_q \sum_{k=0}^n q^{n-2k} z^\downarrow_k z^\downarrow_{n-k}.
\label{eq:Zcheck}
\end{align}
Note that $z^\vee_0=1$. Further define
\begin{align}
Z^\vee(t) = \sum_{n \in \mathbb N} z^\vee_n t^n.
\label{eq:Zvpoly}
\end{align}
\end{definition}

\begin{proposition} \label{def:zcheck}  We have
\begin{align}
\label{eq:zvee}
Z^\vee(t) = Z(S)Z(T).
\end{align}
\end{proposition}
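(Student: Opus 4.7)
The plan is to prove the identity by direct manipulation of formal power series in $t$, using Lemma \ref{lem:ramp} to expand both $Z(S)$ and $Z(T)$ and then comparing coefficient by coefficient to the definition of $z^\vee_n$ in \eqref{eq:Zcheck}.

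First I would invoke Lemma \ref{lem:ramp} to write
\begin{align*}
Z(S) = \sum_{i \in \mathbb N} z^\downarrow_i q^{-i} \lbrack 2 \rbrack^i_q t^i,
\qquad
Z(T) = \sum_{j \in \mathbb N} z^\downarrow_j q^{j} \lbrack 2 \rbrack^j_q t^j.
\end{align*}
Since these are formal power series in $t$ with coefficients in $\mathbb F\lbrack z_1, z_2, \ldots \rbrack$ (a commutative algebra), their product is well-defined as a Cauchy product. Multiplying and collecting the coefficient of $t^n$ gives
\begin{align*}
Z(S) Z(T) = \sum_{n \in \mathbb N} t^n \sum_{k=0}^n z^\downarrow_k z^\downarrow_{n-k}\, q^{-k} q^{n-k} \lbrack 2 \rbrack^k_q \lbrack 2 \rbrack^{n-k}_q
= \sum_{n \in \mathbb N} t^n \lbrack 2 \rbrack^n_q \sum_{k=0}^n q^{n-2k} z^\downarrow_k z^\downarrow_{n-k}.
\end{align*}

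Next I would compare the inner sum with \eqref{eq:Zcheck}: the coefficient of $t^n$ is precisely $z^\vee_n$ by definition. Hence $Z(S)Z(T) = \sum_{n \in \mathbb N} z^\vee_n t^n = Z^\vee(t)$, as in \eqref{eq:Zvpoly}, which is what was to be shown.

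There is essentially no obstacle here: the proof is a one-line verification once Lemma \ref{lem:ramp} is in place, and the definition \eqref{eq:Zcheck} of $z^\vee_n$ was evidently engineered to be the convolution coefficient in $Z(S)Z(T)$. The only point requiring a brief justification is that the formal power series product is meaningful, which is immediate because the indeterminates $z_1, z_2, \ldots$ commute with one another and with $t$, so the Cauchy product converges in the formal sense and equality of power series reduces to equality of coefficients.
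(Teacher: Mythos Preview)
Your proof is correct and follows exactly the same approach as the paper, which simply cites Lemma \ref{lem:ramp} and Definition \ref{def:Zcheck}; you have spelled out the Cauchy-product computation that the paper leaves implicit.
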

\begin{proof} By Lemma \ref{lem:ramp} and Definition \ref{def:Zcheck}.
\end{proof}

\begin{example}\label{ex:zvee} \rm
We have
\begin{align*}
z^\vee_1 &= (q+q^{-1})^2z_1,
\\
z^\vee_2 &= (q+q^{-1})^2(q^2+q^{-2})z_2 + (q+q^{-1})^2z^2_1,
\\
z^\vee_3 &= (q+q^{-1})^3(q^3+q^{-3})z_3 + (q+q^{-1})^4z_1z_2-(q+q^{-1})(q^3+q^{-3})z_1.
\end{align*}
\end{example}

\begin{lemma}
\label{lem:zvpoly}
For $n\geq 1$ the element $z^\vee_n$ is a polynomial
of total degree $n$ in $z_1, z_2, \ldots, z_n$,
where we view $z_k$ as having degree $k$ for $1 \leq k \leq n$.
For this polynomial the constant term is 0, and the coefficient of
$z_n$ is $(q+q^{-1})^n (q^n+q^{-n})$.
\end{lemma}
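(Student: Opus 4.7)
The plan is to reduce everything to direct bookkeeping from the explicit formulas \eqref{eq:zdsum} and \eqref{eq:Zcheck}. Throughout, I use the grading convention $\deg z_k = k$, and I interpret ``total degree'' as the maximum degree of a monomial.

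First I would record the corresponding facts about $z^\downarrow_n$. From \eqref{eq:zdsum} the polynomial $z^\downarrow_n$ is a linear combination of the variables $z_{n-2\ell}$ with $0 \le \ell \le \lfloor (n-1)/2\rfloor$. In particular, for $n \ge 1$: (a) every summand involves some $z_{n-2\ell}$ with $n-2\ell \ge 1$, so the constant term is $0$; (b) the degrees of the summands are $n, n-2, n-4, \dots$, so the total degree is exactly $n$; and (c) the unique term of degree $n$ is the $\ell = 0$ term, which is $z_n$ itself, so the coefficient of $z_n$ equals $1$.

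Next I would substitute these facts into
\[
z^\vee_n = \lbrack 2\rbrack^n_q \sum_{k=0}^n q^{n-2k}\, z^\downarrow_k\, z^\downarrow_{n-k}.
\]
Each product $z^\downarrow_k\, z^\downarrow_{n-k}$ has total degree at most $k + (n-k) = n$, so $z^\vee_n$ has total degree at most $n$. Moreover, for every $k$ in the range $0 \le k \le n$ at least one of $k, n-k$ is $\ge 1$ (since $n \ge 1$), and the corresponding factor has zero constant term by step one, so $z^\downarrow_k\, z^\downarrow_{n-k}$ has zero constant term. Hence $z^\vee_n$ has constant term $0$.

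Finally I would isolate the coefficient of $z_n$. Since $z_n$ has the maximal degree $n$, the monomial $z_n$ can arise in $z^\downarrow_k\, z^\downarrow_{n-k}$ only when one factor is a scalar and the other contributes $z_n$; this forces $\{k, n-k\} = \{0, n\}$. Using $z^\downarrow_0 = 1$ and the fact that the coefficient of $z_n$ in $z^\downarrow_n$ is $1$, the $k=0$ summand contributes $q^n$ and the $k=n$ summand contributes $q^{-n}$ to the coefficient of $z_n$; all other summands contribute $0$. Multiplying by the prefactor $\lbrack 2\rbrack^n_q = (q+q^{-1})^n$ yields coefficient $(q+q^{-1})^n(q^n + q^{-n})$, which is nonzero since $q$ is not a root of unity, so the total degree is in fact exactly $n$. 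There is no real obstacle here: once the elementary analysis of $z^\downarrow_n$ from \eqref{eq:zdsum} is in hand, everything follows from tracking degrees and leading coefficients through the bilinear formula \eqref{eq:Zcheck}.
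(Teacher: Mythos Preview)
Your proof is correct and is precisely the argument the paper has in mind: the paper's own proof is the one-line ``By \eqref{eq:zdsum} and \eqref{eq:Zcheck},'' and you have carried out exactly that bookkeeping in detail.
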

\begin{proof} By
\eqref{eq:zdsum} and \eqref{eq:Zcheck}.
\end{proof}

\noindent The following comments will help us interpret Lemma \ref{lem:zvpoly}.
We describe a basis for the vector space $\mathbb F \lbrack z_1, z_2, \ldots \rbrack$.
For $n \in \mathbb N$, a {\it partition of $n$} is a sequence $\lambda = \lbrace \lambda_i \rbrace_{i=1}^\infty$
of natural numbers such that $\lambda_i \geq \lambda_{i+1}$ for $i\geq 1$ and $n=\sum_{i=1}^\infty \lambda_i$.
The set $\Lambda_n$ consists of the partitions of $n$. Define $\Lambda = \cup_{n \in \mathbb N} \Lambda_n$. 
For $\lambda \in \Lambda$ define $z_\lambda = \prod_{i=1}^\infty z_{\lambda_i}$. The elements 
$\lbrace z_\lambda\rbrace_{\lambda \in \Lambda}$ form a basis for the vector space $\mathbb F \lbrack z_1, z_2, \ldots \rbrack$.
\medskip

\noindent
Next we construct a grading for the algebra $\mathbb F \lbrack z_1, z_2, \ldots \rbrack$.
For notational convenience abbreviate  $P=\mathbb F \lbrack z_1, z_2, \ldots \rbrack$.
  For $n \in \mathbb N$ let $P_n$ denote the subspace of
$P$ with basis $\lbrace z_\lambda \rbrace_{\lambda \in \Lambda_n}$. For example $ P_0 = \mathbb F 1$.
The sum $P = \sum_{n\in \mathbb N} P_n$ is direct. Moreover $P_r P_s\subseteq P_{r+s}$
for $r,s\in \mathbb N$. By these comments 
the subspaces $\lbrace P_n \rbrace_{n\in \mathbb N}$ form
a grading of the algebra $P$. For $n \in \mathbb N$, the dimension of $P_n$ is equal to the number of partitions of $n$. Observe that
\begin{align}
P_n = \mathbb F z_n + \sum_{k=1}^{n-1} {\rm Span} (P_k P_{n-k}), \qquad \qquad n\geq 1.
\label{eq:Pdec}
\end{align}
\noindent 
\medskip

\noindent We now interpret Lemma \ref{lem:zvpoly} in light of the above comments.

\begin{lemma} \label{lem:interpP}
For $n \geq 1$ we have $z^\vee_n \in \sum_{k=1}^n P_k$ and
\begin{align}
z^\vee_n - (q+q^{-1})^n (q^n+q^{-n}) z_n \in \sum_{k=1}^{n-1} P_k+ \sum_{k=1}^{n-1} {\rm Span}(P_k P_{n-k}).
\label{lem:prod}
\end{align}
\end{lemma}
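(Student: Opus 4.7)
The plan is to deduce this lemma directly from Lemma \ref{lem:zvpoly} by reinterpreting the notion of ``total degree with $z_k$ having degree $k$'' in terms of the grading $P = \bigoplus_{n \in \mathbb N} P_n$ described above. Recall that $P_n$ has basis $\{z_\lambda\}_{\lambda \in \Lambda_n}$, so a monomial $z_{i_1} z_{i_2} \cdots z_{i_r}$ with $i_1, \ldots, i_r \geq 1$ lies in $P_{i_1 + \cdots + i_r}$. Therefore the statement in Lemma \ref{lem:zvpoly} that $z^\vee_n$ is a polynomial of total degree $n$ in $z_1, \ldots, z_n$ (with $z_k$ having degree $k$) with zero constant term translates precisely into $z^\vee_n \in \sum_{k=1}^n P_k$, which is the first assertion.

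For the second assertion, first I would write
\[
z^\vee_n = (q+q^{-1})^n(q^n+q^{-n}) z_n + R,
\]
where $R$ is, by Lemma \ref{lem:zvpoly}, a polynomial in $z_1, \ldots, z_n$ of total degree at most $n$ (with zero constant term) in which the monomial $z_n$ itself does not appear. I would then note that any other monomial $z_n \cdot (\text{something})$ in $R$ would have grade strictly greater than $n$ and hence cannot occur, so $R$ is in fact a polynomial in $z_1, \ldots, z_{n-1}$ only. Next I would split the monomials making up $R$ according to their grade:
\begin{itemize}
\item Monomials of grade $k$ with $1 \leq k \leq n-1$ lie in $P_k$, contributing to $\sum_{k=1}^{n-1} P_k$.
\item Monomials of grade exactly $n$ must be products $z_{i_1} \cdots z_{i_r}$ with $r \geq 2$ and each $i_j \leq n-1$; writing $k = i_1$ with $1 \leq k \leq n-1$, such a monomial factors as $z_{i_1} \cdot z_{i_2} \cdots z_{i_r} \in P_k \cdot P_{n-k}$.
\end{itemize}
Combining these two cases gives exactly the containment \eqref{lem:prod}.

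There is essentially no obstacle here: the lemma is a bookkeeping translation between the ``weighted total degree'' language of Lemma \ref{lem:zvpoly} and the grading language introduced in \eqref{eq:Pdec}. The only point that requires a moment of care is the observation that a grade-$n$ monomial involving more than one factor must have its largest factor $z_{i_1}$ with $i_1 \leq n-1$, which is what allows the decomposition into a product of two pieces of grades $k$ and $n-k$ with $1 \leq k \leq n-1$.
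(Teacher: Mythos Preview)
Your proof is correct and follows exactly the approach indicated in the paper, which simply cites Lemma~\ref{lem:zvpoly}; you have just spelled out the translation between the weighted-degree language and the grading $\{P_n\}$ in more detail than the paper does.
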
 
\begin{proof} By Lemma \ref{lem:zvpoly}.
\end{proof}

\begin{lemma} \label{lem:mapV}  There exists an algebra homomorphism $\vee: \mathbb F \lbrack z_1, z_2, \ldots \rbrack\to\mathbb F \lbrack z_1, z_2, \ldots \rbrack$ that sends
$z_n \mapsto z^\vee_n$ for $n\geq 1$. 
\end{lemma}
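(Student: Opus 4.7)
The plan is to invoke the universal property of the polynomial algebra $\mathbb F \lbrack z_1, z_2, \ldots \rbrack$: it is the free commutative associative unital $\mathbb F$-algebra on the generating set $\lbrace z_n\rbrace_{n=1}^\infty$. Concretely, for any commutative $\mathbb F$-algebra $\mathcal A$ and any sequence of elements $\lbrace a_n \rbrace_{n=1}^\infty$ in $\mathcal A$, there exists a unique algebra homomorphism $\mathbb F \lbrack z_1, z_2, \ldots \rbrack \to \mathcal A$ sending $z_n \mapsto a_n$ for $n \geq 1$.

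To apply this, I take the target algebra to be $\mathcal A = \mathbb F \lbrack z_1, z_2, \ldots \rbrack$ itself, which is commutative, and I take $a_n = z^\vee_n$ for $n \geq 1$. These elements $z^\vee_n$ lie in $\mathbb F\lbrack z_1, z_2, \ldots\rbrack$ by Definition \ref{def:Zcheck} (they are explicitly given as polynomial expressions in the $z_k$'s via \eqref{eq:zdsum} and \eqref{eq:Zcheck}). The universal property then produces the desired homomorphism $\vee$, and uniqueness is automatic.

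There is essentially no obstacle here; the statement is a direct consequence of the freeness of the polynomial algebra on countably many commuting generators. The only thing worth flagging is that one should observe that $z^\vee_0 = 1$ is consistent with $z_0 = 1$ under the extended map (which is automatic since every algebra homomorphism preserves $1$), so the assignment is compatible with the convention $z_0 = 1$ used throughout. No computation beyond Definition \ref{def:Zcheck} is required.
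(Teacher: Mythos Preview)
Your proof is correct and takes essentially the same approach as the paper: the paper's one-line proof simply notes that $\lbrace z_n\rbrace_{n=1}^\infty$ are algebraically independent and generate $\mathbb F[z_1,z_2,\ldots]$, which is exactly the universal property you invoke.
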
 
\begin{proof} Since $\lbrace z_n \rbrace_{n=1}^\infty$ are algebraically independent and generate $\mathbb F \lbrack z_1, z_2, \ldots \rbrack$.
\end{proof}

\noindent Our next goal is to show that the map $\vee$ from Lemma \ref{lem:mapV} is an algebra isomorphism.

\begin{definition} \label{lem:zv}\rm  For $z \in \mathbb F \lbrack z_1, z_2, \ldots \rbrack$ let $z^\vee$ denote the image of $z$ under the
homomorphism $\vee$ from Lemma \ref{lem:mapV}.
\end{definition}

\begin{lemma} \label{lem:need} We have $P^\vee_0=P_0$ and for $n\in \mathbb N$,
\begin{align}
\label{eq:ss}
P^\vee_1 + P^\vee_2 + \cdots + P^\vee_n  = P_1 + P_2 + \cdots + P_n.
\end{align}
\end{lemma}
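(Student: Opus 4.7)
The case $n = 0$ is immediate: since $\vee$ is a unital algebra homomorphism, $\vee(1) = 1$, so $P_0^\vee = \mathbb{F}\cdot 1 = P_0$. For the nontrivial identity, my plan is to prove the two inclusions separately, using Lemma \ref{lem:interpP} as the main input and the grading properties $P_r P_s \subseteq P_{r+s}$ together with \eqref{eq:Pdec}.

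For the inclusion $\sum_{k=1}^n P_k^\vee \subseteq \sum_{k=1}^n P_k$, I would argue directly on basis elements. Since $P_k$ is spanned by monomials $z_\lambda = z_{\lambda_1}z_{\lambda_2}\cdots$ for $\lambda \in \Lambda_k$ with all parts $\lambda_i \geq 1$, and $\vee$ is an algebra homomorphism, $z_\lambda^\vee$ is the corresponding product of the $z_{\lambda_i}^\vee$. By Lemma \ref{lem:interpP}, each factor $z_{\lambda_i}^\vee$ lies in $\sum_{j=1}^{\lambda_i} P_j$ (no constant term). Multiplying these out and using $P_{r}P_{s}\subseteq P_{r+s}$, the product lies in $\sum_{j=1}^{k} P_j$, where $k = \sum \lambda_i$. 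This shows $P_k^\vee \subseteq \sum_{j=1}^{k} P_j$, which sums to give the desired inclusion.

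For the reverse inclusion $\sum_{k=1}^n P_k \subseteq \sum_{k=1}^n P_k^\vee$, I would induct on $n$. The base case $n=1$ uses $z_1^\vee = (q+q^{-1})^2 z_1$ from Example \ref{ex:zvee}; since $(q+q^{-1})^2 \neq 0$, we have $z_1 \in P_1^\vee$. For the inductive step, decompose $P_n$ via \eqref{eq:Pdec} as $\mathbb{F}z_n + \sum_{k=1}^{n-1}\mathrm{Span}(P_k P_{n-k})$. For the product pieces, apply the induction hypothesis to rewrite each $P_k$ and $P_{n-k}$ as sums of $P_j^\vee$ with $j \leq k$ and $j \leq n-k$, then use that $\vee$ is a homomorphism so $P_j^\vee P_{j'}^\vee = (P_j P_{j'})^\vee \subseteq P_{j+j'}^\vee$, yielding containment in $\sum_{m=1}^n P_m^\vee$. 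For $z_n$ itself, Lemma \ref{lem:interpP} gives
\[
z_n^\vee - (q+q^{-1})^n(q^n+q^{-n})z_n \in \sum_{k=1}^{n-1} P_k + \sum_{k=1}^{n-1}\mathrm{Span}(P_kP_{n-k}),
\]
and by the induction hypothesis together with the argument just given, the right-hand side lies in $\sum_{m=1}^n P_m^\vee$. Since $z_n^\vee \in P_n^\vee$ and the scalar $(q+q^{-1})^n(q^n+q^{-n})$ is nonzero ($q$ is not a root of unity), we conclude $z_n \in \sum_{m=1}^n P_m^\vee$, closing the induction.

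The argument is almost entirely bookkeeping; there is no serious obstacle beyond organizing the two inclusions cleanly. The one subtle point to watch is that the scalar $(q+q^{-1})^n(q^n+q^{-n})$ appearing in Lemma \ref{lem:interpP} must be invertible, which is precisely guaranteed by our standing assumption that $q$ is not a root of unity.
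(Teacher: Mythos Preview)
Your proof is correct and uses essentially the same ingredients as the paper's proof: Lemma~\ref{lem:interpP}, the decomposition \eqref{eq:Pdec}, and induction on $n$. The only organizational difference is that the paper runs a single induction establishing the equality $F_n^\vee = F_n$ (with $F_n = P_1+\cdots+P_n$) directly, whereas you split the work into two separate inclusions, proving the forward one without induction and the reverse one by induction; both arguments hinge on the same nonvanishing of $(q+q^{-1})^n(q^n+q^{-n})$.
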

\begin{proof} We have $P^\vee_0=P_0$ since  $P_0=\mathbb F 1$ and $1^\vee=1$.
Next we prove \eqref{eq:ss} by induction on $n$.  For notational convenience, define $F_n = P_1+\cdots + P_n$.  We show that $F^\vee_n=F_n$.
This holds vacuosly for $n=0$, so assume that $n\geq 1$.
 By induction,  $F^\vee_\ell=F_\ell$ for $0 \leq \ell \leq n-1$.
Adjusting \eqref{eq:Pdec}, we obtain
\begin{align}
F_n=  F_{n-1} + \mathbb F z_n + \sum_{k=1}^{n-1} {\rm Span}(F_k F_{n-k}).
\label{eq:FF1}
\end{align}
\noindent Upon applying $\vee$ to each side of \eqref{eq:FF1},  
\begin{align}
F^\vee_n 
&=  F^\vee_{n-1} + \mathbb F z^\vee_n + \sum_{k=1}^{n-1} {\rm Span}(F^\vee_k F^\vee_{n-k}) \nonumber \\
&=  F_{n-1} + \mathbb F z^\vee_n + \sum_{k=1}^{n-1} {\rm Span}(F_k F_{n-k}). \label{eq:FF2}
\end{align}
By \eqref{lem:prod} and the construction,
\begin{align*}
z^\vee_n - (q+q^{-1})^n (q^n+q^{-n}) z_n \in F_{n-1} + \sum_{k=1}^{n-1} {\rm Span}(F_k F_{n-k}).
\end{align*}
Consequently,
the right-hand side of 
\eqref{eq:FF1} is equal to the right-hand side of
\eqref{eq:FF2}. 
By these comments $F^\vee_n = F_n$.
\end{proof}

\begin{proposition}
\label{prop:mapIso}
The map $\vee$ from Lemma \ref{lem:mapV}
is an algebra isomorphism.
\end{proposition}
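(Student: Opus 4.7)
The plan is to leverage Lemma \ref{lem:need} together with a finite-dimensional linear-algebra argument. Since $\vee$ is already known to be an algebra homomorphism by Lemma \ref{lem:mapV}, it suffices to show that $\vee$ is a bijection of vector spaces.

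First, I would set $F_n = P_0 + P_1 + \cdots + P_n$ for $n \in \mathbb N$. The basis $\lbrace z_\lambda\rbrace_{\lambda \in \Lambda_0 \cup \cdots \cup \Lambda_n}$ for $F_n$ is finite, so $F_n$ is a finite-dimensional subspace of $P$. Moreover $F_0 \subseteq F_1 \subseteq F_2 \subseteq \cdots$ and $P = \bigcup_{n \in \mathbb N} F_n$. Combining Lemma \ref{lem:need} with the fact that $\vee$ fixes $P_0 = \mathbb F 1$, we obtain $\vee(F_n) = F_n$ for all $n \in \mathbb N$.

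Next I would observe that since $\vee$ is linear and $F_n$ is finite-dimensional, the restriction $\vee|_{F_n} : F_n \to F_n$ is a surjective linear endomorphism of a finite-dimensional vector space, hence bijective. Injectivity of $\vee$ on all of $P$ then follows: any nonzero $p \in P$ lies in some $F_n$, and if $\vee(p) = 0$ then $p \in \ker(\vee|_{F_n}) = 0$. Surjectivity follows similarly: any $p \in P$ lies in some $F_n = \vee(F_n)$, so $p$ has a preimage.

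There is no real obstacle here; the only subtlety is making sure the filtration is genuinely preserved, which is exactly the content of Lemma \ref{lem:need}. That lemma in turn rests on the leading-coefficient statement in Lemma \ref{lem:interpP}, namely that $z^\vee_n - (q+q^{-1})^n(q^n+q^{-n})z_n$ lies in lower filtration degree plus products, together with the fact that the scalar $(q+q^{-1})^n(q^n+q^{-n})$ is nonzero (using that $q$ is not a root of unity). Everything else is routine.
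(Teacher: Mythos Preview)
Your proof is correct and follows essentially the same route as the paper: both use Lemma \ref{lem:need} to conclude that $\vee$ maps each finite-dimensional filtration piece $\sum_{k=0}^n P_k$ onto itself, invoke finite-dimensionality to get bijectivity on each piece, and pass to the union. Your write-up is slightly more explicit about the injectivity/surjectivity step and about the role of the nonvanishing leading coefficient, but the underlying argument is the same.
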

\begin{proof}
By construction $\vee$ is an algebra homomorphism. We show that $\vee$ is a bijection.
 For $n \in \mathbb N$, the restriction of $\vee$ to $\sum_{k=0}^n P_k$ gives a bijection $\sum_{k=0}^n P_k \to \sum_{k=0}^n P_k$ by Lemma \ref{lem:need} and since $\sum_{k=0}^n P_k$ has finite dimension.
 We have  $\mathbb F \lbrack z_1, z_2, \ldots \rbrack=\sum_{k=0}^\infty P_k$.
 Therefore $\vee$ is a
 bijection.
\end{proof}

\begin{corollary}
\label{cor:homzzv}
The elements $\lbrace z^\vee_n\rbrace_{n=1}^\infty$
are algebraically independent and generate 
$\mathbb F \lbrack z_1, z_2,\ldots\rbrack$.
\end{corollary}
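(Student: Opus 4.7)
The plan is to deduce this corollary directly from the fact, already established in Proposition \ref{prop:mapIso}, that the map $\vee : \mathbb F \lbrack z_1, z_2, \ldots \rbrack \to \mathbb F \lbrack z_1, z_2, \ldots \rbrack$ is an algebra isomorphism. By the very construction of this map in Lemma \ref{lem:mapV}, it sends $z_n \mapsto z^\vee_n$ for $n \geq 1$. The point is simply that algebraic independence and the generating property are both preserved under algebra isomorphisms, so the conclusion transfers from the family $\lbrace z_n \rbrace_{n=1}^\infty$ to the family $\lbrace z^\vee_n \rbrace_{n=1}^\infty$.

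In more detail, first I would note that $\lbrace z_n \rbrace_{n=1}^\infty$ are algebraically independent and generate $\mathbb F\lbrack z_1, z_2,\ldots\rbrack$ by Definition \ref{def:poly}. Next, for the generating property: since $\vee$ is surjective, any element $p \in \mathbb F\lbrack z_1, z_2,\ldots\rbrack$ is of the form $\vee(q)$ for some $q$, and writing $q$ as a polynomial in the $z_n$ we see that $p = \vee(q)$ is the same polynomial in the $\vee(z_n) = z^\vee_n$; hence $\lbrace z^\vee_n \rbrace_{n=1}^\infty$ generates $\mathbb F\lbrack z_1, z_2,\ldots\rbrack$. For the algebraic independence: suppose $P$ is a polynomial in finitely many variables over $\mathbb F$ with $P(z^\vee_{n_1}, \ldots, z^\vee_{n_k}) = 0$. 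Applying $\vee^{-1}$ and using that $\vee^{-1}$ is an algebra homomorphism that sends $z^\vee_{n_i}$ to $z_{n_i}$ would require an inverse description, so it is cleaner to argue that $P(z^\vee_{n_1}, \ldots, z^\vee_{n_k}) = \vee\bigl(P(z_{n_1}, \ldots, z_{n_k})\bigr)$, and since $\vee$ is injective we deduce $P(z_{n_1}, \ldots, z_{n_k}) = 0$, whence $P = 0$ by the algebraic independence of $\lbrace z_n \rbrace_{n=1}^\infty$.

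There is essentially no obstacle: all the substantive work has already been carried out, in Lemma \ref{lem:zvpoly} (degree and leading-coefficient information for $z^\vee_n$), Lemma \ref{lem:need} (stability of the filtration $\sum_{k=0}^n P_k$ under $\vee$), and Proposition \ref{prop:mapIso} (the isomorphism statement itself). The corollary is purely a reformulation of the isomorphism property of $\vee$ in terms of the images of the standard generators, so the proof will be a single short paragraph citing Proposition \ref{prop:mapIso}.
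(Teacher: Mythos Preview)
Your proposal is correct and follows essentially the same approach as the paper: observe that $\lbrace z_n\rbrace_{n=1}^\infty$ are algebraically independent generators, then transfer both properties through the algebra isomorphism $\vee$ of Proposition~\ref{prop:mapIso}. The paper's proof is terser (two sentences), while you spell out why isomorphisms preserve algebraic independence and generation, but the content is identical.
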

\begin{proof} The elements $\lbrace z_n \rbrace_{n=1}^\infty$ are algebraically independent and generate $\mathbb F \lbrack z_1, z_2, \ldots \rbrack$.
The result follows from this and Proposition \ref{prop:mapIso}.
\end{proof}

\noindent For $n\geq 1$, we now seek to express
$z_n$ as a polynomial in $z^\vee_1, z^\vee_2,\ldots, z^\vee_n$. For $n=1,2,3$ we obtain the following from Example \ref{ex:zvee}.

\begin{example}
\label{ex:znsolve}
We have
\begin{align*}
z_1 &= \frac{z^\vee_1}{(q+q^{-1})^2},
\\
z_2 &= \frac{(q+q^{-1})^2 z^\vee_2-(z^\vee_1)^2}{(q+q^{-1})^4(q^2+q^{-2})},
\\
z_3 &= \frac{(q+q^{-1})^2 (q^2+q^{-2})z^\vee_3-(q+q^{-1})^2z^\vee_1 z^\vee_2+
(z^\vee_1)^3+(q+q^{-1})(q^2+q^{-2})(q^3+q^{-3})z^\vee_1}{(q+q^{-1})^5(q^2+q^{-2})(q^3+q^{-3})}.
\end{align*}
\end{example}

\begin{lemma}
\label{lem:zpoly}
For $n\geq 1$ the element $z_n$ is a polynomial
of total degree $n$ in $z^\vee_1, z^\vee_2, \ldots, z^\vee_n$,
where we view $z^\vee_k$ as having degree $k$ for $1 \leq k \leq n$.
For this polynomial the constant coefficient is $0$, and the coefficient of
$z^\vee_n$ is $(q+q^{-1})^{-n}(q^n+q^{-n})^{-1}$.
\end{lemma}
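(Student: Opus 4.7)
I would proceed by induction on $n$, inverting the expressions of Lemma \ref{lem:zvpoly} one at a time and tracking the grading via the filtration introduced above Lemma \ref{lem:need}.

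\textbf{Base case.} Lemma \ref{lem:zvpoly} (or Example \ref{ex:zvee}) gives $z^\vee_1 = (q+q^{-1})^2 z_1$, so $z_1 = (q+q^{-1})^{-2} z^\vee_1$. This is a degree-$1$ polynomial in $z^\vee_1$ with zero constant term and leading coefficient $(q+q^{-1})^{-1}(q^1+q^{-1})^{-1} = (q+q^{-1})^{-2}$, as required.

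\textbf{Inductive step.} Assume the lemma for all indices $1,2,\ldots,n-1$. By Lemma \ref{lem:interpP},
$$z^\vee_n - (q+q^{-1})^n(q^n+q^{-n})\,z_n \;\in\; \sum_{k=1}^{n-1} P_k + \sum_{k=1}^{n-1} {\rm Span}(P_k P_{n-k}),$$
so solving for $z_n$,
$$z_n \;=\; \frac{z^\vee_n}{(q+q^{-1})^n(q^n+q^{-n})} \;-\; \frac{R_n}{(q+q^{-1})^n(q^n+q^{-n})},$$
where $R_n$ is a polynomial in $z_1,\ldots,z_{n-1}$ with no constant term. By the induction hypothesis, each such $z_k$ is a polynomial in $z^\vee_1,\ldots,z^\vee_k$ of graded total degree $\leq k$ and zero constant term. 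Substituting these expressions into $R_n$ produces a polynomial $\widetilde R_n$ in $z^\vee_1,\ldots,z^\vee_{n-1}$ with no constant term, yielding $z_n$ as a polynomial in $z^\vee_1,\ldots,z^\vee_n$ as claimed.

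\textbf{Extracting the three required properties.} The constant term is zero since both $z^\vee_n$ and $\widetilde R_n$ have zero constant term. The coefficient of $z^\vee_n$ equals $(q+q^{-1})^{-n}(q^n+q^{-n})^{-1}$ because $R_n$—and hence $\widetilde R_n$—involves only the variables $z^\vee_1,\ldots,z^\vee_{n-1}$ after substitution, so cannot contribute to the $z^\vee_n$ coefficient. For the total-degree claim I would invoke the proof of Lemma \ref{lem:need}, which shows $F_k = F^\vee_k$ where $F^\vee_k$ denotes the span of products $z^\vee_{i_1}\cdots z^\vee_{i_m}$ with $i_1+\cdots+i_m\leq k$. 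By induction $z_j \in F^\vee_j$, so a term from $P_k \subseteq F_k$ substitutes to an element of $F^\vee_k$ of graded degree $\leq k$, and a term from $P_k P_{n-k}$ substitutes to a product in $F^\vee_k\cdot F^\vee_{n-k}\subseteq F^\vee_n$ of graded degree $\leq n$. Combining with the $z^\vee_n$ term of degree $n$, we conclude $z_n\in F^\vee_n$, i.e., graded total degree at most $n$; nonvanishing of the $z^\vee_n$-coefficient forces equality.

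\textbf{Expected obstacles.} None substantial—this is essentially a bookkeeping argument that mirrors the proof of Lemma \ref{lem:zvpoly}. The only mildly delicate point is keeping the filtration indices consistent when substituting lower $z_j$'s into the $P_k P_{n-k}$ terms; the identity $F_k = F^\vee_k$ from Lemma \ref{lem:need} handles this cleanly, since it transfers the graded-degree bound from the $z$-variables to the $z^\vee$-variables with no loss.
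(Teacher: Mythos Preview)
Your argument is correct and matches the paper's approach exactly: the paper's proof reads ``This follows from Lemmas \ref{lem:interpP}, \ref{lem:need}. Alternatively, in Lemma \ref{lem:zvpoly} solve for $z_n$ and evaluate the result using induction on $n$,'' and you have written out precisely this induction, invoking Lemma \ref{lem:interpP} for the remainder term and the identity $F_k = F^\vee_k$ from Lemma \ref{lem:need} for the degree bound. Your write-up is simply a detailed unpacking of the paper's two-sentence sketch.
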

\begin{proof} This follows from Lemmas \ref{lem:interpP}, \ref{lem:need}. Alternatively,
 in Lemma \ref{lem:zvpoly} solve for $z_n$ and evaluate the result using induction on $n$.
\end{proof}

\begin{definition}\label{def:theta} \rm
 Let $\theta: \mathbb F \lbrack z_1, z_2,\ldots \rbrack \to \mathbb F$ denote the algebra homomorphism that sends $z_n \mapsto 0$ for $n\geq 1$. 
\end{definition}

\begin{lemma} \label{lem:idealZZ} For the algebra $\mathbb F \lbrack z_1, z_2, \ldots\rbrack$ the following {\rm (i)--(iv)} are the same:
\begin{enumerate}
\item[\rm (i)] the kernel of  $\theta$;
\item[\rm (ii)] the sum $\sum_{n=1}^\infty P_n$, where $\lbrace P_n \rbrace_{n\in \mathbb N}$ is the grading of  $\mathbb F \lbrack z_1, z_2, \ldots\rbrack$ from below Lemma \ref{lem:zvpoly};
\item[\rm (iii)] the ideal generated by $\lbrace z_n \rbrace_{n=1}^\infty$;
\item[\rm (iv)] the ideal generated by $\lbrace z^\vee_n \rbrace_{n=1}^\infty$.
\end{enumerate}
\end{lemma}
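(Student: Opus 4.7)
The plan is to show the chain of equalities (i)$=$(ii)$=$(iii) by identifying all three with the augmentation ideal, and then to derive (iii)$=$(iv) from the triangular relationship between $\{z_n\}$ and $\{z^\vee_n\}$ supplied by Lemmas \ref{lem:zvpoly} and \ref{lem:zpoly}.

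First, I would observe that $\theta$ is the augmentation map sending $z_n \mapsto 0$ for each $n \geq 1$, so $\ker\theta$ consists precisely of the polynomials in $\mathbb F[z_1,z_2,\ldots]$ with vanishing constant term. Using the basis $\{z_\lambda\}_{\lambda \in \Lambda}$ and the grading from below Lemma \ref{lem:zvpoly}, the polynomials with zero constant term are exactly those lying in $\sum_{n=1}^\infty P_n$, because $P_0 = \mathbb F \cdot 1$ and the sum $\mathbb F \lbrack z_1, z_2, \ldots \rbrack = \bigoplus_{n \in \mathbb N} P_n$ is direct. This yields (i)$=$(ii).

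For (i)$=$(iii), I would note that each $z_n$ with $n \geq 1$ lies in $\ker \theta$, so the ideal $I$ generated by $\{z_n\}_{n=1}^\infty$ is contained in $\ker \theta$. Conversely, every monomial $z_\lambda$ with $\lambda \in \Lambda_n$ and $n \geq 1$ is divisible by at least one $z_{\lambda_i}$ with $\lambda_i \geq 1$, so $z_\lambda \in I$; since such monomials span $\sum_{n=1}^\infty P_n = \ker\theta$, we get $\ker\theta \subseteq I$.

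Finally, for (iii)$=$(iv), let $I^\vee$ denote the ideal generated by $\{z^\vee_n\}_{n=1}^\infty$. By Lemma \ref{lem:zvpoly}, each $z^\vee_n$ ($n \geq 1$) is a polynomial in $z_1, \ldots, z_n$ with zero constant term, so $z^\vee_n \in I$, whence $I^\vee \subseteq I$. Conversely, by Lemma \ref{lem:zpoly}, each $z_n$ ($n \geq 1$) is a polynomial in $z^\vee_1, \ldots, z^\vee_n$ with zero constant term, so $z_n \in I^\vee$, giving $I \subseteq I^\vee$. There is no real obstacle here; the lemma is a clean consolidation of the preceding material, and the only point worth flagging is that both Lemmas \ref{lem:zvpoly} and \ref{lem:zpoly} deliver the crucial vanishing of constant terms, which is exactly what lets each generating set sit inside the other's ideal.
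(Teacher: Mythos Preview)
Your proof is correct. The treatment of (i)$=$(ii)$=$(iii) matches the paper's (which simply calls it ``routine''), and your argument for (iii)$=$(iv) is valid.

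For (iii)$=$(iv) you take a slightly different route from the paper. The paper observes that since $\vee$ is an algebra isomorphism sending $z_n \mapsto z^\vee_n$, the ideal in (iv) is exactly $J^\vee$, where $J$ is the ideal in (iii); it then invokes Lemma~\ref{lem:need} (that $\sum_{k=1}^n P_k^\vee = \sum_{k=1}^n P_k$) applied to description (ii) to conclude $J^\vee = J$. You instead use the triangular change of variables directly: Lemma~\ref{lem:zvpoly} gives $z^\vee_n \in I$, and Lemma~\ref{lem:zpoly} gives $z_n \in I^\vee$. Your approach is a bit more hands-on and avoids explicitly invoking that $\vee$ is an isomorphism, while the paper's is slightly more structural. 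Since Lemma~\ref{lem:zpoly} is itself proved via Lemma~\ref{lem:need}, the two arguments ultimately rest on the same content; yours just packages it through the polynomial relations rather than through the grading and the map $\vee$.
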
 
\begin{proof} It is routine to check that (i)--(iii) are the same; denote this common value by $J$. Comparing (iii), (iv) we see that (iv) is equal to $J^\vee$. Applying $\vee$ to (ii) and using Lemma 
 \ref{lem:need} we obtain
 $J^\vee=J$.
\end{proof}

\section{Acknowledgements}
The author thanks Pascal Baseilhac and Travis Scrimshaw for helpful conversations about the $q$-Onsager algebra and its alternating central extension.


%

\bigskip

\noindent Paul Terwilliger \hfil\break
\noindent Department of Mathematics \hfil\break
\noindent University of Wisconsin \hfil\break
\noindent 480 Lincoln Drive \hfil\break
\noindent Madison, WI 53706-1388 USA \hfil\break
\noindent email: {\tt terwilli@math.wisc.edu }\hfil\break

\end{document}